\DeclareMathOperator{\Dom}{Dom}
\DeclareMathOperator{\meas}{meas}
\DeclareMathOperator{\rank}{rank}
\DeclareMathOperator{\Ran}{Ran}
\DeclareMathOperator{\Ker}{Ker}
\DeclareMathOperator{\clos}{clos}
\DeclareMathOperator{\esssup}{ess-sup}
\DeclareMathOperator{\const}{const}
\renewcommand{\Re}{\mathop{\mathrm{Re}}\nolimits}
\renewcommand{\Im}{\mathop{\mathrm{Im}}\nolimits}
\newcommand*{\hm}[1]{#1\nobreak\discretionary{}	{\hbox{\mathsurround=0pt \ensuremath{#1}}}{}}
\titleformat{\part}[hang]{ \large\bfseries\scshape}{Chapter~\thepart.}{0.5ex}{\centering}[]
\titleformat{\section}[hang]{ \scshape\bfseries}{\S\thesection.}{0.5ex}{\centering}[]
\titleformat{\subsection}[runin]{\bfseries}{\thesubsection.}{0.4ex}{}[.]
\numberwithin{equation}{section}
\theoremstyle{plain}
\newtheorem{thrm}{\scshape Theorem}[section]
\newtheorem{proposition}[thrm]{\scshape Proposition}
\newtheorem{lemma}[thrm]{\scshape Lemma}
\newtheorem{corollary}[thrm]{\scshape Corollary}
\newtheorem{remark}[thrm]{\scshape Remark}
\newtheorem{condition}[thrm]{\scshape Condition}
\theoremstyle{definition}
\newtheorem{example}[thrm]{\scshape Example}
\newtheoremstyle{break}
{}{}
{\itshape}{}
{\bfseries}{}
{\newline}{}
\theoremstyle{break}
\theoremstyle{break}
\providecommand{\keywords}[1]{\textbf{{Keywords:}} #1.}
\title{Operator error estimates for homogenization of the nonstationary Schr\"{o}dinger-type equations: sharpness of the results\footnote{Supported by Young Russian Mathematics award and Ministry of Science and Higher Education of the Russian Federation, agreement \textnumero\,075-15-2019-1619.}}
\date{}
\author{Mark Dorodnyi\footnote{Leonhard Euler International Mathematical Institute, St.~Petersburg State University, 14th Line V.O., 29B, St.~Petersburg, 199178, Russia; e-mail: \texttt{mdorodni@yandex.ru}.}}
\begin{document}

\setlength{\abovedisplayskip}{4pt}
\setlength{\abovedisplayshortskip}{4pt}
\setlength{\belowdisplayskip}{4pt}
\setlength{\belowdisplayshortskip}{4pt}
	
\clubpenalty = 10000
\widowpenalty = 10000
   
\maketitle
\begin{abstract}
	\noindent
	In $L_2 (\mathbb{R}^d; \mathbb{C}^n)$, we consider a selfadjoint matrix strongly elliptic second order differential operator $\mathcal{A}_\varepsilon$ with periodic coefficients depending on $\mathbf{x}/\varepsilon$.  We find approximations of the exponential $e^{-i \tau \mathcal{A}_\varepsilon}$, $\tau \in \mathbb{R}$, for small $\varepsilon$ in the ($H^s \to L_2$)-operator norm with suitable $s$. The sharpness of the error estimates with respect to $\tau$ is discussed. The results are applied to study the behavior of the solution $\mathbf{u}_\varepsilon$ of the Cauchy problem for the Schr\"{o}dinger-type equation $i\partial_{\tau} \mathbf{u}_\varepsilon = \mathcal{A}_\varepsilon \mathbf{u}_\varepsilon + \mathbf{F}$. 
\end{abstract}

\keywords{Periodic differential operators, Nonstationary Schr\"{o}dinger-type equations, Homo\-genization, Effective operator, Operator error estimates}

\part*{Introduction}
The paper concerns homogenization for periodic differential operators (DOs). A broad literature is devoted to homogenization problems in the small period limit; first of all, we mention the books~\cite{BeLP,BaPa,ZhKO}. For homogenization problems in $\mathbb{R}^d$, one of the methods is the spectral approach based on the Floquet--Bloch theory; see, e.~g.,~\cite[Chapter~4]{BeLP}, \cite[Chapter~2]{ZhKO}, \cite{Se1981}, \cite{COrVa}, \cite{APi}.

\subsection{The class of operators}
Let $\Gamma$ be a lattice in $\mathbb{R}^d$, and let $\Omega$ be the elementary cell of the lattice~$\Gamma$. For $\Gamma$-periodic functions in $\mathbb{R}^d$, we denote $\varphi^{\varepsilon}(\mathbf{x}) \coloneqq \varphi(\varepsilon^{-1}\mathbf{x})$, $\varepsilon>0$. In $L_2 (\mathbb{R}^d; \mathbb{C}^n)$, we consider self-adjoint elliptic matrix DOs of the following form
\begin{equation}
\label{intro_A_eps}
\mathcal{A}_{\varepsilon} = (f^{\varepsilon}(\mathbf{x}))^* b(\mathbf{D})^* g^{\varepsilon}(\mathbf{x}) b(\mathbf{D}) f^{\varepsilon}(\mathbf{x}).
\end{equation}
Here $b(\mathbf{D})$ is a homogeneous first order matrix DO with constant coefficients. We assume that the symbol $b(\boldsymbol{\xi})$ is an ($m \times n$)-matrix of rank $n$ ($m \ge n$). Next, $g(\mathbf{x})$ is a $\Gamma$-periodic bounded and uniformly positive definite ($m \times m$)-matrix-valued function and $f(\mathbf{x})$ is a $\Gamma$-periodic bounded together with its inverse ($n \times n$)-matrix-valued function.

It is convenient to start with a simpler class of operators
\begin{equation}
\label{intro_hatA_eps}
\widehat{\mathcal{A}}_{\varepsilon} = b(\mathbf{D})^* g^{\varepsilon}(\mathbf{x}) b(\mathbf{D})
\end{equation}
corresponding to the case where $f = \mathbf{1}_n$. Many operators of mathematical physics can be represented in the form~(\ref{intro_A_eps}) or~(\ref{intro_hatA_eps}); see, e.~g.,~\cite[Chapter~4]{BSu2005-2}. The simplest example is the acoustics operator $\widehat{\mathcal{A}}_{\varepsilon} = \mathbf{D}^* g^{\varepsilon}(\mathbf{x}) \mathbf{D} = -\operatorname{div} g^{\varepsilon}(\mathbf{x}) \nabla$.

\subsection{Survey} 
In 2001 M.~Birman and T.~Suslina (see~\cite{BSu2001}) suggested an operator-theoretic (spectral) approach to homogenization problems in $\mathbb{R}^d$, based on the scaling transformation, the Floquet--Bloch theory, and the analytic perturbation theory. 
With the help of this method, the so-called~\emph{operator error estimates} for homogenization problems were obtained.

In the case of elliptic and parabolic problems this approach was developed in detail: see \cite{BSu2003, BSu2005, BSu2005-2, BSu2006, Su2004, Su2007, Su, V, VSu2011, VSu2012}.

A different approach to operator error estimates for elliptic and parabolic problems (the \textquotedblleft shift method\textquotedblright) was suggested by V.~Zhikov and S.~Pastukhova: see~\cite{Zh2006, ZhPas2005, ZhPas2006} and survey~\cite{ZhPas2016}.

The operator error estimates for nonstationary Schr\"{o}dinger-type and hyperbolic equations have been studied to a lesser extent. The papers~\cite{BSu2008, Su2016, Su2017, DSu2018, M2017, M2019a} were devoted to such problems; see also~\cite{D2019} and \cite{M2019b}, where a wider class of operators with the lower order terms was considered. In operator terms, the behavior of the operator-valued functions $e^{-i \tau \widehat{\mathcal{A}}_{\varepsilon}}$, $\cos(\tau \widehat{\mathcal{A}}_{\varepsilon}^{1/2})$, and $\widehat{\mathcal{A}}_{\varepsilon}^{-1/2} \sin(\tau \widehat{\mathcal{A}}_{\varepsilon}^{1/2})$ (where $\tau \in \mathbb{R}$) for small $\varepsilon$ was studied. Let us dwell on the results for the nonstationary Schr\"{o}dinger-type equations.
In~\cite{BSu2008}, the following estimate was obtained:
\begin{equation}
\label{intro_exp_est_1}
\| e^{-i \tau \widehat{\mathcal{A}}_{\varepsilon}} - e^{-i \tau \widehat{\mathcal{A}}^0} \|_{H^3 (\mathbb{R}^d) \to L_2 (\mathbb{R}^d)} \le C (1 + |\tau|)\varepsilon.
\end{equation}
Here $\widehat{\mathcal{A}}^0 = b(\mathbf{D})^* g^0 b(\mathbf{D})$ is the \emph{effective operator} with the constant \emph{effective matrix} $g^0$. Next, in~\cite{Su2017} (see also~\cite{Su2016}) it was shown that, in the general case, this estimate is sharp with respect to the type of the operator norm. On the other hand, under some additional assumptions (formulated in the spectral terms near the lower edge of the spectrum) this result was improved:
\begin{equation}
\label{intro_exp_est_2}
\| e^{-i \tau \widehat{\mathcal{A}}_{\varepsilon}} - e^{-i \tau \widehat{\mathcal{A}}^0} \|_{H^{2} (\mathbb{R}^d) \to L_2 (\mathbb{R}^d)} \le {C}(1+|\tau|) \varepsilon.
\end{equation}

\subsection{Main results of the paper}
The present paper is devoted to error estimates for the operator exponential; a special attention is paid to the dependence of the estimates on time. We show that, in the general case, the factor 
$(1+ |\tau|)$ in~(\ref{intro_exp_est_1}) cannot be replaced by $(1 + |\tau|^\alpha)$ with $\alpha <1$. On the other hand, we prove that estimate~(\ref{intro_exp_est_2}) (which holds under some additional assumptions) can be improved:
\begin{equation*}
\| e^{-i \tau \widehat{\mathcal{A}}_{\varepsilon}} - e^{-i \tau \widehat{\mathcal{A}}^0} \|_{H^{2} (\mathbb{R}^d) \to L_2 (\mathbb{R}^d)} \le {C}(1+|\tau|^{1/2}) \varepsilon.
\end{equation*}
This result allows us to obtain qualified estimates for large time $\tau = O(\varepsilon^{-\alpha})$ with $\alpha < 2$.
Analogs of these results are obtained also for the more general operator~(\ref{intro_A_eps}). It turns out that it is convenient to study the operator $f^\varepsilon e^{-i \tau \mathcal{A}_{\varepsilon}} (f^\varepsilon)^{-1}$ (the operator exponential sandwiched between rapidly oscillating factors). 

The results given in the operator terms are applied to study the behavior of the solution $\mathbf{u}_\varepsilon (\mathbf{x}, \tau)$, $\mathbf{x} \in \mathbb{R}^d$, $\tau \in \mathbb{R}$, of the problem
\begin{equation}
\label{intro_Ahat_Cauchy_problem}
\left\{
\begin{aligned}
&i \frac{\partial \mathbf{u}_\varepsilon (\mathbf{x}, \tau)}{\partial \tau} =  (\widehat{\mathcal{A}}_{\varepsilon} \mathbf{u}_\varepsilon) (\mathbf{x}, \tau) + \mathbf{F} (\mathbf{x}, \tau), \\
& \mathbf{u}_\varepsilon (\mathbf{x}, 0) = \boldsymbol{\phi} (\mathbf{x}), 
\end{aligned}
\right.
\end{equation}
and also a more general problem with the operator $\mathcal{A}_{\varepsilon}$.

\subsection{Method}
The results are obtained with the help of the operator-theoretic approach. The scaling transformation reduces investigation of the difference of exponentials under the norm sign in~(\ref{intro_exp_est_1}) to studying the difference $e^{-i \tau \varepsilon^{-2} \widehat{\mathcal{A}}} - e^{-i \tau \varepsilon^{-2} \widehat{\mathcal{A}}^0}$, where $\widehat{\mathcal{A}} = b(\mathbf{D})^* g(\mathbf{x}) b(\mathbf{D})$. Next, with the help of the unitary Gelfand transformation, the operator~$\widehat{\mathcal{A}}$ expands into the direct integral of the operators $\widehat{\mathcal{A}}(\mathbf{k})$ depending on the quasimomentum $\mathbf{k}$ and acting in the space $L_2 (\Omega; \mathbb{C}^n)$. According to~\cite{BSu2003}, we distinguish the one-dimensional parameter $t = |\mathbf{k}|$ and consider the family $\widehat{\mathcal{A}}(\mathbf{k})$ as a quadratic operator pencil with respect to the parameter $t$. Here, a good deal of constructions can be done in the framework of an abstract operator-theoretic setting. In the abstract scheme, the operator family $A(t)$ acting in some Hilbert space $\mathfrak{H}$ and admitting a factorization of the form $A(t) = X(t)^* X(t)$, where $X(t) = X_0 + t X_1$, is considered.

\subsection{The plan of the paper}
The paper consists of three chapters. Chapter~\ref{abstr_part} (\S\S1--3) contains necessary abstract operator-theoretic material. In Chapter~\ref{L2_operators_part} (\S\S4--9) periodic DOs acting in $L_2(\mathbb{R}^d;\mathbb{C}^n)$ are studied. Chapter~\ref{main_results_part} (\S\S10--12) is devoted to homogenization problems for nonstationary Schr\"{o}dinger-type equations. In~\S\ref{main_results_exp_section} the main results of the paper in operator terms are obtained. Next, in~\S\ref{main_results_Cauchy_section} these results are applied to homogenization of the Cauchy problem~(\ref{intro_Ahat_Cauchy_problem}) and a more general problem with the operator $\mathcal{A}_{\varepsilon}$. \S\ref{appl_section} is devoted to applications of the general results to particular equations.

\subsection{Notation}
Let $\mathfrak{H}$ and $\mathfrak{H}_{*}$ be complex separable Hilbert spaces. The symbols $(\cdot, \cdot)_{\mathfrak{H}}$ and $ \| \cdot \|_{\mathfrak{H}}$ denote the inner product and the norm in $\mathfrak{H}$. The symbol $\| \cdot \|_{\mathfrak{H} \to \mathfrak{H}_*}$ stands for the norm of a bounded linear operator from $\mathfrak{H}$ to $\mathfrak{H}_{*}$. Sometimes we omit the indices if this does not lead to confusion. By $I = I_{\mathfrak{H}}$ we denote the identity operator in $\mathfrak{H}$. If $A \colon \mathfrak{H} \to \mathfrak{H}_*$ is a linear operator, then $\Dom A$ and $\Ker A$ stand for its domain and kernel, respectively. If $\mathfrak{N}$ is a subspace in $\mathfrak{H}$, then $\mathfrak{N}^{\perp} \coloneqq \mathfrak{H} \ominus \mathfrak{N}$. If $P$ is the orthogonal projection of $\mathfrak{H}$ onto $\mathfrak{N}$, then $P^{\perp}$ is the orthogonal projection of $\mathfrak{H}$ onto $\mathfrak{N}^{\perp}$. 

The symbols $\left< \cdot, \cdot \right>$ and $| \cdot |$ stand for the standard inner product and the norm in $\mathbb{C}^n$; $\boldsymbol{1}_n$ is the unit ($n \times n$)-matrix. If $a$ is an $(m \times n)$-matrix, then $a^*$ stands for the adjoint matrix.
Next, $\mathbf{x} = (x_1, \ldots , x_d) \in \mathbb{R}^d$, $i D_j = \frac{\partial}{\partial x_j}$, $j = 1,\ldots, d$, $\mathbf{D} = -i \nabla = (D_1, \ldots, D_d)$. 

The $L_p$-classes of $\mathbb{C}^n$-valued functions in a domain $\mathcal{O} \subset \mathbb{R}^d$ are denoted by $L_p (\mathcal{O}; \mathbb{C}^n)$, $1 \le p \le \infty$. The Sobolev classes of $\mathbb{C}^n$-valued functions in a domain $\mathcal{O} \subset \mathbb{R}^d$ of order $s$ and integrability index $p$ are denoted by $W^s_p (\mathcal{O}; \mathbb{C}^n)$. For $p=2$ we use the notation $H^s (\mathcal{O}; \mathbb{C}^n)$, $s \in \mathbb{R}$. If $n=1$, we write simply $L_p (\mathcal{O})$, $W^s_p (\mathcal{O})$, $H^s (\mathcal{O})$, etc., but sometimes we use such abbreviated notation also for spaces of vector-valued or matrix-valued functions.

Various constants in estimates are denoted by $C$, $c$, $\mathcal{C}$, $\mathfrak{C}$ (probably, with indices and marks).

\subsection{Acknowledgement}
The author is grateful to T.~A.~Suslina for helpful discussions and advices.

\part{Abstract operator-theoretic scheme}
\label{abstr_part}

\section{Quadratic operator pencils}

\subsection{The operators $X(t)$ and $A(t)$}  
\label{abstr_X_A_section}

Let $\mathfrak{H}$ and $\mathfrak{H}_{*}$ be complex separable Hilbert spaces. Suppose that $X_{0}: \mathfrak{H} \to \mathfrak{H}_{*}$ is a densely defined and closed operator, and $X_{1} \colon \mathfrak{H} \to \mathfrak{H}_{*}$ is a bounded operator. On the domain $\Dom X_0$, we introduce the operator $X(t) \coloneqq X_0 + t X_1$, $t \in \mathbb{R}$. Consider
the family of selfadjoint (and nonnegative) operators $A(t) \coloneqq X(t)^*  X(t)$ in $\mathfrak{H}$. The operator $A(t)$ is
generated by the closed quadratic form $\| X(t) u \|^{2}_{\mathfrak{H}_*}$, $u \in \Dom X_0$. Denote $A_0 \coloneqq A(0)$, 
$\mathfrak{N} \coloneqq \Ker  A_0 = \Ker X_0$, $\mathfrak{N}_{*} \coloneqq \Ker X^*_0$. We impose the following condition.

\begin{condition}
	The point $\lambda_0 = 0$ is an isolated point in the spectrum of $A_0$, and $0 < n \coloneqq \operatorname{dim} \mathfrak{N} < \infty$, $n \le n_* \coloneqq 
	\operatorname{dim}  \mathfrak{N}_* \le \infty$.
\end{condition}

Denote by $d^0$ \emph{the distance from the point $\lambda_0 = 0$ to the rest of the spectrum of $A_0$}. Let $P$ and $P_*$ be the orthogonal projections of $\mathfrak{H}$ onto $\mathfrak{N}$ and of $\mathfrak{H}_*$ onto  $\mathfrak{N}_*$, respectively. Denote by $F(t;[a, b])$ the spectral projection of $A(t)$ for the interval $[a,b]$, and put $\mathfrak{F} (t;[a,b]) \coloneqq F(t;[a, b]) \mathfrak{H}$. \emph{We fix a number $\delta > 0$ such that $8 \delta < d^0$}. We write $F(t)$ in
place of $F(t; [0,\delta])$ and $\mathfrak{F} (t)$ in
place of $\mathfrak{F}(t; [0, \delta])$. Next, we choose a number $t^0 > 0$ such that
\begin{equation}
\label{abstr_t0}
t^0 \le \delta^{1/2} \|X_1\|^{-1}.
\end{equation}
According to~\cite[Chapter~1, Proposition~1.2]{BSu2003}, $F(t; [0,\delta]) = F(t;[0, 3 \delta])$ and $\rank F(t; [0,\delta]) = n$ for $|t| \le t^0$.

\subsection{The operators $Z$, $R$, and $S$}
Now we introduce some operators appearing in the analytic perturbation theory considerations; see~\cite[Chapter~1, \S1]{BSu2003} and~\cite[\S1]{BSu2005}.

Let $\omega \in \mathfrak{N}$, and let $\psi = \psi(\omega) \in \Dom X_0 \cap \mathfrak{N}^{\perp}$ be a (weak) solution of the equation
\begin{equation*}
X^*_0 (X_0 \psi + X_1 \omega) = 0.
\end{equation*}
We define a bounded operator $Z \colon \mathfrak{H} \to \mathfrak{H}$ by the relation $Zu = \psi (P u)$, $u \in \mathfrak{H}$. Next, we define the operator $R\coloneqq X_0 Z + X_1 \colon \mathfrak{N}  \to \mathfrak{N}_*$. Another representation for $R$ is given by $R= P_*X_1 |_{\mathfrak{N}}$. According to~\cite[Chapter~1, Section~1.3]{BSu2003}, the operator $S \coloneqq R^* R \colon \mathfrak{N} \to \mathfrak{N}$ is called \emph{the spectral germ} of the operator family $A(t)$ at $t=0$. The germ can be represented as $S = P X^*_1 P_* X_1 |_{\mathfrak{N}}$. The spectral germ is said to be \emph{non-degenerate} if $\Ker S = \{0\}$.

\subsection{The operators $Z_2$ and $R_2$}
\label{abstr_Z2_R2_section}
We need to introduce the operators $Z_2$ and $R_2$ defined in~\cite[\S1]{VSu2011}.

Let $\omega \in \mathfrak{N}$, and let $\phi = \phi(\omega) \in \Dom X_0 \cap \mathfrak{N}^{\perp}$ be a (weak) solution of the equation
\begin{equation*}
X^*_0 (X_0 \phi + X_1 Z \omega) = -P^{\perp} X_1^* R \omega.
\end{equation*}
The right-hand side of this equation belongs to $\mathfrak{N}^{\perp} = \Ran X_0^*$, so the solvability condition is fulfilled.
We define an operator $Z_2 \colon \mathfrak{H} \to \mathfrak{H}$ by the relation $Z_2 u = \phi (P u)$, $u \in \mathfrak{H}$. Finally, let $R_2 \coloneqq X_0 Z_2 + X_1 Z \colon \mathfrak{N}  \to \mathfrak{H}_*$.

\subsection{The analytic branches of eigenvalues and eigenvectors of the operator $A(t)$} According to the general analytic perturbation theory (see~\cite{K}), for $|t| \le t^0$ there exist real-analytic functions $\lambda_l (t)$ (the branches of eigenvalues) and real-analytic $\mathfrak{H}$-valued functions $\varphi_l (t)$ (the branches of eigenvectors) such that $A(t) \varphi_l(t) = \lambda_l (t) \varphi_l(t)$, $l = 1, \ldots, n$, and the set $\varphi_l (t)$, $l = 1, \ldots, n$, forms \emph{an orthonormal basis} in $\mathfrak{F}(t)$. Moreover, for $|t| \le t_*$, where $0 < t_* \le t^0$ is \emph{sufficiently small}, we have the following convergent power series expansions:
\begin{align}
\label{abstr_A(t)_eigenvalues_series}
\lambda_l(t) &= \gamma_l t^2 + \mu_l t^3 + \nu_l t^4 + \ldots, & &\gamma_l \ge 0, \; \mu_l, \nu_l \in \mathbb{R}, \qquad  l = 1, \ldots, n, \\ 
\label{abstr_A(t)_eigenvectors_series}
\varphi_l (t) &= \omega_l + t \psi_l^{(1)} + t^2 \psi_l^{(2)} + \ldots, & &l = 1, \ldots, n.
\end{align}
We agree to use the numeration such that 
$\gamma_1 \le \gamma_2 \le \dots \le \gamma_n$. 
The elements $\omega_l =  \varphi_l (0)$, $l = 1, \ldots, n$, form an orthonormal basis in $\mathfrak{N}$.
In~\cite[Chapter~1, \S1]{BSu2003} and~\cite[\S1]{BSu2005} it was checked that $\widetilde{\omega}_l \coloneqq \psi_l^{(1)} - Z \omega_l \in \mathfrak{N}$, 
\begin{alignat}{2}
\label{abstr_S_eigenvectors}
S \omega_l = \gamma_l \omega_l&, \qquad  &&l = 1, \ldots, n,\\
\label{abstr_tilde_omega_rel}
(\widetilde{\omega}_j, \omega_k ) + (\omega_j, \widetilde{\omega}_k) = 0&, \qquad &&j, k = 1,\ldots, n.
\end{alignat}
Thus, \emph{the numbers $\gamma_l$ and the elements $\omega_l$ defined by~\emph{(\ref{abstr_A(t)_eigenvalues_series})} and~\emph{(\ref{abstr_A(t)_eigenvectors_series})} are eigenvalues and eigenvectors of the germ $S$}. We have $P = \sum_{l=1}^{n} (\cdot, \omega_l) \omega_l$ and $SP = \sum_{l=1}^{n} \gamma_l (\cdot, \omega_l) \omega_l$.

\subsection{Threshold approximations}
The following statements were obtained in~\cite[Chapter~1, Theorems~4.1 and~4.3]{BSu2003} and~\cite[Theorem~4.1]{BSu2005}. In what follows, we agree to denote by $\beta_j$ various absolute constants (which can be controlled explicitly) assuming that $\beta_j \ge 1$.
\begin{thrm}[\cite{BSu2003}] 
	Under the assumptions of Subsection~\emph{\ref{abstr_X_A_section}}, for $|t| \le t^0$ we have
	\begin{align}
	\label{abstr_F(t)_threshold}
	\| F(t) - P \| &\le C_1 |t|, & C_1 &= \beta_1 \delta^{-1/2} \| X_1 \|,\\
	\notag
	\| A(t)F(t) - t^2 SP \| &\le C_2 |t|^3, & C_2 &= \beta_2 \delta^{-1/2}\| X_1 \|^3.
	\end{align}
\end{thrm}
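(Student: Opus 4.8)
The plan is to combine the analytic perturbation expansions from Subsection~\ref{abstr_X_A_section} with a standard contour-integral (Riesz projection) argument. First I would fix $|t| \le t^0$ and recall that, by the cited result of~\cite{BSu2003}, $\rank F(t) = n$ and $F(t;[0,\delta]) = F(t;[0,3\delta])$, so all spectral data of $A(t)$ in $[0,\delta]$ is captured by the $n$ analytic branches $\lambda_l(t)$, $\varphi_l(t)$. Write $F(t) = \sum_{l=1}^n (\cdot, \varphi_l(t))\varphi_l(t)$ and $P = \sum_{l=1}^n (\cdot,\omega_l)\omega_l$. For the first estimate, the point is that $\varphi_l(t) - \omega_l = O(|t|)$ with an explicit constant; rather than quoting the power series (whose convergence is only guaranteed for the possibly smaller radius $t_*$), I would obtain the bound $\|F(t) - P\| \le C_1|t|$ on the full range $|t| \le t^0$ directly from the resolvent. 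Namely, let $\gamma$ be the circle of radius $\delta$ around $0$ (or, more conveniently, a fixed contour separating $[0,\delta]$ from $[3\delta,\infty)$); then $F(t) - P = -\frac{1}{2\pi i}\oint_\gamma \big( (A(t)-z)^{-1} - (A_0 - z)^{-1}\big)\,dz$, and the second resolvent identity gives $(A(t)-z)^{-1} - (A_0-z)^{-1} = -(A(t)-z)^{-1}(A(t)-A_0)(A_0-z)^{-1}$. Since $A(t) - A_0 = tX_0^*X_1 + tX_1^*X_0 + t^2 X_1^*X_1$, one estimates this as a form-bounded perturbation of $A_0$: on $\gamma$ one controls $\|X_0(A_0-z)^{-1}\|$ and $\|(A(t)-z)^{-1}X_0^*\|$ by $O(\delta^{-1/2})$ using the factorization $A_0 = X_0^*X_0$ and the fact that $\mathrm{dist}(\gamma,\sigma(A_0)) \gtrsim \delta$, together with $\|X_1\| < \infty$ and $|t| \le t^0 \le \delta^{1/2}\|X_1\|^{-1}$ so that $|t|\|X_1\| \le \delta^{1/2}$. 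Collecting powers of $\delta$ yields $\|F(t)-P\| \le \beta_1 \delta^{-1/2}\|X_1\|\,|t|$, which is the claimed $C_1|t|$.

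For the second estimate, $\|A(t)F(t) - t^2 SP\| \le C_2|t|^3$, I would start from the operator identity $A(t)F(t) = -\frac{1}{2\pi i}\oint_\gamma z\,(A(t)-z)^{-1}\,dz$ and expand $(A(t)-z)^{-1}$ to higher order in $t$ via the Neumann-type series built from the second resolvent identity: $(A(t)-z)^{-1} = (A_0-z)^{-1} - (A_0-z)^{-1}(A(t)-A_0)(A_0-z)^{-1} + (A_0-z)^{-1}(A(t)-A_0)(A_0-z)^{-1}(A(t)-A_0)(A(t)-z)^{-1}$. Integrating over $\gamma$, the leading term gives $A_0 P = 0$ (since $\gamma$ surrounds only the eigenvalue $0$ of $A_0$), the first-order-in-$t$ term, after the contour integral and using $\oint z(A_0-z)^{-1}X_0^* \cdot (\ldots) dz$, contributes the germ term $t^2 SP$ — here I would use the definition $S = PX_1^*P_*X_1|_{\mathfrak N}$ and the identity $R = P_*X_1|_{\mathfrak N}$, noting that $P_*$ is exactly the projection onto $\Ker X_0^*$ that emerges from the residue calculus applied to $X_0(A_0-z)^{-1}X_0^*$ — and all remaining terms carry at least $|t|^3$ (one factor from each of two copies of $A(t)-A_0 = O(|t|)$ acting "across" the resolvent, plus the overall structure forces the surviving contribution to vanish at orders $t^0$ and $t^1$). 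The bookkeeping of which terms survive the contour integral and land in $t^2SP$ versus which are absorbed into the $O(|t|^3)$ remainder is the delicate part; one must track the $\|X_1\|$-degree carefully to land on $C_2 = \beta_2\delta^{-1/2}\|X_1\|^3$ (three copies of $X_1$: two from $(A(t)-A_0)$ factors and one producing the cubic term, or equivalently $t^2 \cdot \|X_1\|^2$ from the germ scaled by $\delta^{-1/2}\|X_1\|$).

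The main obstacle I anticipate is making the second estimate uniform on the entire interval $|t|\le t^0$ rather than only on the small interval $|t|\le t_*$ where the power series~\eqref{abstr_A(t)_eigenvalues_series}--\eqref{abstr_A(t)_eigenvectors_series} converge: the naive approach of plugging the series into $\lambda_l(t) - \gamma_l t^2 = \mu_l t^3 + \ldots$ and bounding the tail only works for $|t| \le t_*$, so one genuinely needs the resolvent/contour argument above, which requires careful uniform control of $\|(A(t)-z)^{-1}\|$ and the mixed norms $\|X_0(A_0-z)^{-1}\|$, $\|X_0 F(t)\|$ for $z \in \gamma$ and all $|t| \le t^0$. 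This in turn rests on the geometric fact (from~\cite{BSu2003}) that the spectrum of $A(t)$ in $[0,3\delta]$ stays in $[0,\delta]$ while the rest remains beyond $3\delta$, which keeps $\gamma$ at distance $\gtrsim \delta$ from $\sigma(A(t))$ uniformly. Granting that, the estimates follow by assembling the explicit $\delta$- and $\|X_1\|$-powers, absorbing numerical factors into the $\beta_j$'s.
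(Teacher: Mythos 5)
This theorem is not proved in the present paper at all: it is quoted verbatim from \cite{BSu2003} (Chapter~1, Theorems~4.1 and~4.3), so your proposal can only be measured against the argument in that source --- and your contour-integral (Riesz projection) route is indeed the method used there. Your treatment of the first estimate is essentially correct; the only adjustment is to take the contour to be $|z|=2\delta$ rather than $|z|=\delta$, so that it stays at distance $\ge\delta$ both from $\sigma(A(t))\subset[0,\delta]\cup[3\delta,\infty)$ (this is the content of the quoted Proposition~1.2) and from $\sigma(A_0)\subset\{0\}\cup[8\delta,\infty)$, uniformly in $|t|\le t^0$; a circle of radius exactly $\delta$ may touch an eigenvalue of $A(t)$.

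For the second estimate your sketch misattributes where $t^2SP$ comes from, and the slogan ``all remaining terms carry at least $|t|^3$'' is false as stated. Write $A(t)-A_0=tY+t^2X_1^*X_1$ with $Y=X_0^*X_1+X_1^*X_0$. The single-insertion term of the resolvent expansion contributes, after the residue calculus, $t\,PYP+t^2PX_1^*X_1P=t^2PX_1^*X_1P$ (the $O(t)$ part vanishes because $X_0P=0$ and $PX_0^*=0$); this alone is \emph{not} $t^2SP$. The double-insertion term $(A_0-z)^{-1}V(A_0-z)^{-1}V(A_0-z)^{-1}$ with $V=A(t)-A_0$ is $O(t^2)$, not $O(|t|^3)$, so it cannot be absorbed into the remainder: its second-order part yields $-t^2PX_1^*X_0A_0^{+}X_0^*X_1P=-t^2PX_1^*(I-P_*)X_1P$, where $A_0^{+}$ is the reduced resolvent and $X_0A_0^{+}X_0^{*}$ is the orthogonal projection onto $\clos\Ran X_0=\mathfrak{N}_*^{\perp}$ (this is the only place where the expression $X_0(A_0-z)^{-1}X_0^*$ you invoke actually occurs). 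Only the sum of the two second-order contributions gives $t^2PX_1^*P_*X_1P=t^2SP$. Once this is set right, the remainder bound $C_2|t|^3$ follows by the bookkeeping you describe: each resolvent paired with an adjacent $X_0$ or $X_0^*$ costs $\delta^{-1/2}$ rather than $\delta^{-1}$ times an (infinite) norm of $X_0$, the factor $z$ and the contour length each contribute $\delta$, and surplus powers of $|t|\,\|X_1\|$ are traded for $\delta^{1/2}$ via $|t|\le t^0\le\delta^{1/2}\|X_1\|^{-1}$, which lands exactly on $C_2=\beta_2\delta^{-1/2}\|X_1\|^3$.
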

\begin{thrm}[\cite{BSu2005}]
	\label{abstr_threshold_approx_thrm_2} 
	Under the assumptions of Subsection~\emph{\ref{abstr_X_A_section}}, for $|t| \le t^0$ we have
	\begin{equation*}
	A(t) F(t) = t^2 SP + t^3 K + \Xi (t), \qquad \| \Xi (t) \| \le C_3 t^4, \qquad C_3 = \beta_3 \delta^{-1}\| X_1 \|^4.
	\end{equation*}
	The operator $K$ is represented as $K = K_0 + N = K_0 + N_0 + N_*$, where $K_0$ takes $\mathfrak{N}$ to $\mathfrak{N}^{\perp}$ and $\mathfrak{N}^{\perp}$ to $\mathfrak{N}$, while $N = N_0 + N_*$ takes $\mathfrak{N}$ to itself and $\mathfrak{N}^{\perp}$ to $\{ 0 \}$. In terms of the power series coefficients, the operators $K_0$, $N_0$, $N_*$ are given by $K_0 = \sum_{l=1}^{n} \gamma_l \left( (\cdot, Z \omega_l) \omega_l + (\cdot, \omega_l) Z \omega_l \right)$, 
	\begin{equation}
	\label{abstr_N0_N*}
	N_0 = \sum_{l=1}^{n} \mu_l (\cdot, \omega_l) \omega_l, \qquad N_* = \sum_{l=1}^{n} \gamma_l \left( (\cdot, \widetilde{\omega}_l) \omega_l + (\cdot, \omega_l) \widetilde{\omega}_l\right).
	\end{equation}
	In the invariant terms, we have $K_0 = Z S P + S P Z^*$ and $N = Z^*X_1^* R P + (RP)^* X_1 Z$.
\end{thrm}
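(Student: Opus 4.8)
The plan is to separate the statement into an \emph{algebraic} part --- the identity $A(t)F(t) = t^2 SP + t^3 K + \Xi(t)$, the splitting $K = K_0 + N_0 + N_*$, and the invariant formulas --- and an \emph{analytic} part --- the uniform bound $\|\Xi(t)\| \le \beta_3\delta^{-1}\|X_1\|^4 t^4$ on the whole interval $|t| \le t^0$. The algebra I would read off from the convergent power series \eqref{abstr_A(t)_eigenvalues_series}, \eqref{abstr_A(t)_eigenvectors_series}; the analytic bound I would obtain from a Riesz-projection/resolvent estimate, imitating the derivation in \cite{BSu2003} of the preceding threshold theorem but carried one order further in $t$.

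For the coefficients: since $F(t)$ is the spectral projection of $A(t)$ for $[0,\delta]$ and $\{\varphi_l(t)\}_{l=1}^n$ is an orthonormal basis of $\mathfrak F(t)$, for $|t| \le t_*$ one has $A(t)F(t) = \sum_{l=1}^n \lambda_l(t)\,(\,\cdot\,,\varphi_l(t))\,\varphi_l(t)$. Substituting \eqref{abstr_A(t)_eigenvalues_series}, \eqref{abstr_A(t)_eigenvectors_series} and collecting powers of $t$, the $t^0$- and $t^1$-terms drop out because $\lambda_l(t) = \gamma_l t^2 + \mu_l t^3 + \dots$; the $t^2$-term equals $\sum_l\gamma_l(\,\cdot\,,\omega_l)\omega_l = SP$ by \eqref{abstr_S_eigenvectors}; and the $t^3$-term is $K = \sum_l\bigl(\mu_l(\,\cdot\,,\omega_l)\omega_l + \gamma_l[(\,\cdot\,,\omega_l)\psi_l^{(1)} + (\,\cdot\,,\psi_l^{(1)})\omega_l]\bigr)$. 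Now insert $\psi_l^{(1)} = Z\omega_l + \widetilde\omega_l$, with $Z\omega_l \in \mathfrak N^\perp$ (by definition of $Z$) and $\widetilde\omega_l \in \mathfrak N$: the $Z\omega_l$-terms assemble into $K_0$, the $\widetilde\omega_l$-terms into $N_*$, and the $\mu_l$-terms into $N_0$, exactly as in \eqref{abstr_N0_N*}, and the asserted mapping properties of $K_0$ and of $N = N_0 + N_*$ follow at once from $\omega_l,\widetilde\omega_l \in \mathfrak N$, $Z\omega_l \in \mathfrak N^\perp$. The invariant form $K_0 = ZSP + SPZ^*$ is immediate from $SP = \sum_l\gamma_l(\,\cdot\,,\omega_l)\omega_l$. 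For $N = Z^*X_1^*RP + (RP)^*X_1 Z$, I would substitute the same expansions into the eigenvector equation $A(t)\varphi_l(t) = \lambda_l(t)\varphi_l(t)$, match the $t^2$- and $t^3$-coefficients to express $\widetilde\omega_l$ and $\mu_l$ through $X_0, X_1, Z, R$, and then sum, using \eqref{abstr_tilde_omega_rel}; this is the bookkeeping carried out in \cite[\S1]{BSu2005}.

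For the uniform remainder I would \emph{not} use the series, whose radius of convergence $t_*$ is not controlled by $\delta$ and $\|X_1\|$. Instead, take the contour $C = \{\,|\zeta| = 2\delta\,\}$, which encircles $\operatorname{spec}(A(t)) \cap [0,\delta]$ and --- since $F(t;[0,\delta]) = F(t;[0,3\delta])$ and $d^0 > 8\delta$ --- stays at distance $\ge \delta$ from the remaining spectrum of $A(t)$ and at distance $> 6\delta$ from $\operatorname{spec}(A_0)\setminus\{0\}$; then $A(t)F(t) = -\tfrac{1}{2\pi i}\oint_C \zeta\,(A(t) - \zeta I)^{-1}\,d\zeta$. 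Writing $A(t) = A_0 + t(X_0^*X_1 + X_1^*X_0) + t^2 X_1^*X_1$ and iterating the resolvent identity with the unbounded factor $X_0$ always kept adjacent to a resolvent (so that $X_0(A_0 - \zeta)^{-1}$, $(A_0 - \zeta)^{-1}X_0^*$, $X_0(A(t) - \zeta)^{-1}$ are all bounded, of order $\delta^{-1/2}$ on $C$), I would reorganize the expansion by powers of $t$, keep the $t^2$- and $t^3$-contributions, and bound the tail: using $\|(A_0 - \zeta)^{-1}P^\perp\|, \|(A(t) - \zeta)^{-1}\| \le \operatorname{const}\,\delta^{-1}$ on $C$ together with $|t|\,\|X_1\| \le \delta^{1/2}$ from \eqref{abstr_t0}, every term carrying $t^4$ or a higher power is $O(\delta^{-1}\|X_1\|^4 t^4)$ uniformly for $|t| \le t^0$. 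Uniqueness of Taylor coefficients then identifies the retained $t^2$- and $t^3$-parts with $SP$ and $K$ from the previous step.

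The hard part is precisely this last estimate: pushing it from the (uncontrolled) convergence radius out to all of $[-t^0, t^0]$, bracketing the iterated resolvent expansion so that every factor remains bounded, handling the pole of $(A_0 - \zeta)^{-1}$ at $\zeta = 0$ (isolating $-\zeta^{-1}P$ and estimating the rest on $\mathfrak N^\perp$, where $A_0 \ge d^0 > 8\delta$), and tracking the dependence on $\delta$ and $\|X_1\|$ so that the constant ends up of the exact form $\beta_3\delta^{-1}\|X_1\|^4$. The algebraic identification of $SP$, $K_0$, $N_0$, $N_*$ and of their invariant expressions is, by contrast, routine once \eqref{abstr_S_eigenvectors}, \eqref{abstr_tilde_omega_rel} and the definitions of $Z$, $R$, $S$ are at hand.
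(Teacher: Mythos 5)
The paper does not prove Theorem~\ref{abstr_threshold_approx_thrm_2}: it is quoted verbatim from \cite[Theorem~4.1]{BSu2005}, so there is no in-paper argument to compare against; I judge your proposal on its own. Your two-stage plan is sound. The algebraic stage is correct and essentially complete: for $|t|\le t_*$ one has $A(t)F(t)=\sum_{l}\lambda_l(t)(\cdot,\varphi_l(t))\varphi_l(t)$, the $t^2$- and $t^3$-coefficients are $SP$ and $K_0+N_0+N_*$ after substituting $\psi_l^{(1)}=Z\omega_l+\widetilde{\omega}_l$ with $Z\omega_l\in\mathfrak{N}^{\perp}$, $\widetilde{\omega}_l\in\mathfrak{N}$, and the mapping properties and $K_0=ZSP+SPZ^*$ follow at once (the invariant formula for $N$ you rightly defer to the coefficient matching of \cite{BSu2005}). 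The analytic stage is also the right idea, and your power counting is consistent with the stated constant: on $|\zeta|=2\delta$ each block consisting of one factor $tX_1$ and one resolvent that has absorbed an adjacent $X_0$ costs $t\|X_1\|\delta^{-1/2}\le 1$ by~\eqref{abstr_t0}, so every term of total order $\ge t^4$ is $O(t^4\|X_1\|^4\delta^{-3})$ pointwise, and the factor $\zeta\,d\zeta$ restores two powers of $\delta$, yielding $\beta_3\delta^{-1}\|X_1\|^4t^4$. The published proof is organized differently --- through intermediate threshold approximations of $F(t)$ and of $X(t)F(t)$ combined via $A(t)F(t)=(X(t)F(t))^*X(t)F(t)$, rather than a direct fourth-order expansion of $\oint\zeta(A(t)-\zeta)^{-1}\,d\zeta$ --- but both routes rest on the same contour and the same bounds $\|(A_0-\zeta)^{-1}\|=O(\delta^{-1})$, $\|X_0(A_0-\zeta)^{-1}\|=O(\delta^{-1/2})$, $\|X(t)(A(t)-\zeta)^{-1}\|=O(\delta^{-1/2})$.

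Two cautions on the analytic stage. First, $A(t)=A_0+t(X_0^*X_1+X_1^*X_0)+t^2X_1^*X_1$ is only a form identity ($X_1$ need not map into $\Dom X_0^*$); the iteration must start from the factorized resolvent identity
\begin{equation*}
(A(t)-\zeta)^{-1}-(A_0-\zeta)^{-1}=-t(A(t)-\zeta)^{-1}X(t)^*X_1(A_0-\zeta)^{-1}-t(A(t)-\zeta)^{-1}X_1^*X_0(A_0-\zeta)^{-1},
\end{equation*}
in which every factor is bounded; iterating it produces $X(t)^*=X_0^*+tX_1^*$ rather than pure $X_0^*$ factors, which adds bookkeeping but no obstruction. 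Second, do not sum an infinite Neumann series: the ratio $|t|\,\|X_1\|\delta^{-1/2}$ may equal $1$ at $|t|=t^0$, so the expansion must be truncated after finitely many iterations with the exact remainder (still containing $(A(t)-\zeta)^{-1}$) retained and estimated as above. With these points made precise, your argument goes through.
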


\begin{remark}
	\label{abstr_N_remark}	
	\begin{enumerate*}[label=\emph{\arabic*$^{\circ}.$}, ref=\arabic*$^{\circ}$]
	\item If $Z = 0$, then $K_0 = 0$, $N = 0$, and $K = 0$. 
	\item In the basis $\{\omega_l\}_{l=1}^n$ the operators $N$, $N_0$, $N_*$ \emph{(}restricted to the subspace $\mathfrak{N}$\emph{)} are represented by matrices of size $n \times n$. The operator $N_0$ is diagonal: $(N_0 \omega_j, \omega_k ) = \mu_j \delta_{jk}$, $j, k = 1, \ldots ,n$. The matrix entries of $N_*$ are given by $(N_* \omega_j, \omega_k) = \gamma_k (\omega_j, \widetilde{\omega}_k) + \gamma_j (\widetilde{\omega}_j, \omega_k ) = ( \gamma_j - \gamma_k)(\widetilde{\omega}_j, \omega_k )$, $j, k = 1,\ldots, n$. So, the diagonal elements of $N_*$ are equal to zero. Moreover, $(N_* \omega_j, \omega_k) = 0$ if $\gamma_j = \gamma_k$.
	\item If $n = 1$, then $N_* = 0$ and $N = N_0$.
	\end{enumerate*}
\end{remark}

\subsection{The nondegeneracy condition}
Below we impose the following additional condition.
\begin{condition}
	\label{abstr_nondegeneracy_cond}
	There exists a constant $c_* > 0$ such that $A(t) \ge c_* t^2 I$ for $|t| \le t^0$.
\end{condition}
From Condition~\ref{abstr_nondegeneracy_cond} it follows that $\lambda_l (t) \ge c_* t^2$, $l = 1, \ldots, n$, for $|t| \le t^0$. By~(\ref{abstr_A(t)_eigenvalues_series}), this implies $\gamma_l \ge c_* > 0$, $l= 1, \ldots, n$, i.e., the spectral germ is nondegenerate:
\begin{equation}
\label{abstr_S_nondegeneracy}
S \ge c_* I_{\mathfrak{N}}.
\end{equation}

\subsection{The clusters of eigenvalues of $A(t)$}
\label{abstr_cluster_section}

The content of this subsection is borrowed from~\cite[Section~2]{Su2017} and concerns the case where $n \ge 2$.

Suppose that Condition~\ref{abstr_nondegeneracy_cond} is satisfied. Now, it is convenient to change the notation tracing the multiplicities of the eigenvalues of the operator $S$. Let $p$ be the number of  different eigenvalues of the germ $S$. We enumerate these eigenvalues in the increasing order and denote them by $\gamma^{\circ}_j$, $j = 1, \ldots, p$. Let $k_1, \ldots, k_p$ be their multiplicities (obviously, $k_1 + \ldots + k_p = n$). Denote $\mathfrak{N}_j \coloneqq \Ker(S - \gamma^{\circ}_j I_\mathfrak{N})$, $j = 1 ,\ldots, p$. Then $\mathfrak{N} = \sum_{j=1}^{p} \oplus \mathfrak{N}_j$.
Let $P_j$ be the orthogonal projection of $\mathfrak{H}$ onto $\mathfrak{N}_j$. Then $P = \sum_{j=1}^{p} P_j$, and $P_j P_l = 0$ for $j \ne l$.

\begin{remark}
	By Remark \emph{\ref{abstr_N_remark}}, we have $P_j N_* P_j =0$ and $P_l N_0 P_j=0$ for $l\ne j$. Hence, the operators $N_{0}$ and $N_{*}$ admit the invariant representions: 
	\begin{equation}
	\label{abstr_N0_N*_invar_repr}
	N_{0} = \sum_{j=1}^{p} P_j N P_j, \qquad N_{*} = \sum_{\substack{1 \le l,j \le p \\ j \ne l}} P_l N P_j.
	\end{equation}
\end{remark}

We divide the first $n$ eigenvalues of the operator $A(t)$ in $p$ clusters for $|t| \le t^0$; the $j$-th cluster consists of the eigenvalues $\lambda_l (t)$,
$l=i, \dots, i+ k_j -1$, where $i=i(j) = k_1 + \dots + k_{j-1}+1$. 

For each pair of indices $(j, l)$, $1 \le j,l \le p$, $j \ne l$, we denote $c^{\circ}_{jl} \coloneqq \min \{c_*, n^{-1} |\gamma^{\circ}_l - \gamma^{\circ}_j|\}$. Clearly, there exists a number $i_0 = i_0 (j,l)$, where $j \le i_0 \le l-1$ if $j < l$ and $l \le i_0 \le j-1$ if $l < j$, such that $\gamma^{\circ}_{i_0 + 1} -  \gamma^{\circ}_{i_0} \ge c^{\circ}_{jl}$. It means that on the interval between $\gamma^{\circ}_j$ and $\gamma^{\circ}_l$ there is a gap in the spectrum of $S$ of length at least $c^{\circ}_{jl}$. If such $i_0$ is not unique, we agree to take the minimal possible $i_0$ (for definiteness). Next, we choose a number $t^{00}_{jl} \le t^0$ such that
\begin{equation*}
t^{00}_{jl} \le (4C_2)^{-1} c^{\circ}_{jl} = (4 \beta_2)^{-1} \delta^{1/2} \|X_1\|^{-3 } c^{\circ}_{jl}.
\end{equation*}
Let $\Delta^{(1)}_{jl} \coloneqq [\gamma^{\circ}_1 - c^{\circ}_{jl}/4, \gamma^{\circ}_{i_0} + c^{\circ}_{jl}/4]$ and $\Delta^{(2)}_{jl} \coloneqq [\gamma^{\circ}_{i_0+1} - c^{\circ}_{jl}/4, \gamma^{\circ}_p + c^{\circ}_{jl}/4]$. The distance between the segments $\Delta^{(1)}_{jl}$ and $\Delta^{(2)}_{jl}$ is at least $c^{\circ}_{jl}/2$. As was shown in~\cite[Section~2]{Su2017}, for $|t| \le  t^{00}_{jl}$ the operator $A(t)$ has exactly $k_1 + \ldots + k_{i_0}$ eigenvalues (counted with multiplicities) in the segment $t^2 \Delta^{(1)}_{jl}$ and exactly $k_{i_0+1} + \ldots + k_p$ eigenvalues in the segment $t^2 \Delta^{(2)}_{jl}$.

\subsection{The coefficients $\nu_l$, $l=1, \ldots,n$}
\label{abstr_nu_section}
We need to establish a relationship between the coefficients $\nu_l$, $l=1, \ldots,n$, and some eigenvalue problem. 

In~\cite[(1.34), (1.37)]{VSu2011}, it was checked that
\begin{align}
\notag
\psi_l^{(2)} - Z \widetilde{\omega}_l - Z_2 \omega_l \eqqcolon \widetilde{\omega}^{(2)}_l \in \mathfrak{N}&, & &l = 1, \ldots, n,\\
\label{abstr_tilde_omega^(2)_rel}
(\widetilde{\omega}^{(2)}_l, \omega_k) + (Z \omega_l, Z \omega_k) + (\widetilde{\omega}_l, \widetilde{\omega}_k) + (\omega_l, \widetilde{\omega}^{(2)}_k) = 0&, & &l, k = 1, \ldots, n.
\end{align}  
Next, by~\cite[(2.47), the formula below~(2.46)]{VSu2011}, we have
\begin{multline}
\label{abstr_nu_rel_1}
(N_1 \omega_l, \omega_k) - \mu_l (\widetilde{\omega}_l, \omega_k) - \mu_k (\omega_l, \widetilde{\omega}_k) - \gamma_l (\widetilde{\omega}^{(2)}_l, \omega_k) - \gamma_k (\omega_l, \widetilde{\omega}^{(2)}_k) - (S \widetilde{\omega}_l, \widetilde{\omega}_k) = \nu_l \delta_{lk},\\
l, k = 1, \ldots, n,
\end{multline}
where $N_1 \coloneqq N_1^0 - Z^*Z S P - S P Z^*Z$, $N_1^0 \coloneqq Z_2^* X_1^* R P + (RP)^* X_1 Z_2 + R_2^* R_2 P$.

Let $\gamma_q^{\circ}$ be the $q$-th eigenvalue of problem~(\ref{abstr_S_eigenvectors}) of multiplicity $k_q$ (i.e. $\gamma_q^{\circ} = \gamma_i = \ldots = \gamma_{i+k_q-1}$ for $i = i(q) = k_1+\ldots+k_{q-1}+1$). Consider the eigenvalue problem (see Remark~\ref{abstr_N_remark}) 
\begin{equation}
\label{abstr_N_eigenvalues}
P_q N \omega_l = \mu_l \omega_l, \qquad l = i, \ldots, i+k_q-1.
\end{equation}
Assume that $\mu_l$, $l = i, \ldots, i+k_q-1$, are enumerated in the increasing order. Let $p'(q)$ be the number of different eigenvalues of problem \eqref{abstr_N_eigenvalues}
and denote by $k_{1,q}, \ldots, k_{p'(q),q}$ their multiplicities (of course, $k_{1,q} + \ldots + k_{p'(q),q} = k_q$). We also change the notation and denote by $\mu^{\circ}_{j,q}$, $j = 1, \ldots, p'(q)$, the different eigenvalues of problem \eqref{abstr_N_eigenvalues}, enumerating them in the increasing order. Denote $\mathfrak{N}_{j,q} \coloneqq \Ker(P_qN|_{\mathfrak{N}_q} - \mu^{\circ}_{j,q} I_{\mathfrak{N}_q})$, $j = 1 ,\ldots, p'(q)$. Then $\mathfrak{N}_q = \sum_{j=1}^{p'(q)} \oplus \mathfrak{N}_{j,q}$. Let $P_{j,q}$ be the orthogonal projection of $\mathfrak{H}$ onto $\mathfrak{N}_{j,q}$. Then $P_q = \sum_{j=1}^{p'(q)} P_{j,q}$ and $P_{j,q} P_{r,q} = 0$ for $j \ne r$.  

Let $\mu^{\circ}_{q',q}$ be the $q'$-th eigenvalue of problem~(\ref{abstr_N_eigenvalues}) of multiplicity $k_{q',q}$, i.e., $\mu^{\circ}_{q',q} = \mu_{i'} = \ldots = \mu_{i'+k_{q',q}-1}$, where $i' = i'(q',q) =i(q)+k_{1,q}+\ldots+k_{q'-1,q}$. Using~(\ref{abstr_tilde_omega_rel}), (\ref{abstr_tilde_omega^(2)_rel}) and taking into account that $\gamma_l = \gamma_k = \gamma^{\circ}_q$, $\mu_l = \mu_k = \mu^{\circ}_{q',q}$, $l,k = i', \ldots, i'+k_{q',q}-1$, from~(\ref{abstr_nu_rel_1}) we deduce
\begin{equation}
\label{abstr_nu_rel_2}
(N_1 \omega_l, \omega_k) +  \gamma_l (Z \omega_l, Z \omega_k) + \gamma_l (\widetilde{\omega}_l, \widetilde{\omega}_k) - (S \widetilde{\omega}_l, \widetilde{\omega}_k) = \nu_l \delta_{lk}, \qquad l,k = i', \ldots, i'+k_{q',q}-1.
\end{equation}
Next, by virtue of Remark~\ref{abstr_N_remark}, we have
\begin{multline*}
\gamma_l (\widetilde{\omega}_l, \widetilde{\omega}_k) - (S \widetilde{\omega}_l, \widetilde{\omega}_k) =  \sum_{l'=1}^{n} (\gamma_l - \gamma_{l'}) (\widetilde{\omega}_l,\omega_{l'}) (\omega_{l'}, \widetilde{\omega}_k) \\= \sum_{\substack{l'\in\{1,\ldots,n\}\\l' \ne i, \ldots, i+k_q-1}} \frac{(N \omega_{l}, \omega_{l'}) ( \omega_{l'}, N\omega_{k})}{\gamma^{\circ}_q - \gamma_{l'}}   =  \sum_{\substack{j\in\{1,\ldots,p\} \\	j \ne q}} \frac{( P_{j} N\omega_l, N\omega_k )}{\gamma^{\circ}_q - \gamma^{\circ}_{j}}  \eqqcolon \mathfrak{n}_0^{(q',q)}[\omega_l,\omega_k],\\
l,k = i', \ldots, i'+k_{q',q}-1.
\end{multline*}
Relations~(\ref{abstr_nu_rel_2}) can be treated as the eigenvalue problem for the operator $\mathcal{N}^{(q',q)}$: 
\begin{equation}
\label{abstr_srcN^q_eigenvalues}
\mathcal{N}^{(q',q)} \omega_l = \nu_l \omega_l, \qquad l = i', \ldots, i'+k_{q',q}-1,
\end{equation}
where
\begin{equation*}
\mathcal{N}^{(q',q)} \coloneqq P_{q',q} \left. \left( N_1^0 - \frac{1}{2} Z^*Z S P - \frac{1}{2} S P Z^*Z \right)\right|_{\mathfrak{N}_{q',q}} + \mathcal{N}^{(q',q)}_0
\end{equation*}
and $\mathcal{N}^{(q',q)}_0$ is the operator acting in $\mathfrak{N}_{q',q}$ and generated by the form $\mathfrak{n}_0^{(q',q)}[\cdot,\cdot]$.

\begin{remark}
	\label{rem1.7}
	Let $N_0 = 0$. By \eqref{abstr_N0_N*}, this condition is equivalent to the relations $\mu_l = 0$ for all $l=1, \ldots,n$. In this case, we have $\mathfrak{N}_{1,q} = \mathfrak{N}_{q}$, $q=1,\ldots,p$. Then we shall write $\mathcal{N}^{(q)}$ instead of $\mathcal{N}^{(1,q)}$. Suppose, in addition, that $\mathcal{N}^{(q)}\ne 0$
	for some $q \in \{1,\dots,p\}$. By \eqref{abstr_srcN^q_eigenvalues}, this assumption means that $\nu_j \ne 0$ for some $j \in \{1,\dots,n\}$. 
\end{remark}

\section{Approximation of the operator $e^{-i \tau \varepsilon^{-2} A(t)}P$}
\label{abstr_exp_section}

\subsection{Approximation of the operator $e^{-i \tau \varepsilon^{-2} A(t)}P$}
Let $\varepsilon > 0$. We study the behavior of the operator $e^{-i \tau \varepsilon^{-2} A ( t)}$ for small $\varepsilon$. We shall multiply this operator by the \textquotedblleft smoothing factor\textquotedblright \ $\varepsilon^s (t^2 + \varepsilon^2)^{-s/2}P$, where $s > 0$. (The term is explained by the fact that in applications to DOs this factor turns into the smoothing operator.) Our goal is to find an approximation of the smoothed operator exponential with an error of order $O (\varepsilon)$ for minimal possible $s$.

We rely on the following statements proved in~\cite[Theorem~2.1]{BSu2008} and~\cite[Corollaries~3.3, 3.5]{Su2017}.

\begin{thrm}[\cite{BSu2008}]
	\label{abstr_exp_general_thrm_wo_eps}
	For $\tau \in \mathbb{R}$ and $|t| \le t^0$ we have
	\begin{equation}
	\label{abstr_exp_general_est_wo_eps}
	\| e^{-i \tau A(t)} P - e^{-i \tau t^2 SP} P \| \le  2C_1 |t| + C_2 |\tau| |t|^3.
	\end{equation}
\end{thrm}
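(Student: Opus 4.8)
The plan is to localize the exponential near the bottom of the spectrum of $A(t)$ by inserting the spectral projection $F(t)$, which reduces the estimate to a comparison of the exponentials of two \emph{bounded} self-adjoint operators whose difference is already controlled by the threshold approximations of \cite{BSu2003}. First I would write
\[
e^{-i \tau A(t)} P = e^{-i \tau A(t)} F(t) P + e^{-i \tau A(t)} \bigl( I - F(t) \bigr) P ,
\]
and observe that, since $e^{-i \tau A(t)}$ is unitary and $\bigl( I - F(t) \bigr) P = \bigl( I - F(t) \bigr)\bigl( P - F(t) \bigr)$ (because $\bigl( I - F(t) \bigr) F(t) = 0$), the last term has norm at most $\| P - F(t) \| \le C_1 |t|$ by~(\ref{abstr_F(t)_threshold}).

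Next I would set $B_1 \coloneqq A(t) F(t)$ and $B_2 \coloneqq t^2 S P$. Both are bounded self-adjoint operators in $\mathfrak{H}$: for $|t| \le t^0$ one has $0 \le B_1 \le 3 \delta\, I$ since $F(t) = F(t; [0, 3\delta])$, while $\| B_2 \| \le t^2 \| S \| \le (t^0)^2 \| X_1 \|^2 \le \delta$ by~(\ref{abstr_t0}) and $\| S \| \le \| X_1 \|^2$ (which follows from $R = P_* X_1 |_{\mathfrak{N}}$, $S = R^* R$). Since $F(t)$ is the spectral projection of the self-adjoint operator $A(t)$ for $[0, \delta]$, it commutes with $A(t)$, and $A(t) F(t)$ coincides with $A(t)$ on $\Ran F(t)$ and vanishes on $\Ran F(t)^{\perp}$; hence the functional calculus gives $e^{-i \tau A(t)} F(t) = e^{-i \tau B_1} F(t)$. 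Therefore
\[
e^{-i \tau A(t)} F(t) P - e^{-i \tau t^2 S P} P = e^{-i \tau B_1} \bigl( F(t) - P \bigr) P + \bigl( e^{-i \tau B_1} - e^{-i \tau B_2} \bigr) P ,
\]
and the first summand on the right has norm at most $\| F(t) - P \| \le C_1 |t|$. For the second summand I would invoke the Duhamel identity for bounded self-adjoint operators,
\[
e^{-i \tau B_1} - e^{-i \tau B_2} = -i \int_0^{\tau} e^{-i \sigma B_1} (B_1 - B_2)\, e^{-i (\tau - \sigma) B_2}\, d\sigma ,
\]
which yields $\| e^{-i \tau B_1} - e^{-i \tau B_2} \| \le |\tau|\, \| B_1 - B_2 \| = |\tau|\, \| A(t) F(t) - t^2 S P \| \le C_2 |\tau| |t|^3$ by the second threshold estimate $\| A(t) F(t) - t^2 S P \| \le C_2 |t|^3$. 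Adding the three contributions produces the bound $2 C_1 |t| + C_2 |\tau| |t|^3$.

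I do not expect a genuine obstacle here: the only points requiring a little care are the functional-calculus identity $e^{-i \tau A(t)} F(t) = e^{-i \tau A(t) F(t)} F(t)$ for the possibly unbounded operator $A(t)$ (immediate from the description of $A(t)F(t)$ above) and the convergence of the Duhamel integral (ensured by the boundedness of $B_1$ and $B_2$ just noted). The conceptual content — and the reason the bound grows only linearly in $\tau$ — is that unitarity of the two evolutions together with the threshold approximations collapses the whole estimate to $\| F(t) - P \|$ plus $|\tau|\, \| A(t) F(t) - t^2 S P \|$. The refinements carried out later in the paper should amount to splitting $F(t)$ further into the spectral clusters of Subsection~\ref{abstr_cluster_section} and comparing, on each cluster, the exponential with a sharper model operator, which is what lets one lower the power of $|\tau|$ under the additional spectral assumptions.
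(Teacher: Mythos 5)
Your argument is correct and is essentially the proof of this statement given in the cited source \cite{BSu2008} (the present paper only quotes the theorem without reproving it): one splits off $e^{-i\tau A(t)}(I-F(t))P$ and $e^{-i\tau A(t)F(t)}(F(t)-P)P$, each contributing $C_1|t|$ via \eqref{abstr_F(t)_threshold}, and controls the remaining difference of exponentials of the bounded operators $A(t)F(t)$ and $t^2SP$ by a Duhamel bound together with $\|A(t)F(t)-t^2SP\|\le C_2|t|^3$. The functional-calculus identity $e^{-i\tau A(t)}F(t)=e^{-i\tau A(t)F(t)}F(t)$ and the convergence of the Duhamel integral are handled correctly, so there is nothing to add.
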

\begin{thrm}[\cite{Su2017}]
	\label{abstr_exp_enchcd_thrm_wo_eps_1}
	Suppose that $N=0$. Then for $\tau \in \mathbb{R}$ and $|t| \le t^0$ we have
	\begin{equation} 
	\label{abstr_exp_enchcd_est_wo_eps_1}     
	\| e^{-i \tau A(t)} P - e^{-i \tau t^2 SP} P \| \le 2C_1 |t| + C_{4} | \tau | t^4, 
	\end{equation}
	where $C_4 = \beta_4 \delta^{-1} \| X_1 \|^4$.
\end{thrm}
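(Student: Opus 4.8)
The plan is to reduce everything to the analytic branches described in Subsection~\ref{abstr_X_A_section} and to compare the exact phase $e^{-i\tau\lambda_l(t)}$ with the model phase $e^{-i\tau\gamma_l t^2}$ branch by branch. Since $\{\varphi_l(t)\}_{l=1}^n$ is an orthonormal basis in $\mathfrak{F}(t) = F(t)\mathfrak{H}$ and $\{\omega_l\}_{l=1}^n$ is an orthonormal basis in $\mathfrak{N}$, one has the spectral-type representations $e^{-i\tau A(t)}F(t) = \sum_{l=1}^n e^{-i\tau\lambda_l(t)}(\cdot,\varphi_l(t))\varphi_l(t)$ and $e^{-i\tau t^2 SP}P = \sum_{l=1}^n e^{-i\tau\gamma_l t^2}(\cdot,\omega_l)\omega_l$, using \eqref{abstr_S_eigenvectors}. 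The difference $e^{-i\tau A(t)}P - e^{-i\tau t^2 SP}P$ then splits naturally into a "geometric" part, coming from the discrepancy $F(t)-P$ between the spectral subspaces (and between $\varphi_l(t)$ and $\omega_l$), and a "phase" part, coming from $e^{-i\tau\lambda_l(t)} - e^{-i\tau\gamma_l t^2}$. The first part is $O(|t|)$ uniformly in $\tau$ by the first threshold estimate of Theorem (\cite{BSu2003}), namely $\|F(t)-P\|\le C_1|t|$; tracking constants carefully gives the $2C_1|t|$ term exactly as in Theorem~\ref{abstr_exp_general_thrm_wo_eps}.

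For the phase part the key estimate is the elementary bound $|e^{-i\tau\lambda_l(t)} - e^{-i\tau\gamma_l t^2}| \le |\tau|\,|\lambda_l(t) - \gamma_l t^2|$, so everything hinges on controlling $\lambda_l(t) - \gamma_l t^2$. From the power series \eqref{abstr_A(t)_eigenvalues_series} we have $\lambda_l(t) = \gamma_l t^2 + \mu_l t^3 + \nu_l t^4 + \dots$, hence generically $|\lambda_l(t) - \gamma_l t^2| = O(|t|^3)$ — this is what yields Theorem~\ref{abstr_exp_general_thrm_wo_eps}. Under the hypothesis $N = 0$, however, Remark~\ref{abstr_N_remark}(1°) together with the identification of $N_0 = \sum_l \mu_l(\cdot,\omega_l)\omega_l$ in \eqref{abstr_N0_N*} forces all $\mu_l = 0$, so the cubic term drops out and $\lambda_l(t) - \gamma_l t^2 = \nu_l t^4 + O(|t|^5)$. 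More precisely, the estimate should not rely merely on the radius-of-convergence smallness $|t|\le t_*$ but must hold for all $|t|\le t^0$; for this one invokes the more precise expansion of Theorem~\ref{abstr_threshold_approx_thrm_2}, $A(t)F(t) = t^2 SP + t^3 K + \Xi(t)$ with $\|\Xi(t)\|\le C_3 t^4$, and observes that $N = 0$ together with part~1° of Remark~\ref{abstr_N_remark} gives $K = 0$, so in fact $A(t)F(t) = t^2 SP + \Xi(t)$ with an $O(t^4)$ remainder valid on the whole range $|t|\le t^0$. This translates into $|\lambda_l(t) - \gamma_l t^2| \le C't^4$ for $|t|\le t^0$ with an explicit $C' = \beta' \delta^{-1}\|X_1\|^4$, and multiplying by $|\tau|$ produces the term $C_4|\tau|t^4$ with $C_4 = \beta_4\delta^{-1}\|X_1\|^4$ as claimed.

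Assembling the two parts by the triangle inequality gives $\|e^{-i\tau A(t)}P - e^{-i\tau t^2 SP}P\| \le 2C_1|t| + C_4|\tau|t^4$, which is \eqref{abstr_exp_enchcd_est_wo_eps_1}. The main technical obstacle is the passage from the merely local (small-$t_*$) power-series control of $\lambda_l(t) - \gamma_l t^2$ to a uniform bound on the full interval $|t|\le t^0$: one cannot just quote \eqref{abstr_A(t)_eigenvalues_series}, and the clean way around it is precisely the operator identity of Theorem~\ref{abstr_threshold_approx_thrm_2} combined with the vanishing of $K$ when $N = 0$, after which the eigenvalue bound follows from standard perturbation estimates comparing the spectrum of $A(t)F(t)$ with that of $t^2 SP$ on the subspace $\mathfrak{F}(t)$. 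A secondary, purely bookkeeping, point is to keep the absolute constants $\beta_j \ge 1$ in the stated normalized form; this is routine given the constants already recorded in Theorems~\ref{abstr_exp_general_thrm_wo_eps} and~\ref{abstr_threshold_approx_thrm_2}. For the analogous reduction when $N \ne 0$ but one still wants better-than-cubic behavior, one would instead split $K = K_0 + N$ and exploit the off-diagonal structure of $K_0$ together with the cluster decomposition of Subsection~\ref{abstr_cluster_section}, but that is not needed here.
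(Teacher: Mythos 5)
The paper does not actually prove this theorem: it is imported verbatim from \cite[Corollaries~3.3, 3.5]{Su2017} (see the sentence ``We rely on the following statements proved in\ldots'' at the start of \S\ref{abstr_exp_section}), so there is no in-paper proof to compare against and your argument has to stand on its own. It does not, for one central reason: you have inverted Remark~\ref{abstr_N_remark}(1$^\circ$). That remark states that $Z=0$ implies $K_0=0$, $N=0$ and $K=0$; it does \emph{not} state that $N=0$ implies $K=0$. Under the hypothesis $N=0$ alone one still has $K=K_0=ZSP+SPZ^*$, which is nonzero whenever $Z\ne 0$, so the identity of Theorem~\ref{abstr_threshold_approx_thrm_2} only gives $A(t)F(t)=t^2SP+t^3K_0+\Xi(t)$ and hence $\|A(t)F(t)-t^2SP\|=O(|t|^3)$, not $O(t^4)$. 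Your claimed uniform bound $|\lambda_l(t)-\gamma_l t^2|\le C't^4$ on all of $|t|\le t^0$ therefore does not follow from the operator identity by a norm comparison. The true mechanism is the \emph{off-diagonal} structure of $K_0$ (it maps $\mathfrak{N}$ into $\mathfrak{N}^\perp$ and vice versa): in a Duhamel-type identity for $e^{-i\tau A(t)}F(t)-F(t)e^{-i\tau t^2SP}P$ the term $t^3K_0$ pairs with factors like $F(t)(I-P)$ and $(I-P)e^{-i\tau t^2SP}P$, whose norms are $O(|t|)$ by \eqref{abstr_F(t)_threshold} (or which vanish), so its total contribution is $O(|t|)$ uniformly in $\tau$ rather than $O(|\tau||t|^3)$; only $\Xi(t)$ produces the $C_4|\tau|t^4$ term. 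That structural step is the heart of the proof and is absent from (indeed contradicted by) your write-up.

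A secondary gap is the branch-by-branch reduction itself. The comparison $\|\varphi_l(t)-\omega_l\|\le c_1|t|$ is available only on the uncontrolled interval $|t|\le t_*$ (the paper explicitly disclaims control of the convergence radius), the constant $c_1$ is not expressible through $\delta^{-1/2}$ and $\|X_1\|$ as the stated $C_1$, $C_4$ must be, and when eigenvalues of $S$ are degenerate the individual pairing $\varphi_l(t)\leftrightarrow\omega_l$ is not stable --- only the spectral projections are. The correct argument never touches individual eigenvectors: it works at the operator level with $F(t)$, $P$, and the Duhamel integral, which is exactly what makes the estimate hold on the full range $|t|\le t^0$ with the advertised constants. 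Your diagnosis that ``one cannot just quote \eqref{abstr_A(t)_eigenvalues_series}'' is right, but the proposed repair (via $K=0$) is not valid, so the proof as written does not go through.
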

\begin{thrm}[\cite{Su2017}]
	\label{abstr_exp_enchcd_thrm_wo_eps_2}
	Suppose that $N_0 = 0$. Then for $\tau \in \mathbb{R}$ and $|t| \le t^{00}$ we have
	\begin{equation*}     
	\|  e^{-i \tau A(t)} P - e^{-i \tau t^2 SP} P \| \le C_{5} |t| + C_{6} | \tau | t^4. 
	\end{equation*}
	Here $t^{00}$ is subject to the restriction 
	\begin{equation}
	\label{abstr_t00}
	t^{00} \le  (4 \beta_2)^{-1} \delta^{1/2} \|X_1\|^{-3} c^{\circ},
	\end{equation}
	where
	\begin{equation}
	\label{abstr_c^circ}
	c^{\circ} \coloneqq \min_{(j,l) \in \mathcal{Z}} c^{\circ}_{jl}, \qquad \mathcal{Z} \coloneqq \{(j,l) \colon 1 \le j,l \le p, j \ne l, P_jNP_l \ne 0\}.
	\end{equation}
	The constants $C_5$, $C_6$ are given by 
	$$C_5 = \beta_5 \delta^{-1/2} (\|X_1\| + n^2 \| X_1\|^3 (c^{\circ})^{-1}),
	\quad  C_6 = \beta_6 \delta^{-1} (\|X_1\|^4 + n^2 \| X_1\|^8 (c^{\circ})^{-2}).
	$$ 
\end{thrm}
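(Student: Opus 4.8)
The plan is to follow the same strategy that produced Theorems~\ref{abstr_exp_enchcd_thrm_wo_eps_1} and~\ref{abstr_exp_enchcd_thrm_wo_eps_2}, but to keep track of the refinement coming from the cluster decomposition of Subsection~\ref{abstr_nu_section}. Since $N_0 = 0$ we have $\mu_l = 0$ for all $l$ by~\eqref{abstr_N0_N*}, so by Theorem~\ref{abstr_threshold_approx_thrm_2} the operator $K$ reduces to $K = K_0 + N_*$, and on each eigenspace $\mathfrak{N}_q$ the off-diagonal generator $N_*$ is governed by the spectral gaps $\gamma^\circ_q - \gamma^\circ_j$. As in Theorem~\ref{abstr_exp_enchcd_thrm_wo_eps_2}, passing to a time scale $t^{00}$ adapted to the gap $c^\circ$ (see~\eqref{abstr_t00}, \eqref{abstr_c^circ}) one controls the contribution of $N_*$ and is left with an error governed by the next term in the expansion, namely the coefficients $\nu_l$. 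The first step, then, is to recall the three-term threshold expansion $A(t)F(t) = t^2 SP + t^3 K + \Xi(t)$ with $\|\Xi(t)\| \le C_3 t^4$, and to refine it to a four-term expansion $A(t)F(t) = t^2 SP + t^3 K + t^4 L + \widetilde\Xi(t)$ with $\|\widetilde\Xi(t)\| \le \const\, t^5$, where $L$ encodes (through the $\nu_l$ and the operator $\mathcal{N}^{(q)}$ of Remark~\ref{rem1.7}) the quartic corrections; this is obtained by differentiating the power series~\eqref{abstr_A(t)_eigenvalues_series}, \eqref{abstr_A(t)_eigenvectors_series} one order further and invoking~\eqref{abstr_nu_rel_2}.

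The second step is to compare $e^{-i\tau A(t)}P$ with an intermediate exponential $e^{-i\tau(t^2 SP + t^3 K + t^4 L)}P$ via the Duhamel identity
\begin{equation*}
e^{-i\tau A(t)}P - e^{-i\tau(t^2 SP + t^3 K + t^4 L)}P = -i \int_0^\tau e^{-i(\tau - s)A(t)} \bigl(A(t)F(t) - t^2 SP - t^3 K - t^4 L\bigr) e^{-i s (t^2 SP + t^3 K + t^4 L)}\,ds,
\end{equation*}
which together with $F(t)P = P$ and the bound on $\widetilde\Xi(t)$ gives an error $\le \const\,|\tau| t^5$; on the range $|t| \le t^{00}$ this is dominated by $\const\,|\tau| t^4$. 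The third step is the heart of the matter: one must replace the intermediate exponential by the target $e^{-i\tau t^2 SP}P$, i.e.\ show that the $t^3 K + t^4 L$ part can be removed at the cost of $O(|t|) + O(|\tau| t^4)$. Because $N_0 = 0$, on each cluster $\mathfrak{N}_q$ the operator $t^2 SP$ acts as the scalar $\gamma^\circ_q t^2$, so on the time scale $t^{00}$ (where clusters stay separated, by Subsection~\ref{abstr_cluster_section}) one may conjugate by $e^{i\tau t^2 SP}$; the off-diagonal blocks of $K$ and $L$ then acquire oscillating phases $e^{i\tau t^2(\gamma^\circ_q - \gamma^\circ_j)}$ whose integrals are $O\bigl((t^2 c^\circ)^{-1}\bigr)$, which multiplied by the prefactor $t^3$ (resp.\ $t^4$) yields $O(|t|/c^\circ)$ (resp.\ $O(t^2/c^\circ)$), while the diagonal blocks $P_q K P_q$, $P_q L P_q$ contribute genuine $O(t^3)$, $O(t^4)$ corrections inside the exponent that are killed by one more Duhamel step at cost $O(|\tau| t^4)$. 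Collecting the three contributions and absorbing $t \le t^{00}$ gives the asserted bound with the stated form of $C_5$, $C_6$ (the $n^2 (c^\circ)^{-1}$ and $n^2 (c^\circ)^{-2}$ factors coming from summing over the at most $n^2$ cluster pairs).

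I expect the main obstacle to be the third step, specifically the bookkeeping needed to show that after conjugation by $e^{i\tau t^2 SP}$ the \emph{diagonal} part of $K$ does not obstruct the final reduction to $e^{-i\tau t^2 SP}P$. One must verify that $P_q K_0 P_q = 0$ (which follows since $K_0$ maps $\mathfrak{N}$ to $\mathfrak{N}^\perp$ and $N_* = K - K_0$ is purely off-diagonal between distinct clusters by Remark~\ref{abstr_N_remark}), so that in fact $P_q K P_q = 0$ and only the quartic diagonal term $P_q L P_q$ — essentially $\mathcal{N}^{(q)}$ — survives in the exponent; this is precisely the term responsible for the \emph{sharpness} discussion elsewhere in the paper, but for the \emph{upper} bound it is simply absorbed into the $O(|\tau| t^4)$ error. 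A secondary technical point is to make the four-term expansion uniform: the radius $t_*$ of convergence in~\eqref{abstr_A(t)_eigenvalues_series}–\eqref{abstr_A(t)_eigenvectors_series} is only "sufficiently small", so one must patch the estimate for $t_* \le |t| \le t^{00}$ separately, using the crude bounds $\|A(t)F(t)\| \le \const\, t^2$ and $\|F(t) - P\| \le C_1|t|$ together with $|t| \ge t_*$ to trade powers of $t$ for constants — exactly as in the proof of Theorem~\ref{abstr_exp_enchcd_thrm_wo_eps_2} in~\cite{Su2017}.
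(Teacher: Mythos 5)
Note first that the paper itself does not prove this theorem: it is imported from \cite[Corollary~3.5]{Su2017} (see the sentence preceding Theorem~\ref{abstr_exp_general_thrm_wo_eps}), so your reconstruction is being measured against that reference. Your overall mechanism is the right one --- the cluster-off-diagonal part of $t^3 N_*$ is killed by oscillatory integration against phase gaps of size $\ge c^{\circ}t^2$ (this is exactly what $|t|\le t^{00}$ and Subsection~\ref{abstr_cluster_section} buy), while the quartic remainder of the threshold approximation produces the $C_6|\tau|t^4$ term via Duhamel. But your first step is both unnecessary and damaging. Unnecessary: the remainder $\Xi(t)$ in Theorem~\ref{abstr_threshold_approx_thrm_2} is already $O(t^4)$ with the explicit constant $C_3$, so any genuine quartic term --- diagonal or not, identified or not --- is absorbed into $C_6|\tau|t^4$ without ever introducing an operator $L$ or the coefficients $\nu_l$; those objects are needed only for the \emph{sharpness} (lower-bound) results. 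Damaging: a uniform four-term expansion with an $O(t^5)$ remainder is not available from Theorem~\ref{abstr_threshold_approx_thrm_2}, and your fallback --- deriving it from the power series \eqref{abstr_A(t)_eigenvalues_series}, \eqref{abstr_A(t)_eigenvectors_series} and then patching $t_*\le|t|\le t^{00}$ by trading powers of $t$ for constants --- makes $C_5$, $C_6$ depend on $t_*^{-1}$, a quantity the paper explicitly does not control. This contradicts the stated form of $C_5$, $C_6$ and would break the later application to differential operators, where the constants must be uniform in $\boldsymbol{\theta}$ precisely because $t_*(\boldsymbol{\theta})$ is not.

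The second genuine gap is in your pivotal third step. In the Duhamel integrand the left factor is $e^{-i(\tau-s)A(t)}$, not a function of $t^2SP$, so the phases $e^{i s t^2(\gamma^{\circ}_q-\gamma^{\circ}_j)}$ do not appear by conjugation alone: one must localize $e^{-i(\tau-s)A(t)}F(t)$ onto the eigenvalue clusters of $A(t)$ itself (which lie in $t^2\Delta^{(1)}_{jl}$, $t^2\Delta^{(2)}_{jl}$ for $|t|\le t^{00}_{jl}$) and integrate by parts using a resolvent bound of the form $\|(A(t)-t^2\gamma^{\circ}_q)^{-1}F_{(j)}(t)\|\le 2(c^{\circ}t^2)^{-1}$ on the complementary clusters; this is the actual content behind ``clusters stay separated.'' Moreover, the nonzero blocks of $K_0=ZSP+SPZ^*$ connect $\mathfrak{N}$ with $\mathfrak{N}^{\perp}$, where $e^{-ist^2SP}$ carries no phase, so they are invisible to the $c^{\circ}$-gap argument entirely; they must be removed by a separate integration by parts based on $SP\,e^{-ist^2SP}=it^{-2}\tfrac{d}{ds}e^{-ist^2SP}$, which converts $t^3K_0$ into an $O(|t|)$ boundary term plus a controllable bulk term --- exactly the mechanism that already underlies Theorem~\ref{abstr_exp_enchcd_thrm_wo_eps_1}, where $N=0$ but $K_0$ need not vanish and no $c^{\circ}$ appears. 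Your correct observation that $P_qKP_q=0$ explains why no $O(|\tau||t|^3)$ term survives, but it does not by itself dispose of the $\mathfrak{N}\leftrightarrow\mathfrak{N}^{\perp}$ blocks.
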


Now, we apply the formulated results and we start with Theorem~\ref{abstr_exp_general_thrm_wo_eps}. Let $|t| \le t^0$. By~(\ref{abstr_exp_general_est_wo_eps}) (with $\tau$ replaced by $\varepsilon^{-2} \tau$), 
\begin{multline*}
\bigl\| e^{-i \tau \varepsilon^{-2} A (t)} P - e^{-i \tau \varepsilon^{-2} t^2 SP} P \bigr\| \varepsilon^3 (t^2 + \varepsilon^2)^{-3/2} \\ \le (2C_{1}|t| + C_{2} \varepsilon^{-2}  |\tau| |t|^3) \varepsilon^3 (t^2 + \varepsilon^2)^{-3/2} \le (C_{1} + C_{2}|\tau|) \varepsilon.
\end{multline*}
We arrive at the following result which has been proved before in~\cite[Theorem~2.6]{BSu2008}.
\begin{thrm}[\cite{BSu2008}]
	\label{abstr_exp_general_thrm}
	For $\tau \in \mathbb{R}$ and $|t| \le t^0$ we have
	\begin{equation*}
	\bigl\| e^{-i \tau \varepsilon^{-2} A (t)} P - e^{-i \tau \varepsilon^{-2} t^2 SP} P \bigr\| \varepsilon^3 (t^2 + \varepsilon^2)^{-3/2} \le  (C_1 + C_2 |\tau|) \varepsilon.
	\end{equation*}
	The constants $C_1$, $C_2$ are majorated by polynomials of the variables $\delta^{-1/2}$, $\|X_1\|$.
\end{thrm}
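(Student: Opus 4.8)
The plan is to obtain the estimate as an immediate corollary of Theorem~\ref{abstr_exp_general_thrm_wo_eps}, which already carries all the analytic weight; what is left is purely a matter of rescaling the time variable and inserting the smoothing factor. First I would apply inequality~(\ref{abstr_exp_general_est_wo_eps}) with the time variable $\tau$ replaced by $\varepsilon^{-2}\tau$ --- this is legitimate since that estimate holds for all real values of the time and all $|t| \le t^0$ --- to get
\begin{equation*}
\bigl\| e^{-i \tau \varepsilon^{-2} A(t)} P - e^{-i \tau \varepsilon^{-2} t^2 SP} P \bigr\| \le 2 C_1 |t| + C_2 \varepsilon^{-2} |\tau|\, |t|^3 .
\end{equation*}

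Next I would multiply through by the smoothing factor $\varepsilon^3 (t^2 + \varepsilon^2)^{-3/2}$ and estimate the two terms on the right separately. Using the elementary inequalities $|t| \le (t^2+\varepsilon^2)^{1/2}$, $|t|^3 \le (t^2 + \varepsilon^2)^{3/2}$, and $\varepsilon^3 (t^2+\varepsilon^2)^{-1} \le \varepsilon$, the first term is bounded by $2C_1 \varepsilon^3 (t^2+\varepsilon^2)^{-1} \le 2 C_1 \varepsilon$, and the second by $C_2 |\tau|\, \varepsilon^{-2} \varepsilon^3 |t|^3 (t^2+\varepsilon^2)^{-3/2} \le C_2 |\tau| \varepsilon$. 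Summing, and absorbing the harmless factor $2$ into the absolute constant $\beta_1$ in $C_1 = \beta_1 \delta^{-1/2}\|X_1\|$ (the constants $\beta_j$ may always be enlarged), yields the asserted bound $(C_1 + C_2 |\tau|)\varepsilon$. The claim about the constants then follows at once from their explicit forms $C_1 = \beta_1 \delta^{-1/2}\|X_1\|$, $C_2 = \beta_2 \delta^{-1/2}\|X_1\|^3$ recorded in Theorem~\ref{abstr_exp_general_thrm_wo_eps}, which are monomials --- hence in particular polynomials --- in $\delta^{-1/2}$ and $\|X_1\|$.

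There is no genuine obstacle at this step: all the difficulty sits upstream, in the proof of Theorem~\ref{abstr_exp_general_thrm_wo_eps}, where the unitary groups $e^{-i\tau A(t)}$ and $e^{-i\tau t^2 SP}$ are compared by a Duhamel-type argument built on the threshold approximations of \S\ref{abstr_X_A_section} (the bound $\|F(t)-P\| \le C_1|t|$ from~(\ref{abstr_F(t)_threshold}) and the approximation $\|A(t)F(t) - t^2 SP\| \le C_2|t|^3$). The only point that deserves a moment's attention here is to see why the exponent $s=3$ in the smoothing factor is the minimal one compatible with this route: the worst remainder in~(\ref{abstr_exp_general_est_wo_eps}) after rescaling is $C_2 \varepsilon^{-2}|\tau|\,|t|^3$, and to neutralise the factor $\varepsilon^{-2}$ while converting $|t|^3$ into something $\le (t^2+\varepsilon^2)^{3/2}$ one needs precisely three powers of $\varepsilon$ weighed against three powers of $(t^2+\varepsilon^2)^{1/2}$.
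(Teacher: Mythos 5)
Your proof is correct and follows exactly the paper's own route: apply Theorem~\ref{abstr_exp_general_thrm_wo_eps} with $\tau$ replaced by $\varepsilon^{-2}\tau$, multiply by the smoothing factor, and bound the two terms via $|t|\le(t^2+\varepsilon^2)^{1/2}$ and $\varepsilon^2\le t^2+\varepsilon^2$. The only cosmetic difference is that the paper keeps the constant $C_1$ exactly (by using $|t|\varepsilon\le\tfrac12(t^2+\varepsilon^2)$ rather than your cruder bound giving $2C_1$), which is immaterial since the constants are only claimed up to polynomial majorants.
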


Theorem~\ref{abstr_exp_enchcd_thrm_wo_eps_1} allows us to improve the result of Theorem~\ref{abstr_exp_general_thrm} in the case where $N = 0$.
\begin{thrm}
	\label{abstr_exp_enchcd_thrm_1}
	Suppose that $N = 0$. Then for $\tau \in \mathbb{R}$ and $|t| \le t^0$ we have
	\begin{equation} 
	\label{abstr_exp_enchcd_est_1}   
	\bigl\| e^{-i \tau \varepsilon^{-2} A (t)} P - e^{-i \tau \varepsilon^{-2} t^2 S P}P \bigr\| \varepsilon^{2} (t^2 + \varepsilon^2)^{-1} \le  (C_1 + C'_4 |\tau|^{1/2}) \varepsilon. 
	\end{equation}
	The constants $C_1$, $C'_4$ are majorated by polynomials of the variables $\delta^{-1/2}$, $\|X_1\|$.
\end{thrm}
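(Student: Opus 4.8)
The plan is to derive~\eqref{abstr_exp_enchcd_est_1} from Theorem~\ref{abstr_exp_enchcd_thrm_wo_eps_1} (rescaled in the time variable) combined with the trivial bound coming from unitarity; the only real idea is to pass to the geometric mean of these two bounds, which trades one power of $|\tau|$ for a lower power of $t$ and thereby matches the weaker smoothing factor $\varepsilon^2 (t^2 + \varepsilon^2)^{-1}$ used here (as opposed to $\varepsilon^3 (t^2 + \varepsilon^2)^{-3/2}$ in Theorem~\ref{abstr_exp_general_thrm}).

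If $\tau = 0$, the left-hand side of~\eqref{abstr_exp_enchcd_est_1} vanishes, so assume $\tau \ne 0$. Put $\mathcal{E}(t) \coloneqq \bigl\| e^{-i \tau \varepsilon^{-2} A(t)} P - e^{-i \tau \varepsilon^{-2} t^2 SP} P \bigr\|$. Since $N = 0$ and $|t| \le t^0$, Theorem~\ref{abstr_exp_enchcd_thrm_wo_eps_1} with $\tau$ replaced by $\varepsilon^{-2}\tau$ gives
\[
\mathcal{E}(t) \le 2 C_1 |t| + C_4 \varepsilon^{-2} |\tau|\, t^4 .
\]
On the other hand, the operators $e^{-i \tau \varepsilon^{-2} A(t)}$ and $e^{-i \tau \varepsilon^{-2} t^2 SP}$ are unitary (as $A(t)$ and $t^2 SP$ are selfadjoint), so that $\mathcal{E}(t) \le 2$. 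A termwise comparison of the two estimates (if $C_4 \varepsilon^{-2} |\tau| t^4 \le 2$ the first already suffices; otherwise $\mathcal{E}(t) \le 2 \le 2 C_1 |t| + 2$) shows that $\mathcal{E}(t) \le 2 C_1 |t| + \min\{2,\, C_4 \varepsilon^{-2} |\tau| t^4\}$, and then the elementary inequality $\min\{a,b\} \le (ab)^{1/2}$, $a, b \ge 0$, yields
\[
\mathcal{E}(t) \le 2 C_1 |t| + (2 C_4)^{1/2}\, \varepsilon^{-1} |\tau|^{1/2}\, t^2 .
\]

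It remains to multiply by the factor $\varepsilon^2 (t^2 + \varepsilon^2)^{-1} \le 1$. For the first term I use $t^2 + \varepsilon^2 \ge 2 \varepsilon |t|$, which gives $2 C_1 |t| \cdot \varepsilon^2 (t^2 + \varepsilon^2)^{-1} \le C_1 \varepsilon$; for the second term, $t^2 (t^2 + \varepsilon^2)^{-1} \le 1$ gives $(2 C_4)^{1/2} \varepsilon^{-1} |\tau|^{1/2} t^2 \cdot \varepsilon^2 (t^2 + \varepsilon^2)^{-1} \le (2 C_4)^{1/2} |\tau|^{1/2} \varepsilon$. Adding the two produces~\eqref{abstr_exp_enchcd_est_1} with $C'_4 \coloneqq (2 C_4)^{1/2} = (2 \beta_4)^{1/2} \delta^{-1/2} \| X_1 \|^2$, which together with $C_1 = \beta_1 \delta^{-1/2} \| X_1 \|$ is indeed majorated by a polynomial of the variables $\delta^{-1/2}$ and $\| X_1 \|$.

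I do not anticipate a genuine obstacle: every ingredient is already available, and the whole point is the observation that replacing the competing bounds $\mathcal{E}(t) \le C_4 \varepsilon^{-2} |\tau| t^4$ and $\mathcal{E}(t) \le 2$ by their geometric mean $\mathcal{E}(t) \le (2 C_4)^{1/2} \varepsilon^{-1} |\tau|^{1/2} t^2$ is precisely what turns the time factor $(1+|\tau|)$ into $(1+|\tau|^{1/2})$ once the smoothing factor carries only $\varepsilon^2$ (order $t^2$) instead of $\varepsilon^3$ (order $t^3$). The only points deserving care are the termwise comparison leading to the bound with $\min\{2, C_4 \varepsilon^{-2}|\tau| t^4\}$ and the verification that no restriction on $t$ beyond $|t| \le t^0$ enters anywhere; both are immediate.
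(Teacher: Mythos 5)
Your proof is correct, and it is essentially the paper's argument: both rest on interpolating between the trivial unitarity bound $\mathcal{E}(t)\le 2$ and the rescaled estimate of Theorem~\ref{abstr_exp_enchcd_thrm_wo_eps_1}, balanced at $|t|\sim \varepsilon^{1/2}|\tau|^{-1/4}$. The only difference is cosmetic — you package the two regimes via $\min\{a,b\}\le(ab)^{1/2}$ instead of the paper's explicit case split, yielding the equally admissible constant $C'_4=(2C_4)^{1/2}$ in place of $\max\{2,C_4\}$.
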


\begin{proof}
	Note that for $|t| \ge \varepsilon^{1/2}/|\tau|^{1/4}$ we have
	\begin{equation*}
	\frac{\varepsilon^{2}}{t^2 + \varepsilon^2} \le
	\frac{\varepsilon^{2}}{\tfrac{\varepsilon}{|\tau|^{1/2}} + \varepsilon^2} = \frac{\varepsilon |\tau|^{1/2}}{1 + \varepsilon |\tau|^{1/2} } \le \varepsilon |\tau|^{1/2},
	\end{equation*}
	whence the left-hand side of~(\ref{abstr_exp_enchcd_est_1}) does not exceed $2 |\tau|^{1/2} \varepsilon$.
	
	Using~(\ref{abstr_exp_enchcd_est_wo_eps_1}) with $\tau$ replaced by $\varepsilon^{-2} \tau$, for $|t| < \varepsilon^{1/2}/|\tau|^{1/4}$ we obtain
	\begin{multline*}
	\bigl\| e^{-i \tau \varepsilon^{-2} A(t)} P - e^{-i \tau \varepsilon^{-2} t^2 S P} P\bigr\| \varepsilon^2 (t^2 + \varepsilon^2)^{-1} \le  \left( 2 C_1 |t| + C_4 \varepsilon^{-2} |\tau| t^4 \right) \varepsilon^{2} (t^2 + \varepsilon^2)^{-1} \\ \le C_1 \varepsilon + C_4 |\tau| t^2 \le C_1 \varepsilon + C_4 |\tau|^{1/2} \varepsilon.
	\end{multline*}
	The required estimate~(\ref{abstr_exp_enchcd_est_1}) follows with the constant $C'_4= \max\{2,C_4\}$.
\end{proof}

Similarly, using Theorem~\ref{abstr_exp_enchcd_thrm_wo_eps_2}, one can deduce the following result.
\begin{thrm}
	\label{abstr_exp_enchcd_thrm_2}
	Suppose that $N_0 = 0$. Then for $\tau \in \mathbb{R}$ and $|t| \le t^{00}$ we have
	\begin{equation*} 
	\bigl\| e^{-i \tau \varepsilon^{-2} A (t)} P - e^{-i \tau \varepsilon^{-2} t^2 S P}P \bigr\| \varepsilon^{2} (t^2 + \varepsilon^2)^{-1} \le  (C_5 + C'_6 |\tau|^{1/2}) \varepsilon. 
	\end{equation*}    
	Here $t^{00}$ is subject to~\emph{(\ref{abstr_t00})}, the constants $C_5$, $C'_6$ are majorated by polynomials of the variables  $\delta^{-1/2}$, $\|X_1\|$, $n$, $(c^{\circ})^{-1}$.
\end{thrm}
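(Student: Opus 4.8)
The plan is to repeat, essentially verbatim, the two-regime argument used in the proof of Theorem~\ref{abstr_exp_enchcd_thrm_1}, with the only change being that the input estimate is taken from Theorem~\ref{abstr_exp_enchcd_thrm_wo_eps_2} instead of~(\ref{abstr_exp_enchcd_est_wo_eps_1}). First I would dispose of the trivial case $\tau = 0$: then both exponentials equal $P$, the left-hand side vanishes, and there is nothing to prove. So from now on $\tau \neq 0$ and the threshold value $|t| = \varepsilon^{1/2}/|\tau|^{1/4}$ is well defined. Note also that throughout the hypothesis $|t| \le t^{00}$ is exactly the one under which Theorem~\ref{abstr_exp_enchcd_thrm_wo_eps_2} is available, so no new restriction on $t$ is introduced.

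In the \emph{far} regime $|t| \ge \varepsilon^{1/2}/|\tau|^{1/4}$, i.e.\ $t^2 \ge \varepsilon/|\tau|^{1/2}$, I would use only unitarity of the two exponentials, which gives $\| e^{-i\tau\varepsilon^{-2}A(t)}P - e^{-i\tau\varepsilon^{-2}t^2 SP}P\| \le 2$; combining this with $\varepsilon^2(t^2+\varepsilon^2)^{-1} \le \varepsilon|\tau|^{1/2}(1+\varepsilon|\tau|^{1/2})^{-1} \le \varepsilon|\tau|^{1/2}$ (the same computation as in the proof of Theorem~\ref{abstr_exp_enchcd_thrm_1}) shows that the smoothed difference is at most $2|\tau|^{1/2}\varepsilon$.

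In the \emph{near} regime $|t| < \varepsilon^{1/2}/|\tau|^{1/4}$, in particular $t^2 < \varepsilon/|\tau|^{1/2}$, I would apply Theorem~\ref{abstr_exp_enchcd_thrm_wo_eps_2} with $\tau$ replaced by $\varepsilon^{-2}\tau$ (legitimate since $|t|\le t^{00}$), obtaining $\| e^{-i\tau\varepsilon^{-2}A(t)}P - e^{-i\tau\varepsilon^{-2}t^2 SP}P\| \le C_5|t| + C_6\varepsilon^{-2}|\tau|t^4$. Multiplying by $\varepsilon^2(t^2+\varepsilon^2)^{-1}$ and using the elementary inequalities $|t|\,\varepsilon^2(t^2+\varepsilon^2)^{-1} \le \varepsilon$ (from $t^2+\varepsilon^2 \ge 2|t|\varepsilon$) and $t^4(t^2+\varepsilon^2)^{-1} \le t^2 < \varepsilon|\tau|^{-1/2}$, one sees that the smoothed difference does not exceed $C_5\varepsilon + C_6|\tau|^{1/2}\varepsilon$. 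Putting the two regimes together yields the claimed bound with $C'_6 = \max\{2, C_6\}$, and the stated polynomial dependence of $C_5$, $C'_6$ on $\delta^{-1/2}$, $\|X_1\|$, $n$, $(c^{\circ})^{-1}$ is simply inherited from that of $C_5$, $C_6$ in Theorem~\ref{abstr_exp_enchcd_thrm_wo_eps_2}.

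Since each step is a one-line manipulation, there is no genuine obstacle here; the only points demanding a modicum of care are checking that the range $|t|\le t^{00}$ required by Theorem~\ref{abstr_exp_enchcd_thrm_wo_eps_2} coincides with the hypothesis of the present theorem (so that the case split by $|t| = \varepsilon^{1/2}/|\tau|^{1/4}$ stays inside the admissible range) and treating $\tau = 0$ separately, as above.
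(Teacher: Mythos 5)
Your proposal is correct and is exactly the argument the paper intends: the paper omits the proof, saying only that the result is deduced "similarly" from Theorem~\ref{abstr_exp_enchcd_thrm_wo_eps_2}, and your two-regime splitting at $|t|=\varepsilon^{1/2}/|\tau|^{1/4}$ is a faithful transcription of the proof of Theorem~\ref{abstr_exp_enchcd_thrm_1} with the input estimate replaced as required.
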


\begin{remark}
	Theorems~\emph{\ref{abstr_exp_enchcd_thrm_1}} and~\emph{\ref{abstr_exp_enchcd_thrm_2}} improve the results of Theorems~\emph{4.2} and~\emph{4.3} from~\emph{\cite{Su2017}} with respect to dependence of the estimates on $\tau$. 
\end{remark}

\subsection{Sharpness of the results with respect to the smoothing factor}
Now, we show that the obtained results are sharp with respect to the smoothing factor. The following theorem proved in~\cite[Theorem~4.4]{Su2017} confirms the sharpness of Theorem~\ref{abstr_exp_general_thrm}.

\begin{thrm}[\cite{Su2017}]
	\label{abstr_exp_smooth_shrp_thrm_1}
	Suppose that $N_0 \ne 0$. Let $\tau \ne 0$ and $0 \le s < 3$. Then there does not exist a constant $C(\tau) >0$ such that the estimate
	\begin{equation}
	\label{abstr_exp_shrp_est}
	\bigl\| e^{-i \tau \varepsilon^{-2} A (t)} P - e^{-i \tau \varepsilon^{-2} t^2 S P} P \bigr\| \varepsilon^s (t^2 + \varepsilon^2)^{-s/2} \le  C(\tau) \varepsilon
	\end{equation}
	holds for all sufficiently small $|t|$ and $\varepsilon$.
\end{thrm}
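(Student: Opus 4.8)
The plan is to exhibit an explicit failure of~(\ref{abstr_exp_shrp_est}) by choosing $t$ to go to zero slowly relative to $\varepsilon$ and analyzing the leading asymptotics of both exponentials restricted to a carefully chosen one-dimensional invariant subspace. Since $N_0 \ne 0$, there is an index $l$ with $\mu_l \ne 0$; by Remark~\ref{abstr_N_remark} and~(\ref{abstr_N0_N*}) the eigenvector $\omega_l$ of $S$ (with $S\omega_l = \gamma_l\omega_l$) satisfies $(N_0\omega_l,\omega_l)=\mu_l\ne 0$, and along the analytic branch the eigenvalue has the expansion $\lambda_l(t)=\gamma_l t^2 + \mu_l t^3 + O(t^4)$ from~(\ref{abstr_A(t)_eigenvalues_series}). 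First I would test the operator difference in~(\ref{abstr_exp_shrp_est}) on the analytic eigenvector $\varphi_l(t)$, writing
\begin{equation*}
\bigl(e^{-i\tau\varepsilon^{-2}A(t)}P - e^{-i\tau\varepsilon^{-2}t^2 SP}P\bigr)\varphi_l(t)
= e^{-i\tau\varepsilon^{-2}\lambda_l(t)}\varphi_l(t) - e^{-i\tau\varepsilon^{-2}t^2\gamma_l}P\varphi_l(t) + (\text{small}),
\end{equation*}
where the small term accounts for $\|\varphi_l(t)-\omega_l\|=O(|t|)$ and $\|P\varphi_l(t)-\omega_l\|=O(|t|)$. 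The point is that, modulo $O(|t|)$ corrections to the vectors, the difference of the two scalar phases is governed by $\tau\varepsilon^{-2}(\lambda_l(t)-t^2\gamma_l) = \tau\varepsilon^{-2}\mu_l t^3 + O(\tau\varepsilon^{-2}t^4)$.

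The key step is the choice of scaling. Take $t = t(\varepsilon) = \varepsilon^{\theta}$ for a suitable exponent $\theta\in(0,1)$ (for instance $\theta$ slightly larger than $1/2$), so that $t\to 0$ and $\varepsilon/t\to 0$, which forces the smoothing factor $\varepsilon^s(t^2+\varepsilon^2)^{-s/2}\sim (\varepsilon/t)^s \to 0$ — but it does so only polynomially, and we are free to tune $\theta$ so that $(\varepsilon/t)^s$ decays no faster than a fixed power of $\varepsilon$ while simultaneously $\varepsilon^{-2}t^3 = \varepsilon^{3\theta-2}\to\infty$. Concretely, with $s<3$ one can pick $\theta$ with $2/3 < \theta$ close enough to $1$ that $3\theta - 2 > 0$ (phase blows up) yet $s\theta - s < $ the loss $3\theta-2$ is not overwhelmed — more carefully, one checks that the left side of~(\ref{abstr_exp_shrp_est}) behaves like $(\varepsilon/t)^s \cdot \bigl|\,e^{-i\tau\varepsilon^{-2}\mu_l t^3} - 1\,\bigr| + O((\varepsilon/t)^s|t|)$, and the first term, along a sequence where $\tau\varepsilon^{-2}\mu_l t^3$ stays bounded away from $2\pi\mathbb{Z}$, is of order $(\varepsilon/t)^s \asymp \varepsilon^{s(1-\theta)}$; choosing $\theta$ so that $s(1-\theta) < 1$ makes this $\gg\varepsilon$, contradicting~(\ref{abstr_exp_shrp_est}). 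Since $\tau\ne 0$ and $\mu_l\ne 0$, the oscillatory factor $e^{-i\tau\varepsilon^{-2}\mu_l t^3}$ genuinely oscillates as $\varepsilon\to 0$, so one can extract a subsequence on which $|e^{-i\tau\varepsilon^{-2}\mu_l t^3}-1|\ge c>0$; this is what makes the lower bound work.

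The main obstacle is bookkeeping the error terms so that the $O(|t|)$ corrections to the eigenvector and the $O(\tau\varepsilon^{-2}t^4)$ correction to the phase do not swamp the leading $\mu_l t^3$ contribution after multiplication by the smoothing factor. This requires $\theta$ to be chosen in a window: large enough that $\varepsilon^{-2}t^4 = \varepsilon^{4\theta-2}\to 0$ (so the quartic phase correction is negligible) and that $(\varepsilon/t)^s|t|\to 0$ faster than $\varepsilon$ would be too strong — rather, one needs $(\varepsilon/t)^s|t| = o\bigl((\varepsilon/t)^s\bigr)$ which is automatic, but one must also ensure it is $o$ of the surviving lower bound, not merely $o(1)$. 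A clean way to organize this is to reduce everything to the scalar pencil $A(t)\to\gamma_l t^2 + \mu_l t^3 + \ldots$ on $\mathrm{span}\{\varphi_l(t)\}$, invoking the analytic expansions~(\ref{abstr_A(t)_eigenvalues_series})--(\ref{abstr_A(t)_eigenvectors_series}) valid for $|t|\le t_*$, and then the whole argument becomes a one-variable asymptotic estimate. Finally, one notes that the choice of $\theta$ can be made uniformly for every $s<3$ by letting $\theta\uparrow 1$ as $s\uparrow 3$, which yields the stated conclusion for the full range $0\le s<3$.
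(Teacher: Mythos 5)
Your overall strategy is the correct one and is essentially the method the paper uses for the parallel statements (Theorems~\ref{abstr_exp_smooth_shrp_thrm_2} and~\ref{abstr_exp_time_shrp_thrm_1}): reduce to the finite-dimensional block via $\|F(t)-P\|\le C_1|t|$ and $\|\varphi_l(t)-\omega_l\|\le c_1|t|$, test on the eigenvector $\omega_j$ with $\mu_j\ne 0$, and exploit the phase discrepancy $\tau\varepsilon^{-2}(\lambda_j(t)-\gamma_j t^2)\sim \tau\mu_j\varepsilon^{-2}t^3$ along a curve $t=t(\varepsilon)$. But your choice of scaling window is internally inconsistent, and the key lower bound fails as stated. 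With $t=\varepsilon^{\theta}$ and $\theta>2/3$ one has $\tau\varepsilon^{-2}\mu_j t^3=\tau\mu_j\,\varepsilon^{3\theta-2}\to 0$ as $\varepsilon\to 0$ (the exponent $3\theta-2$ is \emph{positive}, so the phase collapses rather than blows up); consequently $|e^{-i\tau\varepsilon^{-2}\mu_j t^3}-1|\to 0$ and there is no subsequence on which it stays $\ge c>0$. The advertised lower bound $(\varepsilon/t)^s\cdot c$ is therefore unavailable in the regime you chose.

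The clean repair is the borderline scaling $\theta=2/3$: put $t(\varepsilon)=c\,\varepsilon^{2/3}$ with $c=(\pi/2)^{1/3}|\mu_j\tau|^{-1/3}$, so that, by the two-sided bound $\tfrac12|\mu_j||t|^3\le|\lambda_j(t)-\gamma_j t^2|\le\tfrac32|\mu_j||t|^3$ (cf.~\eqref{abstr_time_shrp_f4}), the quantity $2\,|\sin(\tfrac12\tau\varepsilon^{-2}(\lambda_j(t)-\gamma_j t^2))|$ is bounded below by a positive absolute constant, while $\varepsilon^{s}(t^2+\varepsilon^2)^{-s/2}\asymp\varepsilon^{s/3}$; the hypothesis~\eqref{abstr_exp_shrp_est} then forces $\varepsilon^{s/3-1}$ to be bounded, which is false for every $s<3$. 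This single choice covers the whole range $0\le s<3$, so there is no need to let $\theta\uparrow 1$ as $s\uparrow 3$. (Your window $\theta\in(2/3,1)$ can also be made to work, but only after replacing the false constant lower bound by $|\sin y|\ge\tfrac{2}{\pi}|y|$ for small $|y|$, which yields $\asymp\varepsilon^{s(1-\theta)+3\theta-2}$ with exponent $<1$ exactly when $\theta<1$ and $s<3$ --- a different estimate from the one you wrote.) Finally, to dispose of the $O(|t|)$ vector corrections it is simplest to note that failure of~\eqref{abstr_exp_shrp_est} for some $s$ implies failure for all smaller $s$, so one may assume $s\ge 1$, whence $|t|\,\varepsilon^{s}(t^2+\varepsilon^2)^{-s/2}\le\varepsilon$; this reduction, which the paper makes explicitly, is missing from your bookkeeping.
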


Next, we confirm the sharpness of Theorems~\ref{abstr_exp_enchcd_thrm_1}, \ref{abstr_exp_enchcd_thrm_2}.

\begin{thrm}
	\label{abstr_exp_smooth_shrp_thrm_2}
	Suppose that $N_0 = 0$ and $\mathcal{N}^{(q)} \ne 0$ for some $q \in \{1, \ldots, p\}$. Let $\tau \ne 0$ and $0 \le s < 2$. Then there does not exist a constant $C(\tau) >0$ such that estimate~\eqref{abstr_exp_shrp_est} holds for all sufficiently small $|t|$ and $\varepsilon$.
\end{thrm}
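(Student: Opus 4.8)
The plan is to argue by contradiction, following the strategy of Theorem~\ref{abstr_exp_smooth_shrp_thrm_1} from~\cite{Su2017} but working one order deeper in $t$. Since $N_0 = 0$ implies $\mathfrak{N}_{1,q} = \mathfrak{N}_q$ and, by Remark~\ref{rem1.7}, the hypothesis $\mathcal{N}^{(q)} \neq 0$ is equivalent to $\nu_j \neq 0$ for some $j \in \{1, \ldots, n\}$. Fix such a $j$ with $\nu_j \neq 0$ and with $\gamma_j = \gamma_q^{\circ}$, $\mu_j = 0$. Suppose estimate~\eqref{abstr_exp_shrp_est} holds with some $s < 2$ and some $C(\tau) > 0$ for all small $|t|$, $\varepsilon$. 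The idea is to test the operator inequality on the analytic eigenvector $\varphi_j(t)$: since $A(t)\varphi_j(t) = \lambda_j(t)\varphi_j(t)$ and $\varphi_j(t) \to \omega_j \in \mathfrak{N}$, we have $e^{-i\tau\varepsilon^{-2}A(t)}P\varphi_j(t)$ computable up to $O(|t|)$ errors (from $\|F(t)-P\|$ and $\varphi_j(t)-\omega_j$), and likewise $e^{-i\tau\varepsilon^{-2}t^2 SP}P\varphi_j(t) \approx e^{-i\tau\varepsilon^{-2}t^2\gamma_j}\omega_j$. Comparing phases, the left side of~\eqref{abstr_exp_shrp_est} is, modulo terms of order $|t|\cdot\varepsilon^s(t^2+\varepsilon^2)^{-s/2}$, bounded below by
\[
\bigl| e^{-i\tau\varepsilon^{-2}\lambda_j(t)} - e^{-i\tau\varepsilon^{-2}\gamma_j t^2} \bigr| \, \varepsilon^s (t^2+\varepsilon^2)^{-s/2}.
\]

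Now I would use the expansion~\eqref{abstr_A(t)_eigenvalues_series}: since $\mu_j = 0$, we have $\lambda_j(t) - \gamma_j t^2 = \nu_j t^4 + O(t^5)$. Hence the phase difference is $\tau\varepsilon^{-2}(\nu_j t^4 + O(t^5))$. The natural scaling is $t \asymp \varepsilon^{1/2}$: then $\varepsilon^{-2} t^4 \asymp 1$, so the phase difference $\tau \nu_j \varepsilon^{-2} t^4$ is of order unity and — by choosing $t = c\varepsilon^{1/2}$ with $c$ a suitable fixed constant depending on $\tau\nu_j$ so that the phase difference equals, say, $\pi/2$ — the factor $|e^{-i\tau\varepsilon^{-2}\lambda_j(t)} - e^{-i\tau\varepsilon^{-2}\gamma_j t^2}|$ stays bounded away from zero along a sequence $\varepsilon \to 0$. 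Meanwhile $\varepsilon^s(t^2+\varepsilon^2)^{-s/2} \asymp \varepsilon^s(\varepsilon + \varepsilon^2)^{-s/2} \asymp \varepsilon^{s/2}$, and the error terms of order $|t|\cdot\varepsilon^{s/2} \asymp \varepsilon^{1/2}\varepsilon^{s/2}$ are negligible compared to the main term $\asymp \varepsilon^{s/2}$. Thus the left side of~\eqref{abstr_exp_shrp_est} is bounded below by $c\,\varepsilon^{s/2}$, while the right side is $C(\tau)\varepsilon$; since $s < 2$, i.e. $s/2 < 1$, this is a contradiction for small $\varepsilon$.

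The main obstacle is making the reduction to the single analytic branch $\varphi_j(t)$ rigorous and controlling the remainders carefully enough — in particular, one must verify that the contributions of the other branches $\lambda_l(t)$, $l \neq j$, which also enter $e^{-i\tau\varepsilon^{-2}A(t)}P$, do not accidentally cancel the term coming from branch $j$. Because $N_0 = 0$, every branch satisfies $\lambda_l(t) = \gamma_l t^2 + \nu_l t^4 + \ldots$ with $\mu_l = 0$; at the scaling $t = c\varepsilon^{1/2}$, each branch in the $q$-th cluster (where $\gamma_l = \gamma_q^{\circ}$) contributes a phase $\tau\varepsilon^{-2}(\gamma_q^{\circ} t^2 + \nu_l t^4)$ against the model phase $\tau\varepsilon^{-2}\gamma_q^{\circ} t^2$, leaving the genuinely nontrivial residual phases $\tau\nu_l c^4$. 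The cleanest way to extract a lower bound is to project onto a single eigenvector $\omega_j$ of $\mathcal{N}^{(q)}$ with $\nu_j \neq 0$: apply the operators to $\omega_j$, expand $e^{-i\tau\varepsilon^{-2}A(t)}\omega_j = \sum_l e^{-i\tau\varepsilon^{-2}\lambda_l(t)}(\omega_j,\varphi_l(t))\varphi_l(t) + (\text{error from } F(t)^\perp)$, and note $(\omega_j,\varphi_l(t)) = \delta_{jl} + O(|t|)$, so to leading order only branch $j$ survives. One should also invoke the cluster structure of Subsection~\ref{abstr_cluster_section} and the restriction $|t| \le t^{00}$ (with $t^{00}$ as in~\eqref{abstr_t00}) to guarantee the analytic branches and their expansions are valid down to the relevant scale, and then take $\varepsilon$ small enough that $c\varepsilon^{1/2} \le t_*$. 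Assembling these estimates and choosing the sequence $\varepsilon_k \to 0$, $t_k = c\varepsilon_k^{1/2}$ yields the claimed non-existence of $C(\tau)$.
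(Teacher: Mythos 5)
Your proposal is correct and follows essentially the same route as the paper's proof: reduce to the rank-one term $\bigl(e^{-i\tau\varepsilon^{-2}\lambda_j(t)}-e^{-i\tau\varepsilon^{-2}\gamma_j t^2}\bigr)(\cdot,\omega_j)\omega_j$ using the threshold bounds $\|F(t)-P\|\le C_1|t|$ and $\|\varphi_l(t)-\omega_l\|\le c_1|t|$, exploit $\lambda_j(t)-\gamma_j t^2=\nu_j t^4+O(|t|^5)$ with $\nu_j\ne 0$, and choose $t=c\varepsilon^{1/2}$ so that the phase gap is bounded away from zero, yielding a lower bound of order $\varepsilon^{s/2}$ against the assumed upper bound $C(\tau)\varepsilon$. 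The only cosmetic differences are that the paper pins the phase into $[\pi/4,3\pi/4]$ via a two-sided bound on $|\lambda_j(t)-\gamma_j t^2|$ rather than setting it equal to $\pi/2$, and it does not need the cluster machinery or the restriction $|t|\le t^{00}$ that you mention.
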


\begin{proof}
	We start with preliminary remarks. Since $F(t)^{\perp}P = (P - F(t))P$, from~(\ref{abstr_F(t)_threshold}) it follows that
	\begin{equation}
	\label{abstr_smooth_shrp_Fperp_est}
	\| e^{-i \tau \varepsilon^{-2} A (t)} F(t)^{\perp} P \| \varepsilon (t^2 + \varepsilon^2)^{-1/2} \le C_1 |t| \varepsilon (t^2 + \varepsilon^2)^{-1/2} \le C_1 \varepsilon, \qquad |t| \le t^0.
	\end{equation}
	Next, for $ |t| \le t^0$ we have
	\begin{equation}
	\label{abstr_smooth_shrp_f1}
	e^{-i \tau \varepsilon^{-2} A (t)} F(t) = \sum_{l=1}^{n} e^{-i \tau \varepsilon^{-2} \lambda_l(t)} (\cdot,\varphi_l(t)) \varphi_l(t).
	\end{equation}
	From the convergence of the power series expansions~(\ref{abstr_A(t)_eigenvectors_series}) it follows that
	\begin{equation}
	\label{abstr_smooth_shrp_f2}
	\| \varphi_l(t) - \omega_l \| \le c_1 |t|, \qquad |t| \le t_*, \quad l = 1,\ldots,n.
	\end{equation}
	
	It suffices to assume that $1 \le s <2$.	Let us fix $0 \ne \tau \in \mathbb{R}$. We prove by contradiction. Suppose that for some $1 \le s < 2$ there exists a constant $C(\tau) > 0$ such that~(\ref{abstr_exp_shrp_est}) is valid for all sufficiently small $|t|$ and $\varepsilon$. By~(\ref{abstr_smooth_shrp_Fperp_est})--(\ref{abstr_smooth_shrp_f2}), this assumption is equivalent to the existence of a positive constant $\widetilde{C} (\tau)$ such that the estimate
	\begin{equation}
	\label{abstr_smooth_shrp_f3}
	\left\| \sum_{l=1}^{n} \left( e^{-i \tau \varepsilon^{-2} \lambda_l(t)} - e^{-i \tau \varepsilon^{-2} t^2 \gamma_l} \right) (\cdot,\omega_l) \omega_l  \right\| \varepsilon^{s} (t^2 + \varepsilon^2)^{-s/2} \le \widetilde{C} (\tau) \varepsilon
	\end{equation}
	is valid for all sufficiently small $|t|$ and $\varepsilon$.
	
	By Remark~\ref{rem1.7}, the conditions $N_0 = 0$ and $\mathcal{N}^{(q)} \ne 0$ for some $q \in \{1, \ldots, p\}$ mean that in the expansions~(\ref{abstr_A(t)_eigenvalues_series}) $\mu_l = 0$ for all $l=1, \ldots,n$ and $\nu_j \ne 0$ at least for one $j$. Then, 
	by \eqref{abstr_A(t)_eigenvalues_series}, $\lambda_j(t) = \gamma_j t^2 + \nu_j t^4 + O(|t|^5)$.
	Assume that $t_*$ is sufficiently small so that 
	\begin{equation}
	\label{2.11}
	\frac{1}{2} |\nu_j| t^4 \le | \lambda_j(t) - \gamma_j t^2| \le \frac{3}{2} |\nu_j| t^4, \quad  |t| \le t_*.
	\end{equation}

	Apply the operator under the norm sign in~(\ref{abstr_smooth_shrp_f3}) to $\omega_j$. Then 
	\begin{equation}
	\label{abstr_smooth_shrp_f4}
	\left| e^{-i \tau \varepsilon^{-2} \lambda_j(t)} - e^{-i \tau \varepsilon^{-2} t^2 \gamma_j} \right| \varepsilon^{s} (t^2 + \varepsilon^2)^{-s/2} \le \widetilde{C} (\tau) \varepsilon
	\end{equation}
	for all sufficiently small $|t|$ and $\varepsilon$. The left-hand side of~(\ref{abstr_smooth_shrp_f4}) can be written as 
	$$2 \left| \sin \left( \frac{1}{2} \tau \varepsilon^{-2} (\lambda_j(t) - \gamma_j t^2) \right) \right| \varepsilon^{s} (t^2 + \varepsilon^2)^{-s/2}.$$

	Now, assuming that $\varepsilon$ is sufficiently small so that $\varepsilon \le \pi^{-1/2} |\nu_j\tau|^{1/2} t_*^2$, we put $t = t(\varepsilon) = \pi^{1/4} |\nu_j \tau|^{-1/4} \varepsilon^{1/2} = c \varepsilon^{1/2}$. Then $t(\varepsilon) \le t_*$ and, by \eqref{2.11},
	$$
	2 \left| \sin \left( \frac{1}{2} \tau \varepsilon^{-2} (\lambda_j(t(\varepsilon)) - \gamma_j  t(\varepsilon)^2) \right) \right|  \ge \sqrt{2},
	$$
	whence~(\ref{abstr_smooth_shrp_f4}) implies $\sqrt{2} \varepsilon^{s} (c^2 \varepsilon + \varepsilon^2)^{-s/2} \le \widetilde{C} (\tau) \varepsilon$.
	It follows that the function $\varepsilon^{s/2-1} (c^2 + \varepsilon)^{-s/2}$ is uniformly bounded for small $\varepsilon$. But this is not true if $s < 2$. This contradiction completes the proof.
\end{proof}

\subsection{Sharpness of the results with respect to time}
Now, we prove the following statement confirming the sharpness of Theorem~\ref{abstr_exp_general_thrm} with respect to dependence of the estimate on time.

\begin{thrm}
	\label{abstr_exp_time_shrp_thrm_1}
	Suppose that $N_0 \ne 0$. Let $s\ge 3$. Then there does not exist a positive function $C(\tau)$ such that $\lim_{\tau \to \infty} C(\tau)/ |\tau| = 0$ and estimate \eqref{abstr_exp_shrp_est} holds for all $\tau \in \mathbb{R}$ and all sufficiently small $|t|$ and $\varepsilon > 0$.
\end{thrm}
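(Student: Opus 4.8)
The plan is to argue by contradiction, following the scheme of the proof of Theorem~\ref{abstr_exp_smooth_shrp_thrm_2}, but now exploiting the \emph{cubic} term in the eigenvalue expansion \eqref{abstr_A(t)_eigenvalues_series}. Suppose a positive function $C(\tau)$ exists with $\lim_{\tau\to\infty}C(\tau)/|\tau|=0$ for which \eqref{abstr_exp_shrp_est} holds for all $\tau\in\mathbb{R}$ and all sufficiently small $|t|$, $\varepsilon>0$. Since $s\ge 3\ge 1$ and $\varepsilon(t^2+\varepsilon^2)^{-1/2}\le 1$, we have $\varepsilon^s(t^2+\varepsilon^2)^{-s/2}=\bigl(\varepsilon(t^2+\varepsilon^2)^{-1/2}\bigr)^s\le\varepsilon(t^2+\varepsilon^2)^{-1/2}$; this is exactly the inequality used in \eqref{abstr_smooth_shrp_Fperp_est}, so the very same computation as in the first part of the proof of Theorem~\ref{abstr_exp_smooth_shrp_thrm_2} (which only uses $s\ge 1$) shows that \eqref{abstr_exp_shrp_est} implies the diagonal estimate \eqref{abstr_smooth_shrp_f3} with a new constant $\widetilde{C}(\tau)=C(\tau)+\mathfrak{C}$, where $\mathfrak{C}$ is independent of $\tau$. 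In particular $\widetilde{C}(\tau)/|\tau|\to 0$ as well.

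Since $N_0\ne 0$, by \eqref{abstr_N0_N*} there is an index $j$ with $\mu_j\ne 0$, so \eqref{abstr_A(t)_eigenvalues_series} gives $\lambda_j(t)=\gamma_j t^2+\mu_j t^3+O(|t|^4)$; shrinking $t_*$ if necessary, we may assume $\tfrac12|\mu_j||t|^3\le|\lambda_j(t)-\gamma_j t^2|\le\tfrac32|\mu_j||t|^3$ for $|t|\le t_*$. Applying the operator under the norm in \eqref{abstr_smooth_shrp_f3} to $\omega_j$ (and using the orthonormality of $\{\omega_l\}$) yields, for all sufficiently small $|t|$ and $\varepsilon$,
\begin{equation*}
2\left|\sin\!\left(\tfrac12\tau\varepsilon^{-2}(\lambda_j(t)-\gamma_j t^2)\right)\right|\varepsilon^s(t^2+\varepsilon^2)^{-s/2}\le\widetilde{C}(\tau)\varepsilon.
\end{equation*}
Fix a large $\tau>0$ and set $t=\varepsilon$, choosing $\varepsilon$ so small (depending on $\tau$) that $\varepsilon\le t_*$, that the estimate above is valid, and that $x_\varepsilon\coloneqq\tfrac12\tau\varepsilon^{-2}|\lambda_j(\varepsilon)-\gamma_j\varepsilon^2|\le\pi/2$; the last is possible since $x_\varepsilon\le\tfrac34\tau|\mu_j|\varepsilon\to 0$ as $\varepsilon\to 0$ for fixed $\tau$. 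Then $x_\varepsilon\ge\tfrac14\tau|\mu_j|\varepsilon$, the elementary bound $|\sin x|\ge\tfrac{2}{\pi}x$ on $[0,\pi/2]$ gives $2|\sin x_\varepsilon|\ge\tfrac{|\mu_j|}{\pi}\tau\varepsilon$, while the smoothing factor equals $\varepsilon^s(2\varepsilon^2)^{-s/2}=2^{-s/2}$. Hence $2^{-s/2}\tfrac{|\mu_j|}{\pi}\tau\varepsilon\le\widetilde{C}(\tau)\varepsilon$, i.e. $\widetilde{C}(\tau)\ge 2^{-s/2}\pi^{-1}|\mu_j|\,\tau$ for every large $\tau$. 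Therefore $\widetilde{C}(\tau)/\tau$ is bounded below by a positive constant, contradicting $\widetilde{C}(\tau)/|\tau|\to 0$; the case of negative $\tau$ is handled identically with absolute values throughout.

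The crux — and the point to get exactly right — is the cancellation of the factor $\varepsilon$: because we use $|\sin x|$ in the linear regime $x\to 0$ rather than trying to pin the phase near $\pi/2$, the $\varepsilon$ appears on both sides of the final inequality and drops out, so no delicate tuning of $\varepsilon$ against $\tau$ is needed; for each fixed $\tau$ it suffices to take $\varepsilon$ small enough, which is always permitted inside the ``sufficiently small'' range. The only things that genuinely need checking are that the reduction to the diagonal estimate \eqref{abstr_smooth_shrp_f3} costs merely a $\tau$-independent additive constant (so the sublinear-growth hypothesis transfers from $C$ to $\widetilde{C}$), and that the choice $t=\varepsilon$ stays within all the smallness thresholds used in \eqref{abstr_smooth_shrp_Fperp_est}--\eqref{abstr_smooth_shrp_f2} and in the two-sided bound for $\lambda_j(t)-\gamma_j t^2$; both are immediate once $\varepsilon$ is taken below $t_*$ and below the threshold of validity of \eqref{abstr_exp_shrp_est}.
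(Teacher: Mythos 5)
Your proof is correct and follows essentially the same route as the paper's: the same reduction to the scalar estimate \eqref{abstr_smooth_shrp_f3} at a $\tau$-independent cost, the same use of $\mu_j \ne 0$ together with the two-sided bound \eqref{abstr_time_shrp_f4}, and the same lower bound $|\sin y| \ge \frac{2}{\pi}|y|$ in the small-argument regime. The only difference is in the choice of test point: you set $t=\varepsilon$ and let $\varepsilon \to 0$ for each fixed $\tau$, whereas the paper takes $t_\flat = c_\flat |\tau|^{-1/3}\varepsilon^{2/3}$ and then $\varepsilon = |\tau|^{-1}$, which at the last step is again $t \propto \varepsilon$ — so the two arguments coincide up to constants, with yours slightly streamlining the bookkeeping.
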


\begin{proof}
	We prove by contradiction. Suppose that there exists a positive function $C(\tau)$ such that $\lim_{\tau \to \infty} C(\tau)/ |\tau| = 0$ and~\eqref{abstr_exp_shrp_est} is valid for all sufficiently small $|t|$ and $\varepsilon$. By~(\ref{abstr_smooth_shrp_Fperp_est})--(\ref{abstr_smooth_shrp_f2}), this assumption is equivalent to the existence of a positive function $\widetilde{C} (\tau)$ such that $\lim_{\tau \to \infty} \widetilde{C}(\tau)/ |\tau| = 0$ and estimate \eqref{abstr_smooth_shrp_f3}
	holds for all sufficiently small $|t|$ and $\varepsilon$.
	
	The condition $N_0 \ne 0$ means that $\mu_j \ne 0$ at least for one $j$. Then $\lambda_j(t) = \gamma_j t^2 + \mu_j t^3 + O(t^4)$.
	Assume that $t_*$ is sufficiently small so that
	\begin{equation}
	\label{abstr_time_shrp_f4}
	\frac{1}{2} |\mu_j| |t|^3 \le | \lambda_j(t) - \gamma_j t^2 | \le \frac{3}{2} |\mu_j| |t|^3, \qquad |t| \le t_*.
	\end{equation}

	Applying the operator under the norm sign in~\eqref{abstr_smooth_shrp_f3} to $\omega_j$, we obtain 
	\begin{equation}
	\label{abstr_time_shrp_f2}
	2 \left| \sin \left( \frac{1}{2} \tau \varepsilon^{-2} (\lambda_j(t) - \gamma_j t^2) \right) \right| \varepsilon^{s} (t^2 + \varepsilon^2)^{-s/2} \le \widetilde{C} (\tau) \varepsilon
	\end{equation}
	for all sufficiently small $|t|$ and $\varepsilon$.

	Let $\tau \ne 0$, and let $\varepsilon \le \varepsilon_\flat |\tau|^{1/2}$, where $\varepsilon_\flat = (2\pi)^{-1/2} |\mu_j|^{1/2} t_*^{3/2}$. We put
	\begin{equation}
	\label{t_flat}
	t_\flat = t_\flat(\varepsilon,\tau) = c_\flat |\tau|^{-1/3} \varepsilon^{2/3}, \quad c_\flat = \left(\frac{\pi}{4}\right)^{1/3} |\mu_j|^{-1/3}.
	\end{equation}
	Then $t_\flat \le t_*/2$ and, by \eqref{abstr_time_shrp_f4}, $\left| \frac{\tau}{2 \varepsilon^2}  (\lambda_j(t_\flat) - \gamma_j t_\flat^2)  \right| \le \frac{3 \pi}{16} < \frac{\pi}{4}$.	Applying the estimate $|\sin y| \ge \frac{2}{\pi}|y|$ for $|y| \le \pi/2$ and using the lower estimate \eqref{abstr_time_shrp_f4},
	we obtain 
	$$
	\left| \sin \left( \frac{1}{2} \tau \varepsilon^{-2} (\lambda_j(t_\flat) - \gamma_j t_\flat^2) \right) \right|
	\ge \frac{|\tau|}{\pi \varepsilon^2} \left| \lambda_j(t_\flat) - \gamma_j t_\flat^2 \right|
	\ge \frac{|\tau| |\mu_j|}{2 \pi \varepsilon^2} t_\flat^3 = \frac{1}{8}.
	$$
	Together with \eqref{abstr_time_shrp_f2}, this yields $	\frac{1}{4} {\varepsilon^s}{(t_\flat^2 + \varepsilon^2)^{-s/2}} \le  \widetilde{C} (\tau) \varepsilon$ 	for all sufficiently small $\varepsilon$. By \eqref{t_flat}, this implies
	\begin{equation}
	\label{abstr_time_shrp_f5}
	\frac{1}{4}  \frac{( \varepsilon |\tau|)^{s/3 -1}}{( c_\flat^2 + (\varepsilon |\tau|)^{2/3})^{s/2} } \le \frac{\widetilde{C}(\tau)}{|\tau|}
	\end{equation}
	for all sufficiently small $\varepsilon > 0$. But estimate~(\ref{abstr_time_shrp_f5}) is not true for large $|\tau|$ and $\varepsilon = |\tau|^{-1}$ 
	since $\lim_{\tau \to \infty} \widetilde{C}(\tau)/ |\tau| = 0$. This contradiction completes the proof.
\end{proof}

The following statement confirms the sharpness of Theorems~\ref{abstr_exp_enchcd_thrm_1}, \ref{abstr_exp_enchcd_thrm_2}.

\begin{thrm}
	\label{abstr_exp_time_shrp_thrm_2}
	Suppose that $N_0 = 0$ and $\mathcal{N}^{(q)} \ne 0$ for some $q \in \{1, \ldots, p\}$. Let $s \ge 2$. Then there does not exist a positive function $C(\tau)$ such that $\lim_{\tau \to \infty} C(\tau)/ |\tau|^{1/2} = 0$ and estimate~\eqref{abstr_exp_shrp_est}
	holds for all $\tau \in \mathbb{R}$ and all sufficiently small $|t|$ and $\varepsilon > 0$.
\end{thrm}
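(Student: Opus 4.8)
The plan is to argue by contradiction, adapting the proof of Theorem~\ref{abstr_exp_time_shrp_thrm_1} but replacing the cubic term $\mu_j t^3$ by the quartic term $\nu_j t^4$ in the eigenvalue expansion, and replacing the test scaling $\varepsilon = |\tau|^{-1}$ by $\varepsilon = |\tau|^{-1/2}$.

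First I would translate the hypotheses via Remark~\ref{rem1.7}: $N_0 = 0$ together with $\mathcal{N}^{(q)} \ne 0$ for some $q$ means $\mu_l = 0$ for all $l$ and $\nu_j \ne 0$ for at least one index $j$; fix such a $j$. Then~\eqref{abstr_A(t)_eigenvalues_series} gives $\lambda_j(t) = \gamma_j t^2 + \nu_j t^4 + O(|t|^5)$, and after possibly shrinking $t_*$ we may assume $\frac12|\nu_j|t^4 \le |\lambda_j(t) - \gamma_j t^2| \le \frac32|\nu_j|t^4$ for $|t|\le t_*$ (the analog of~\eqref{2.11}). Next, exactly as in the cited proofs, I would use the reduction~\eqref{abstr_smooth_shrp_Fperp_est}--\eqref{abstr_smooth_shrp_f2} — valid since $s\ge 2 \ge 1$ — to see that the negation of the claim is equivalent to the existence of a positive function $\widetilde C(\tau)$ with $\widetilde C(\tau)/|\tau|^{1/2}\to 0$ such that~\eqref{abstr_smooth_shrp_f3} holds for all small $|t|,\varepsilon$. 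Applying the operator under the norm in~\eqref{abstr_smooth_shrp_f3} to $\omega_j$ then yields
\[
2\left|\sin\left(\frac12\tau\varepsilon^{-2}(\lambda_j(t) - \gamma_j t^2)\right)\right|\varepsilon^{s}(t^2+\varepsilon^2)^{-s/2} \le \widetilde C(\tau)\varepsilon
\]
for all sufficiently small $|t|,\varepsilon$.

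The key step is the choice of $t$. I would set $t_\flat = t_\flat(\varepsilon,\tau) := c_\flat|\tau|^{-1/4}\varepsilon^{1/2}$ with $c_\flat := (\pi/4)^{1/4}|\nu_j|^{-1/4}$, which is $\le t_*$ once $\varepsilon \le \varepsilon_\flat|\tau|^{1/2}$ for a suitable $\varepsilon_\flat$. The upper bound on $|\lambda_j(t)-\gamma_j t^2|$ gives $\left|\frac{\tau}{2\varepsilon^2}(\lambda_j(t_\flat)-\gamma_j t_\flat^2)\right| \le \frac{3\pi}{16} < \frac{\pi}{2}$, so the inequality $|\sin y|\ge\frac2\pi|y|$ together with the lower bound gives $\left|\sin\left(\frac12\tau\varepsilon^{-2}(\lambda_j(t_\flat)-\gamma_j t_\flat^2)\right)\right| \ge \frac{|\tau||\nu_j|}{2\pi\varepsilon^2}t_\flat^4 = \frac18$. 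Substituting into the displayed inequality, using $t_\flat^2+\varepsilon^2 = \varepsilon(c_\flat^2|\tau|^{-1/2}+\varepsilon)$, and dividing by $\varepsilon$, one obtains
\[
\frac14\,\frac{(\varepsilon|\tau|^{1/2})^{s/2-1}}{\bigl(c_\flat^2 + \varepsilon|\tau|^{1/2}\bigr)^{s/2}} \le \frac{\widetilde C(\tau)}{|\tau|^{1/2}}
\]
for all sufficiently small $\varepsilon$; here the powers of $\varepsilon$ and $|\tau|$ combine so that precisely the factor $|\tau|^{-1/2}$ survives on the right. Taking $\varepsilon = |\tau|^{-1/2}$ (admissible for large $|\tau|$) makes the left-hand side equal to the positive constant $\frac14(c_\flat^2+1)^{-s/2}$, independent of $\tau$, whereas the right-hand side tends to $0$ as $\tau\to\infty$. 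This contradiction finishes the argument.

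I expect the only point requiring care to be the exponent bookkeeping in the last display: one must check that with $t_\flat^2$ of order $|\tau|^{-1/2}\varepsilon$ the smoothing factor $\varepsilon^s(t^2+\varepsilon^2)^{-s/2}$ reduces to $\varepsilon^{s/2}(c_\flat^2|\tau|^{-1/2}+\varepsilon)^{-s/2}$, and that rescaling by $u = \varepsilon|\tau|^{1/2}$ isolates exactly the factor $|\tau|^{1/2}$ needed to beat $\widetilde C(\tau) = o(|\tau|^{1/2})$; for $s=2$ the numerator exponent vanishes but the argument still goes through. The rest is a routine transcription of the proofs of Theorems~\ref{abstr_exp_smooth_shrp_thrm_2} and~\ref{abstr_exp_time_shrp_thrm_1}.
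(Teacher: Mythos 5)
Your proposal is correct and coincides with the paper's proof in all essentials: the same reduction via Remark~\ref{rem1.7} and \eqref{abstr_smooth_shrp_Fperp_est}--\eqref{abstr_smooth_shrp_f3}, the same choice $t = (\pi/4)^{1/4}|\nu_j|^{-1/4}|\tau|^{-1/4}\varepsilon^{1/2}$, the same lower bound $1/8$ for the sine, the same rescaled inequality \eqref{2.18}, and the same test value $\varepsilon = |\tau|^{-1/2}$. The exponent bookkeeping you flag checks out exactly as in the paper.
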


\begin{proof}
	By Remark~\ref{rem1.7}, the conditions $N_0 = 0$ and $\mathcal{N}^{(q)} \ne 0$ for some $q \in \{1, \ldots, p\}$ mean that  $\mu_l = 0$ for all $l=1, \ldots,n$ and $\nu_j \ne 0$ at least for one $j$. Then 
	for sufficiently small $t_*$ relations \eqref{2.11} hold. 
	
	We prove by contradiction. Suppose the opposite. Then, similarly to the proof of Theorem \ref{abstr_exp_time_shrp_thrm_1}, we conclude that 
	there exists  a positive function $\widetilde{C} (\tau)$ such that $\lim_{\tau \to \infty} \widetilde{C}(\tau)/ |\tau|^{1/2} = 0$ and 
	\eqref{abstr_time_shrp_f2} holds for all sufficiently small $|t|$ and $\varepsilon$.

	Let $\tau \ne 0$, and let $\varepsilon \le \varepsilon_\dag |\tau|^{1/2}$, where $\varepsilon_\dag = \frac{1}{2} \pi^{-1/2} |\nu_j|^{1/2} t_*^{2}$. We put
	\begin{equation}
	\label{t_dag}
	t_\dag = t_\dag(\varepsilon,\tau) = c_\dag |\tau|^{-1/4} \varepsilon^{1/2}, \quad c_\dag = \left(\frac{\pi}{4}\right)^{1/4} |\nu_j|^{-1/4}.
	\end{equation}
	Then $t_\dag \le t_*/2$ and, by \eqref{2.11},
	$\left| \frac{\tau}{2 \varepsilon^2}  (\lambda_j(t_\dag) - \gamma_j t_\dag^2)  \right| \le \frac{3 \pi}{16} < \frac{\pi}{4}$.
	Applying the estimate $|\sin y| \ge \frac{2}{\pi}|y|$ for $|y| \le \pi/2$ and using the lower estimate \eqref{2.11},
	we obtain 
	$$
	\left| \sin \left( \frac{1}{2} \tau \varepsilon^{-2} (\lambda_j(t_\dag) - \gamma_j t_\dag^2) \right) \right|
	\ge \frac{|\tau|}{\pi \varepsilon^2} \left| \lambda_j(t_\dag) - \gamma_j t_\dag^2 \right|
	\ge \frac{|\tau| |\nu_j|}{2 \pi \varepsilon^2} t_\dag^4 = \frac{1}{8}.
	$$
	Combining this with  \eqref{abstr_time_shrp_f2}, we have $\frac{1}{4} {\varepsilon^s}{(t_\dag^2 + \varepsilon^2)^{-s/2}} \le  \widetilde{C} (\tau) \varepsilon$ for all sufficiently small $\varepsilon$. By \eqref{t_dag}, this is equivalent to 
	\begin{equation}
	\label{2.18}
	\frac{1}{4}  \frac{( \varepsilon |\tau|^{1/2})^{s/2 -1}}{( c_\dag^2 + \varepsilon |\tau|^{1/2})^{s/2} } \le \frac{\widetilde{C}(\tau)}{|\tau|^{1/2}}
	\end{equation}
	for all sufficiently small $\varepsilon > 0$. But estimate~(\ref{2.18}) is not true for large $|\tau|$ and $\varepsilon = |\tau|^{-1/2}$ 
	since $\lim_{\tau \to \infty} \widetilde{C}(\tau)/ |\tau|^{1/2} = 0$. This contradiction completes the proof. 
\end{proof}

\section{Approximation of the sandwiched operator exponential}
\label{abstr_sndw_section}
\subsection{The operator family $A(t) = M^* \widehat{A} (t) M$}
\label{abstr_A_and_Ahat_section}
Let $\widehat{\mathfrak{H}}$ be yet another separable Hilbert space. Let $\widehat{X} (t) = \widehat{X}_0 + t \widehat{X}_1 \colon \widehat{\mathfrak{H}} \to \mathfrak{H}_* $ be a family of operators of the same form as $X(t)$, and suppose that $\widehat{X} (t)$ satisfies the assumptions of Subsection~\ref{abstr_X_A_section}. Let $M \colon \mathfrak{H} \to \widehat{\mathfrak{H}}$ be an isomorphism. Suppose that $M \Dom X_0 = \Dom \widehat{X}_0$, $X(t) = \widehat{X} (t) M$, and then also $X_0 = \widehat{X}_0 M$, $X_1 = \widehat{X}_1 M$. In $\widehat{\mathfrak{H}}$, we consider the family of selfadjoint operators $\widehat{A} (t) = \widehat{X} (t)^* \widehat{X} (t)$. Then, obviously, 
\begin{equation}
\label{abstr_A_Ahat}
A(t) = M^* \widehat{A} (t) M.
\end{equation} 
In what follows, all the objects corresponding to the family $\widehat{A}(t)$ are marked by the sign \textquotedblleft$\widehat{\phantom{m}} $\textquotedblright. Note that $\widehat{\mathfrak{N}} = M \mathfrak{N}$ and $\widehat{\mathfrak{N}}_* =  \mathfrak{N}_*$. 

In $\widehat{\mathfrak{H}}$ we consider the positive definite operator $Q \coloneqq (M M^*)^{-1}$. Let $Q_{\widehat{\mathfrak{N}}}$ be the block of $Q$ in the subspace $\widehat{\mathfrak{N}}$, i.e.
$Q_{\widehat{\mathfrak{N}}} = \widehat{P} Q|_{\widehat{\mathfrak{N}}}$. Obviously, $Q_{\widehat{\mathfrak{N}}}$ is an isomorphism in $\widehat{\mathfrak{N}}$.

Condition~\ref{abstr_nondegeneracy_cond} implies that for $\widehat{A} (t)$ we have
\begin{equation*}
\widehat{A} (t) \ge \widehat{c}_* t^2 I, \quad \widehat{c}_* = c_* \|M\|^{-2}, \qquad |t| \le t^0.
\end{equation*}

According to~\cite[Proposition~1.2]{Su2007}, the orthogonal projection $P$ of $\mathfrak{H}$ onto $\mathfrak{N}$ and  the orthogonal projection $\widehat{P}$ of $\widehat{\mathfrak{H}}$ onto $\widehat{\mathfrak{N}}$ satisfy the following relation
\begin{equation}
\label{abstr_P_Phat}
P = M^{-1} (Q_{\widehat{\mathfrak{N}}})^{-1} \widehat{P} (M^*)^{-1}.
\end{equation}
Let $\widehat{S} \colon \widehat{\mathfrak{N}} \to \widehat{\mathfrak{N}}$ be the spectral germ of $\widehat{A} (t)$ at $t = 0$, and let $S$ be the germ of $A (t)$. The following identity was obtained in~\cite[Chapter~1, Subsection~1.5]{BSu2003}:
\begin{equation}
\label{abstr_S_Shat}
S = P M^* \widehat{S} M |_\mathfrak{N}.
\end{equation}

\subsection{The operators $\widehat{Z}_Q$ and $\widehat{N}_Q$}
\label{abstr_hatZ_Q_and_hatN_Q_section}
For the operator family $\widehat{A} (t)$ we introduce the operator $\widehat{Z}_Q$ acting in $\widehat{\mathfrak{H}}$ and taking an element $\widehat{u} \in \widehat{\mathfrak{H}}$ to the solution $\widehat{\psi}_Q$ of the problem $\widehat{X}^*_0 (\widehat{X}_0 \widehat{\psi}_Q + \widehat{X}_1 \widehat{\omega}) = 0$, $Q \widehat{\psi}_Q \perp \widehat{\mathfrak{N}}$, where $\widehat{\omega} = \widehat{P} \widehat{u}$. According to~\cite[\S6]{BSu2005}, the operator $Z$ for $A(t)$ and the operator $\widehat{Z}_Q$ introduced above satisfy
\begin{equation}
\label{abstr_Z_hatZ_Q}
\widehat{Z}_Q =M Z M^{-1} \widehat{P}.
\end{equation}
Next, we put $\widehat{N}_Q \coloneqq \widehat{Z}_Q^* \widehat{X}_1^* \widehat{R} \widehat{P} + (\widehat{R} \widehat{P})^* \widehat{X}_1  \widehat{Z}_Q$. According to~\cite[\S6]{BSu2005}, the operator $N$ for $A(t)$ and the operator $\widehat{N}_Q$ satisfy
\begin{equation}
\label{abstr_N_hatN_Q}
\widehat{N}_Q = \widehat{P} (M^*)^{-1} N M^{-1} \widehat{P}.
\end{equation}
Since $N = N_0 + N_*$, we have $\widehat{N}_Q = \widehat{N}_{0,Q} + \widehat{N}_{*,Q}$, where
\begin{equation}
\label{abstr_N0*_hatN0*_Q}
\widehat{N}_{0,Q} = \widehat{P} (M^*)^{-1} N_0 M^{-1} \widehat{P}, \qquad \widehat{N}_{*,Q} = \widehat{P} (M^*)^{-1} N_* M^{-1} \widehat{P}.
\end{equation}

The following lemma was proved in~\cite[Lemma~5.1]{Su2017}.
\begin{lemma}[\cite{Su2017}]
	\label{abstr_N_hatNQ_lemma}
	The relation $N = 0$ is equivalent to the relation $\widehat{N}_Q = 0$. The relation $N_0 = 0$ is equivalent to the relation $\widehat{N}_{0,Q} = 0$.
\end{lemma}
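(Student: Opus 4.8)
The plan is to derive both equivalences directly from the identities \eqref{abstr_N_hatN_Q} and \eqref{abstr_N0*_hatN0*_Q} already recorded, together with two structural facts: first, $\widehat{\mathfrak{N}} = M\mathfrak{N}$, so that $M^{-1}$ carries $\widehat{\mathfrak{N}}$ onto $\mathfrak{N}$; second, by Remark~\ref{abstr_N_remark} and \eqref{abstr_N0_N*}, both $N$ and $N_0$ map $\mathfrak{N}$ into itself and annihilate $\mathfrak{N}^{\perp}$, so that $N = PNP$ and $N_0 = PN_0P$.

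One implication is trivial: if $N = 0$, then \eqref{abstr_N_hatN_Q} gives $\widehat{N}_Q = \widehat{P}(M^*)^{-1} N M^{-1}\widehat{P} = 0$, and likewise $N_0 = 0$ yields $\widehat{N}_{0,Q} = 0$ via \eqref{abstr_N0*_hatN0*_Q}.

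For the reverse implication I would pass to bilinear forms. Using that $\widehat{P}$ is self-adjoint and that $((M^*)^{-1})^* = M^{-1}$, for all $\widehat{u}, \widehat{v} \in \widehat{\mathfrak{H}}$ one obtains $(\widehat{N}_Q \widehat{u}, \widehat{v})_{\widehat{\mathfrak{H}}} = (N M^{-1}\widehat{P}\widehat{u},\, M^{-1}\widehat{P}\widehat{v})_{\mathfrak{H}}$. As $\widehat{u}$ and $\widehat{v}$ range over $\widehat{\mathfrak{H}}$, the vectors $M^{-1}\widehat{P}\widehat{u}$ and $M^{-1}\widehat{P}\widehat{v}$ range precisely over $M^{-1}\widehat{\mathfrak{N}} = \mathfrak{N}$; hence $\widehat{N}_Q = 0$ is equivalent to $(Nw, z)_{\mathfrak{H}} = 0$ for all $w, z \in \mathfrak{N}$, which, since $N = PNP$, is in turn equivalent to $N = 0$. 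Carrying out the same computation for \eqref{abstr_N0*_hatN0*_Q}, with $N$ replaced by $N_0$ and using $N_0 = PN_0P$, proves the equivalence of $N_0 = 0$ with $\widehat{N}_{0,Q} = 0$.

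There is no genuine obstacle here; the one point that needs care is that the outer projections $\widehat{P}$ in \eqref{abstr_N_hatN_Q}--\eqref{abstr_N0*_hatN0*_Q} could a priori lose information --- one cannot simply cancel the isomorphisms $M$ and $(M^*)^{-1}$, because $(M^*)^{-1}\mathfrak{N} \ne \widehat{\mathfrak{N}}$ in general. This is precisely neutralised by $M^{-1}\widehat{\mathfrak{N}} = \mathfrak{N}$ together with $N = PNP$: they say that the restriction of the quadratic form of $N$ to $\mathfrak{N} \times \mathfrak{N}$ --- the only data not annihilated by the surrounding projections --- already determines $N$ completely.
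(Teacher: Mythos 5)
Your proof is correct. The paper itself does not reprove this lemma (it is quoted from \cite{Su2017}), but your argument is a clean, self-contained version of the standard one. The forward implications are indeed immediate from \eqref{abstr_N_hatN_Q} and \eqref{abstr_N0*_hatN0*_Q}, and your passage to the bilinear form correctly neutralises the only real danger, namely that the outer projections $\widehat{P}$ might lose information: since $((M^*)^{-1})^* = M^{-1}$ and $M^{-1}\widehat{\mathfrak{N}} = \mathfrak{N}$, the vanishing of $\widehat{N}_Q$ is equivalent to the vanishing of $(N\,\cdot,\cdot)$ on $\mathfrak{N}\times\mathfrak{N}$, which together with $N = PNP$ forces $N=0$; the same works verbatim for $N_0$. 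An equally short alternative, closer in spirit to the computations of \S\ref{abstr_sndw_section}, is to read the second equivalence off the explicit representations: by \eqref{abstr_N0_N*} one has $N_0=0$ iff $\mu_l=0$ for all $l$, while by \eqref{abstr_hatN_0Q_N_*Q} one has $\widehat{N}_{0,Q}=0$ iff $\mu_l=0$ for all $l$, because the vectors $Q_{\widehat{\mathfrak{N}}}\zeta_l$ are linearly independent ($Q_{\widehat{\mathfrak{N}}}$ is an isomorphism of $\widehat{\mathfrak{N}}$ and $\{\zeta_l\}$ is a basis); yet another variant deduces injectivity of $\widehat{P}(M^*)^{-1}|_{\mathfrak{N}}$ directly from \eqref{abstr_P_Phat}. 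All routes are of comparable length; yours has the advantage of not invoking the eigenbasis or the weight $Q_{\widehat{\mathfrak{N}}}$ at all.
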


\subsection{The operators $\widehat{Z}_{2,Q}$, $\widehat{R}_{2,Q}$, and $\widehat{N}_{1,Q}^0$}
\label{abstr_hatZ2_Q_hatR2_Q_N1^0_Q_section}
Let $\widehat{u} \in \widehat{\mathfrak{H}}$ and let $\widehat{\phi}_Q = \widehat{\phi}_Q(\widehat{u}) \in \Dom \widehat{X}_0$ be a (weak) solution of the equation
\begin{equation*}
\widehat{X}^*_0 (\widehat{X}_0 \widehat{\phi}_Q + \widehat{X}_1 \widehat{Z}_Q \widehat{\omega}) = -\widehat{X}_1^* \widehat{R} \widehat{\omega} + Q (Q_{\widehat{\mathfrak{N}}})^{-1} \widehat{P} \widehat{X}_1^* \widehat{R} \widehat{\omega}, \qquad Q \widehat{\phi}_Q \perp \widehat{\mathfrak{N}},
\end{equation*}
where $\widehat{\omega} = \widehat{P} \widehat{u}$. Clearly, the right-hand side of this equation belongs to $\widehat{\mathfrak{N}}^{\perp} = \Ran \widehat{X}_0^*$, thereby the solvability condition is satisfied. We define an operator 
$\widehat{Z}_{2,Q} \colon \widehat{\mathfrak{H}} \to \widehat{\mathfrak{H}}$ by the formula $\widehat{Z}_{2,Q} \widehat{u} = \widehat{\phi}_Q(\widehat{u})$. 

Now, we introduce an operator $\widehat{R}_{2,Q} \colon \widehat{\mathfrak{N}} \to \mathfrak{H}_*$ by the formula
$\widehat{R}_{2,Q} = \widehat{X}_0 \widehat{Z}_{2,Q}  + \widehat{X}_1 \widehat{Z}_Q $.
Finally, we define the operator $\widehat{N}_{1,Q}^0$:
\begin{equation*}
\widehat{N}_{1,Q}^0 = \widehat{Z}_{2,Q}^* \widehat{X}_1^* \widehat{R} \widehat{P} + (\widehat{R} \widehat{P})^* \widehat{X}_1 \widehat{Z}_{2,Q} + \widehat{R}_{2,Q}^* \widehat{R}_{2,Q} \widehat{P}.
\end{equation*}
According to~\cite[Section~6.3]{VSu2011}, we have
\begin{gather}
\notag
\widehat{Z}_{2,Q} =M Z_2 M^{-1} \widehat{P}, \\
\notag
R_2 = \widehat{R}_{2,Q} M |_{\mathfrak{N}}, \quad \widehat{R}_{2,Q} = R_2 M^{-1} |_{\widehat{\mathfrak{N}}},\\
\label{abstr_N1^0_N1Q^0_hat}
\widehat{N}_{1,Q}^0 = \widehat{P} (M^*)^{-1} N_1^0 M^{-1} \widehat{P}.
\end{gather}

\subsection{Relations between the operators and the coefficients of the power series expansions}
Now, we describe the relations between the coefficients of the power series expansions~(\ref{abstr_A(t)_eigenvalues_series}), (\ref{abstr_A(t)_eigenvectors_series}) and the operators $\widehat{S}$ and $Q_{\widehat{\mathfrak{N}}}$. (See~\cite[Sections~1.6,~1.7]{BSu2005}.) Denote $\zeta_l \coloneqq M \omega_l \in \widehat{\mathfrak{N}}$, $l = 1, \ldots, n$. Then relations~(\ref{abstr_S_eigenvectors}) and~(\ref{abstr_P_Phat}), (\ref{abstr_S_Shat}) show that
\begin{equation}
\label{abstr_hatS_gener_spec_problem}
\widehat{S} \zeta_l  = \gamma_l Q_{\widehat{\mathfrak{N}}} \zeta_l, \qquad l = 1, \ldots, n. 
\end{equation}
The set $\zeta_1, \ldots, \zeta_n$ forms a basis in $\widehat{\mathfrak{N}}$ orthonormal with the weight $Q_{\widehat{\mathfrak{N}}}$: $(Q_{\widehat{\mathfrak{N}}} \zeta_l, \zeta_j) = \delta_{lj}$, $l,j = 1,\ldots,n$.

The operators $\widehat{N}_{0,Q}$ and $\widehat{N}_{*,Q}$ can be described in terms of the coefficients of the expansions~(\ref{abstr_A(t)_eigenvalues_series}) and~(\ref{abstr_A(t)_eigenvectors_series}); cf.~(\ref{abstr_N0_N*}). We put $\widetilde{\zeta}_l \coloneqq M \widetilde{\omega}_l \in \widehat{\mathfrak{N}}$, $l = 1, \ldots,n$. Then
\begin{equation}
\label{abstr_hatN_0Q_N_*Q}
\widehat{N}_{0,Q} = \sum_{k=1}^{n} \mu_k (\cdot, Q_{\widehat{\mathfrak{N}}} \zeta_k) Q_{\widehat{\mathfrak{N}}} \zeta_k, \qquad \widehat{N}_{*,Q} = \sum_{k=1}^{n} \gamma_k \left( (\cdot, Q_{\widehat{\mathfrak{N}}} \widetilde{\zeta}_k) Q_{\widehat{\mathfrak{N}}} \zeta_k + (\cdot, Q_{\widehat{\mathfrak{N}}} \zeta_k) Q_{\widehat{\mathfrak{N}}} \widetilde{\zeta}_k \right). 
\end{equation}

Now, we return to the notation of Section~\ref{abstr_cluster_section}. Recall that the different eigenvalues of the germ $S$ are denoted by $\gamma^{\circ}_q$, $q = 1,\ldots,p$, and the corresponding eigenspaces by $\mathfrak{N}_q$. The set of the vectors $\omega_l$, $l=i, \ldots, i+k_q-1$, where $i = i(q) = k_1+\ldots+k_{q-1}+1$, forms an orthonormal basis in~$\mathfrak{N}_q$. Then the same numbers $\gamma^{\circ}_q$, $q = 1,\ldots,p$, are the different eigenvalues of the problem~(\ref{abstr_hatS_gener_spec_problem}) and $M \mathfrak{N}_q \eqqcolon \widehat{\mathfrak{N}}_{q,Q}$ are the corresponding eigenspaces. The vectors $\zeta_l = M \omega_l$, $l=i, \ldots, i+k_q-1$, form a basis in $\widehat{\mathfrak{N}}_{q,Q}$ (orthonormal with the weight $Q_{\widehat{\mathfrak{N}}}$). By $\mathcal{P}_q$ we denote the \textquotedblleft skew\textquotedblright \ projection of $\widehat{\mathfrak H}$ onto $\widehat{\mathfrak{N}}_{q,Q}$ that is orthogonal with respect to the inner product $(Q_{\widehat{\mathfrak{N}}} \cdot, \cdot)$, i.e. $\mathcal{P}_q = \sum_{l=i}^{i+k_q-1} (\cdot, Q_{\widehat{\mathfrak{N}}} \zeta_l) \zeta_l$. It is easily seen that $\mathcal{P}_q =M P_q M^{-1} \widehat{P}$.
Using~(\ref{abstr_N0_N*_invar_repr}), (\ref{abstr_N_hatN_Q}), and~(\ref{abstr_N0*_hatN0*_Q}), it is easy to check that
\begin{equation}
\label{abstr_hatN_0Q_N_*Q_invar_repr}
\widehat{N}_{0,Q} = \sum_{j=1}^{p} \mathcal{P}_j^* \widehat{N}_Q \mathcal{P}_j, \qquad \widehat{N}_{*,Q} = \sum_{\substack{1 \le l,j \le p \\ j \ne l}} \mathcal{P}_l^* \widehat{N}_Q \mathcal{P}_j.
\end{equation}

Next, we  find a relationship between the eigenvalues and eigenvectors of  problem~(\ref{abstr_N_eigenvalues}) and the operator $\widehat{N}_Q$. Let $\gamma^{\circ}_q$ be 
the $q$-th eigenvalue of problem~(\ref{abstr_hatS_gener_spec_problem}) of multiplicity $k_q$. Then from~\eqref{abstr_N_eigenvalues},~(\ref{abstr_N_hatN_Q}) and the obvious identity $M P_q = \widehat{P}_{q,Q} M P_q$, where $\widehat{P}_{q,Q}$ is the orthogonal projection of $\widehat{\mathfrak H}$ onto $\widehat{\mathfrak{N}}_{q,Q}$, it is seen that
\begin{equation}
\label{abstr_hatN_Q_gener_spec_problem}
\widehat{P}_{q,Q} \widehat{N}_Q \zeta_l = \mu_l Q_{\widehat{\mathfrak{N}}_{q,Q}} \zeta_l, \qquad l = i(q), \ldots, i(q)+k_q-1,
\end{equation}
where $Q_{\widehat{\mathfrak{N}}_{q,Q}} = \widehat{P}_{q,Q} Q|_{\widehat{\mathfrak{N}}_{q,Q}}$. Recall that the different eigenvalues of problem~(\ref{abstr_N_eigenvalues}) are denoted by $\mu^{\circ}_{q',q}$, $q' = 1,\ldots,p'(q)$, and the corresponding eigenspaces by $\mathfrak{N}_{q',q}$. Then the same numbers $\mu^{\circ}_{q',q}$, $q' = 1,\ldots,p'(q)$, are different eigenvalues of problem~(\ref{abstr_hatN_Q_gener_spec_problem}), and $M \mathfrak{N}_{q',q} \eqqcolon \widehat{\mathfrak{N}}_{q',q,Q}$ are the corresponding eigenspaces.

Finally, we connect the eigenvalues and eigenvectors of the problem~(\ref{abstr_srcN^q_eigenvalues}) and the operator
\begin{equation*}
\widehat{\mathcal{N}}_Q^{(q',q)} =  \widehat{P}_{q',q,Q} \left. \left( \widehat{N}_{1,Q}^0 - \frac{1}{2} \widehat{Z}_Q^* Q \widehat{Z}_Q Q^{-1} \widehat{S} \widehat{P} - \frac{1}{2} \widehat{S} \widehat{P} Q^{-1} \widehat{Z}_Q^* Q \widehat{Z}_Q \right)\right|_{\widehat{\mathfrak{N}}_{q',q,Q}} + \widehat{\mathcal{N}}^{(q',q)}_{0,Q},
\end{equation*}
where $\widehat{\mathcal{N}}^{(q',q)}_{0,Q}$ is the operator in $\widehat{\mathfrak{N}}_{q',q,Q}$ generated by the form
\begin{equation*}
\widehat{\mathfrak{n}}_{0,Q}^{(q',q)}[\cdot,\cdot] =  \sum_{\substack{j\in\{1,\ldots,p\} \\ j \ne q}} \frac{( \widehat{P}_{j,Q} (M M^*) \widehat{P}_{j,Q} \widehat{N}_Q  \cdot,\widehat{N}_Q \cdot)}{\gamma^{\circ}_q - \gamma^{\circ}_j},
\end{equation*}
and $\widehat{P}_{q',q,Q}$ is the orthogonal projection onto $\widehat{\mathfrak{N}}_{q',q,Q}$. From~\eqref{abstr_srcN^q_eigenvalues}, \eqref{abstr_S_Shat}--\eqref{abstr_N_hatN_Q}, (\ref{abstr_N1^0_N1Q^0_hat}), and the identities $M P_j = \widehat{P}_{j,Q} M P_j$, $\widehat{P}_{j,Q} M (I - P_j) = 0$, $j=1,\ldots,p$, $M P_{q',q} = \widehat{P}_{q',q,Q} M P_{q',q}$, it is seen that
\begin{equation}
\label{abstr_scrNhat_M^(q)_gener_spec_problem}
\widehat{\mathcal{N}}_Q^{(q',q)} \zeta_l = \nu_l Q_{q',q,Q} \zeta_l, \qquad l = i', \ldots, i'+k_{q',q}-1.
\end{equation}
Here $i' = i'(q',q) = i(q)+k_{1,q}+\ldots+k_{q'-1,q}$ and $Q_{q',q,Q} = \widehat{P}_{q',q,Q} Q|_{\widehat{\mathfrak{N}}_{q',q,Q}}$.

\begin{remark}
	Let $\widehat{N}_{0,Q} = 0$. By \eqref{abstr_hatN_0Q_N_*Q}, this condition is equivalent to the relations $\mu_l = 0$ for all $l = 1, \ldots, n$. In this case, we have $\mathfrak{N}_{1,q} = \mathfrak{N}_{q}$, $q=1,\ldots,p$. Then we shall write $\widehat{\mathcal{N}}^{(q)}_Q$ instead of $\widehat{\mathcal{N}}^{(1,q)}_Q$. 
\end{remark}

\begin{remark}
	\label{abstr_N^(q',q)_hatNQ^(q',q)_rem}
	Let $\widehat{\mathcal{N}}_Q^{(q',q)} \ne 0$ for some $q$ and $q'$. Then, by~\eqref{abstr_scrNhat_M^(q)_gener_spec_problem}, $\nu_l \ne0$ for some $l = i'(q',q), \ldots, i'(q',q)+k_{q',q}-1$. From~\eqref{abstr_srcN^q_eigenvalues} it directly follows that $\mathcal{N}^{(q',q)} \ne 0$.
\end{remark}

\subsection{Approximation of the sandwiched operator exponential}
\label{abstr_sndw_exp_section}
In this subsection we find approximation for the operator exponential $e^{-i \tau \varepsilon^{-2} A(t)}$ of the family~(\ref{abstr_A_Ahat}) in terms of the germ $\widehat{S}$ of $\widehat{A}(t)$ and the isomorphism $M$. It is convenient to border the exponential by appropriate factors.

Denote $M_0 \coloneqq (Q_{\widehat{\mathfrak{N}}})^{-1/2}$. The following estimates were proved in~\cite[Lemma~5.3]{Su2017}:
\begin{align}
\label{abstr_sndw_exp_est_1}
\| M e^{-i\tau A(t)} M^{-1} \widehat{P} -  M_0 e^{-i \tau t^2 M_0 \widehat{S} M_0} M_0^{-1} \widehat{P}\| \le  \| M \|^2 \| M^{-1} \|^2 \| e^{-i \tau A(t)} P -  e^{-i \tau t^2 S P} P \|,\\
\label{abstr_sndw_exp_est_2}
\| e^{-i \tau A(t)} P -  e^{-i \tau t^2 S P} P \| \le  \| M \|^2 \| M^{-1} \|^2 \| M e^{-i\tau A(t)} M^{-1} \widehat{P} -  M_0 e^{-i \tau t^2 M_0 \widehat{S} M_0} M_0^{-1} \widehat{P}\|. 
\end{align}
Theorems~\ref{abstr_exp_general_thrm}, \ref{abstr_exp_enchcd_thrm_1}, \ref{abstr_exp_enchcd_thrm_2}, Lemma~\ref{abstr_N_hatNQ_lemma}, and  inequality~(\ref{abstr_sndw_exp_est_1}) directly imply the following results.

\begin{thrm}[\cite{BSu2008}]
	\label{abstr_sndw_exp_general_thrm}
	Under the assumptions of Subsection~\emph{\ref{abstr_A_and_Ahat_section}} for $\tau \in \mathbb{R}$, $\varepsilon > 0$, and $|t| \le t^0$ we have
	\begin{equation*}
	\|M e^{-i \tau \varepsilon^{-2} A(t)} M^{-1} \widehat{P} -  M_0 e^{-i \tau \varepsilon^{-2} t^2 M_0 \widehat{S} M_0} M_0^{-1} \widehat{P}\| \varepsilon^{3} (t^2 + \varepsilon^2)^{-3/2} \le \| M \|^2 \| M^{-1} \|^2 (C_1  + C_2 |\tau| ) \varepsilon.
	\end{equation*}
\end{thrm}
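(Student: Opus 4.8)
The plan is to simply combine the abstract exponential estimate from Theorem~\ref{abstr_exp_general_thrm} with the transfer inequality~\eqref{abstr_sndw_exp_est_1}, inserting the scaling $\tau \mapsto \varepsilon^{-2}\tau$ throughout. First I would recall that the family $A(t) = M^*\widehat{A}(t)M$ of Subsection~\ref{abstr_A_and_Ahat_section} satisfies all the hypotheses of Subsection~\ref{abstr_X_A_section} (in particular Condition~\ref{abstr_nondegeneracy_cond}, which carries over with $\widehat{c}_* = c_*\|M\|^{-2}$), so Theorem~\ref{abstr_exp_general_thrm} applies to $A(t)$ verbatim. Thus, for $\tau\in\mathbb{R}$, $\varepsilon>0$, and $|t|\le t^0$,
\begin{equation*}
\bigl\| e^{-i \tau \varepsilon^{-2} A (t)} P - e^{-i \tau \varepsilon^{-2} t^2 SP} P \bigr\| \varepsilon^3 (t^2 + \varepsilon^2)^{-3/2} \le  (C_1 + C_2 |\tau|) \varepsilon,
\end{equation*}
with $C_1, C_2$ majorated by polynomials of $\delta^{-1/2}$, $\|X_1\|$. (Here one should note that $SP$ and the germ $S$ used in Theorem~\ref{abstr_exp_general_thrm} are those of the family $A(t)$; they are related to $\widehat{S}$ by~\eqref{abstr_S_Shat}, but this identity is not even needed for the present bound.)

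Next I would invoke inequality~\eqref{abstr_sndw_exp_est_1}, which was established in~\cite[Lemma~5.3]{Su2017}, with $\tau$ replaced by $\varepsilon^{-2}\tau$:
\begin{equation*}
\| M e^{-i\tau \varepsilon^{-2} A(t)} M^{-1} \widehat{P} -  M_0 e^{-i \tau \varepsilon^{-2} t^2 M_0 \widehat{S} M_0} M_0^{-1} \widehat{P}\| \le  \| M \|^2 \| M^{-1} \|^2 \| e^{-i \tau \varepsilon^{-2} A(t)} P -  e^{-i \tau \varepsilon^{-2} t^2 S P} P \|.
\end{equation*}
Multiplying both sides by the smoothing factor $\varepsilon^{3}(t^2+\varepsilon^2)^{-3/2}$ and substituting the bound from the previous step yields exactly the claimed estimate
\begin{equation*}
\|M e^{-i \tau \varepsilon^{-2} A(t)} M^{-1} \widehat{P} -  M_0 e^{-i \tau \varepsilon^{-2} t^2 M_0 \widehat{S} M_0} M_0^{-1} \widehat{P}\| \varepsilon^{3} (t^2 + \varepsilon^2)^{-3/2} \le \| M \|^2 \| M^{-1} \|^2 (C_1  + C_2 |\tau| ) \varepsilon,
\end{equation*}
valid for all $\tau\in\mathbb{R}$, $\varepsilon>0$, $|t|\le t^0$.

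Honestly, there is no real obstacle here: the statement is a direct corollary, and the only point requiring minimal care is the bookkeeping of which germ ($S$ versus $M_0\widehat{S}M_0$, with $M_0 = (Q_{\widehat{\mathfrak{N}}})^{-1/2}$) appears on which side, and checking that the smoothing factor $\varepsilon^3(t^2+\varepsilon^2)^{-3/2}$ commutes past the operator-norm inequality~\eqref{abstr_sndw_exp_est_1} — which it does, being just a nonnegative scalar. I would present the proof in one or two sentences, exactly along the lines that the excerpt itself signals (``Theorems~\ref{abstr_exp_general_thrm}, \ref{abstr_exp_enchcd_thrm_1}, \ref{abstr_exp_enchcd_thrm_2}, Lemma~\ref{abstr_N_hatNQ_lemma}, and inequality~\eqref{abstr_sndw_exp_est_1} directly imply the following results''), namely: apply Theorem~\ref{abstr_exp_general_thrm} to the family $A(t) = M^*\widehat{A}(t)M$, then multiply~\eqref{abstr_sndw_exp_est_1} (with $\varepsilon^{-2}\tau$ in place of $\tau$) by the smoothing factor.
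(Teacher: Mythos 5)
Your proof is correct and is exactly the paper's argument: apply Theorem~\ref{abstr_exp_general_thrm} to the family $A(t)=M^*\widehat{A}(t)M$ and then transfer via inequality~\eqref{abstr_sndw_exp_est_1} with $\tau$ replaced by $\varepsilon^{-2}\tau$, multiplying by the scalar smoothing factor. (The aside about Condition~\ref{abstr_nondegeneracy_cond} is unnecessary for this particular theorem, but harmless.)
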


\begin{thrm}
	\label{abstr_sndw_exp_enchcd_thrm_1}
	Suppose that the assumptions of Subsection~\emph{\ref{abstr_A_and_Ahat_section}} are satisfied. Suppose that $\widehat{N}_Q = 0$. Then for $\tau \in \mathbb{R}$, $\varepsilon > 0$, and $|t| \le t^0$ we have
	\begin{equation*}
	\| M e^{-i \tau \varepsilon^{-2} A(t)} M^{-1} \widehat{P} -  M_0 e^{-i \tau \varepsilon^{-2} t^2 M_0 \widehat{S} M_0} M_0^{-1} \widehat{P}\| \varepsilon^{2} (t^2 + \varepsilon^2)^{-1} \le \| M \|^2 \| M^{-1} \|^2 ( C_1  + C'_4 | \tau |^{1/2} ) \varepsilon.
	\end{equation*}
\end{thrm}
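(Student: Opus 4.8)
The plan is straightforward: Theorem~\ref{abstr_sndw_exp_enchcd_thrm_1} is a ``sandwiched'' version of Theorem~\ref{abstr_exp_enchcd_thrm_1}, obtained by transplanting the nonsandwiched estimate through the isomorphism $M$ via inequality~\eqref{abstr_sndw_exp_est_1}. First I would observe that Lemma~\ref{abstr_N_hatNQ_lemma} lets us translate the hypothesis: the assumption $\widehat{N}_Q = 0$ imposed here is equivalent to $N = 0$, which is precisely the hypothesis of Theorem~\ref{abstr_exp_enchcd_thrm_1}. Hence that theorem applies and gives, for $\tau \in \mathbb{R}$ and $|t| \le t^0$,
\begin{equation*}
\bigl\| e^{-i \tau \varepsilon^{-2} A (t)} P - e^{-i \tau \varepsilon^{-2} t^2 S P}P \bigr\| \varepsilon^{2} (t^2 + \varepsilon^2)^{-1} \le  (C_1 + C'_4 |\tau|^{1/2}) \varepsilon.
\end{equation*}

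Next I would apply inequality~\eqref{abstr_sndw_exp_est_1} with $\tau$ replaced by $\varepsilon^{-2}\tau$. Since $A(t) = M^* \widehat{A}(t) M$, the quantity $M e^{-i\tau\varepsilon^{-2} A(t)} M^{-1}\widehat P$ is exactly the sandwiched exponential appearing on the left of the claimed estimate, and $M_0 e^{-i\tau\varepsilon^{-2} t^2 M_0 \widehat S M_0} M_0^{-1}\widehat P$ (with $M_0 = (Q_{\widehat{\mathfrak N}})^{-1/2}$) is its effective approximation. Inequality~\eqref{abstr_sndw_exp_est_1} bounds the norm of the sandwiched difference by $\|M\|^2\|M^{-1}\|^2$ times the norm of the nonsandwiched difference $\|e^{-i\tau\varepsilon^{-2} A(t)} P - e^{-i\tau\varepsilon^{-2} t^2 SP}P\|$. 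Multiplying both sides of~\eqref{abstr_sndw_exp_est_1} by the scalar smoothing factor $\varepsilon^2(t^2+\varepsilon^2)^{-1}$ and inserting the bound from Theorem~\ref{abstr_exp_enchcd_thrm_1} yields precisely the asserted inequality.

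Finally, I would note that the constants $C_1$, $C'_4$ are inherited verbatim from Theorem~\ref{abstr_exp_enchcd_thrm_1} and are therefore majorated by polynomials in $\delta^{-1/2}$ and $\|X_1\|$, as stated there; the only new factor $\|M\|^2\|M^{-1}\|^2$ is harmless and already appears explicitly in the statement. There is essentially no obstacle here: the whole content has been pre-packaged into Lemma~\ref{abstr_N_hatNQ_lemma} and inequality~\eqref{abstr_sndw_exp_est_1}, and the proof is a one-line citation of Theorem~\ref{abstr_exp_enchcd_thrm_1} followed by an application of those two tools --- exactly the way Theorem~\ref{abstr_sndw_exp_general_thrm} was deduced from Theorem~\ref{abstr_exp_general_thrm}. (An entirely analogous argument, using Theorem~\ref{abstr_exp_enchcd_thrm_2} in place of Theorem~\ref{abstr_exp_enchcd_thrm_1} and the second assertion of Lemma~\ref{abstr_N_hatNQ_lemma}, would give the corresponding sandwiched version under the hypothesis $\widehat N_{0,Q} = 0$, with $t^0$ replaced by $t^{00}$.)
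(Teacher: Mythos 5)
Your proposal is correct and is exactly the paper's argument: the paper states that Theorems~\ref{abstr_exp_general_thrm}, \ref{abstr_exp_enchcd_thrm_1}, \ref{abstr_exp_enchcd_thrm_2}, Lemma~\ref{abstr_N_hatNQ_lemma}, and inequality~(\ref{abstr_sndw_exp_est_1}) ``directly imply'' this theorem, which is precisely the chain you spell out (translate $\widehat{N}_Q=0$ into $N=0$ via the lemma, invoke Theorem~\ref{abstr_exp_enchcd_thrm_1}, and transplant through $M$ via~(\ref{abstr_sndw_exp_est_1}) with $\tau$ replaced by $\varepsilon^{-2}\tau$). No gaps.
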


\begin{thrm}
	\label{abstr_sndw_exp_enchcd_thrm_2}
	Suppose that the assumptions of Subsection~\emph{\ref{abstr_A_and_Ahat_section}} and Condition~\emph{\ref{abstr_nondegeneracy_cond}} are satisfied. Suppose that $\widehat{N}_{0,Q} = 0$. Then for $\tau \in \mathbb{R}$, $\varepsilon > 0$, and $|t| \le t^{00}$ we have
	\begin{equation*}
	\| M e^{-i \tau \varepsilon^{-2} A(t)} M^{-1} \widehat{P} -  M_0 e^{-i \tau \varepsilon^{-2} t^2 M_0 \widehat{S} M_0} M_0^{-1} \widehat{P} \| \varepsilon^{2} (t^2 + \varepsilon^2)^{-1} \le \| M \|^2 \| M^{-1} \|^2 (C_5 + C'_6 | \tau |^{1/2} ) \varepsilon.
	\end{equation*}
\end{thrm}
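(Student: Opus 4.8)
The plan is to reduce the claimed sandwiched estimate to the already-established unsandwiched estimate of Theorem~\ref{abstr_exp_enchcd_thrm_2} by means of the two-sided bound~(\ref{abstr_sndw_exp_est_1}). The only point that requires an argument is the verification that the hypothesis of Theorem~\ref{abstr_exp_enchcd_thrm_2} — namely $N_0 = 0$ for the family $A(t)$ — is available in the present setting; this is exactly the content of the second assertion of Lemma~\ref{abstr_N_hatNQ_lemma}, which says that $\widehat N_{0,Q} = 0$ (our assumption) is equivalent to $N_0 = 0$.

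First I would fix $\tau \in \mathbb{R}$, $\varepsilon > 0$, and $|t| \le t^{00}$ (with $t^{00}$ subject to~(\ref{abstr_t00})), and invoke Lemma~\ref{abstr_N_hatNQ_lemma} to pass from $\widehat N_{0,Q}=0$ to $N_0=0$. Since Condition~\ref{abstr_nondegeneracy_cond} is assumed, Theorem~\ref{abstr_exp_enchcd_thrm_2} applies and yields
\[
\bigl\| e^{-i \tau \varepsilon^{-2} A(t)} P - e^{-i \tau \varepsilon^{-2} t^2 S P} P \bigr\| \, \varepsilon^{2} (t^2 + \varepsilon^2)^{-1} \le (C_5 + C'_6 |\tau|^{1/2}) \varepsilon .
\]
Next I would apply inequality~(\ref{abstr_sndw_exp_est_1}) with $\tau$ replaced by $\varepsilon^{-2}\tau$ (legitimate, since that inequality holds for every real time argument), which gives
\[
\bigl\| M e^{-i\tau \varepsilon^{-2} A(t)} M^{-1} \widehat{P} -  M_0 e^{-i \tau \varepsilon^{-2} t^2 M_0 \widehat{S} M_0} M_0^{-1} \widehat{P}\bigr\| \le  \| M \|^2 \| M^{-1} \|^2 \bigl\| e^{-i \tau \varepsilon^{-2} A(t)} P -  e^{-i \tau \varepsilon^{-2} t^2 S P} P \bigr\| .
\]
Multiplying both sides by the scalar smoothing factor $\varepsilon^{2}(t^2+\varepsilon^2)^{-1}$ and substituting the bound from the previous display produces precisely the asserted estimate, with the same constants $C_5$, $C'_6$, which by Theorem~\ref{abstr_exp_enchcd_thrm_2} are majorated by polynomials of $\delta^{-1/2}$, $\|X_1\|$, $n$, $(c^{\circ})^{-1}$.

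As for the main obstacle: there is essentially no analytic difficulty here, since all the hard work has already been carried out in Theorems~\ref{abstr_exp_enchcd_thrm_wo_eps_2} and~\ref{abstr_exp_enchcd_thrm_2} and in Lemma~\ref{abstr_N_hatNQ_lemma}. The only things to be careful about are bookkeeping details: that the restriction $|t| \le t^{00}$ is exactly the one appearing in Theorem~\ref{abstr_exp_enchcd_thrm_2}, that $M_0 = (Q_{\widehat{\mathfrak{N}}})^{-1/2}$ enters the factors exactly as in~(\ref{abstr_sndw_exp_est_1}), and that the time-rescaling $\tau \mapsto \varepsilon^{-2}\tau$ is applied consistently in both the operator exponential and the effective exponential. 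This is the same scheme by which Theorems~\ref{abstr_sndw_exp_general_thrm} and~\ref{abstr_sndw_exp_enchcd_thrm_1} were derived from their unsandwiched counterparts.
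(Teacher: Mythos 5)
Your proposal is correct and coincides with the paper's own argument: the paper derives Theorem~\ref{abstr_sndw_exp_enchcd_thrm_2} precisely by combining Theorem~\ref{abstr_exp_enchcd_thrm_2}, Lemma~\ref{abstr_N_hatNQ_lemma} (to convert $\widehat{N}_{0,Q}=0$ into $N_0=0$), and inequality~(\ref{abstr_sndw_exp_est_1}) with the rescaled time $\varepsilon^{-2}\tau$, then multiplying by the smoothing factor. No gaps.
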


Theorem~\ref{abstr_sndw_exp_general_thrm} was proved in~\cite[Theorem~3.2]{BSu2008}.
\begin{remark}
	Theorems~\emph{\ref{abstr_sndw_exp_enchcd_thrm_1}} and~\emph{\ref{abstr_sndw_exp_enchcd_thrm_2}} improve the results of Theorems~\emph{5.8}, \emph{5.9} from~\emph{\cite{Su2017}} with respect to $\tau$. 
\end{remark}

\subsection{The sharpness of the results}
Theorems~\ref{abstr_exp_smooth_shrp_thrm_1}, \ref{abstr_exp_smooth_shrp_thrm_2}, \ref{abstr_exp_time_shrp_thrm_1}, \ref{abstr_exp_time_shrp_thrm_2}, Lemma~\ref{abstr_N_hatNQ_lemma}, Remark~\ref{abstr_N^(q',q)_hatNQ^(q',q)_rem}, and inequality~(\ref{abstr_sndw_exp_est_2}) directly imply the following statements.

\begin{thrm}[\cite{Su2017}]
	\label{abstr_sndw_exp_smooth_shrp_thrm_1}
	Suppose that $\widehat{N}_{0,Q} \ne 0$. Let $\tau \ne 0$ and $0 \le s < 3$. Then there does not exist a constant $C(\tau) >0$ such the estimate
	\begin{equation}
	\label{abstr_sndw_exp_shrp_est}
	\bigl\| M e^{-i \tau \varepsilon^{-2} A(t)} M^{-1} \widehat{P} -  M_0 e^{-i \tau \varepsilon^{-2} t^2 M_0 \widehat{S} M_0} M_0^{-1} \widehat{P} \bigr\| \varepsilon^s (t^2 + \varepsilon^2)^{-s/2} \le  C(\tau) \varepsilon
	\end{equation}
	holds for all sufficiently small $|t|$ and $\varepsilon$.
\end{thrm}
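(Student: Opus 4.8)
The plan is to reduce this statement directly to Theorem~\ref{abstr_exp_smooth_shrp_thrm_1}, which already establishes the analogous sharpness result for the non-sandwiched operator exponential. The bridge consists of two ingredients supplied earlier in Section~\ref{abstr_sndw_section}: Lemma~\ref{abstr_N_hatNQ_lemma}, which says that the hypothesis $\widehat{N}_{0,Q} \ne 0$ is equivalent to $N_0 \ne 0$ for the family $A(t)$; and inequality~(\ref{abstr_sndw_exp_est_2}), which estimates the non-sandwiched error by the sandwiched one up to the factor $\|M\|^2 \|M^{-1}\|^2$.

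First I would argue by contradiction. Suppose that for some $s$ with $0 \le s < 3$ there exists a constant $C(\tau) > 0$ such that~(\ref{abstr_sndw_exp_shrp_est}) holds for all sufficiently small $|t|$ and $\varepsilon$. Applying~(\ref{abstr_sndw_exp_est_2}) with $\tau$ replaced by $\tau \varepsilon^{-2}$, and then multiplying through by the smoothing factor $\varepsilon^s (t^2+\varepsilon^2)^{-s/2}$ (which is unaffected by this rescaling), we get
\[
\bigl\| e^{-i \tau \varepsilon^{-2} A(t)} P - e^{-i \tau \varepsilon^{-2} t^2 S P} P \bigr\| \varepsilon^s (t^2 + \varepsilon^2)^{-s/2} \le \|M\|^2 \|M^{-1}\|^2\, C(\tau)\, \varepsilon
\]
for all sufficiently small $|t|$ and $\varepsilon$. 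In other words, estimate~(\ref{abstr_exp_shrp_est}) would hold with the constant $\widetilde{C}(\tau) := \|M\|^2 \|M^{-1}\|^2 C(\tau)$.

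On the other hand, by Lemma~\ref{abstr_N_hatNQ_lemma} the assumption $\widehat{N}_{0,Q} \ne 0$ forces $N_0 \ne 0$, so Theorem~\ref{abstr_exp_smooth_shrp_thrm_1} is applicable and asserts precisely that no such constant $\widetilde{C}(\tau)$ can exist when $0 \le s < 3$. This contradiction proves the claim.

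Since the reduction is entirely mechanical, I do not expect a genuine obstacle here; the only points requiring a little care are tracking the $\tau \mapsto \tau\varepsilon^{-2}$ substitution inside~(\ref{abstr_sndw_exp_est_2}) and correctly invoking Lemma~\ref{abstr_N_hatNQ_lemma} to convert the hypothesis on $\widehat{N}_{0,Q}$ into the hypothesis on $N_0$ needed by Theorem~\ref{abstr_exp_smooth_shrp_thrm_1}.
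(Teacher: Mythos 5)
Your reduction is exactly the paper's argument: the paper derives Theorem~\ref{abstr_sndw_exp_smooth_shrp_thrm_1} directly from Theorem~\ref{abstr_exp_smooth_shrp_thrm_1} via Lemma~\ref{abstr_N_hatNQ_lemma} and inequality~(\ref{abstr_sndw_exp_est_2}), just as you propose. The proof is correct.
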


\begin{thrm}
	\label{abstr_sndw_exp_smooth_shrp_thrm_2}
	Let $\widehat{N}_{0,Q} = 0$ and $\widehat{\mathcal{N}}_Q^{(q)} \ne 0$ for some $q \in \{1, \ldots, p\}$.  Let $\tau \ne 0$ and $0 \le s < 2$. Then there does not exist a constant $C(\tau) >0$ such that estimate~\eqref{abstr_sndw_exp_shrp_est} holds for all sufficiently small $|t|$ and~$\varepsilon$.
\end{thrm}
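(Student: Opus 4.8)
The plan is to deduce this sandwiched sharpness statement from its non-sandwiched analogue, Theorem~\ref{abstr_exp_smooth_shrp_thrm_2}, by invoking the two-sided comparison estimate~\eqref{abstr_sndw_exp_est_2} together with the dictionary relating the ``hatted'' objects to the originals. I would argue by contradiction: suppose that for some $0 \le s < 2$ there exists a constant $C(\tau) > 0$ such that~\eqref{abstr_sndw_exp_shrp_est} holds for all sufficiently small $|t|$ and $\varepsilon$.

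Next I would multiply inequality~\eqref{abstr_sndw_exp_est_2} (with $\tau$ replaced by $\varepsilon^{-2}\tau$) by the smoothing factor $\varepsilon^s (t^2 + \varepsilon^2)^{-s/2}$. The right-hand side then becomes $\|M\|^2 \|M^{-1}\|^2$ times precisely the quantity appearing on the left of~\eqref{abstr_sndw_exp_shrp_est}, which by the contradiction hypothesis is bounded by $\|M\|^2 \|M^{-1}\|^2 C(\tau)\varepsilon$ for all sufficiently small $|t|$ and $\varepsilon$. Consequently, estimate~\eqref{abstr_exp_shrp_est} would hold for the family $A(t)$ with the constant $\|M\|^2 \|M^{-1}\|^2 C(\tau)$, for all sufficiently small $|t|$ and $\varepsilon$.

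It then remains to check that the hypotheses of Theorem~\ref{abstr_exp_smooth_shrp_thrm_2} are met for $A(t)$. By Lemma~\ref{abstr_N_hatNQ_lemma}, the assumption $\widehat{N}_{0,Q} = 0$ is equivalent to $N_0 = 0$; and since $N_0 = 0$ (so that $\mathcal{N}^{(q)}$ denotes $\mathcal{N}^{(1,q)}$ and $\widehat{\mathcal{N}}_Q^{(q)}$ denotes $\widehat{\mathcal{N}}_Q^{(1,q)}$), Remark~\ref{abstr_N^(q',q)_hatNQ^(q',q)_rem} shows that $\widehat{\mathcal{N}}_Q^{(q)} \ne 0$ forces $\mathcal{N}^{(q)} \ne 0$ for the same $q$. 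Hence Theorem~\ref{abstr_exp_smooth_shrp_thrm_2} applies and asserts that no constant can make~\eqref{abstr_exp_shrp_est} valid for all sufficiently small $|t|$ and $\varepsilon$, contradicting the conclusion of the previous paragraph. This contradiction completes the proof.

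The argument is a routine transfer principle, so I do not anticipate a genuine obstacle; the only points requiring care are the bookkeeping of the equivalences and implications connecting $\widehat{N}_{0,Q}$, $\widehat{\mathcal{N}}_Q^{(q)}$ with $N_0$, $\mathcal{N}^{(q)}$ --- in particular, using Remark~\ref{abstr_N^(q',q)_hatNQ^(q',q)_rem} in the correct direction (passing from the hatted hypothesis to the abstract one) --- and making sure that it is~\eqref{abstr_sndw_exp_est_2}, rather than~\eqref{abstr_sndw_exp_est_1}, that supplies the required lower bound on the sandwiched difference in terms of the non-sandwiched one.
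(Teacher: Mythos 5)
Your proposal is correct and follows exactly the route the paper takes: the paper states that Theorem~\ref{abstr_sndw_exp_smooth_shrp_thrm_2} follows directly from Theorem~\ref{abstr_exp_smooth_shrp_thrm_2}, Lemma~\ref{abstr_N_hatNQ_lemma}, Remark~\ref{abstr_N^(q',q)_hatNQ^(q',q)_rem}, and inequality~\eqref{abstr_sndw_exp_est_2}, which is precisely the transfer argument you spell out. You also correctly identified the two points of care --- using~\eqref{abstr_sndw_exp_est_2} rather than~\eqref{abstr_sndw_exp_est_1}, and applying Remark~\ref{abstr_N^(q',q)_hatNQ^(q',q)_rem} in the direction from $\widehat{\mathcal{N}}_Q^{(q)} \ne 0$ to $\mathcal{N}^{(q)} \ne 0$.
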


\begin{thrm}
	\label{abstr_sndw_exp_time_shrp_thrm_1}
	Suppose that $\widehat{N}_{0,Q} \ne 0$. Let $s \ge 3$. Then there does not exist a positive function $C(\tau)$ such that $\lim_{\tau \to \infty} C(\tau)/ |\tau| = 0$ and estimate~\eqref{abstr_sndw_exp_shrp_est} holds for all $\tau \in \mathbb{R}$ and for all sufficiently small $|t|$ and $\varepsilon > 0$.
\end{thrm}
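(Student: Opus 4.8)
The plan is to deduce the statement from the corresponding non-sandwiched sharpness result, Theorem~\ref{abstr_exp_time_shrp_thrm_1}, by transferring along the similarity $A(t) = M^* \widehat{A}(t) M$. The two tools for this transfer are Lemma~\ref{abstr_N_hatNQ_lemma}, which matches the hypotheses, and the two-sided bound~\eqref{abstr_sndw_exp_est_2}, which matches the conclusions.

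First I would translate the hypothesis. By Lemma~\ref{abstr_N_hatNQ_lemma}, the condition $\widehat{N}_{0,Q} \neq 0$ is equivalent to $N_0 \neq 0$, which is precisely the hypothesis of Theorem~\ref{abstr_exp_time_shrp_thrm_1}. So it remains to turn a hypothetical good bound for the sandwiched exponential into one for the non-sandwiched exponential.

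Then I would argue by contradiction. Suppose there is a positive function $C(\tau)$ with $\lim_{\tau \to \infty} C(\tau)/|\tau| = 0$ such that~\eqref{abstr_sndw_exp_shrp_est} holds for all $\tau \in \mathbb{R}$ and all sufficiently small $|t|$ and $\varepsilon > 0$. Apply inequality~\eqref{abstr_sndw_exp_est_2} with $\tau$ replaced by $\varepsilon^{-2}\tau$ (it is valid for every real value of its argument), multiply both sides by the smoothing factor $\varepsilon^{s}(t^2 + \varepsilon^2)^{-s/2}$, and bound the right-hand side using~\eqref{abstr_sndw_exp_shrp_est}. This yields
\[
\bigl\| e^{-i\tau\varepsilon^{-2}A(t)}P - e^{-i\tau\varepsilon^{-2}t^2 SP}P \bigr\|\,\varepsilon^{s}(t^2+\varepsilon^2)^{-s/2} \le \|M\|^2 \|M^{-1}\|^2\, C(\tau)\, \varepsilon
\]
for all $\tau \in \mathbb{R}$ and all sufficiently small $|t|$ and $\varepsilon$. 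Setting $\widetilde{C}(\tau) := \|M\|^2 \|M^{-1}\|^2 C(\tau)$, the displayed inequality is exactly~\eqref{abstr_exp_shrp_est}, and clearly $\lim_{\tau \to \infty} \widetilde{C}(\tau)/|\tau| = 0$. Since $N_0 \neq 0$ and $s \ge 3$, this contradicts Theorem~\ref{abstr_exp_time_shrp_thrm_1}, and the proof is complete.

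I do not expect a genuine obstacle here: all the analytic perturbation theory (the power series $\lambda_j(t) = \gamma_j t^2 + \mu_j t^3 + O(t^4)$ with $\mu_j \neq 0$, and the choice $t_\flat \sim |\tau|^{-1/3}\varepsilon^{2/3}$, $\varepsilon = |\tau|^{-1}$) is already packaged inside Theorem~\ref{abstr_exp_time_shrp_thrm_1}. The only points requiring a bit of care are invoking~\eqref{abstr_sndw_exp_est_2} at the rescaled time $\varepsilon^{-2}\tau$ rather than at $\tau$, and observing that multiplying $C(\tau)$ by the fixed constant $\|M\|^2\|M^{-1}\|^2$ does not spoil the growth condition $C(\tau)/|\tau| \to 0$. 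This is the same bookkeeping used to derive Theorems~\ref{abstr_sndw_exp_general_thrm}--\ref{abstr_sndw_exp_enchcd_thrm_2} from their non-sandwiched counterparts, run in the reverse (lower-bound) direction via~\eqref{abstr_sndw_exp_est_2}.
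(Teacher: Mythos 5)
Your argument is correct and is exactly the route the paper takes: it states that Theorem~\ref{abstr_sndw_exp_time_shrp_thrm_1} follows directly from Theorem~\ref{abstr_exp_time_shrp_thrm_1}, Lemma~\ref{abstr_N_hatNQ_lemma}, and inequality~\eqref{abstr_sndw_exp_est_2}, which is precisely your transfer-by-contradiction. The two points you flag for care (evaluating \eqref{abstr_sndw_exp_est_2} at the rescaled time $\varepsilon^{-2}\tau$ and absorbing the fixed factor $\|M\|^2\|M^{-1}\|^2$ into the function $C(\tau)$ without affecting the growth condition) are handled correctly.
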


\begin{thrm}
	\label{abstr_sndw_exp_time_shrp_thrm_2}
	Suppose that $\widehat{N}_{0,Q} = 0$ and $\widehat{\mathcal{N}}_Q^{(q)} \ne 0$ for some $q \in \{1, \ldots, p\}$. Let $s \ge 2$. Then there does not exist a positive function $C(\tau)$ such that $\lim_{\tau \to \infty} C(\tau)/ |\tau|^{1/2} = 0$ and estimate~\eqref{abstr_sndw_exp_shrp_est} holds for all $\tau \in \mathbb{R}$ and for all sufficiently small $|t|$ and $\varepsilon > 0$.
\end{thrm}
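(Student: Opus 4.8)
The plan is to derive the statement directly from Theorem~\ref{abstr_exp_time_shrp_thrm_2} by means of the two-sided bound~\eqref{abstr_sndw_exp_est_2} together with Lemma~\ref{abstr_N_hatNQ_lemma} and Remark~\ref{abstr_N^(q',q)_hatNQ^(q',q)_rem}. We argue by contradiction: suppose that for some $s \ge 2$ there exists a positive function $C(\tau)$ with $\lim_{\tau\to\infty} C(\tau)/|\tau|^{1/2} = 0$ such that~\eqref{abstr_sndw_exp_shrp_est} holds for all $\tau\in\mathbb{R}$ and all sufficiently small $|t|$ and $\varepsilon$.

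First I would apply inequality~\eqref{abstr_sndw_exp_est_2} with $\tau$ replaced by $\varepsilon^{-2}\tau$, multiply the result by the smoothing factor $\varepsilon^s(t^2+\varepsilon^2)^{-s/2}$, and insert the assumed bound~\eqref{abstr_sndw_exp_shrp_est} on the right-hand side. This yields
\[
\bigl\| e^{-i \tau \varepsilon^{-2} A(t)} P - e^{-i \tau \varepsilon^{-2} t^2 S P} P \bigr\| \varepsilon^s (t^2 + \varepsilon^2)^{-s/2} \le \widetilde{C}(\tau)\,\varepsilon
\]
for all sufficiently small $|t|$ and $\varepsilon$, where $\widetilde{C}(\tau) \coloneqq \|M\|^2 \|M^{-1}\|^2 C(\tau)$. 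Since $\|M\|^2\|M^{-1}\|^2$ is a fixed finite constant, $\widetilde{C}(\tau)$ is again a positive function with $\lim_{\tau\to\infty}\widetilde{C}(\tau)/|\tau|^{1/2} = 0$. Hence estimate~\eqref{abstr_exp_shrp_est} holds with the function $\widetilde{C}(\tau)$ and the same exponent $s \ge 2$.

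It remains to verify that the hypotheses of Theorem~\ref{abstr_exp_time_shrp_thrm_2} are satisfied. By Lemma~\ref{abstr_N_hatNQ_lemma}, the condition $\widehat{N}_{0,Q} = 0$ is equivalent to $N_0 = 0$; and by Remark~\ref{abstr_N^(q',q)_hatNQ^(q',q)_rem}, the condition $\widehat{\mathcal{N}}_Q^{(q)} \ne 0$ implies $\mathcal{N}^{(q)} \ne 0$ for the same $q$. Thus Theorem~\ref{abstr_exp_time_shrp_thrm_2} applies and asserts that no positive function with the stated property can exist~--- a contradiction, which completes the proof.

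There is essentially no serious obstacle here; the only point requiring care is that the bordering operators on the left-hand side of~\eqref{abstr_sndw_exp_shrp_est} coincide exactly with those appearing in~\eqref{abstr_sndw_exp_est_2}, so that the one-line transfer from the sandwiched quantity to the bare one $e^{-i\tau\varepsilon^{-2}A(t)}P - e^{-i\tau\varepsilon^{-2}t^2SP}P$ is legitimate. The rest is bookkeeping: replacing $\tau$ by $\varepsilon^{-2}\tau$, tracking the fixed multiplicative constant $\|M\|^2\|M^{-1}\|^2$, and observing that it does not affect the limit condition defining an admissible $C(\tau)$.
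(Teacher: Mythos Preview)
Your proof is correct and follows exactly the route the paper indicates: transfer the hypothetical bound via inequality~\eqref{abstr_sndw_exp_est_2}, translate the hypotheses using Lemma~\ref{abstr_N_hatNQ_lemma} and Remark~\ref{abstr_N^(q',q)_hatNQ^(q',q)_rem}, and then invoke Theorem~\ref{abstr_exp_time_shrp_thrm_2} to reach a contradiction.
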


Theorem~\ref{abstr_sndw_exp_smooth_shrp_thrm_1} was proved in~\cite[Theorem~5.10]{Su2017}.

\part{Periodic differential operators in $L_2(\mathbb{R}^d; \mathbb{C}^n)$}
\label{L2_operators_part}

\section{The class of periodic differential operators}

\subsection{Preliminaries: lattices and the Gelfand transformation}
Let $\Gamma$ be a lattice in $\mathbb{R}^d$ generated by the basis $\mathbf{a}_1, \ldots , \mathbf{a}_d$, i.e., $\Gamma = \left\{ \mathbf{a} \in \mathbb{R}^d \colon \mathbf{a} = \sum_{j=1}^{d} n_j \mathbf{a}_j, \; n_j \in \mathbb{Z} \right\}$,
and let $\Omega$ be the elementary cell of this lattice: 
$\Omega \coloneqq \left\{ \mathbf{x} \in \mathbb{R}^d \colon \mathbf{x} = \sum_{j=1}^{d} \xi_j \mathbf{a}_j, \; 0 < \xi_j < 1 \right\}$. 
The basis $\mathbf{b}_1, \ldots, \mathbf{b}_d$ dual to $\mathbf{a}_1, \ldots , \mathbf{a}_d$ is defined by the relations $\left< \mathbf{b}_l, \mathbf{a}_j \right> = 2 \pi \delta_{lj}$. This basis generates the \emph{lattice} $\widetilde \Gamma$ \emph{dual to} $\Gamma$. Denote by $\widetilde \Omega$ the central \emph{Brillouin zone} of $\widetilde \Gamma$:
\begin{equation}
\label{Brillouin_zone}
\widetilde \Omega = \left\{ \mathbf{k} \in \mathbb{R}^d \colon | \mathbf{k} | < | \mathbf{k} - \mathbf{b} |, \; 0 \ne \mathbf{b} \in \widetilde \Gamma \right\}.
\end{equation}
Denote $| \Omega | = \meas \Omega$, $| \widetilde \Omega | = \meas \widetilde \Omega$. Note that $| \Omega |  | \widetilde \Omega | = (2 \pi)^d$. Let $r_0$ be the radius of the ball \emph{inscribed} in $\clos \widetilde \Omega$. We have
$2 r_0 = \min|\mathbf{b}|$, $0 \ne \mathbf{b} \in \widetilde \Gamma$. 

With the lattice $ \Gamma$, we associate the discrete Fourier transformation
$\{ \hat{\mathbf{u}}_{\mathbf{b}} \} \mapsto \mathbf{u}$:
\begin{equation*}
\mathbf{u}(\mathbf{x}) = | \Omega |^{-1/2} \sum_{\mathbf{b} \in \widetilde \Gamma} \hat{\mathbf{u}}_{\mathbf{b}} e^{i \left<\mathbf{b}, \mathbf{x} \right>},
\end{equation*}
which is a unitary mapping of $l_2 (\widetilde \Gamma; \mathbb{C}^n) $ onto $L_2 (\Omega; \mathbb{C}^n)$. \textit{By $\widetilde H^\sigma(\Omega; \mathbb{C}^n)$ we denote the subspace of functions from $ H^\sigma(\Omega; \mathbb{C}^n)$ whose $\Gamma$-periodic extension to $\mathbb{R}^d$ belongs to $H^\sigma_{\mathrm{loc}}(\mathbb{R}^d; \mathbb{C}^n)$}. We have
\begin{equation}
\label{D_and_fourier}
\int_{\Omega} |(\mathbf{D} + \mathbf{k}) \mathbf{u}|^2\, d\mathbf{x} = \sum_{\mathbf{b} \in \widetilde{\Gamma}} |\mathbf{b} + \mathbf{k} |^2 |\hat{\mathbf{u}}_{\mathbf{b}}|^2, \qquad \mathbf{u} \in \widetilde{H}^1(\Omega; \mathbb{C}^n), \; \mathbf{k} \in \mathbb{R}^d,
\end{equation} 
and convergence of the series in the right-hand side of~(\ref{D_and_fourier}) is equivalent to the inclusion $\mathbf{u} \in \widetilde{H}^1(\Omega; \mathbb{C}^n)$. From~(\ref{Brillouin_zone})~and~(\ref{D_and_fourier}) it follows that
\begin{equation}
\label{(D+k)u_est}
\int_{\Omega} |(\mathbf{D} + \mathbf{k}) \mathbf{u}|^2\, d\mathbf{x} \ge \sum_{\mathbf{b} \in \widetilde{\Gamma}} | \mathbf{k} |^2 |\hat{\mathbf{u}}_{\mathbf{b}}|^2 = | \mathbf{k} |^2 \int_{\Omega} |\mathbf{u}|^2\, d\mathbf{x}, \qquad  \mathbf{u} \in \widetilde{H}^1(\Omega; \mathbb{C}^n), \; \mathbf{k} \in \widetilde{\Omega}.
\end{equation}  

Initially, the Gelfand transformation $\mathscr{U}$ is defined on the functions from the Schwartz class $\mathbf{v} \in \mathcal{S}(\mathbb{R}^d; \mathbb{C}^n)$ by the formula
\begin{equation*}
\tilde{\mathbf{v}} ( \mathbf{k}, \mathbf{x}) = (\mathscr{U} \- \mathbf{v}) (\mathbf{k}, \mathbf{x}) = | \widetilde \Omega |^{-1/2} \sum_{\mathbf{a} \in \Gamma} e^{- i \left< \mathbf{k}, \mathbf{x} + \mathbf{a} \right>} \mathbf{v} ( \mathbf{x} + \mathbf{a}), \qquad
\mathbf{x} \in \Omega, \; \mathbf{k} \in \widetilde \Omega,
\end{equation*}
and extends by continuity up to a unitary mapping:
\begin{equation*}
\mathscr{U} \colon L_2 (\mathbb{R}^d; \mathbb{C}^n) \to \int_{\widetilde \Omega} \oplus  L_2 (\Omega; \mathbb{C}^n) \, d \mathbf{k} \eqqcolon  \mathcal{K}.
\end{equation*}

\subsection{Factorized second order operators $\mathcal{A}$}
\label{A_section}
Let $b (\mathbf{D})= \sum_{l=1}^d b_l D_l$, where $b_l$ are constant ($ m \times n $)-matrices (in general, with complex entries).
\emph{It is assumed that $m \ge n$}. Consider the symbol $b(\boldsymbol{\xi}) = \sum_{l=1}^d b_l \xi_l$, $\boldsymbol{\xi} \in \mathbb{R}^d$.
\emph{Suppose that} $\rank b( \boldsymbol{\xi} ) = n$, $0 \ne  \boldsymbol{\xi} \in \mathbb{R}^d$. This condition is equivalent to the inequalities 
\begin{equation}
\label{alpha0_alpha1}
\alpha_0 \mathbf{1}_n \le b( \boldsymbol{\theta} )^* b( \boldsymbol{\theta} ) \le \alpha_1 \mathbf{1}_n, \quad  \boldsymbol{\theta} \in \mathbb{S}^{d-1}, \quad 0 < \alpha_0 \le \alpha_1 < \infty,
\end{equation}
with some positive constants $\alpha_0$, $\alpha_1$.
Let $f(\mathbf{x})$ be a $\Gamma$-periodic  ($n \times n$)-matrix-valued function and $h(\mathbf{x})$ be a $\Gamma$-periodic  ($m \times m$)-matrix-valued function such that
\begin{equation}
\label{h_f_Linfty}
f, f^{-1} \in L_{\infty} (\mathbb{R}^d); \quad h, h^{-1} \in L_{\infty} (\mathbb{R}^d).
\end{equation}
Consider the closed operator $\mathcal{X}  \colon  L_2 (\mathbb{R}^d ; \mathbb{C}^n) \to  L_2 (\mathbb{R}^d ; \mathbb{C}^m)$ given by 
$\mathcal{X} = h b( \mathbf{D} ) f$ on the domain
$\Dom \mathcal{X} = \left\lbrace \mathbf{u} \in L_2 (\mathbb{R}^d ; \mathbb{C}^n) \colon f \mathbf{u} \in H^1  (\mathbb{R}^d ; \mathbb{C}^n) \right\rbrace$. The selfadjoint operator $\mathcal{A} = \mathcal{X}^* \mathcal{X}$ in $L_2 (\mathbb{R}^d ; \mathbb{C}^n)$ is generated by the closed quadratic form $\mathfrak{a}[\mathbf{u}, \mathbf{u}] = \| \mathcal{X} \mathbf{u} \|^2_{L_2(\mathbb{R}^d)}$, $ \mathbf{u} \in 
\Dom \mathcal{X}$. Formally, we have
\begin{equation}
\label{A}
\mathcal{A} = f (\mathbf{x})^* b( \mathbf{D} )^* g( \mathbf{x} )  b( \mathbf{D} ) f(\mathbf{x}),
\end{equation}
where $g( \mathbf{x} ) \coloneqq h( \mathbf{x} )^*  h( \mathbf{x} )$. Using the Fourier transformation, and~(\ref{alpha0_alpha1}),~(\ref{h_f_Linfty}), it is easily seen that
\begin{equation*}
\alpha_0 \| g^{-1} \|_{L_{\infty}}^{-1} \| \mathbf{D} (f \mathbf{u}) \|_{L_2(\mathbb{R}^d)}^2 \le \mathfrak{a}[\mathbf{u}, \mathbf{u}] \le \alpha_1 \| g \|_{L_{\infty}} \| \mathbf{D} (f \mathbf{u}) \|_{L_2(\mathbb{R}^d)}^2, \qquad \mathbf{u} \in \Dom \mathcal{X}.
\end{equation*}

\subsection{The operators $\mathcal{A}(\mathbf{k})$}
We put
\begin{equation}
\label{Spaces_H}
\mathfrak{H} = L_2 (\Omega; \mathbb{C}^n), \qquad \mathfrak{H}_* = L_2 (\Omega; \mathbb{C}^m)
\end{equation}
and consider the closed operator $\mathcal{X} (\mathbf{k}) \colon \mathfrak{H} \to \mathfrak{H}_*$ depending on the parameter $\mathbf{k} \in \mathbb{R}^d$ and given by $\mathcal{X} (\mathbf{k}) = hb(\mathbf{D} + \mathbf{k})f$ on the domain $\Dom \mathcal{X} (\mathbf{k}) = \bigl\lbrace \mathbf{u} \in \mathfrak{H} \colon   f \mathbf{u} \in \widetilde{H}^1 (\Omega; \mathbb{C}^n)\bigr\rbrace \eqqcolon  \mathfrak{d}$.
The selfadjoint operator $\mathcal{A} (\mathbf{k}) =\mathcal{X} (\mathbf{k})^* \mathcal{X} (\mathbf{k}) \colon \mathfrak{H} \to \mathfrak{H}$ is generated by the quadratic form $\mathfrak{a}(\mathbf{k})[\mathbf{u}, \mathbf{u}] = \| \mathcal{X}(\mathbf{k}) \mathbf{u} \|_{\mathfrak{H}_*}^2$, $\mathbf{u} \in \mathfrak{d}$.
Using the Fourier series expansion for $\mathbf{v} = f\mathbf{u}$ and conditions~(\ref{alpha0_alpha1}), (\ref{h_f_Linfty}), it is easy to check that
\begin{equation}
\label{a(k)_est}
\alpha_0 \|g^{-1} \|_{L_\infty}^{-1} \|(\mathbf{D} + \mathbf{k}) f \mathbf{u} \|_{L_2(\Omega)}^2 \le \mathfrak{a}(\mathbf{k})[\mathbf{u}, \mathbf{u}] \le \alpha_1 \|g \|_{L_\infty} \|(\mathbf{D} + \mathbf{k}) f \mathbf{u} \|_{L_2(\Omega)}^2, \quad \mathbf{u} \in \mathfrak{d}.
\end{equation}

From~(\ref{(D+k)u_est}) and the lower estimate~(\ref{a(k)_est}) it follows that
\begin{equation}
\label{c_*}	
\mathcal{A} (\mathbf{k}) \ge c_* |\mathbf{k}|^2 I, \qquad \mathbf{k} \in \widetilde{\Omega}, \; c_* = \alpha_0\|f^{-1} \|_{L_\infty}^{-2} \|g^{-1} \|_{L_\infty}^{-1} .
\end{equation}

We put $\mathfrak{N} \coloneqq  \Ker \mathcal{A} (0) = \Ker \mathcal{X} (0)$. Relations~(\ref{a(k)_est}) with $\mathbf{k} = 0$ show that
\begin{equation}
\label{frakN}
\mathfrak{N} = \left\lbrace \mathbf{u} \in L_2 (\Omega; \mathbb{C}^n) \colon f \mathbf{u} = \mathbf{c} \in \mathbb{C}^n \right\rbrace, \qquad \dim \mathfrak{N} = n. 
\end{equation}

\subsection{The band functions}
Let $E_j(\mathbf{k})$, $j \in \mathbb{N}$, be the consecutive eigenvalues of the operator $\mathcal{A}(\mathbf{k})$ (the band functions):
\begin{equation*}
E_1(\mathbf{k}) \le E_2(\mathbf{k}) \le \ldots \le E_j(\mathbf{k}) \le \ldots, \qquad \mathbf{k} \in \mathbb{R}^d.
\end{equation*}
The band functions $E_j(\mathbf{k})$ are continuous and $\widetilde{\Gamma}$-periodic.
In~\cite[Chapter~2, Subsection~2.2]{BSu2003}, by simple variational arguments it was shown that 
\begin{alignat*}{2}
E_j(\mathbf{k})  &\ge c_* | \mathbf{k} |^2, \qquad  &&\mathbf{k} \in \clos \widetilde{\Omega}, \qquad j = 1, \ldots, n, \\
E_{n+1}(\mathbf{k}) &\ge c_* r_0^2, \qquad &&\mathbf{k} \in \clos \widetilde{\Omega}, \\
E_{n+1}(0)  &\ge 4 c_* r_0^2.
\end{alignat*}

\subsection{The direct integral for the operator $\mathcal{A}$}
Under the Gelfand transformation $\mathscr{U}$ the operator $\mathcal{A}$ expands in the direct integral of the operators $\mathcal{A} (\mathbf{k})$:
\begin{equation}
\label{Gelfand_A_decompose}
\mathscr{U} \mathcal{A}  \mathscr{U}^{-1} = \int_{\widetilde \Omega} \oplus \mathcal{A} (\mathbf{k}) \, d \mathbf{k}.
\end{equation}
This means the following. If $\mathbf{v} \in \Dom \mathcal{X}$, then $\tilde{\mathbf{v}}(\mathbf{k}, \cdot) \in \mathfrak{d}$ for a.e. $\mathbf{k} \in \widetilde \Omega$ and
\begin{equation}
\label{Gelfand_a_form}
\mathfrak{a}[\mathbf{v}, \mathbf{v}] = \int_{\widetilde{\Omega}} \mathfrak{a}(\mathbf{k}) [\tilde{\mathbf{v}}(\mathbf{k}, \cdot), \tilde{\mathbf{v}}(\mathbf{k}, \cdot)] \, d \mathbf{k} .
\end{equation}
Conversely, if $\tilde{\mathbf{v}} \in \mathcal{K}$ satisfies $\tilde{\mathbf{v}}(\mathbf{k}, \cdot) \in \mathfrak{d}$ for a.e. $\mathbf{k} \in \widetilde \Omega$ and the integral in~(\ref{Gelfand_a_form}) is finite, then $\mathbf{v} \in \Dom \mathcal{X}$ and~(\ref{Gelfand_a_form}) is valid.

\subsection{Incorporation of the operators $\mathcal{A} (\mathbf{k})$ in the abstract scheme}
For $d > 1$, the operators  $\mathcal{A} (\mathbf{k})$ depend on the multidimensional parameter $\mathbf{k}$. According to \cite[Chapter~2]{BSu2003}, we introduce the one-dimensional parameter $t = | \mathbf{k}|$. We shall apply the method described in Chapter~\ref{abstr_part}. Now, all constructions will depend on the additional parameter $\boldsymbol{\theta} = \mathbf{k} / | \mathbf{k}| \in \mathbb{S}^{d-1}$ and we need to make our estimates uniform with respect to $\boldsymbol{\theta}$. The spaces $\mathfrak{H}$ and $\mathfrak{H}_*$ are defined by~(\ref{Spaces_H}). We put $X(t) = X(t; \boldsymbol{\theta}) \coloneqq \mathcal{X}(t \boldsymbol{\theta})$. Then $X(t; \boldsymbol{\theta}) = X_0 + t  X_1 (\boldsymbol{\theta})$, where $X_0 = h(\mathbf{x}) b (\mathbf{D}) f(\mathbf{x})$, $\Dom X_0 = \mathfrak{d}$, and $X_1 (\boldsymbol{\theta})$ is the bounded operator of multiplication by the matrix $h(\mathbf{x}) b(\boldsymbol{\theta}) f(\mathbf{x})$. Next, we put $A(t) = A(t; \boldsymbol{\theta}) \coloneqq \mathcal{A}(t \boldsymbol{\theta})$. The kernel $\mathfrak{N} = \Ker X_0$ is described by~(\ref{frakN}). As was shown in~\cite[Chapter~2, \S3]{BSu2003}, the distance $d^0$ from the point $\lambda_0 = 0$ to the rest of the spectrum of the operator $\mathcal{A}(0)$ is subject to the estimate $d^0 \ge 4 c_* r_0^2$. The condition $n \le n_* = \operatorname{dim} \Ker X^*_0$ is also fulfilled. Moreover, either $n_* = n$ (if $m = n$), or $n_* = \infty$ (if $m > n$).

In Subsection~\ref{abstr_X_A_section}, it was required to fix a number $\delta \in (0, d^0/8)$. Since $d^0 \ge 4 c_* r_0^2$, we choose
\begin{equation}
\label{delta_fixation}
\delta = \frac{1}{4} c_* r^2_0 = \frac{1}{4} \alpha_0\|f^{-1} \|_{L_\infty}^{-2} \|g^{-1} \|_{L_\infty}^{-1} r^2_0.
\end{equation}  
Note that by~(\ref{alpha0_alpha1}) and~(\ref{h_f_Linfty}) we have
\begin{equation}
\label{X_1_estimate}
\| X_1 (\boldsymbol{\theta}) \| \le  \alpha^{1/2}_1 \| h \|_{L_{\infty}} \| f \|_{L_{\infty}}, \qquad \boldsymbol{\theta} \in \mathbb{S}^{d-1}.
\end{equation}
We put (see~(\ref{abstr_t0}))
\begin{equation}
\label{t0_fixation}
t^0 = \delta^{1/2} \alpha_1^{-1/2} \| h \|_{L_{\infty}}^{-1} \|f\|_{L_{\infty}}^{-1} = \frac{r_0}{2} \alpha_0^{1/2} \alpha_1^{-1/2} \left( \| h \|_{L_{\infty}} \| h^{-1} \|_{L_{\infty}} \|f \|_{L_\infty} \|f^{-1} \|_{L_\infty} \right)^{-1}.
\end{equation}
Note that $t^0 \le r_0/2$. Thus, the ball $|\mathbf{k}| \le t^0$ lies inside $\widetilde{\Omega}$. It is important that $c_*$, $\delta$, $t^0$ (see~(\ref{c_*}), (\ref{delta_fixation}), (\ref{t0_fixation})) do not depend on $\boldsymbol{\theta}$. By~(\ref{c_*}),  Condition~\ref{abstr_nondegeneracy_cond} is fulfilled. The germ $S(\boldsymbol{\theta})$ of the operator $A(t; \boldsymbol{\theta})$ is nondegenerate  uniformly in $\boldsymbol{\theta}$: $S(\boldsymbol{\theta}) \ge c_* I_{\mathfrak{N}}$ (cf.~(\ref{abstr_S_nondegeneracy})).

\section{The effective characteristics of the operator  $\widehat{\mathcal{A}} = b(\mathbf{D})^* g(\mathbf{x}) b(\mathbf{D})$}
\subsection{The operator $A(t; \boldsymbol{\theta})$ in the case where $f = \mathbf{1}_n$}
The operator $A(t; \boldsymbol{\theta})$ in the case where $f = \mathbf{1}_n$ plays a special role. In this case we agree to mark all the associated objects by hat \textquotedblleft$\, \widehat{\phantom{\_}} \,$\textquotedblright. Then for the operator
\begin{equation}
\label{hatA}
\widehat{\mathcal{A}} = b(\mathbf{D})^* g(\mathbf{x}) b(\mathbf{D})
\end{equation}
the family $\widehat{\mathcal{A}} (\mathbf{k})$ is denoted by $\widehat{A} (t; \boldsymbol{\theta})$. The kernel~(\ref{frakN}) takes the form
\begin{equation}
\label{Ker3}
\widehat{\mathfrak{N}} = \left\lbrace \mathbf{u} \in L_2 (\Omega; \mathbb{C}^n) \colon \mathbf{u} = \mathbf{c} \in \mathbb{C}^n \right\rbrace, 
\end{equation}
i.e., $\widehat{\mathfrak{N}}$ consists of constant vector-valued functions. The orthogonal projection $\widehat{P}$ of the space $L_2 (\Omega; \mathbb{C}^n)$ onto the subspace~(\ref{Ker3}) is the operator of averaging over the cell:
\begin{equation}
\label{Phat_projector}
\widehat{P} \mathbf{u} = |\Omega|^{-1} \int_{\Omega} \mathbf{u} (\mathbf{x}) \, d\mathbf{x}.
\end{equation}
According to~\cite[Chapter~3,~\S1]{BSu2003}, the spectral germ $\widehat{S} (\boldsymbol{\theta}) \colon \widehat{\mathfrak{N}} \to \widehat{\mathfrak{N}} $ of the family $\widehat{A}(t; \boldsymbol{\theta})$ is represented as 
$\widehat{S} (\boldsymbol{\theta}) = b(\boldsymbol{\theta})^* g^0 b(\boldsymbol{\theta})$,  $\boldsymbol{\theta} \in \mathbb{S}^{d-1}$, where $g^0$ is the so-called \emph{effective matrix}. The constant ($m \times m$)-matrix $g^0$ is defined as follows. Let $\Lambda \in \widetilde{H}^1 (\Omega)$ be a $\Gamma$-periodic ($n \times m$)-matrix-valued function satisfying the equation
\begin{equation}
\label{equation_for_Lambda}
b(\mathbf{D})^* g(\mathbf{x}) (b(\mathbf{D}) \Lambda (\mathbf{x}) + \mathbf{1}_m) = 0, \qquad \int_{\Omega} \Lambda (\mathbf{x}) \, d \mathbf{x} = 0.
\end{equation}
The effective matrix $g^0$ can be described in terms of the matrix $\Lambda (\mathbf{x})$:
\begin{align}
\label{g0}
&g^0 = | \Omega |^{-1} \int_{\Omega} \widetilde{g} (\mathbf{x}) \, d \mathbf{x},\\
\label{g_tilde}
&\widetilde{g} (\mathbf{x}) \coloneqq g(\mathbf{x})( b(\mathbf{D}) \Lambda (\mathbf{x}) + \mathbf{1}_m).
\end{align}
It turns out that the matrix $g^0$ is positive definite. Consider the symbol
\begin{equation}
\label{effective_oper_symb}
\widehat{S} (\mathbf{k}) \coloneqq t^2 \widehat{S} (\boldsymbol{\theta}) = b(\mathbf{k})^* g^0 b(\mathbf{k}), \qquad \mathbf{k} \in \mathbb{R}^{d}.
\end{equation}
Expression~(\ref{effective_oper_symb}) is the symbol of the DO
\begin{equation}
\label{hatA0}
\widehat{\mathcal{A}}^0 = b(\mathbf{D})^* g^0 b(\mathbf{D})
\end{equation}
acting in $L_2(\mathbb{R}^d; \mathbb{C}^n)$ and called the \emph{effective operator} for the operator $\widehat{\mathcal{A}}$.

Let $\widehat{\mathcal{A}}^0 (\mathbf{k})$ be the operator family in $L_2(\Omega; \mathbb{C}^n)$ corresponding to operator~(\ref{hatA0}). Then $\widehat{\mathcal{A}}^0 (\mathbf{k})$ is given by $\widehat{\mathcal{A}}^0 (\mathbf{k}) = b(\mathbf{D} + \mathbf{k})^* g^0 b(\mathbf{D} + \mathbf{k})$ with periodic boundary conditions. Taking into account~(\ref{Phat_projector}) and~(\ref{effective_oper_symb}), we have
\begin{equation}
\label{hatS_P=hatA^0_P}
\widehat{S} (\mathbf{k}) \widehat{P} = \widehat{\mathcal{A}}^0 (\mathbf{k}) \widehat{P}.
\end{equation}

\subsection{Properties of the effective matrix}

The following properties of $g^0$ were checked in~\cite[Chapter~3, Theorem~1.5]{BSu2003}.
\begin{proposition}[\cite{BSu2003}]
	The effective matrix satisfies the following estimates
	\begin{equation}
	\label{Voigt_Reuss}
	\underline{g} \le g^0 \le \overline{g},
	\end{equation}
	where $\overline{g} \coloneqq | \Omega |^{-1} \int_{\Omega} g (\mathbf{x}) \, d \mathbf{x}$ and $\underline{g} \coloneqq \left( | \Omega |^{-1} \int_{\Omega} g (\mathbf{x})^{-1} \, d \mathbf{x}\right)^{-1}$. If $m = n$, then $g^0 = \underline{g}$.
\end{proposition}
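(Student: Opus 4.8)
\emph{Proof plan.} The plan is to deduce both inequalities in~(\ref{Voigt_Reuss}) from a variational (minimum) characterization of the quadratic form $\langle g^0\boldsymbol{\xi},\boldsymbol{\xi}\rangle$, and to handle the case $m=n$ by a separate Fourier argument. \textbf{Step 1 (variational characterization).} Fix $\boldsymbol{\xi}\in\mathbb{C}^m$ and set $\mathbf{v}_{\boldsymbol{\xi}}\coloneqq\Lambda\boldsymbol{\xi}\in\widetilde{H}^1(\Omega;\mathbb{C}^n)$. Equation~(\ref{equation_for_Lambda}), understood in the weak sense, reads $\int_{\Omega}\langle g(b(\mathbf{D})\mathbf{v}_{\boldsymbol{\xi}}+\boldsymbol{\xi}),b(\mathbf{D})\mathbf{w}\rangle\,d\mathbf{x}=0$ for all $\mathbf{w}\in\widetilde{H}^1(\Omega;\mathbb{C}^n)$. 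Since $\widetilde{g}(\mathbf{x})\boldsymbol{\xi}=g(\mathbf{x})(b(\mathbf{D})\mathbf{v}_{\boldsymbol{\xi}}+\boldsymbol{\xi})$ by~(\ref{g_tilde}), testing with $\mathbf{w}=\mathbf{v}_{\boldsymbol{\xi}}$ and using~(\ref{g0}) should give the identity $\langle g^0\boldsymbol{\xi},\boldsymbol{\xi}\rangle=|\Omega|^{-1}\int_{\Omega}\langle g(b(\mathbf{D})\mathbf{v}_{\boldsymbol{\xi}}+\boldsymbol{\xi}),b(\mathbf{D})\mathbf{v}_{\boldsymbol{\xi}}+\boldsymbol{\xi}\rangle\,d\mathbf{x}$. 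Writing a general $\mathbf{w}$ as $\mathbf{v}_{\boldsymbol{\xi}}+\mathbf{z}$, expanding $\int_{\Omega}\langle g(b(\mathbf{D})\mathbf{w}+\boldsymbol{\xi}),b(\mathbf{D})\mathbf{w}+\boldsymbol{\xi}\rangle\,d\mathbf{x}$, killing the cross term by the weak equation (test function $\mathbf{z}$), and noting that the leftover term $\int_{\Omega}\langle g\,b(\mathbf{D})\mathbf{z},b(\mathbf{D})\mathbf{z}\rangle\,d\mathbf{x}\ge 0$ is nonnegative, I would conclude that $\langle g^0\boldsymbol{\xi},\boldsymbol{\xi}\rangle$ equals the minimum of $|\Omega|^{-1}\int_{\Omega}\langle g(\mathbf{x})(b(\mathbf{D})\mathbf{w}+\boldsymbol{\xi}),b(\mathbf{D})\mathbf{w}+\boldsymbol{\xi}\rangle\,d\mathbf{x}$ over $\mathbf{w}\in\widetilde{H}^1(\Omega;\mathbb{C}^n)$, the minimum being attained at $\mathbf{v}_{\boldsymbol{\xi}}$.

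\textbf{Step 2 (the two bounds).} The upper bound $g^0\le\overline{g}$ is then immediate from the admissible choice $\mathbf{w}=0$. For the lower bound I would fix $\mathbf{w}$, put $\mathbf{G}\coloneqq b(\mathbf{D})\mathbf{w}+\boldsymbol{\xi}$ (so that $|\Omega|^{-1}\int_{\Omega}\mathbf{G}\,d\mathbf{x}=\boldsymbol{\xi}$ by $\Gamma$-periodicity of $\mathbf{w}$), and use the pointwise inequality $\langle g\mathbf{G},\mathbf{G}\rangle\ge 2\Re\langle\mathbf{G},\mathbf{c}\rangle-\langle g^{-1}\mathbf{c},\mathbf{c}\rangle$, valid for every constant vector $\mathbf{c}\in\mathbb{C}^m$ (it is just $\langle g(\mathbf{G}-g^{-1}\mathbf{c}),\mathbf{G}-g^{-1}\mathbf{c}\rangle\ge 0$, with $g$ Hermitian positive definite). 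Averaging over $\Omega$, recalling $\underline{g}^{-1}=|\Omega|^{-1}\int_{\Omega}g^{-1}\,d\mathbf{x}$, and then optimizing in $\mathbf{c}$ (the optimal choice is $\mathbf{c}=\underline{g}\boldsymbol{\xi}$) gives $|\Omega|^{-1}\int_{\Omega}\langle g\mathbf{G},\mathbf{G}\rangle\,d\mathbf{x}\ge\langle\underline{g}\boldsymbol{\xi},\boldsymbol{\xi}\rangle$; since $\mathbf{w}$ is arbitrary, Step~1 yields $g^0\ge\underline{g}$.

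\textbf{Step 3 (equality for $m=n$).} If $m=n$, then by the rank condition the symbol $b(\mathbf{b})$, hence also $b(\mathbf{b})^*$, is invertible for every $0\ne\mathbf{b}\in\widetilde{\Gamma}$. By~(\ref{equation_for_Lambda}) each column of $\widetilde{g}$ is $\Gamma$-periodic and annihilated by $b(\mathbf{D})^*$; expanding in the Fourier series over $\widetilde{\Gamma}$ and using invertibility of $b(\mathbf{b})^*$ for $\mathbf{b}\ne 0$, all nonconstant Fourier modes must vanish, so $\widetilde{g}$ is a constant matrix and therefore $\widetilde{g}\equiv g^0$ by~(\ref{g0}). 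Then~(\ref{g_tilde}) gives $b(\mathbf{D})\Lambda+\mathbf{1}_n=g^{-1}g^0$; averaging over $\Omega$ and using $\int_{\Omega}\Lambda\,d\mathbf{x}=0$ (whence $\int_{\Omega}b(\mathbf{D})\Lambda\,d\mathbf{x}=0$) produces $\mathbf{1}_n=\underline{g}^{-1}g^0$, i.e.\ $g^0=\underline{g}$.

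The step I expect to be the main obstacle is Step~1: one must set up the weak formulation of~(\ref{equation_for_Lambda}) carefully (complex inner products, the identification $\widetilde{g}\boldsymbol{\xi}=g(b(\mathbf{D})\mathbf{v}_{\boldsymbol{\xi}}+\boldsymbol{\xi})$, the ``completion of the square'') and verify that the energy functional is genuinely minimized at $\mathbf{v}_{\boldsymbol{\xi}}$ rather than merely stationary there; once the variational identity is established, both bounds in~(\ref{Voigt_Reuss}) and the equality case reduce to the short pointwise and Fourier computations above.
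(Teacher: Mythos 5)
Your argument is correct and is essentially the standard proof from the cited source \cite[Chapter~3, Theorem~1.5]{BSu2003} (the paper itself only quotes the result without proof): the variational characterization of $\langle g^0\boldsymbol{\xi},\boldsymbol{\xi}\rangle$ obtained from the weak form of~(\ref{equation_for_Lambda}), the Legendre-type pointwise inequality for the lower (Reuss) bound, the test function $\mathbf{w}=0$ for the upper (Voigt) bound, and the Fourier/invertibility argument showing $\widetilde{g}=\const$ when $m=n$. No gaps.
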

For specific DOs, estimates~(\ref{Voigt_Reuss}) are known as the Voight-Reuss bracketing. Now, we distinguish the cases where one of the inequalities in~(\ref{Voigt_Reuss}) becomes an identity. The following statements were obtained in~\cite[Chapter~3, Propositions~1.6, 1.7]{BSu2003}.
\begin{proposition}[\cite{BSu2003}]
	The identity $g^0 = \overline{g}$ is equivalent to the relations
	\begin{equation}
	\label{g0=overline_g_relat}
	b(\mathbf{D})^* \mathbf{g}_k (\mathbf{x}) = 0, \quad k = 1, \ldots, m,
	\end{equation}
	where $\mathbf{g}_k (\mathbf{x})$, $k = 1, \ldots,m$, are the columns of the matrix $g (\mathbf{x})$.
\end{proposition}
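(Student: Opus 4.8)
The plan is to express the matrix $\overline{g}-g^0$ as a manifestly nonnegative quadratic form built from $b(\mathbf{D})\Lambda$, and then to read off exactly when it degenerates.

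First I would derive the ``energy'' representation $g^0 = |\Omega|^{-1}\int_\Omega (b(\mathbf{D})\Lambda(\mathbf{x})+\mathbf{1}_m)^*\, g(\mathbf{x})\, (b(\mathbf{D})\Lambda(\mathbf{x})+\mathbf{1}_m)\, d\mathbf{x}$. Starting from~(\ref{g0}),~(\ref{g_tilde}), this reduces to the identity $\int_\Omega (b(\mathbf{D})\Lambda(\mathbf{x}))^*\, \widetilde{g}(\mathbf{x})\, d\mathbf{x} = \int_\Omega \Lambda(\mathbf{x})^*\, b(\mathbf{D})^* g(\mathbf{x})(b(\mathbf{D})\Lambda(\mathbf{x})+\mathbf{1}_m)\, d\mathbf{x} = 0$, which is just the cell equation~(\ref{equation_for_Lambda}) tested, column by column, against $\Lambda$ (integration by parts over $\Omega$, legitimate by periodicity). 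Adding this vanishing term and its adjoint to~(\ref{g0}) gives the representation.

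Next I would expand the integrand in that representation and use the same vanishing fact once more (for the two cross terms, together with $g=g^*$), which cancels them against $(b(\mathbf{D})\Lambda)^* g\, b(\mathbf{D})\Lambda$ up to sign and yields
\begin{equation*}
\overline{g}-g^0 = |\Omega|^{-1}\int_\Omega (b(\mathbf{D})\Lambda(\mathbf{x}))^*\, g(\mathbf{x})\, b(\mathbf{D})\Lambda(\mathbf{x})\, d\mathbf{x},
\end{equation*}
where $\overline{g} = |\Omega|^{-1}\int_\Omega g\, d\mathbf{x} = |\Omega|^{-1}\int_\Omega \mathbf{1}_m^* g\, \mathbf{1}_m\, d\mathbf{x}$. (This is precisely the computation behind the inequality $g^0\le\overline{g}$ in~(\ref{Voigt_Reuss}).) Since $g(\mathbf{x}) = h(\mathbf{x})^* h(\mathbf{x})$ is uniformly positive definite by~(\ref{h_f_Linfty}), the right-hand side vanishes if and only if $b(\mathbf{D})\Lambda(\mathbf{x}) = 0$ for a.e.\ $\mathbf{x}$; passing to Fourier series and using $\rank b(\mathbf{b}) = n$ for $0\ne\mathbf{b}\in\widetilde{\Gamma}$, this forces each column of $\Lambda$ to be constant, and then the normalization $\int_\Omega\Lambda\, d\mathbf{x} = 0$ gives $\Lambda\equiv 0$.

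It remains to identify the condition $\Lambda\equiv 0$ with~(\ref{g0=overline_g_relat}). If $\Lambda\equiv 0$, then~(\ref{equation_for_Lambda}) collapses to $b(\mathbf{D})^* g(\mathbf{x})\mathbf{1}_m = 0$, i.e.\ $b(\mathbf{D})^*\mathbf{g}_k = 0$ for every column $\mathbf{g}_k$ of $g$. Conversely, if $b(\mathbf{D})^*\mathbf{g}_k = 0$ for all $k$, then $\Lambda\equiv 0$ solves~(\ref{equation_for_Lambda}) with zero mean, hence coincides with the solution $\Lambda$ by uniqueness for the cell problem (uniqueness follows again from the rank/Fourier argument: the difference of two solutions is a zero-mean periodic function annihilated by $b(\mathbf{D})$, hence $0$). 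Chaining these equivalences yields $g^0 = \overline{g}\iff\Lambda\equiv 0\iff$~(\ref{g0=overline_g_relat}). The only step with real content is the passage to the nonnegative representation of $\overline{g}-g^0$: the algebra of the cross terms and the correct column-by-column use of the cell equation must be handled carefully, whereas positivity of $g$, the elementary fact that $b(\mathbf{D})\chi = 0$ forces a periodic $\chi$ to be constant, and uniqueness for the cell problem are routine.
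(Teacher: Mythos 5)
Your argument is correct and is essentially the standard proof of this proposition from~\cite{BSu2003}: the identity $\overline{g}-g^0=|\Omega|^{-1}\int_\Omega (b(\mathbf{D})\Lambda)^* g\, (b(\mathbf{D})\Lambda)\,d\mathbf{x}$ is exactly the quantitative form of the variational characterization of $g^0$ underlying~(\ref{Voigt_Reuss}), and the identification $g^0=\overline{g}\iff\Lambda\equiv 0\iff$~(\ref{g0=overline_g_relat}) proceeds as in the cited reference. The only cosmetic remark is that in the uniqueness step the difference of two solutions is annihilated by $b(\mathbf{D})^*g\,b(\mathbf{D})$, and one should note that testing against itself (or $\Ker X^*X=\Ker X$) yields $b(\mathbf{D})\chi=0$ before invoking the rank/Fourier argument.
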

\begin{proposition}[\cite{BSu2003}]
	The identity $g^0 = \underline{g}$ is equivalent to the representations
	\begin{equation}
	\label{g0=underline_g_relat}
	\mathbf{l}_k (\mathbf{x}) = \mathbf{l}^0_k + b(\mathbf{D}) \mathbf{w}_k(\mathbf{x}), \quad \mathbf{l}^0_k \in \mathbb{C}^m, \quad \mathbf{w}_k \in \widetilde{H}^1 (\Omega; \mathbb{C}^n), \quad k = 1, \ldots,m,
	\end{equation}
	where $\mathbf{l}_k (\mathbf{x})$, $k = 1, \ldots,m$, are the columns of the matrix $g (\mathbf{x})^{-1}$.
\end{proposition}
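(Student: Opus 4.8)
The plan is to route the equivalence through the auxiliary condition that $\widetilde{g}(\mathbf{x})$ be constant a.e.: I would prove that $g^0 = \underline{g}$ is equivalent to $\widetilde{g} \equiv g^0$, and then that $\widetilde{g} \equiv g^0$ is equivalent to the representation~\eqref{g0=underline_g_relat}. The workhorse for the first equivalence is an energy identity for $\widetilde{g}$.

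First I would record two consequences of the cell equation~\eqref{equation_for_Lambda}, which in weak form states $\int_\Omega (b(\mathbf{D})\Phi)^*\widetilde{g}\,d\mathbf{x} = 0$ for every periodic $\Phi \in \widetilde{H}^1(\Omega)$. Since $\widetilde{g} = g(b(\mathbf{D})\Lambda + \mathbf{1}_m)$ and $g = g^*$, we have $g^{-1}\widetilde{g} = b(\mathbf{D})\Lambda + \mathbf{1}_m$ and $\widetilde{g}^* g^{-1}\widetilde{g} = (b(\mathbf{D})\Lambda + \mathbf{1}_m)^*\widetilde{g}$; averaging over $\Omega$, and using the vanishing mean of $b(\mathbf{D})\Lambda$ together with the orthogonality above, gives $|\Omega|^{-1}\int_\Omega g^{-1}\widetilde{g}\,d\mathbf{x} = \mathbf{1}_m$ and $|\Omega|^{-1}\int_\Omega \widetilde{g}^* g^{-1}\widetilde{g}\,d\mathbf{x} = g^0$. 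Expanding the manifestly nonnegative matrix $\int_\Omega(\widetilde{g} - g^0)^* g^{-1}(\widetilde{g} - g^0)\,d\mathbf{x}$ with these two identities, together with $\int_\Omega \widetilde{g}\,d\mathbf{x} = |\Omega|\,g^0$ and $\int_\Omega g^{-1}\,d\mathbf{x} = |\Omega|\,\underline{g}^{-1}$, I obtain
\[
\int_\Omega(\widetilde{g} - g^0)^* g^{-1}(\widetilde{g} - g^0)\,d\mathbf{x} = |\Omega|\,g^0\bigl(\underline{g}^{-1} - (g^0)^{-1}\bigr)g^0 .
\]

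Now both directions follow. The left-hand side, with $g^{-1} > 0$, vanishes exactly when $\widetilde{g}(\mathbf{x}) = g^0$ for a.e.\ $\mathbf{x}$; by the bound $\underline{g} \le g^0$ of the preceding Proposition (see~\eqref{Voigt_Reuss}) the right-hand side is a nonnegative matrix, which (using that $g^0$ is invertible) vanishes exactly when $g^0 = \underline{g}$. Hence $g^0 = \underline{g} \iff \widetilde{g} \equiv g^0$. If $\widetilde{g} \equiv g^0$, then $b(\mathbf{D})\Lambda + \mathbf{1}_m = g^{-1} g^0$, so $g^{-1} = b(\mathbf{D})\bigl(\Lambda(g^0)^{-1}\bigr) + (g^0)^{-1}$ (the constant matrix $(g^0)^{-1}$ may be moved through $b(\mathbf{D})$), and reading off the $k$-th columns yields~\eqref{g0=underline_g_relat} with $\mathbf{w}_k$ the $k$-th column of $\Lambda(g^0)^{-1}$ (it lies in $\widetilde{H}^1(\Omega;\mathbb{C}^n)$) and $\mathbf{l}^0_k$ the $k$-th column of $(g^0)^{-1}$. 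Conversely, suppose~\eqref{g0=underline_g_relat} holds; let $L^0$ be the constant matrix with columns $\mathbf{l}^0_k$ and $W$ the matrix-valued function with columns $\mathbf{w}_k$, so that $g^{-1} = L^0 + b(\mathbf{D})W$. Averaging forces $L^0 = \underline{g}^{-1}$, hence $g^{-1} = \underline{g}^{-1} + b(\mathbf{D})W$; multiplying by $g$ on the left and $\underline{g}$ on the right yields $\underline{g} = g\bigl(b(\mathbf{D})(W\underline{g}) + \mathbf{1}_m\bigr)$. Putting $\Lambda \coloneqq W\underline{g} - |\Omega|^{-1}\int_\Omega W\underline{g}\,d\mathbf{x}$, we still have $g(b(\mathbf{D})\Lambda + \mathbf{1}_m) = \underline{g}$, a constant matrix annihilated by $b(\mathbf{D})^*$, with $\int_\Omega \Lambda\,d\mathbf{x} = 0$ and $\Lambda \in \widetilde{H}^1(\Omega)$; by uniqueness of the solution of~\eqref{equation_for_Lambda} this is the $\Lambda$ defining $\widetilde{g}$, so $\widetilde{g} \equiv \underline{g}$ and $g^0 = |\Omega|^{-1}\int_\Omega \widetilde{g}\,d\mathbf{x} = \underline{g}$.

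The individual steps are elementary linear algebra and integration by parts; the only place requiring care is the bookkeeping of matrix adjoints and of the non-commutativity of $b(\mathbf{D})$ with multiplication by $g$ (constant matrices, however, do pass through $b(\mathbf{D})$). The one genuinely substantive ingredient is the energy identity in the display, which rests on both the cell equation $b(\mathbf{D})^*\widetilde{g} = 0$ and the already-established bound $\underline{g} \le g^0$. An alternative route — inspecting the equality case in the Cauchy--Schwarz argument that proves $\underline{g} \le g^0$ — reaches the same conclusion, but the identity above is the most economical to write down.
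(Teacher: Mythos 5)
Your argument is correct and complete: the energy identity is verified by a direct expansion using $\int_\Omega g^{-1}\widetilde g\,d\mathbf{x}=|\Omega|\mathbf{1}_m$ and $\int_\Omega \widetilde g^* g^{-1}\widetilde g\,d\mathbf{x}=|\Omega|g^0$, and both implications then follow as you describe (the converse direction correctly invokes uniqueness of the zero-mean solution of~\eqref{equation_for_Lambda}). The paper itself states this proposition without proof, citing~\cite[Chapter~3, Proposition~1.7]{BSu2003}; your route through the intermediate equivalence $g^0=\underline g \iff \widetilde g\equiv g^0$ is essentially the standard argument given there, so there is nothing further to flag.
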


\subsection{The analytic branches of eigenvalues and eigenvectors}

The analytic (with respect to $t$) branches of the eigenvalues $\widehat{\lambda}_l (t, \boldsymbol{\theta})$ and the analytic branches of the eigenvectors $\widehat{\varphi}_l (t, \boldsymbol{\theta})$ of the operator $\widehat{A} (t; \boldsymbol{\theta})$ admit the power series expansions of the form~(\ref{abstr_A(t)_eigenvalues_series}),~(\ref{abstr_A(t)_eigenvectors_series}) with the coefficients depending on $\boldsymbol{\theta}$ (we do not control the interval of convergence $t = |\mathbf{k}| \le t_* (\boldsymbol{\theta})$):
\begin{align}
\label{hatA_eigenvalues_series}
\widehat{\lambda}_l (t, \boldsymbol{\theta}) &= \widehat{\gamma}_l (\boldsymbol{\theta}) t^2 + \widehat{\mu}_l (\boldsymbol{\theta}) t^3 + \widehat{\nu}_l (\boldsymbol{\theta}) t^4 + \ldots, & l &= 1, \ldots, n,   \\
\label{hatA_eigenvectors_series}
\widehat{\varphi}_l (t, \boldsymbol{\theta}) &= \widehat{\omega}_l (\boldsymbol{\theta}) + t \widehat{\psi}^{(1)}_l (\boldsymbol{\theta}) + \ldots, & l &= 1, \ldots, n.
\end{align}
According to~(\ref{abstr_S_eigenvectors}), $\widehat{\gamma}_l (\boldsymbol{\theta})$ and $\widehat{\omega}_l (\boldsymbol{\theta})$ are the eigenvalues and the eigenvectors of the germ: $
b(\boldsymbol{\theta})^* g^0 b(\boldsymbol{\theta}) \widehat{\omega}_l (\boldsymbol{\theta}) = \widehat{\gamma}_l (\boldsymbol{\theta}) \widehat{\omega}_l (\boldsymbol{\theta})$, $l = 1, \ldots, n$.

\subsection{The operator $\widehat{N} (\boldsymbol{\theta})$}

We need to describe the operator $N$ (which in abstract terms is defined in Theorem~\ref{abstr_threshold_approx_thrm_2}). According to~\cite[\S4]{BSu2005-2}, for the family $\widehat{A} (t; \boldsymbol{\theta})$ this operator takes the form
\begin{align}
\label{hatN(theta)}
\widehat{N} (\boldsymbol{\theta}) &= b(\boldsymbol{\theta})^* L(\boldsymbol{\theta}) b(\boldsymbol{\theta}) \widehat{P}, \\ 
\notag
L (\boldsymbol{\theta}) &\coloneqq | \Omega |^{-1} \int_{\Omega} (\Lambda (\mathbf{x})^* b(\boldsymbol{\theta})^* \widetilde{g}(\mathbf{x}) + \widetilde{g}(\mathbf{x})^* b(\boldsymbol{\theta}) \Lambda (\mathbf{x}) ) \, d \mathbf{x}.
\end{align}
Here $\Lambda (\mathbf{x})$ is the $\Gamma$-periodic solution of problem~(\ref{equation_for_Lambda}), and $\widetilde{g}(\mathbf{x})$ is given by~(\ref{g_tilde}).

Some cases where $\widehat{N}(\boldsymbol{\theta}) = 0$ were distinguished in~\cite[\S4]{BSu2005-2}.
\begin{proposition}[\cite{BSu2005-2}]
	\label{N=0_proposit}
	Suppose that at least one of the following conditions is fulfilled\emph{:}
	\begin{enumerate}[label=\emph{\arabic*$^{\circ}.$}, ref=\arabic*$^{\circ}$,parsep=-3pt]
		\item \label{N=0_proposit_it1} $\widehat{\mathcal{A}} = \mathbf{D}^* g(\mathbf{x}) \mathbf{D}$, where $g(\mathbf{x})$ is a symmetric matrix with real entries.
		\item Relations~\emph{(\ref{g0=overline_g_relat})} are satisfied, i.e. $g^0 = \overline{g}$.
		\item Relations~\emph{(\ref{g0=underline_g_relat})} are satisfied, i.e. $g^0 = \underline{g}$. \emph{(}If $m = n$, this is the case.\emph{)} 
	\end{enumerate}
	Then $\widehat{N} (\boldsymbol{\theta}) = 0$ for any $\boldsymbol{\theta} \in \mathbb{S}^{d-1}$.
\end{proposition}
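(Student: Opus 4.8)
The plan is to reduce the claim to a single matrix identity and then verify it in each of the three cases. Since $\widehat{N}(\boldsymbol{\theta}) = b(\boldsymbol{\theta})^* L(\boldsymbol{\theta}) b(\boldsymbol{\theta}) \widehat{P}$ by~\eqref{hatN(theta)} and $\widehat{P}$ is a nonzero projection, it suffices to prove that the constant matrix $b(\boldsymbol{\theta})^* L(\boldsymbol{\theta}) b(\boldsymbol{\theta})$ vanishes, and for this I only need to control the periodic corrector $\Lambda$. The tools are: $\Lambda$ is the \emph{unique} $\Gamma$-periodic solution of~\eqref{equation_for_Lambda} with $\int_\Omega \Lambda\, d\mathbf{x} = 0$ (uniqueness follows from uniform positive definiteness of $g$ together with the Poincar\'e inequality on $\widetilde{H}^1(\Omega)$); the matrix $\widetilde{g}$ from~\eqref{g_tilde} satisfies $b(\mathbf{D})^* \widetilde{g} = 0$ and $|\Omega|^{-1}\int_\Omega \widetilde{g}\, d\mathbf{x} = g^0$ (cf.~\eqref{g0}); and $L(\boldsymbol{\theta})$ is assembled bilinearly from $\Lambda$, $\Lambda^*$ and $\widetilde{g}$. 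In each case I would first identify $\Lambda$ (or extract just enough of its structure) and then substitute into the formula for $L(\boldsymbol{\theta})$.

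\emph{Case $2^{\circ}$.} Relations~\eqref{g0=overline_g_relat} (equivalent to $g^0 = \overline{g}$) say precisely that $b(\mathbf{D})^* g = 0$. Then $\Lambda = 0$ already solves~\eqref{equation_for_Lambda} with zero average, so $\Lambda = 0$ by uniqueness; since every term of $L(\boldsymbol{\theta})$ carries a factor $\Lambda$ or $\Lambda^*$, we get $L(\boldsymbol{\theta}) = 0$, hence $\widehat{N}(\boldsymbol{\theta}) = 0$. \emph{Case $3^{\circ}$.} By~\eqref{g0=underline_g_relat} the columns of $g^{-1}$ have the form $\mathbf{l}_k = \mathbf{l}_k^0 + b(\mathbf{D}) \mathbf{w}_k$ with $\mathbf{w}_k \in \widetilde{H}^1(\Omega;\mathbb{C}^n)$; averaging, and using that the mean of $b(\mathbf{D})\mathbf{w}_k$ vanishes, shows that $L^0 := (\mathbf{l}_1^0 \mid \dots \mid \mathbf{l}_m^0)$ equals $|\Omega|^{-1}\int_\Omega g^{-1}\, d\mathbf{x}$, so $(L^0)^{-1} = \underline{g} = g^0 =: c$. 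Consequently $g^{-1} c - \mathbf{1}_m = b(\mathbf{D}) W$ for the $\Gamma$-periodic matrix $W$ whose $k$-th column is $\sum_j \mathbf{w}_j c_{jk}$, because the constant part $L^0 c - \mathbf{1}_m$ of $g^{-1} c - \mathbf{1}_m$ is zero. Hence the corrector, normalized to zero average, is $\Lambda = W - |\Omega|^{-1}\int_\Omega W\, d\mathbf{x}$, and then $\widetilde{g} = g(b(\mathbf{D})\Lambda + \mathbf{1}_m) = g\, g^{-1} c = c$ is a constant matrix (equal to $g^0$) satisfying $b(\mathbf{D})^* \widetilde{g} = b(\mathbf{D})^* c = 0$, which confirms that this $\Lambda$ is the corrector. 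Since $\widetilde{g} \equiv g^0$ is constant and $\int_\Omega \Lambda\, d\mathbf{x} = 0$, the formula for $L(\boldsymbol{\theta})$ reduces to a sum of two terms, each a multiple of $\int_\Omega \Lambda\, d\mathbf{x} = 0$, so $L(\boldsymbol{\theta}) = 0$. The parenthetical assertion is covered because $m = n$ forces $g^0 = \underline{g}$, i.e.~\eqref{g0=underline_g_relat}.

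\emph{Case $1^{\circ}$.} Here $b(\mathbf{D}) = \mathbf{D} = -i\nabla$, so $n = 1$, $m = d$, and $g$ is real and symmetric; I would exploit a reality property of the corrector. Writing $b(\mathbf{D})^* \widetilde{g} = 0$ column by column, the $k$-th column $\Lambda^{(k)}$ of $\Lambda$ solves $\operatorname{div}(g \nabla \Lambda^{(k)}) = -i\operatorname{div} \mathbf{g}_k$; since the operator $\operatorname{div}(g\nabla\,\cdot\,)$ has real coefficients and the right-hand side is $-i$ times a real function, $\Lambda^{(k)} = -i\, \Lambda^{(k)}_{\mathbb{R}}$ with $\Lambda^{(k)}_{\mathbb{R}}$ real-valued. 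Therefore $b(\mathbf{D})\Lambda = -\nabla \Lambda_{\mathbb{R}}$ and $\widetilde{g} = g(\mathbf{1}_d - \nabla \Lambda_{\mathbb{R}})$ are \emph{real}-valued, while $b(\boldsymbol{\theta})$ is real. Substituting into $b(\boldsymbol{\theta})^* L(\boldsymbol{\theta}) b(\boldsymbol{\theta})$, and noting that $b(\boldsymbol{\theta})^* \Lambda^* = \overline{\Lambda\, b(\boldsymbol{\theta})}$ and $b(\boldsymbol{\theta})^* \widetilde{g}^* b(\boldsymbol{\theta}) = \overline{b(\boldsymbol{\theta})^* \widetilde{g}\, b(\boldsymbol{\theta})}$, one gets (up to the factor $|\Omega|^{-1}$) the integral over $\Omega$ of $z + \bar{z}$, where $z = \overline{\Lambda\, b(\boldsymbol{\theta})}\cdot b(\boldsymbol{\theta})^* \widetilde{g}\, b(\boldsymbol{\theta})$. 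But $b(\boldsymbol{\theta})^* \widetilde{g}\, b(\boldsymbol{\theta})$ is real-valued whereas $\Lambda\, b(\boldsymbol{\theta})$ is purely imaginary, so $z$ is purely imaginary pointwise, $z + \bar{z} = 0$, and $b(\boldsymbol{\theta})^* L(\boldsymbol{\theta}) b(\boldsymbol{\theta}) = 0$. (As an alternative route: for real $g$, complex conjugation intertwines $\widehat{\mathcal{A}}(\mathbf{k})$ and $\widehat{\mathcal{A}}(-\mathbf{k})$, so the band functions are even in $\mathbf{k}$, whence $\widehat{\mu}_1(\boldsymbol{\theta}) = 0$; since $n = 1$, Remark~\ref{abstr_N_remark} gives $N = N_0$, which therefore vanishes.)

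The routine parts are the substitutions into $L(\boldsymbol{\theta})$; the substantive work — and the main obstacle — is extracting the right structural fact about $\Lambda$ in each case: that the endpoint characterizations~\eqref{g0=overline_g_relat} and~\eqref{g0=underline_g_relat} of the Voigt--Reuss bracketing translate, through uniqueness of the periodic corrector, into ``$\Lambda = 0$'' and ``$\widetilde{g}$ is constant'' respectively, and, in Case $1^{\circ}$, that the reality of $g$ forces the corrector to be purely imaginary so that the two halves of $L(\boldsymbol{\theta})$ cancel. Case $1^{\circ}$ is the most delicate, because this cancellation is invisible until one carefully tracks the factors of $i$ coming from $b(\mathbf{D}) = -i\nabla$.
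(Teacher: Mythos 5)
Your argument is correct. The paper does not prove this proposition itself — it is quoted from~\cite{BSu2005-2} — and your reasoning (uniqueness of the zero-mean periodic corrector giving $\Lambda = 0$ in case $2^{\circ}$, $\widetilde{g} \equiv g^0$ constant in case $3^{\circ}$, and the purely imaginary corrector forcing the two halves of $L(\boldsymbol{\theta})$ to cancel in case $1^{\circ}$) is essentially the proof given there.
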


Recall (see Remark~\ref{abstr_N_remark}) that $\widehat{N} (\boldsymbol{\theta}) = \widehat{N}_0 (\boldsymbol{\theta}) + \widehat{N}_* (\boldsymbol{\theta})$, where the operator $\widehat{N}_0 (\boldsymbol{\theta})$ is diagonal in the basis $\{ \widehat{\omega}_l (\boldsymbol{\theta})\}_{l=1}^n$, while the operator $\widehat{N}_* (\boldsymbol{\theta})$ has zero diagonal elements. We have
\begin{equation*}
(\widehat{N} (\boldsymbol{\theta}) \widehat{\omega}_l (\boldsymbol{\theta}), \widehat{\omega}_l (\boldsymbol{\theta}))_{L_2 (\Omega)} = (\widehat{N}_0 (\boldsymbol{\theta}) \widehat{\omega}_l (\boldsymbol{\theta}), \widehat{\omega}_l (\boldsymbol{\theta}))_{L_2 (\Omega)} = \widehat{\mu}_l (\boldsymbol{\theta}), \qquad l=1, \ldots, n.
\end{equation*}

In~\cite[Subsection~4.3]{BSu2005-2} the following statement was proved.

\begin{proposition}[\cite{BSu2005-2}]
	Suppose that the matrices $b(\boldsymbol{\theta})$ and $g (\mathbf{x})$ have real entries. Suppose that in the expansions~\emph{(\ref{hatA_eigenvectors_series})} the  \textquotedblleft embryos\textquotedblright \ $\widehat{\omega}_l (\boldsymbol{\theta})$, $l = 1, \ldots, n$, can be chosen to be real. Then in~\emph{(\ref{hatA_eigenvalues_series})} we have $\widehat{\mu}_l (\boldsymbol{\theta}) = 0$, $l=1, \ldots, n$, i.e., $\widehat{N}_0 (\boldsymbol{\theta}) = 0$.
\end{proposition}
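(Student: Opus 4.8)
The plan is to track how each ingredient of formula~(\ref{hatN(theta)}) for $\widehat N(\boldsymbol{\theta})$ behaves under complex conjugation. The upshot will be that $\widehat N(\boldsymbol{\theta})$, regarded as multiplication by a constant $(n\times n)$-matrix on the subspace $\widehat{\mathfrak N}$ of constant vector-valued functions, equals $i$ times a \emph{real} matrix; pairing it with the real embryos $\widehat\omega_l(\boldsymbol{\theta})$ then makes $(\widehat N(\boldsymbol{\theta})\widehat\omega_l(\boldsymbol{\theta}),\widehat\omega_l(\boldsymbol{\theta}))_{L_2(\Omega)}$ purely imaginary. Since, by the relation displayed just before the statement, this quantity equals the real number $\widehat\mu_l(\boldsymbol{\theta})$ (it is a Taylor coefficient of the real-analytic branch $\widehat\lambda_l(t,\boldsymbol{\theta})$; cf.~(\ref{abstr_A(t)_eigenvalues_series}),~(\ref{hatA_eigenvalues_series})), we conclude $\widehat\mu_l(\boldsymbol{\theta})=0$ for every $l$, which is precisely $\widehat N_0(\boldsymbol{\theta})=0$.

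The first and only delicate step is to show that the solution $\Lambda$ of the cell problem~(\ref{equation_for_Lambda}) is purely imaginary, $\overline\Lambda=-\Lambda$. Because $b(\mathbf D)=-i\sum_l b_l\partial_l$ with real matrices $b_l$, one has $\overline{b(\mathbf D)\mathbf v}=-b(\mathbf D)\overline{\mathbf v}$ and likewise $\overline{b(\mathbf D)^*\mathbf w}=-b(\mathbf D)^*\overline{\mathbf w}$. Taking the complex conjugate of~(\ref{equation_for_Lambda}) and using $\overline g=g$ therefore gives $b(\mathbf D)^* g\,(b(\mathbf D)\overline\Lambda-\mathbf 1_m)=0$; adding this to~(\ref{equation_for_Lambda}) yields $b(\mathbf D)^* g\,b(\mathbf D)(\Lambda+\overline\Lambda)=0$. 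Testing column by column against $\Lambda+\overline\Lambda$ over $\Omega$ and using positive definiteness of $g$, each column of the real function $\Lambda+\overline\Lambda\in\widetilde H^1(\Omega;\mathbb C^n)$ lies in $\Ker b(\mathbf D)$, hence is constant by~(\ref{Ker3}), and then vanishes because $\int_\Omega\Lambda\,d\mathbf x=0$. Thus $\overline\Lambda=-\Lambda$. Substituting into~(\ref{g_tilde}) gives $\overline{\widetilde g}=g(\overline{b(\mathbf D)\Lambda}+\mathbf 1_m)=g(b(\mathbf D)\Lambda+\mathbf 1_m)=\widetilde g$, so $\widetilde g$ is real as well.

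The remainder is bookkeeping with conjugation. From $\overline{\Lambda^*}=(\overline\Lambda)^*=-\Lambda^*$, $\overline{b(\boldsymbol{\theta})^*}=b(\boldsymbol{\theta})^*$, and $\overline{\widetilde g}=\widetilde g$, both summands in the integrand defining $L(\boldsymbol{\theta})$ in~(\ref{hatN(theta)}) change sign under conjugation, so $\overline{L(\boldsymbol{\theta})}=-L(\boldsymbol{\theta})$; i.e.\ $L(\boldsymbol{\theta})$ is $i$ times a real matrix (in fact $i$ times a real antisymmetric one, since $L(\boldsymbol{\theta})$ is Hermitian by construction). Hence the $(n\times n)$-matrix $b(\boldsymbol{\theta})^* L(\boldsymbol{\theta}) b(\boldsymbol{\theta})$, which is the way $\widehat N(\boldsymbol{\theta})$ acts on constants by~(\ref{hatN(theta)}), is again $i$ times a real matrix, so it sends the real vector underlying $\widehat\omega_l(\boldsymbol{\theta})$ to a purely imaginary vector, whence $(\widehat N(\boldsymbol{\theta})\widehat\omega_l(\boldsymbol{\theta}),\widehat\omega_l(\boldsymbol{\theta}))_{L_2(\Omega)}\in i\mathbb R$. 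Combined with the reality of $\widehat\mu_l(\boldsymbol{\theta})$ noted above, this forces $\widehat\mu_l(\boldsymbol{\theta})=0$ for all $l$, i.e.\ $\widehat N_0(\boldsymbol{\theta})=0$.

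The main obstacle is the reality/uniqueness argument for $\Lambda$ in the second paragraph — in particular keeping correct account of the factor $-i$ concealed in $b(\mathbf D)$, and invoking that the kernel of $b(\mathbf D)$ on periodic functions consists of constants so that the homogeneous cell problem has only the trivial mean-zero solution. Everything after that is a routine exercise in complex conjugation.
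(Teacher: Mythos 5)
Your argument is correct and is essentially the proof given in the cited source \cite[Subsection~4.3]{BSu2005-2} (the present paper only quotes the statement): one shows $\overline{\Lambda}=-\Lambda$ from the cell problem~(\ref{equation_for_Lambda}) using that $b(\mathbf{D})$ anticommutes with complex conjugation, deduces that $\widetilde{g}$ is real and $L(\boldsymbol{\theta})$ is $i$ times a real (antisymmetric) matrix, and concludes via the identity $(\widehat{N}(\boldsymbol{\theta})\widehat{\omega}_l,\widehat{\omega}_l)=\widehat{\mu}_l(\boldsymbol{\theta})\in\mathbb{R}$. All steps, including the uniqueness argument forcing $\Lambda+\overline{\Lambda}=0$, check out.
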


In the \textquotedblleft real\textquotedblright \ case under consideration, the germ $\widehat{S} (\boldsymbol{\theta})$  is a symmetric matrix with real entries. Clearly, if the eigenvalue $\widehat{\gamma}_j (\boldsymbol{\theta})$ of the germ is simple, then the embryo  $\widehat{\omega}_j (\boldsymbol{\theta})$ is defined uniquely up to a phase factor, so we can always choose it to be real. We arrive at the following corollary.

\begin{corollary}
	\label{real_S_spec_simple_coroll}
	Suppose that $b(\boldsymbol{\theta})$ and $g (\mathbf{x})$ have real entries and the spectrum of the germ $\widehat{S} (\boldsymbol{\theta})$ is simple. Then $\widehat{N}_0 (\boldsymbol{\theta}) = 0$.
\end{corollary}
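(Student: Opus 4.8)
The plan is to deduce the corollary directly from the preceding proposition, whose only hypothesis that is not automatic under our assumptions is the possibility of choosing the embryos $\widehat{\omega}_l(\boldsymbol{\theta})$, $l=1,\ldots,n$, to be real. Thus the whole task reduces to verifying that, when the spectrum of $\widehat{S}(\boldsymbol{\theta})$ is simple, such a real choice is admissible.

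First I would record that in the situation at hand the germ $\widehat{S}(\boldsymbol{\theta}) = b(\boldsymbol{\theta})^* g^0 b(\boldsymbol{\theta})$ is a real symmetric $(n\times n)$-matrix. Reality of $g^0$ follows from reality of $g$: equation~(\ref{equation_for_Lambda}) has real coefficients and admits a unique $\Gamma$-periodic mean-zero solution, so $\Lambda$ may (and must) be taken with real entries; then $\widetilde{g}$ in~(\ref{g_tilde}) and $g^0$ in~(\ref{g0}) are real. Symmetry of $\widehat{S}(\boldsymbol{\theta})$ is part of the general structure of the germ $S = R^*R$ (it is selfadjoint, hence, being real, symmetric); this is precisely the remark made just before the statement of the corollary.

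Next comes the key step. By hypothesis the eigenvalues $\widehat{\gamma}_1(\boldsymbol{\theta}) < \dots < \widehat{\gamma}_n(\boldsymbol{\theta})$ of $\widehat{S}(\boldsymbol{\theta})$ are pairwise distinct, so every eigenspace $\Ker(\widehat{S}(\boldsymbol{\theta}) - \widehat{\gamma}_l(\boldsymbol{\theta}) I_{\widehat{\mathfrak{N}}})$ is one-dimensional; since $\widehat{S}(\boldsymbol{\theta})$ is a real symmetric matrix, each of these lines contains a real unit vector, unique up to sign. By~(\ref{abstr_S_eigenvectors}) and the agreed increasing numeration of the $\widehat{\gamma}_l$, these are exactly the leading coefficients $\widehat{\omega}_l(\boldsymbol{\theta})$ in the expansions~(\ref{hatA_eigenvectors_series}), so the embryos may indeed be chosen real (equivalently, the corresponding constant functions in $\widehat{\mathfrak{N}}$ are real-valued). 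Applying the preceding proposition to this choice yields $\widehat{\mu}_l(\boldsymbol{\theta}) = 0$ for all $l = 1, \dots, n$; since $\widehat{N}_0(\boldsymbol{\theta})$ is diagonal in the basis $\{\widehat{\omega}_l(\boldsymbol{\theta})\}_{l=1}^n$ with diagonal entries $\widehat{\mu}_l(\boldsymbol{\theta})$, this gives $\widehat{N}_0(\boldsymbol{\theta}) = 0$.

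I do not anticipate a genuine obstacle here: the result is essentially a specialization of the proposition. The only two points needing a line of care are the reality of $g^0$ (routine, via uniqueness of the solution of~(\ref{equation_for_Lambda})) and the elementary linear-algebra fact that a real symmetric matrix with simple spectrum possesses an orthonormal basis of real eigenvectors.
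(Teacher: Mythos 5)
Your proof is correct and follows the same route as the paper: observe that in the real case $\widehat{S}(\boldsymbol{\theta})$ is a real symmetric matrix, so simplicity of an eigenvalue forces the corresponding embryo to be determined up to a phase factor and hence choosable to be real, after which the preceding proposition gives $\widehat{\mu}_l(\boldsymbol{\theta})=0$ for all $l$ and thus $\widehat{N}_0(\boldsymbol{\theta})=0$. The extra verification that $g^0$ is real (via the real solution of the cell problem~(\ref{equation_for_Lambda})) is a harmless elaboration of what the paper states tacitly.
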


However, as is seen from~\cite[Example~8.7]{Su2017}, \cite[Subsection~14.3]{DSu2018}, in the \textquotedblleft real\textquotedblright \ case it is not always possible to choose the vectors $\widehat{\omega}_l (\boldsymbol{\theta})$ to be real. It may happen that $\widehat{N}_0 (\boldsymbol{\theta}) \ne 0$ at some isolated points $\boldsymbol{\theta}$.

\subsection{The operators $\widehat{Z}_2(\boldsymbol{\theta})$, $\widehat{R}_2(\boldsymbol{\theta})$, $\widehat{N}_1^0(\boldsymbol{\theta})$}
We need to describe the operators $Z_2$, $R_2$, $N_1^0$ (which in abstract terms are defined in Subsections~\ref{abstr_Z2_R2_section} and \ref{abstr_nu_section}) for the family $\widehat{A} (t; \boldsymbol{\theta})$. Let $\Lambda^{(2)}_l (\mathbf{x})$ be a  $\Gamma$-periodic solution of the problem
\begin{equation*}
b(\mathbf{D})^* g(\mathbf{x}) (b(\mathbf{D}) \Lambda^{(2)}_l (\mathbf{x}) + b_l \Lambda(\mathbf{x})) = b_l^* (g^0 - \widetilde{g} (\mathbf{x})), \qquad \int_{\Omega} \Lambda^{(2)}_l (\mathbf{x}) \, d \mathbf{x} = 0.
\end{equation*}
We put $
\Lambda^{(2)} (\mathbf{x}; \boldsymbol{\theta}) \coloneqq \sum_{l=1}^{d} \Lambda^{(2)}_l (\mathbf{x}) \theta_l$.
In~\cite[Subsection~6.3]{VSu2012} it was checked that
\begin{gather*}
\widehat{Z}_2(\boldsymbol{\theta}) = \Lambda^{(2)} (\mathbf{x}; \boldsymbol{\theta}) b(\boldsymbol{\theta}) \widehat{P}, \qquad
\widehat{R}_2(\boldsymbol{\theta}) = h(\mathbf{x}) (b(\mathbf{D}) \Lambda^{(2)} (\mathbf{x}; \boldsymbol{\theta}) + b(\boldsymbol{\theta}) \Lambda(\mathbf{x})) b(\boldsymbol{\theta}).
\end{gather*}
Finally, in~\cite[Subsection~6.4]{VSu2012} the following representation was obtained:
\begin{align*}
&\widehat{N}_1^0(\boldsymbol{\theta}) = b(\boldsymbol{\theta})^* L_2(\boldsymbol{\theta}) b(\boldsymbol{\theta}) \widehat{P}, 
\\
&\begin{multlined}[c]
L_2(\boldsymbol{\theta}) \coloneqq |\Omega|^{-1} \int_{\Omega} \left(\Lambda^{(2)} (\mathbf{x}; \boldsymbol{\theta})^* b(\boldsymbol{\theta})^* \widetilde{g}(\mathbf{x}) + \widetilde{g}(\mathbf{x})^* b(\boldsymbol{\theta}) \Lambda^{(2)} (\mathbf{x}; \boldsymbol{\theta}) \right) \, d \mathbf{x}  \\ 
+
|\Omega|^{-1} \int_{\Omega} \left(b(\mathbf{D}) \Lambda^{(2)} (\mathbf{x}; \boldsymbol{\theta}) + b(\boldsymbol{\theta}) \Lambda(\mathbf{x}) \right)^* g(\mathbf{x}) \left(b(\mathbf{D}) \Lambda^{(2)} (\mathbf{x}; \boldsymbol{\theta}) + b(\boldsymbol{\theta}) \Lambda(\mathbf{x})\right) \, d \mathbf{x}.
\end{multlined}
\end{align*}

\subsection{Multiplicities of the eigenvalues of the germ}
\label{eigenval_multipl_section}
Considerations of this subsection concern the case where $n \ge 2$. Now, we return to the notation of Subsection~\ref{abstr_cluster_section}, tracing the multiplicities of the eigenvalues of the germ $\widehat{S} (\boldsymbol{\theta})$. In general, the number $p(\boldsymbol{\theta})$ of different eigenvalues $\widehat{\gamma}^{\circ}_1 (\boldsymbol{\theta}), \ldots, \widehat{\gamma}^{\circ}_{p(\boldsymbol{\theta})} (\boldsymbol{\theta})$ of $\widehat{S}(\boldsymbol{\theta})$ and their multiplicities $k_1 (\boldsymbol{\theta}), \ldots, k_{p(\boldsymbol{\theta})} (\boldsymbol{\theta})$ depend on the parameter $\boldsymbol{\theta} \in \mathbb{S}^{d-1}$. For a fixed $\boldsymbol{\theta}$ denote by $\widehat{P}_j (\boldsymbol{\theta})$ the orthogonal projection of $L_2 (\Omega; \mathbb{C}^n)$ onto the eigenspace of  $\widehat{S}(\boldsymbol{\theta})$ corresponding to $\widehat{\gamma}_j^{\circ} (\boldsymbol{\theta})$. According to~(\ref{abstr_N0_N*_invar_repr}), the operators $\widehat{N}_0 (\boldsymbol{\theta})$ and $\widehat{N}_* (\boldsymbol{\theta})$ admit the following invariant representations:
\begin{equation}
\label{N0_N*_invar_repr}
\widehat{N}_0 (\boldsymbol{\theta}) = \sum_{j=1}^{p(\boldsymbol{\theta})} \widehat{P}_j (\boldsymbol{\theta}) \widehat{N} (\boldsymbol{\theta}) \widehat{P}_j (\boldsymbol{\theta}), \qquad \widehat{N}_* (\boldsymbol{\theta}) = \sum_{\substack{1 \le l,j \le p(\boldsymbol{\theta})\\ j \ne l}} \widehat{P}_j (\boldsymbol{\theta}) \widehat{N} (\boldsymbol{\theta}) \widehat{P}_l (\boldsymbol{\theta}).
\end{equation}

\subsection{The coefficients $\widehat{\nu}_l (\boldsymbol{\theta})$, $l=1, \ldots,n$}
The number $p'(q, \boldsymbol{\theta})$ of different eigenvalues $\widehat{\mu}^{\circ}_{1,q} (\boldsymbol{\theta}), \hm{\ldots}, \widehat{\mu}^{\circ}_{p'(q, \boldsymbol{\theta}),q} (\boldsymbol{\theta})$ of the operator $\widehat{P}_q(\boldsymbol{\theta}) \widehat{N}(\boldsymbol{\theta}) |_{\widehat{\mathfrak{N}}_q(\boldsymbol{\theta})}$ and their multiplicities $k_{1,q} (\boldsymbol{\theta}), \ldots, k_{p'(\boldsymbol{\theta}),q} (\boldsymbol{\theta})$ also depend on the parameter $\boldsymbol{\theta} \in \mathbb{S}^{d-1}$. For a fixed $\boldsymbol{\theta}$ denote by $\widehat{P}_{q',q} (\boldsymbol{\theta})$ the orthogonal projection of $L_2 (\Omega; \mathbb{C}^n)$ onto the eigenspace $\widehat{\mathfrak{N}}_{q',q}(\boldsymbol{\theta})$ of 
the operator $\widehat{P}_q(\boldsymbol{\theta}) \widehat{N}(\boldsymbol{\theta}) |_{\widehat{\mathfrak{N}}_q(\boldsymbol{\theta})}$
corresponding to $\widehat{\mu}^{\circ}_{q',q} (\boldsymbol{\theta})$. 

The coefficients $\widehat{\nu}_l (\boldsymbol{\theta})$, $l = i'(q',q,\boldsymbol{\theta}), \ldots, i'(q',q,\boldsymbol{\theta})+k_{q',q}(\boldsymbol{\theta})-1$, where $i'(q',q,\boldsymbol{\theta}) = i(q,\boldsymbol{\theta})+k_{1,q}(\boldsymbol{\theta})+\ldots+k_{q'-1,q}(\boldsymbol{\theta})$, $i(q,\boldsymbol{\theta}) = k_1(\boldsymbol{\theta})+\ldots+k_{q-1}(\boldsymbol{\theta})+1$, are the eigenvalues of the following problem
\begin{equation*}
\widehat{\mathcal{N}}^{(q',q)}(\boldsymbol{\theta}) \widehat{\omega}_l(\boldsymbol{\theta}) = \widehat{\nu}_l(\boldsymbol{\theta}) \widehat{\omega}_l(\boldsymbol{\theta}), \qquad l = i'(q',q,\boldsymbol{\theta}), \ldots, i'(q',q,\boldsymbol{\theta})+k_{q',q}(\boldsymbol{\theta})-1,
\end{equation*}
where
\begin{multline*}
\widehat{\mathcal{N}}^{(q',q)}(\boldsymbol{\theta}) \coloneqq \widehat{P}_{q',q}(\boldsymbol{\theta}) \left. \left( \widehat{N}_1^0(\boldsymbol{\theta}) - \frac{1}{2} \widehat{Z}(\boldsymbol{\theta})^* \widehat{Z}(\boldsymbol{\theta}) \widehat{S}(\boldsymbol{\theta}) \widehat{P} - \frac{1}{2} \widehat{S}(\boldsymbol{\theta}) \widehat{P} \widehat{Z}(\boldsymbol{\theta})^* \widehat{Z}(\boldsymbol{\theta}) \right)\right|_{\widehat{\mathfrak{N}}_{q',q}} \\ +
\sum_{\substack{j\in\{1,\ldots,p(\boldsymbol{\theta})\} \\ j \ne q}} \bigl(\gamma^{\circ}_q(\boldsymbol{\theta}) - \gamma^{\circ}_{j}(\boldsymbol{\theta})\bigr)^{-1} \widehat{P}_{q',q}(\boldsymbol{\theta}) \widehat{N}(\boldsymbol{\theta}) \widehat{P}_{j}(\boldsymbol{\theta}) \widehat{N}(\boldsymbol{\theta})|_{\widehat{\mathfrak{N}}_{q',q}(\boldsymbol{\theta})}.
\end{multline*}

Note that in the case where $\widehat{N}_0(\boldsymbol{\theta}) = 0$, we have $\widehat{\mathfrak{N}}_{1,q}(\boldsymbol{\theta}) = \widehat{\mathfrak{N}}_{q}(\boldsymbol{\theta})$, $q=1, \ldots, p(\boldsymbol{\theta})$. Then we shall write $\widehat{\mathcal{N}}^{(q)}(\boldsymbol{\theta})$ instead of $\widehat{\mathcal{N}}^{(1,q)}(\boldsymbol{\theta})$.

\section{Approximations for the operator $e^{-i \tau \varepsilon^{-2} \widehat{\mathcal{A}}(\mathbf{k})}$}
\subsection{The general case}

Consider the operator $\mathcal{H}_0 = -\Delta$ in $L_2 (\mathbb{R}^d; \mathbb{C}^n)$. Under the Gelfand transformation, the operator $\mathcal{H}_0$ expands in the direct integral of the operators $\mathcal{H}_0 (\mathbf{k})$ acting in $L_2 (\Omega; \mathbb{C}^n)$ and 
given by $| \mathbf{D} + \mathbf{k} |^2$ with periodic boundary conditions. Denote
\begin{equation}
\label{R(k,eps)}
\mathcal{R}(\mathbf{k}, \varepsilon) \coloneqq \varepsilon^2 (\mathcal{H}_0 (\mathbf{k}) + \varepsilon^2 I)^{-1}.
\end{equation}
Obviously, 
\begin{equation}
\label{R_P}
\mathcal{R}(\mathbf{k}, \varepsilon)^{s/2}\widehat{P} = \varepsilon^s (t^2 + \varepsilon^2)^{-s/2} \widehat{P}, \qquad s > 0.
\end{equation}
Note that for $ |\mathbf{k}| > \widehat{t}^{\,0}$ we have
\begin{equation}
\label{R_hatP_est}
\| \mathcal{R}(\mathbf{k}, \varepsilon)^{s/2}\widehat{P} \|_{L_2 (\Omega) \to L_2 (\Omega)} \le (\widehat{t}^{\,0})^{-s} \varepsilon^s, \qquad \varepsilon > 0, \; \mathbf{k} \in \widetilde{\Omega}, \; |\mathbf{k}| > \widehat{t}^{\,0}.
\end{equation}
Next, using the Fourier series decomposition, we see that
\begin{equation}
\label{R(k,eps)(I-P)_est}
\| \mathcal{R}(\mathbf{k}, \varepsilon)^{s/2} (I - \widehat{P}) \|_{L_2(\Omega) \to L_2 (\Omega) }  = \sup_{0 \ne \mathbf{b} \in \widetilde{\Gamma}} \varepsilon^s (|\mathbf{b} + \mathbf{k}|^2 + \varepsilon^2)^{-s/2} \le r_0^{-s} \varepsilon^s, \qquad
\varepsilon > 0, \; \mathbf{k} \in \widetilde{\Omega}.
\end{equation}

Denote
\begin{equation}
\label{Jhat(k,eps)}
\widehat{J} (\mathbf{k}, \varepsilon; \tau) \coloneqq e^{-i \tau \varepsilon^{-2} \widehat{\mathcal{A}} (\mathbf{k})} - e^{-i \tau \varepsilon^{-2} \widehat{\mathcal{A}}^0 (\mathbf{k})}.
\end{equation}
We shall apply theorems of~\S\ref{abstr_exp_section} to the operator $\widehat{A}(t; \boldsymbol{\theta}) = \widehat{\mathcal{A}}(\mathbf{k})$. In doing so, we may trace the dependence of the constants in estimates on the problem data. Note that $\widehat{c}_*$, $\widehat{\delta}$, and $\widehat{t}^ 0$ do not depend on $\boldsymbol{\theta}$ (see~(\ref{c_*}), (\ref{delta_fixation}), (\ref{t0_fixation}) with $f = \mathbf{1}_n$). According to~(\ref{X_1_estimate}) (with $f = \mathbf{1}_n$), the norm $\| \widehat{X}_1 (\boldsymbol{\theta}) \|$ can be replaced by $\alpha_1^{1/2} \| g \|_{L_{\infty}}^{1/2}$. Hence, the constants in Theorems~\ref{abstr_exp_general_thrm} and~\ref{abstr_exp_enchcd_thrm_1} (applied to the operator $\widehat{\mathcal{A}}(\mathbf{k})$) will be independent of $\boldsymbol{\theta}$. They depend only on $\alpha_0$, $\alpha_1$, $\|g\|_{L_\infty}$, $\|g^{-1}\|_{L_\infty}$, and $r_0$.

Applying Theorem~\ref{abstr_exp_general_thrm} and taking into account~(\ref{hatS_P=hatA^0_P}), (\ref{R_P})--(\ref{R(k,eps)(I-P)_est}), we arrive at the following statement proved before in~\cite[Theorem~7.1]{BSu2008}.

\begin{thrm}[\cite{BSu2008}]
	\label{hatA(k)_exp_general_thrm}
	For $\tau \in \mathbb{R}$, $\varepsilon > 0$, and $\mathbf{k} \in \widetilde{\Omega}$ we have
	\begin{equation*}
	\| \widehat{J} (\mathbf{k}, \varepsilon; \tau) \mathcal{R}(\mathbf{k}, \varepsilon)^{3/2}\|_{L_2(\Omega) \to L_2 (\Omega) }  \le \widehat{\mathcal{C}}_1 (1 + |\tau|) \varepsilon,
	\end{equation*}
	where the constant $\widehat{\mathcal{C}}_1$ depends only on $\alpha_0$, $\alpha_1$, $\|g\|_{L_\infty}$, $\|g^{-1}\|_{L_\infty}$, and $r_0$.
\end{thrm}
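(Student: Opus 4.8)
The plan is to apply the abstract Theorem~\ref{abstr_exp_general_thrm} fiberwise and to combine it with the $L_2$-specific facts~(\ref{hatS_P=hatA^0_P}), (\ref{R_P})--(\ref{R(k,eps)(I-P)_est}) collected above, disposing first of the trivial regions. If $\varepsilon \ge 1$, then $\mathcal{R}(\mathbf{k},\varepsilon) \le I$ (since $\mathcal{H}_0(\mathbf{k}) \ge 0$) and the two exponentials in $\widehat{J}$ are unitary, so $\|\widehat{J}(\mathbf{k},\varepsilon;\tau)\mathcal{R}(\mathbf{k},\varepsilon)^{3/2}\| \le 2 \le 2(1+|\tau|)\varepsilon$; hence we may assume $0 < \varepsilon < 1$, whence $\varepsilon^3 \le \varepsilon$. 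Writing $I = \widehat{P} + (I-\widehat{P})$, one has, by unitarity of the exponentials and~(\ref{R(k,eps)(I-P)_est}), $\|\widehat{J}(\mathbf{k},\varepsilon;\tau)\mathcal{R}(\mathbf{k},\varepsilon)^{3/2}(I-\widehat{P})\| \le 2\|\mathcal{R}(\mathbf{k},\varepsilon)^{3/2}(I-\widehat{P})\| \le 2 r_0^{-3}\varepsilon^3 \le 2r_0^{-3}\varepsilon$; and, when $|\mathbf{k}| > \widehat{t}^{\,0}$, by~(\ref{R_hatP_est}), $\|\widehat{J}(\mathbf{k},\varepsilon;\tau)\mathcal{R}(\mathbf{k},\varepsilon)^{3/2}\widehat{P}\| \le 2\|\mathcal{R}(\mathbf{k},\varepsilon)^{3/2}\widehat{P}\| \le 2(\widehat{t}^{\,0})^{-3}\varepsilon^3 \le 2(\widehat{t}^{\,0})^{-3}\varepsilon$.

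The only substantive estimate is that of $\widehat{J}(\mathbf{k},\varepsilon;\tau)\mathcal{R}(\mathbf{k},\varepsilon)^{3/2}\widehat{P}$ for $|\mathbf{k}| \le \widehat{t}^{\,0}$, where I would use the incorporation of $\widehat{\mathcal{A}}(\mathbf{k})$ into the abstract scheme already carried out above: with $t = |\mathbf{k}|$, $\boldsymbol{\theta} = \mathbf{k}/|\mathbf{k}|$, the family $\widehat{A}(t;\boldsymbol{\theta}) = \widehat{\mathcal{A}}(\mathbf{k})$ satisfies the hypotheses of Subsection~\ref{abstr_X_A_section} (and Condition~\ref{abstr_nondegeneracy_cond}), its spectral germ is $\widehat{S}(\boldsymbol{\theta})$, and the projection $P$ is $\widehat{P}$. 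By~(\ref{R_P}) the factor $\mathcal{R}(\mathbf{k},\varepsilon)^{3/2}\widehat{P}$ is precisely the abstract smoothing factor $\varepsilon^3(t^2+\varepsilon^2)^{-3/2}\widehat{P}$. Moreover $\widehat{\mathcal{A}}^0(\mathbf{k})$ maps $\widehat{\mathfrak{N}}$ into itself — hence, being selfadjoint, also $\widehat{\mathfrak{N}}^\perp$ into itself — and by~(\ref{hatS_P=hatA^0_P}), (\ref{effective_oper_symb}) it acts on $\widehat{\mathfrak{N}}$ as multiplication by $\widehat{S}(\mathbf{k}) = t^2\widehat{S}(\boldsymbol{\theta})$, so that $e^{-i\tau\varepsilon^{-2}\widehat{\mathcal{A}}^0(\mathbf{k})}\widehat{P} = e^{-i\tau\varepsilon^{-2}t^2\widehat{S}(\boldsymbol{\theta})\widehat{P}}\widehat{P}$. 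Therefore Theorem~\ref{abstr_exp_general_thrm}, applied to $\widehat{A}(t;\boldsymbol{\theta})$, yields directly
\begin{equation*}
\|\widehat{J}(\mathbf{k},\varepsilon;\tau)\mathcal{R}(\mathbf{k},\varepsilon)^{3/2}\widehat{P}\| \le (C_1 + C_2|\tau|)\varepsilon, \qquad |\mathbf{k}| \le \widehat{t}^{\,0}.
\end{equation*}

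Adding the $\widehat{P}$- and $(I-\widehat{P})$-contributions over both ranges of $|\mathbf{k}|$, and recalling the case $\varepsilon \ge 1$, gives the assertion with a constant of the form $\widehat{\mathcal{C}}_1 = \max\{2,\, C_1 + C_2,\, 2r_0^{-3} + 2(\widehat{t}^{\,0})^{-3}\}$. As already explained just before the statement, when Theorem~\ref{abstr_exp_general_thrm} is applied to $\widehat{\mathcal{A}}(\mathbf{k})$ its constants $C_1$, $C_2$ are polynomials in $\widehat{\delta}^{-1/2}$ and in $\|\widehat{X}_1(\boldsymbol{\theta})\| \le \alpha_1^{1/2}\|g\|_{L_\infty}^{1/2}$, hence independent of $\boldsymbol{\theta}$; together with the explicit expressions~(\ref{delta_fixation}), (\ref{t0_fixation}) for $f = \mathbf{1}_n$ of $\widehat{\delta}$ and $\widehat{t}^{\,0}$, this makes $\widehat{\mathcal{C}}_1$ depend only on $\alpha_0$, $\alpha_1$, $\|g\|_{L_\infty}$, $\|g^{-1}\|_{L_\infty}$, $r_0$. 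There is no genuine obstacle here: all the analytic weight sits in Theorem~\ref{abstr_exp_general_thrm}, and what remains is the identification $e^{-i\tau\varepsilon^{-2}\widehat{\mathcal{A}}^0(\mathbf{k})}\widehat{P} = e^{-i\tau\varepsilon^{-2}t^2\widehat{S}(\boldsymbol{\theta})\widehat{P}}\widehat{P}$ via~(\ref{hatS_P=hatA^0_P}), the crude disposal of the ranges $|\mathbf{k}| > \widehat{t}^{\,0}$ and $\varepsilon \ge 1$ (where the smoothing factor itself supplies the gain), and keeping track of the uniformity of the constants in $\boldsymbol{\theta}$.
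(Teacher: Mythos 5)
Your proposal is correct and follows essentially the same route as the paper, which obtains the statement by applying the abstract Theorem~\ref{abstr_exp_general_thrm} to $\widehat{A}(t;\boldsymbol{\theta})=\widehat{\mathcal{A}}(\mathbf{k})$ together with the identifications~(\ref{hatS_P=hatA^0_P}), (\ref{R_P})--(\ref{R(k,eps)(I-P)_est}); you have merely written out the routine case splits ($\varepsilon\ge 1$, $|\mathbf{k}|>\widehat{t}^{\,0}$, the $(I-\widehat{P})$-component) that the paper leaves implicit.
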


\subsection{The case where $\widehat{N}(\boldsymbol{\theta}) = 0$}
Now, we apply Theorem~\ref{abstr_exp_enchcd_thrm_1}, assuming that $\widehat{N}(\boldsymbol{\theta}) = 0$ for any $\boldsymbol{\theta} \in \mathbb{S}^{d-1}$. Taking~(\ref{hatS_P=hatA^0_P}), (\ref{R_P})--(\ref{R(k,eps)(I-P)_est}) into account, we obtain the following result.

\begin{thrm}
	\label{hatA(k)_exp_enchcd_thrm_1}
	Let $\widehat{N}(\boldsymbol{\theta})$ be the operator defined in~\emph{(\ref{hatN(theta)})}. Suppose that $\widehat{N}(\boldsymbol{\theta})~=~0$ for any $\boldsymbol{\theta} \in \mathbb{S}^{d-1}$. Then for $\tau \in \mathbb{R}$, $\varepsilon > 0$, and $\mathbf{k} \in \widetilde{\Omega}$ we have
	\begin{equation*}
	\| \widehat{J}(\mathbf{k}, \varepsilon; \tau ) \mathcal{R}(\mathbf{k}, \varepsilon)\|_{L_2(\Omega) \to L_2 (\Omega) }  \le \widehat{\mathcal{C}}_2 (1+ |\tau|^{1/2}) \varepsilon,
	\end{equation*}
	where the constant $\widehat{\mathcal{C}}_2$ depends only on $\alpha_0$, $\alpha_1$, $\|g\|_{L_\infty}$, $\|g^{-1}\|_{L_\infty}$, and $r_0$.
\end{thrm}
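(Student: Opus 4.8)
The plan is to repeat the derivation of Theorem~\ref{hatA(k)_exp_general_thrm}, but to invoke the sharpened abstract estimate of Theorem~\ref{abstr_exp_enchcd_thrm_1} in place of Theorem~\ref{abstr_exp_general_thrm}. The starting observation will be that, by~\eqref{hatN(theta)}, the hypothesis $\widehat{N}(\boldsymbol{\theta}) = 0$ is precisely the abstract condition $N = 0$ for the family $\widehat{A}(t;\boldsymbol{\theta}) = \widehat{\mathcal{A}}(t\boldsymbol{\theta})$, for every $\boldsymbol{\theta} \in \mathbb{S}^{d-1}$. Since the numbers $\widehat{\delta}$ and $\widehat{t}^{\,0}$ (see~\eqref{delta_fixation},~\eqref{t0_fixation} with $f = \mathbf{1}_n$) do not depend on $\boldsymbol{\theta}$, and $\|\widehat{X}_1(\boldsymbol{\theta})\| \le \alpha_1^{1/2}\|g\|_{L_\infty}^{1/2}$ uniformly in $\boldsymbol{\theta}$ by~\eqref{X_1_estimate}, the constants $C_1$ and $C'_4$ produced by Theorem~\ref{abstr_exp_enchcd_thrm_1} applied to $\widehat{A}(t;\boldsymbol{\theta})$ --- being polynomials in $\widehat{\delta}^{-1/2}$ and $\|\widehat{X}_1(\boldsymbol{\theta})\|$ --- will be $\boldsymbol{\theta}$-independent and controlled by $\alpha_0$, $\alpha_1$, $\|g\|_{L_\infty}$, $\|g^{-1}\|_{L_\infty}$, $r_0$ alone.

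Next I would decompose $\mathcal{R}(\mathbf{k},\varepsilon) = \mathcal{R}(\mathbf{k},\varepsilon)\widehat{P} + \mathcal{R}(\mathbf{k},\varepsilon)(I - \widehat{P})$ and bound $\widehat{J}(\mathbf{k},\varepsilon;\tau)\mathcal{R}(\mathbf{k},\varepsilon)$ piece by piece, using throughout the trivial bounds $\|\widehat{J}(\mathbf{k},\varepsilon;\tau)\| \le 2$ (a difference of two unitary operators) and $\|\mathcal{R}(\mathbf{k},\varepsilon)\| \le 1$. For $\varepsilon \ge 1$ these already give $\|\widehat{J}(\mathbf{k},\varepsilon;\tau)\mathcal{R}(\mathbf{k},\varepsilon)\| \le 2 \le 2(1 + |\tau|^{1/2})\varepsilon$, so I may assume $\varepsilon \le 1$. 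On the range of $I - \widehat{P}$, estimate~\eqref{R(k,eps)(I-P)_est} with $s = 2$ gives $\|\widehat{J}(\mathbf{k},\varepsilon;\tau)\mathcal{R}(\mathbf{k},\varepsilon)(I - \widehat{P})\| \le 2 r_0^{-2}\varepsilon^2 \le 2 r_0^{-2}\varepsilon$ for all $\mathbf{k} \in \widetilde{\Omega}$; and when $|\mathbf{k}| > \widehat{t}^{\,0}$, estimate~\eqref{R_hatP_est} disposes of the remaining term: $\|\widehat{J}(\mathbf{k},\varepsilon;\tau)\mathcal{R}(\mathbf{k},\varepsilon)\widehat{P}\| \le 2(\widehat{t}^{\,0})^{-2}\varepsilon^2 \le 2(\widehat{t}^{\,0})^{-2}\varepsilon$.

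It then remains to estimate $\widehat{J}(\mathbf{k},\varepsilon;\tau)\mathcal{R}(\mathbf{k},\varepsilon)\widehat{P}$ for $|\mathbf{k}| \le \widehat{t}^{\,0}$, where the sharpened abstract result enters. By~\eqref{R_P} with $s = 2$ one has $\mathcal{R}(\mathbf{k},\varepsilon)\widehat{P} = \varepsilon^2 (t^2 + \varepsilon^2)^{-1}\widehat{P}$ with $t = |\mathbf{k}|$; moreover the subspace of constant vector-valued functions is invariant under $\widehat{\mathcal{A}}^0(\mathbf{k}) = b(\mathbf{D}+\mathbf{k})^* g^0 b(\mathbf{D}+\mathbf{k})$, on which it acts as $b(\mathbf{k})^* g^0 b(\mathbf{k}) = t^2\widehat{S}(\boldsymbol{\theta})$, so~\eqref{hatS_P=hatA^0_P} gives $e^{-i\tau\varepsilon^{-2}\widehat{\mathcal{A}}^0(\mathbf{k})}\widehat{P} = e^{-i\tau\varepsilon^{-2}t^2\widehat{S}(\boldsymbol{\theta})\widehat{P}}\widehat{P}$. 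Hence $\widehat{J}(\mathbf{k},\varepsilon;\tau)\mathcal{R}(\mathbf{k},\varepsilon)\widehat{P} = \bigl(e^{-i\tau\varepsilon^{-2}\widehat{A}(t;\boldsymbol{\theta})}\widehat{P} - e^{-i\tau\varepsilon^{-2}t^2\widehat{S}(\boldsymbol{\theta})\widehat{P}}\widehat{P}\bigr)\varepsilon^2(t^2 + \varepsilon^2)^{-1}$, whose operator norm is at most $(C_1 + C'_4|\tau|^{1/2})\varepsilon$ by Theorem~\ref{abstr_exp_enchcd_thrm_1}. Adding the three contributions and absorbing all the constants into a single $\widehat{\mathcal{C}}_2 \ge 2$ depending only on $\alpha_0$, $\alpha_1$, $\|g\|_{L_\infty}$, $\|g^{-1}\|_{L_\infty}$, $r_0$ completes the proof. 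I do not expect a genuine obstacle here; the one point requiring care is the bookkeeping of constants --- checking their uniformity in $\boldsymbol{\theta}$ via the explicit formulas~\eqref{delta_fixation},~\eqref{t0_fixation},~\eqref{X_1_estimate} specialized to $f = \mathbf{1}_n$ --- exactly as was already done in passing from Theorem~\ref{abstr_exp_general_thrm} to Theorem~\ref{hatA(k)_exp_general_thrm}.
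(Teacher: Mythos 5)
Your proposal is correct and follows exactly the route the paper takes: apply the abstract Theorem~\ref{abstr_exp_enchcd_thrm_1} to $\widehat{A}(t;\boldsymbol{\theta})=\widehat{\mathcal{A}}(t\boldsymbol{\theta})$ with $\boldsymbol{\theta}$-uniform constants (via~(\ref{delta_fixation}), (\ref{t0_fixation}), (\ref{X_1_estimate}) with $f=\mathbf{1}_n$), and combine it with (\ref{hatS_P=hatA^0_P}) and (\ref{R_P})--(\ref{R(k,eps)(I-P)_est}) to handle the $\widehat{P}$ and $I-\widehat{P}$ components and the region $|\mathbf{k}|>\widehat{t}^{\,0}$. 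The paper leaves these routine reductions implicit; your write-up simply makes them explicit.
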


\subsection{The case where $\widehat{N}_0(\boldsymbol{\theta}) = 0$}
\label{ench_approx2_section}
Now, we reject the assumptions of Theorem~\ref{hatA(k)_exp_enchcd_thrm_1}, but we assume instead that $\widehat{N}_0(\boldsymbol{\theta}) = 0$ for all $\boldsymbol{\theta}$. We would like to apply the results of Theorem~\ref{abstr_exp_enchcd_thrm_2}. However, there is an additional difficulty: the multiplicities of the eigenvalues of the germ  $\widehat{S} (\boldsymbol{\theta})$ may change at some points $\boldsymbol{\theta}$. Near such points the distance between some pair of different eigenvalues  tends to zero and we are not able to choose the parameters $\widehat{c}^{\circ}_{jl}$, $\widehat{t}^{\,00}_{jl}$ to be independent on $\boldsymbol{\theta}$. Therefore, we are forced to impose additional conditions. We have to take care only about those eigenvalues for which the corresponding term in the second formula in~(\ref{N0_N*_invar_repr}) is not zero. Now, it is more convenient to use the initial enumeration of the eigenvalues  $\widehat{\gamma}_1 (\boldsymbol{\theta}), \ldots , \widehat{\gamma}_n (\boldsymbol{\theta})$ of $\widehat{S} (\boldsymbol{\theta})$ (each eigenvalue is repeated according to its multiplicity); we agree to enumerate them in the nondecreasing order: $\widehat{\gamma}_1 (\boldsymbol{\theta}) \le \widehat{\gamma}_2 (\boldsymbol{\theta}) \le \ldots \le \widehat{\gamma}_n (\boldsymbol{\theta})$. Denote by $\widehat{P}^{(k)} (\boldsymbol{\theta})$ the orthogonal projection of $L_2 (\Omega; \mathbb{C}^n)$ onto the eigenspace of $\widehat{S} (\boldsymbol{\theta})$ corresponding to the eigenvalue $\widehat{\gamma}_k (\boldsymbol{\theta})$. Clearly, for each $\boldsymbol{\theta}$ the operator $\widehat{P}^{(k)} (\boldsymbol{\theta})$ coincides with one of the projections $\widehat{P}_j (\boldsymbol{\theta})$ introduced in Subsection~\ref{eigenval_multipl_section} (but the number $j$ may depend on $\boldsymbol{\theta}$). 
\begin{condition}
	\label{cond1}
	\begin{enumerate*}[label=\emph{\arabic*$^{\circ}.$}, ref=\arabic*$^{\circ}$]
	\item $\widehat{N}_0(\boldsymbol{\theta})=0$ for any $\boldsymbol{\theta} \in \mathbb{S}^{d-1}$.
	\item \label{cond1_it2} For any pair of indices $(k,r)$, $1 \le k,r \le n$, $k \ne r$, such that $\widehat{\gamma}_k (\boldsymbol{\theta}_0) = \widehat{\gamma}_r (\boldsymbol{\theta}_0)$ for some $\boldsymbol{\theta}_0 \in \mathbb{S}^{d-1}$, we have  $\widehat{P}^{(k)} (\boldsymbol{\theta}) \widehat{N} (\boldsymbol{\theta}) \widehat{P}^{(r)} (\boldsymbol{\theta}) = 0$ for any $\boldsymbol{\theta} \in \mathbb{S}^{d-1}$.
	\end{enumerate*}    
\end{condition}

Condition~\ref{cond1}(\ref{cond1_it2}) can be reformulated: we assume that, for the \textquotedblleft blocks\textquotedblright \ $\widehat{P}^{(k)} (\boldsymbol{\theta}) \widehat{N} (\boldsymbol{\theta}) \widehat{P}^{(r)} (\boldsymbol{\theta})$ of the operator $\widehat{N} (\boldsymbol{\theta})$ that are not identically zero, the corresponding branches of the eigenvalues  $\widehat{\gamma}_k (\boldsymbol{\theta})$ and  $\widehat{\gamma}_r (\boldsymbol{\theta})$ do not intersect.

Obviously, Condition~\ref{cond1} is ensured by the following more restrictive condition.
\begin{condition}
	\label{cond2}	
	\begin{enumerate*}[label=\emph{\arabic*$^{\circ}.$}, ref=\arabic*$^{\circ}$]
	\item	$\widehat{N}_0(\boldsymbol{\theta})=0$ for any $\boldsymbol{\theta} \in \mathbb{S}^{d-1}$.
	\item \label{cond2_it2} The number $p$ of different eigenvalues of the germ $\widehat{S}(\boldsymbol{\theta})$ does not depend on $\boldsymbol{\theta} \in \mathbb{S}^{d-1}$.        
	\end{enumerate*}
\end{condition}

Under Condition~\ref{cond2} denote different eigenvalues of the germ enumerated in the increasing order by $\widehat{\gamma}^{\circ}_1(\boldsymbol{\theta}), \ldots, \widehat{\gamma}^{\circ}_p(\boldsymbol{\theta})$. Then their multiplicities $k_1, \ldots, k_p$ do not depend on $\boldsymbol{\theta} \in \mathbb{S}^{d-1}$.  

\begin{remark}
	\begin{enumerate*}[label=\emph{\arabic*$^{\circ}.$}, ref=\arabic*$^{\circ}$]
	\item Assumption~\emph{\ref{cond2_it2}} of Condition~\emph{\ref{cond2}} is a fortiori satisfied, if the spectrum of the germ $\widehat{S}(\boldsymbol{\theta})$ is simple for any $\boldsymbol{\theta} \in \mathbb{S}^{d-1}$.
	\item  From Corollary~\emph{\ref{real_S_spec_simple_coroll}} it follows that Condition~\emph{\ref{cond2}} is satisfied if the matrices 
	$b (\boldsymbol{\theta})$ and $g (\mathbf{x})$ have real entries and the spectrum of the germ $\widehat{S}(\boldsymbol{\theta})$ is simple for any $\boldsymbol{\theta} \in \mathbb{S}^{d-1}$.
	\end{enumerate*}
\end{remark}

So, we suppose that Condition~\ref{cond1} is fulfilled. Denote
\begin{equation*}
\widehat{\mathcal{K}} \coloneqq \{ (k,r): 1 \le k,r \le n, \;  k \ne r, \; \widehat{P}^{(k)} (\boldsymbol{\theta}) \widehat{N} (\boldsymbol{\theta}) \widehat{P}^{(r)} (\boldsymbol{\theta}) \not\equiv 0 \}.
\end{equation*}
Let $\widehat{c}^{\circ}_{kr} (\boldsymbol{\theta}) \coloneqq \min \{\widehat{c}_*, n^{-1} |\widehat{\gamma}_k (\boldsymbol{\theta}) - \widehat{\gamma}_r (\boldsymbol{\theta})| \}$, $(k,r) \in \widehat{\mathcal{K}}$. Since $\widehat{S} (\boldsymbol{\theta})$ depends on $\boldsymbol{\theta} \in \mathbb{S}^{d-1}$ continuously, then the perturbation theory implies that $\widehat{\gamma}_j (\boldsymbol{\theta})$ are continuous on $\mathbb{S}^{d-1}$. By Condition~\ref{cond1}$(2^\circ)$,  for $(k,r) \in \widehat{\mathcal{K}}$ we have~$|\widehat{\gamma}_k (\boldsymbol{\theta}) - \widehat{\gamma}_r (\boldsymbol{\theta})| > 0$ for any $\boldsymbol{\theta} \in \mathbb{S}^{d-1}$, whence $\widehat{c}^{\circ}_{kr} \coloneqq \min_{\boldsymbol{\theta} \in \mathbb{S}^{d-1}} \widehat{c}^{\circ}_{kr} (\boldsymbol{\theta}) > 0$ for $(k,r) \in \widehat{\mathcal{K}}$. We put
\begin{equation}
\label{hatc^circ}
\widehat{c}^{\circ} \coloneqq \min_{(k,r) \in \widehat{\mathcal{K}}} \widehat{c}^{\circ}_{kr}.
\end{equation}

Clearly, the number~(\ref{hatc^circ}) is a realization of~(\ref{abstr_c^circ}) chosen independently of $\boldsymbol{\theta}$.
Under Condition~\ref{cond1} the number $\widehat{t}^{\,00}$ subject to~(\ref{abstr_t00}) also can be chosen independently of $\boldsymbol{\theta} \in \mathbb{S}^{d-1}$. Taking~(\ref{delta_fixation}) and~(\ref{X_1_estimate}) into account (with $f = \mathbf{1}_n$), we put
\begin{equation*}
\widehat{t}^{\,00} = (8 \beta_2)^{-1} r_0 \alpha_1^{-3/2} \alpha_0^{1/2} \| g\|_{L_{\infty}}^{-3/2} \| g^{-1}\|_{L_{\infty}}^{-1/2} \widehat{c}^{\circ},
\end{equation*}
where $\widehat{c}^{\circ}$ is defined in~(\ref{hatc^circ}). (The condition $\widehat{t}^{\,00} \le \widehat{t}^{\,0}$ is valid automatically, since $\widehat{c}^{\circ} \le \| \widehat{S} (\boldsymbol{\theta}) \| \le \alpha_1 \|g\|_{L_{\infty}}$.)

\begin{remark}
	Unlike $\widehat{t}^{\,0}$ \emph{(}see~\emph{(\ref{t0_fixation})} with $f = \mathbf{1}_n$\emph{)} that is controlled only in terms of $r_0$, $\alpha_0$, $\alpha_1$, $\|g\|_{L_{\infty}}$ and $\|g^{-1}\|_{L_{\infty}}$, the number $\widehat{t}^{\,00}$ depends on the spectral characteristics of the germ~--- on the minimal distance between its different eigenvalues $\widehat{\gamma}_k (\boldsymbol{\theta})$ and $ \widehat{\gamma}_r (\boldsymbol{\theta})$ \emph{(}where $(k,r)$ runs through $\widehat{\mathcal{K}}$\emph{)}.
\end{remark}

Applying Theorem~\ref{abstr_exp_enchcd_thrm_2}, we deduce the following result.
\begin{thrm}
	\label{hatA(k)_exp_enchcd_thrm_2}
	Suppose that Condition~\emph{\ref{cond1}} \emph{(}or more restrictive Condition~\emph{\ref{cond2}}\emph{)} is satisfied. Then for $\tau \in \mathbb{R}$, $\varepsilon > 0$, and $\mathbf{k} \in \widetilde{\Omega}$ we have
	\begin{equation*}
	\| \widehat{J}(\mathbf{k}, \varepsilon; \tau ) \mathcal{R}(\mathbf{k}, \varepsilon)\|_{L_2(\Omega) \to L_2 (\Omega)}  \le \widehat{\mathcal{C}}_3 (1+ |\tau|^{1/2}) \varepsilon,
	\end{equation*}
	where the constant $\widehat{\mathcal{C}}_3$ depends only on $\alpha_0$, $\alpha_1$, $\|g\|_{L_\infty}$, $\|g^{-1}\|_{L_\infty}$, $r_0$, and also on $n$ and the number $\widehat{c}^{\circ}$.
\end{thrm}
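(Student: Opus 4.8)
The plan is to apply the abstract Theorem~\ref{abstr_exp_enchcd_thrm_2} to the family $\widehat{A}(t;\boldsymbol{\theta})=\widehat{\mathcal{A}}(\mathbf{k})$ (with $\mathbf{k}=t\boldsymbol{\theta}$, $t=|\mathbf{k}|$) and to patch it with the elementary resolvent bounds~(\ref{R_hatP_est}), (\ref{R(k,eps)(I-P)_est}) away from the threshold, arranging that every constant is independent of $\boldsymbol{\theta}\in\mathbb{S}^{d-1}$. The first step is to rewrite the object under the norm in terms of the abstract one. Since $\mathcal{H}_0(\mathbf{k})$ acts on the zero Fourier mode as the scalar $t^2$ and leaves $\widehat{\mathfrak{N}}^{\perp}$ invariant, the factor $\mathcal{R}(\mathbf{k},\varepsilon)$ commutes with $\widehat{P}$ and $\widehat{P}\mathcal{R}(\mathbf{k},\varepsilon)=\varepsilon^2(t^2+\varepsilon^2)^{-1}\widehat{P}$; hence
\[
\widehat{J}(\mathbf{k},\varepsilon;\tau)\mathcal{R}(\mathbf{k},\varepsilon)=\varepsilon^2(t^2+\varepsilon^2)^{-1}\widehat{J}(\mathbf{k},\varepsilon;\tau)\widehat{P}+\widehat{J}(\mathbf{k},\varepsilon;\tau)(I-\widehat{P})\mathcal{R}(\mathbf{k},\varepsilon).
\]
By~(\ref{hatS_P=hatA^0_P}) and the commutation of $\widehat{\mathcal{A}}^0(\mathbf{k})$ with $\widehat{P}$, the first summand equals $\varepsilon^2(t^2+\varepsilon^2)^{-1}\bigl(e^{-i\tau\varepsilon^{-2}\widehat{A}(t;\boldsymbol{\theta})}\widehat{P}-e^{-i\tau\varepsilon^{-2}t^2\widehat{S}(\boldsymbol{\theta})\widehat{P}}\widehat{P}\bigr)$, which is exactly the quantity estimated in Theorem~\ref{abstr_exp_enchcd_thrm_2} for the family $\widehat{A}(t;\boldsymbol{\theta})$ (with $P=\widehat{P}$, $S=\widehat{S}(\boldsymbol{\theta})$).

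For the second summand I use $\|\widehat{J}(\mathbf{k},\varepsilon;\tau)\|\le2$ (a difference of unitaries) and~(\ref{R(k,eps)(I-P)_est}) with $s=2$, which together with $\min\{1,(\varepsilon/r_0)^2\}\le\varepsilon/r_0$ gives $\|(I-\widehat{P})\mathcal{R}(\mathbf{k},\varepsilon)\|\le r_0^{-1}\varepsilon$; so this term is $\le2r_0^{-1}\varepsilon$ for all $\varepsilon>0$ and all $\mathbf{k}\in\widetilde{\Omega}$. For the first summand I split on $|\mathbf{k}|$. If $|\mathbf{k}|>\widehat{t}^{\,00}$, then $\varepsilon^2(t^2+\varepsilon^2)^{-1}\le(\widehat{t}^{\,00})^{-1}\varepsilon$ and $\|\widehat{J}(\mathbf{k},\varepsilon;\tau)\widehat{P}\|\le2$, so the first summand is $\le2(\widehat{t}^{\,00})^{-1}\varepsilon$; the assertion is proved in this region.

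For $|\mathbf{k}|\le\widehat{t}^{\,00}$ I invoke Theorem~\ref{abstr_exp_enchcd_thrm_2}: its hypothesis $N_0=0$ is the relation $\widehat{N}_0(\boldsymbol{\theta})=0$ of Condition~\ref{cond1}$(1^{\circ})$, and its threshold $t^{00}$ must obey~(\ref{abstr_t00}) with the spectral gap parameter~(\ref{abstr_c^circ}); the number $\widehat{t}^{\,00}$ fixed before the statement, built from $\widehat{c}^{\circ}$ of~(\ref{hatc^circ}), is such a realization, uniform in $\boldsymbol{\theta}$. This is the only place Condition~\ref{cond1}$(2^{\circ})$ is used: the coupled branches $\widehat{\gamma}_k(\boldsymbol{\theta}),\widehat{\gamma}_r(\boldsymbol{\theta})$ (those with $\widehat{P}^{(k)}\widehat{N}\widehat{P}^{(r)}\not\equiv0$) never meet, so continuity of the $\widehat{\gamma}_j(\cdot)$ together with compactness of $\mathbb{S}^{d-1}$ yields $\widehat{c}^{\circ}>0$, hence $\widehat{t}^{\,00}>0$ (under the stronger Condition~\ref{cond2} the same holds because $p$ and the multiplicities $k_1,\dots,k_p$ are then constant in $\boldsymbol{\theta}$). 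Since $\|\widehat{X}_1(\boldsymbol{\theta})\|\le\alpha_1^{1/2}\|g\|_{L_\infty}^{1/2}$ and $\widehat{\delta}$ (see~(\ref{delta_fixation}) with $f=\mathbf{1}_n$) are $\boldsymbol{\theta}$-free, the constants $C_5,C_6'$ from Theorem~\ref{abstr_exp_enchcd_thrm_2} depend only on $\alpha_0,\alpha_1,\|g\|_{L_\infty},\|g^{-1}\|_{L_\infty},r_0,n,\widehat{c}^{\circ}$, and one gets $\varepsilon^2(t^2+\varepsilon^2)^{-1}\|\widehat{J}(\mathbf{k},\varepsilon;\tau)\widehat{P}\|\le(C_5+C_6'|\tau|^{1/2})\varepsilon$. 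Adding the $\le2r_0^{-1}\varepsilon$ bound completes this region, and collecting both regions yields the claim with $\widehat{\mathcal{C}}_3$ any common majorant of $C_5$, $C_6'$, $2(\widehat{t}^{\,00})^{-1}$, $2r_0^{-1}$.

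The one substantial point is the uniformity in $\boldsymbol{\theta}$ of the threshold $\widehat{t}^{\,00}$ and of the constants $C_5,C_6'$: without Condition~\ref{cond1} the spectral gaps entering~(\ref{abstr_t00}) could degenerate as $\boldsymbol{\theta}$ varies, and no $\boldsymbol{\theta}$-independent estimate would survive. Condition~\ref{cond1} is designed precisely to prevent this, and the continuity/compactness argument is all that is genuinely needed once it is imposed; the remaining steps are routine.
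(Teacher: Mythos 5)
Your argument is correct and is essentially the paper's own proof written out in full: the paper likewise obtains the result by applying the abstract Theorem~\ref{abstr_exp_enchcd_thrm_2} to $\widehat{A}(t;\boldsymbol{\theta})=\widehat{\mathcal{A}}(\mathbf{k})$, identifying $t^2\widehat{S}(\boldsymbol{\theta})\widehat{P}$ with $\widehat{\mathcal{A}}^0(\mathbf{k})\widehat{P}$ via~(\ref{hatS_P=hatA^0_P}), handling the ranges $|\mathbf{k}|>\widehat{t}^{\,00}$ and the $(I-\widehat{P})$ component through~(\ref{R_P})--(\ref{R(k,eps)(I-P)_est}), and securing the $\boldsymbol{\theta}$-uniform choice of $\widehat{c}^{\circ}$ and $\widehat{t}^{\,00}$ from Condition~\ref{cond1} by the same continuity-plus-compactness argument. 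Your explicit decomposition and region splitting merely make precise what the paper leaves implicit, so there is nothing to correct.
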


\subsection{The sharpness of the results with respect to the smoothing operator}
Application of Theorems~\ref{abstr_exp_smooth_shrp_thrm_1}, \ref{abstr_exp_smooth_shrp_thrm_2} allows us to confirm that the results of Theorems~\ref{hatA(k)_exp_general_thrm}, \ref{hatA(k)_exp_enchcd_thrm_1}, \ref{hatA(k)_exp_enchcd_thrm_2} are sharp with respect to the
smoothing operator.

\begin{thrm}[\cite{Su2017}]
	\label{hatA(k)_exp_smooth_shrp_thrm_1}
	Suppose that $\widehat{N}_0 (\boldsymbol{\theta}_0) \ne 0$ for some $\boldsymbol{\theta}_0 \in \mathbb{S}^{d-1}$. Let $\tau \ne 0$ and $0 \le s < 3$. Then there does not exist a constant $\mathcal{C}(\tau) >0$ such that the estimate
	\begin{equation}
	\label{6.6a}
	\bigl\| \bigl( e^{-i \tau \varepsilon^{-2} \widehat{\mathcal{A}} (\mathbf{k})}  - e^{-i \tau \varepsilon^{-2} \widehat{\mathcal{A}}^0 (\mathbf{k})} \bigr) \mathcal{R} (\mathbf{k}, \varepsilon)^{s/2} \bigr\|_{L_2(\Omega) \to L_2 (\Omega)} \le \mathcal{C}(\tau) \varepsilon
	\end{equation}
	holds for almost~every $\mathbf{k}  \in \widetilde{\Omega}$ and sufficiently small $\varepsilon > 0$.
\end{thrm}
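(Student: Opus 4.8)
The plan is to reduce the claim to the abstract Theorem~\ref{abstr_exp_smooth_shrp_thrm_1}, applied to the operator family $A(t) = \widehat{A}(t;\boldsymbol{\theta}_0) = \widehat{\mathcal{A}}(t\boldsymbol{\theta}_0)$ along the fixed direction $\boldsymbol{\theta}_0$ at which $\widehat{N}_0(\boldsymbol{\theta}_0) \ne 0$; for this family the germ is $\widehat{S}(\boldsymbol{\theta}_0)$, the projection is $\widehat{P}$, and the abstract operator $N_0$ is precisely $\widehat{N}_0(\boldsymbol{\theta}_0)$. Arguing by contradiction, assume that for some $\tau \ne 0$ and some $s \in [0,3)$ there is $\mathcal{C}(\tau) > 0$ with \eqref{6.6a} valid for a.e.\ $\mathbf{k} \in \widetilde{\Omega}$ and all sufficiently small $\varepsilon > 0$. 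Multiplying \eqref{6.6a} from the right by $\widehat{P}$ and using $\|T\widehat{P}\| \le \|T\|$, \eqref{R_P}, and \eqref{hatS_P=hatA^0_P} (so that $\mathcal{R}(\mathbf{k},\varepsilon)^{s/2}\widehat{P} = \varepsilon^s(|\mathbf{k}|^2+\varepsilon^2)^{-s/2}\widehat{P}$ and $e^{-i\tau\varepsilon^{-2}\widehat{\mathcal{A}}^0(\mathbf{k})}\widehat{P} = e^{-i\tau\varepsilon^{-2}\widehat{S}(\mathbf{k})}\widehat{P}$, since $\widehat{\mathcal{A}}^0(\mathbf{k})$ reduces the subspace $\widehat{\mathfrak{N}}$ of constants), I obtain for the reduced quantity
\begin{equation*}
\Psi(\mathbf{k},\varepsilon) \coloneqq \bigl\| \bigl( e^{-i\tau\varepsilon^{-2}\widehat{\mathcal{A}}(\mathbf{k})} - e^{-i\tau\varepsilon^{-2}\widehat{\mathcal{A}}^0(\mathbf{k})} \bigr)\widehat{P} \bigr\|\,\varepsilon^s(|\mathbf{k}|^2+\varepsilon^2)^{-s/2}
\end{equation*}
the bound $\Psi(\mathbf{k},\varepsilon) \le \mathcal{C}(\tau)\varepsilon$ for a.e.\ $\mathbf{k}$ and all sufficiently small $\varepsilon$. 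Along the ray $\mathbf{k} = t\boldsymbol{\theta}_0$ the quantity $\Psi(t\boldsymbol{\theta}_0,\varepsilon)$ is exactly the left-hand side of \eqref{abstr_exp_shrp_est} for the family $\widehat{A}(\cdot;\boldsymbol{\theta}_0)$, so it would suffice to know $\Psi(t\boldsymbol{\theta}_0,\varepsilon) \le \mathcal{C}(\tau)\varepsilon$ for small $t$ and $\varepsilon$ in order to invoke Theorem~\ref{abstr_exp_smooth_shrp_thrm_1} and reach a contradiction.

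The main obstacle is that the ray $\{t\boldsymbol{\theta}_0 : 0 < t \le \widehat{t}^{\,0}\}$ is a Lebesgue-null subset of $\widetilde{\Omega}$ (for $d \ge 2$) and $\widehat{N}_0(\boldsymbol{\theta})$ may well vanish for all directions near $\boldsymbol{\theta}_0$ other than $\boldsymbol{\theta}_0$ itself, so the a.e.\ bound does not, a priori, say anything on this ray. The way around this is a continuity argument: for fixed $\tau$ and $\varepsilon > 0$, the map $\mathbf{k} \mapsto \Psi(\mathbf{k},\varepsilon)$ is continuous on the closed ball $\{|\mathbf{k}| \le \widehat{t}^{\,0}\} \subset \widetilde{\Omega}$ (recall $\widehat{t}^{\,0} \le r_0/2$), whence the a.e.\ bound extends to \emph{every} $\mathbf{k}$ in that ball, in particular to $\mathbf{k} = t\boldsymbol{\theta}_0$. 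For the continuity I would split $\widehat{P} = \widehat{F}(\mathbf{k})\widehat{P} + \widehat{F}(\mathbf{k})^\perp\widehat{P}$, where $\widehat{F}(\mathbf{k})$ is the spectral projection of $\widehat{\mathcal{A}}(\mathbf{k})$ for $[0,\widehat{\delta}]$: on the low-energy part $e^{-i\tau\varepsilon^{-2}\widehat{\mathcal{A}}(\mathbf{k})}\widehat{F}(\mathbf{k}) = (f\chi)(\widehat{\mathcal{A}}(\mathbf{k}))$, with $f(\lambda) = e^{-i\tau\varepsilon^{-2}\lambda}$ and a fixed continuous cutoff $\chi$ equal to $1$ on $[0,\widehat{\delta}]$ and to $0$ on $[2\widehat{\delta},\infty)$ (admissible because $\widehat{\mathcal{A}}(\mathbf{k})$ has no spectrum in $(\widehat{\delta},3\widehat{\delta}]$ for $|\mathbf{k}| \le \widehat{t}^{\,0}$), and since $f\chi$ vanishes at infinity while $\widehat{\mathcal{A}}(\mathbf{k})$ depends norm-resolvent-continuously (indeed analytically, as a type~(B) family) on $\mathbf{k}$, this term is norm-continuous; on the complementary part one has $\|\widehat{F}(\mathbf{k})^\perp\widehat{P}\| = \|(\widehat{P}-\widehat{F}(\mathbf{k}))\widehat{P}\| \le \widehat{C}_1|\mathbf{k}|$ by \eqref{abstr_F(t)_threshold}, and continuity at $\mathbf{k}_0$ reduces to $\|\widehat{F}(\mathbf{k}) - \widehat{F}(\mathbf{k}_0)\| \to 0$ together with the action of $e^{-i\tau\varepsilon^{-2}\widehat{\mathcal{A}}(\mathbf{k})} - e^{-i\tau\varepsilon^{-2}\widehat{\mathcal{A}}(\mathbf{k}_0)}$ on the fixed finite-dimensional range of $\widehat{F}(\mathbf{k}_0)^\perp\widehat{P}$, which tends to $0$ by the strong continuity of the unitary group (a standard consequence of resolvent convergence). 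Finally $e^{-i\tau\varepsilon^{-2}\widehat{\mathcal{A}}^0(\mathbf{k})}\widehat{P} = e^{-i\tau\varepsilon^{-2}\widehat{S}(\mathbf{k})}\widehat{P}$ is continuous in $\mathbf{k}$ since $\widehat{S}(\mathbf{k}) = b(\mathbf{k})^* g^0 b(\mathbf{k})$ is polynomial in $\mathbf{k}$ by \eqref{effective_oper_symb}, and the scalar factor $\varepsilon^s(|\mathbf{k}|^2+\varepsilon^2)^{-s/2}$ is continuous for $\varepsilon > 0$; so $\Psi(\cdot,\varepsilon)$ is continuous on the ball.

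With this in hand, $\Psi(t\boldsymbol{\theta}_0,\varepsilon) \le \mathcal{C}(\tau)\varepsilon$ for all $0 < t \le \widehat{t}^{\,0}$ and all sufficiently small $\varepsilon$; that is, estimate \eqref{abstr_exp_shrp_est} for the family $\widehat{A}(\cdot;\boldsymbol{\theta}_0)$ would hold with $C(\tau) = \mathcal{C}(\tau)$ for all sufficiently small $|t|$ and $\varepsilon$. Since $\widehat{N}_0(\boldsymbol{\theta}_0) \ne 0$, this contradicts Theorem~\ref{abstr_exp_smooth_shrp_thrm_1}. As $s \in [0,3)$ was arbitrary, the theorem follows. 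Apart from the continuity claim above, the argument is just a transcription of the abstract statement into the language of the family $\widehat{\mathcal{A}}(\mathbf{k})$.
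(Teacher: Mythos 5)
Your argument is correct and coincides with the paper's (deferred to \cite[Theorem~9.8]{Su2017} and mirrored in the proof of Theorem~\ref{hatA(k)_exp_time_shrp_thrm_1} here): reduce via $\widehat{P}$, (\ref{R_P}), (\ref{hatS_P=hatA^0_P}) and the threshold estimate (\ref{abstr_F(t)_threshold}), upgrade the a.e.\ bound to every $\mathbf{k}$ in the ball $|\mathbf{k}|\le\widehat{t}^{\,0}$ by continuity in $\mathbf{k}$, restrict to the ray $t\boldsymbol{\theta}_0$, and invoke Theorem~\ref{abstr_exp_smooth_shrp_thrm_1}. The only difference is that you reprove the continuity statement (the paper cites \cite[Lemma~9.9]{Su2017}), and your sketch of it via the cutoff functional calculus and the finite rank of $\widehat{F}(\mathbf{k}_0)^\perp\widehat{P}$ is sound.
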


\begin{thrm}
	\label{hatA(k)_exp_smooth_shrp_thrm_2}
	Suppose that $\widehat{N}_0 (\boldsymbol{\theta}) = 0$ for any $\boldsymbol{\theta} \in \mathbb{S}^{d-1}$ and  $\widehat{\mathcal{N}}^{(q)} (\boldsymbol{\theta}_0) \ne 0$  for some $\boldsymbol{\theta}_0 \in \mathbb{S}^{d-1}$ and some  $q \in \{1,\ldots,p(\boldsymbol{\theta}_0)\}$.
	Let $\tau \ne 0$ and $0 \le s < 2$. Then there does not exist a constant $\mathcal{C}(\tau) >0$ such that estimate \eqref{6.6a}
	holds for almost~every $\mathbf{k} \in \widetilde{\Omega}$ and sufficiently small $\varepsilon > 0$.
\end{thrm}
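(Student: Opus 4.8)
The plan is to reduce the statement to the abstract sharpness result Theorem~\ref{abstr_exp_smooth_shrp_thrm_2}, applied to the fibre operator family $\widehat{A}(t;\boldsymbol{\theta}_0) = \widehat{\mathcal{A}}(t\boldsymbol{\theta}_0)$ in the fixed direction $\boldsymbol{\theta}_0$ supplied by the hypothesis, exactly as Theorem~\ref{hatA(k)_exp_smooth_shrp_thrm_1} is deduced from Theorem~\ref{abstr_exp_smooth_shrp_thrm_1}. The only genuine issue is that the ray $\{\mathbf{k} = t\boldsymbol{\theta}_0 : t > 0\}$ has measure zero, so the ``almost every $\mathbf{k}$'' hypothesis cannot be used on it directly. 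I would first remove this difficulty by the following continuity observation: for each fixed $\varepsilon > 0$ the operator-valued function
$$\mathbf{k} \longmapsto \bigl(e^{-i\tau\varepsilon^{-2}\widehat{\mathcal{A}}(\mathbf{k})} - e^{-i\tau\varepsilon^{-2}\widehat{\mathcal{A}}^0(\mathbf{k})}\bigr)\mathcal{R}(\mathbf{k},\varepsilon)^{s/2}$$
is continuous in the $L_2(\Omega)\to L_2(\Omega)$ operator norm on $\widetilde{\Omega}$. Indeed, $\widehat{\mathcal{A}}(\mathbf{k})$, $\widehat{\mathcal{A}}^0(\mathbf{k})$ and $\mathcal{H}_0(\mathbf{k})$ depend on $\mathbf{k}$ continuously in the norm-resolvent sense (their form domains are $\mathbf{k}$-independent and the forms depend polynomially on $\mathbf{k}$), so the two exponentials are strongly continuous in $\mathbf{k}$ and uniformly bounded, while $\mathcal{R}(\mathbf{k},\varepsilon)^{s/2}$ is a norm-continuous and, since $\mathcal{H}_0(\mathbf{k})$ has compact resolvent in $L_2(\Omega)$, compact function of $\mathcal{H}_0(\mathbf{k})$; composing a uniformly bounded strongly continuous family with a fixed compact operator on the right produces a norm-continuous family. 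Hence, if $\mathcal{C}(\tau)$ were a constant for which~\eqref{6.6a} holds for a.e.\ $\mathbf{k}$ and all sufficiently small $\varepsilon$, continuity forces~\eqref{6.6a} to hold for \emph{every} $\mathbf{k}\in\widetilde{\Omega}$ and all such $\varepsilon$; in particular for $\mathbf{k} = t\boldsymbol{\theta}_0$, $t > 0$ small.

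Next I would translate this into the abstract setting. Since $\mathcal{R}(\mathbf{k},\varepsilon)$ commutes with $\widehat{P}$, multiplying~\eqref{6.6a} by $\widehat{P}$ on the right and using~\eqref{R_P} gives, with $t = |\mathbf{k}|$,
$$\bigl\|\bigl(e^{-i\tau\varepsilon^{-2}\widehat{\mathcal{A}}(\mathbf{k})} - e^{-i\tau\varepsilon^{-2}\widehat{\mathcal{A}}^0(\mathbf{k})}\bigr)\widehat{P}\bigr\|\,\varepsilon^s(t^2+\varepsilon^2)^{-s/2} \le \mathcal{C}(\tau)\varepsilon .$$
The subspace $\widehat{\mathfrak{N}}$ of constant vector-functions reduces $\widehat{\mathcal{A}}^0(\mathbf{k})$, and on it $\widehat{\mathcal{A}}^0(\mathbf{k})$ acts as $\widehat{S}(\mathbf{k}) = t^2\widehat{S}(\boldsymbol{\theta}_0)$ by~\eqref{hatS_P=hatA^0_P}, \eqref{effective_oper_symb}; therefore $e^{-i\tau\varepsilon^{-2}\widehat{\mathcal{A}}^0(\mathbf{k})}\widehat{P} = e^{-i\tau\varepsilon^{-2}t^2\widehat{S}(\boldsymbol{\theta}_0)\widehat{P}}\widehat{P}$. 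For the family $\widehat{A}(t;\boldsymbol{\theta}_0)$ we have $\widehat{\mathcal{A}}(\mathbf{k}) = \widehat{A}(t;\boldsymbol{\theta}_0)$, the projection $\widehat{P}$ is the germ projection $P$, and $\widehat{S}(\boldsymbol{\theta}_0)$ is the germ $S$. Thus the displayed inequality is precisely estimate~\eqref{abstr_exp_shrp_est} for this family, valid for all sufficiently small $|t|$ and $\varepsilon$.

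It remains to check that the hypotheses of Theorem~\ref{abstr_exp_smooth_shrp_thrm_2} hold for $\widehat{A}(t;\boldsymbol{\theta}_0)$ and to invoke it. The abstract operator $N_0$ for this family is $\widehat{N}_0(\boldsymbol{\theta}_0)$, which vanishes since $\widehat{N}_0(\boldsymbol{\theta}) = 0$ for all $\boldsymbol{\theta}$ by assumption; and the abstract operator $\mathcal{N}^{(q)}$ of \S\ref{abstr_nu_section} for this family is exactly $\widehat{\mathcal{N}}^{(q)}(\boldsymbol{\theta}_0)$ (the notation of \S\ref{eigenval_multipl_section} specialises that of \S\ref{abstr_nu_section} in the direction $\boldsymbol{\theta}_0$), which is nonzero for some $q\in\{1,\dots,p(\boldsymbol{\theta}_0)\}$ by hypothesis. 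Theorem~\ref{abstr_exp_smooth_shrp_thrm_2} then asserts that no constant $C(\tau) > 0$ can make~\eqref{abstr_exp_shrp_est} hold for all small $|t|$ and $\varepsilon$ --- contradicting what we derived from $\mathcal{C}(\tau)$. This contradiction proves the theorem.

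The step I expect to be the main obstacle is the norm-continuity in $\mathbf{k}$ used to pass from ``almost every $\mathbf{k}$'' to the single ray through $\boldsymbol{\theta}_0$: it is essential to use the compactness (the smoothing character on $L_2(\Omega)$) of the factor $\mathcal{R}(\mathbf{k},\varepsilon)^{s/2}$, since the exponentials by themselves are only strongly, not norm, continuous in $\mathbf{k}$. An alternative, more hands-on route avoiding this would be to exhibit directly a set of positive measure of ``bad'' $\mathbf{k}$ in a conical neighbourhood of $\{t\boldsymbol{\theta}_0\}$, combining the threshold approximations of \S\ref{abstr_exp_section} (which reduce $e^{-i\tau\varepsilon^{-2}\widehat{\mathcal{A}}(\mathbf{k})}\widehat{P}$ to $\sum_l e^{-i\tau\varepsilon^{-2}\widehat{\lambda}_l(t,\boldsymbol{\theta})}(\,\cdot\,,\widehat{\omega}_l(\boldsymbol{\theta}))\widehat{\omega}_l(\boldsymbol{\theta})$ up to an $O(|\mathbf{k}|)$ error) with a lower bound $|\widehat{\lambda}_j(t,\boldsymbol{\theta}) - \widehat{\gamma}_j(\boldsymbol{\theta})t^2| \asymp |\widehat{\nu}_j(\boldsymbol{\theta})|\,t^4$ holding with $\boldsymbol{\theta}$-uniform constants near $\boldsymbol{\theta}_0$; the price of that variant is having to control the radius of convergence of~\eqref{hatA_eigenvalues_series} uniformly in $\boldsymbol{\theta}$, i.e.\ to handle possible collisions of the multiplicities of the eigenvalues of $\widehat{S}(\boldsymbol{\theta})$ as $\boldsymbol{\theta}$ varies near $\boldsymbol{\theta}_0$.
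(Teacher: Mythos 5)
Your reduction is the same as the paper's: pass from \textquotedblleft almost every $\mathbf{k}$\textquotedblright\ to the whole ball by a continuity argument, restrict to the ray $\mathbf{k}=t\boldsymbol{\theta}_0$, project onto $\widehat{\mathfrak{N}}$ using \eqref{R_P} and \eqref{hatS_P=hatA^0_P}, identify $N_0=\widehat N_0(\boldsymbol{\theta}_0)=0$ and $\mathcal{N}^{(q)}=\widehat{\mathcal{N}}^{(q)}(\boldsymbol{\theta}_0)\ne 0$, and invoke Theorem~\ref{abstr_exp_smooth_shrp_thrm_2}. The one place where you genuinely diverge is the continuity step: the paper (following \cite[Lemma~9.9]{Su2017}) first trades $\mathcal{R}(\mathbf{k},\varepsilon)^{s/2}$ for the finite-rank spectral projection $\widehat F(\mathbf{k})$ via \eqref{abstr_F(t)_threshold} and \eqref{R(k,eps)(I-P)_est}, and then uses norm-continuity of $e^{-i\tau\varepsilon^{-2}\widehat{\mathcal{A}}(\mathbf{k})}\widehat F(\mathbf{k})$, which reduces to continuity of finitely many band functions and eigenprojections; you instead keep $\mathcal{R}(\mathbf{k},\varepsilon)^{s/2}$ and exploit its compactness together with strong continuity of the unitary groups. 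Your variant is correct and self-contained (it avoids the external lemma), but note two small points: \emph{(i)} it requires $s>0$, since $\mathcal{R}(\mathbf{k},\varepsilon)^{0}=I$ is not compact, so you must first observe that validity of \eqref{6.6a} for some $s$ implies it for every larger $s'<2$ (because $\varepsilon(|\mathbf{k}|^2+\varepsilon^2)^{-1/2}\le 1$) and thereby assume $1\le s<2$, exactly as in the proof of Theorem~\ref{abstr_exp_smooth_shrp_thrm_2}; \emph{(ii)} the same trade-off of $\mathcal{R}^{s/2}$ for $\varepsilon^s(t^2+\varepsilon^2)^{-s/2}\widehat P$ plus an $O(\varepsilon)$ remainder on $(I-\widehat P)$ via \eqref{R(k,eps)(I-P)_est} also needs $s\ge1$, so the reduction in \emph{(i)} is used twice. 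With these remarks the argument is complete.
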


Theorem~\ref{hatA(k)_exp_smooth_shrp_thrm_1} was proved in~\cite[Theorem~9.8]{Su2017}. Theorem~\ref{hatA(k)_exp_smooth_shrp_thrm_2} is proved with the help of Theorem~\ref{abstr_exp_smooth_shrp_thrm_2}  in a similar way as~\cite[Theorem~9.8]{Su2017}.

\subsection{The sharpness of the results with respect to dependence of the estimates on time}
Application of Theorems~\ref{abstr_exp_time_shrp_thrm_1}, \ref{abstr_exp_time_shrp_thrm_2} allows us to confirm that the results of Theorems~\ref{hatA(k)_exp_general_thrm}, \ref{hatA(k)_exp_enchcd_thrm_1}, \ref{hatA(k)_exp_enchcd_thrm_2} are sharp with respect to dependence of the estimates on time.

\begin{thrm}
	\label{hatA(k)_exp_time_shrp_thrm_1}
	Suppose that $\widehat{N}_0 (\boldsymbol{\theta}_0) \ne 0$ for some $\boldsymbol{\theta}_0 \in \mathbb{S}^{d-1}$. Let $s \ge 3$. 
	Then there does not exist a positive function $\mathcal{C}(\tau)$ such that $\lim_{\tau \to \infty} \mathcal{C}(\tau)/ |\tau| = 0$ and estimate
	\eqref{6.6a} holds for all $\tau \in \mathbb{R}$, almost~every $\mathbf{k} \in \widetilde{\Omega}$, and sufficiently small $\varepsilon > 0$.
\end{thrm}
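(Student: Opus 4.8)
The plan is to reduce the assertion to the abstract Theorem~\ref{abstr_exp_time_shrp_thrm_1} applied to the operator family $\widehat{A}(t;\boldsymbol{\theta}_0)$, exactly in the way in which \cite[Theorem~9.8]{Su2017} (which proves Theorem~\ref{hatA(k)_exp_smooth_shrp_thrm_1}) reduces to the abstract Theorem~\ref{abstr_exp_smooth_shrp_thrm_1}; the only difference is that here one invokes the time-sharpness abstract statement rather than the smoothing-sharpness one. I would argue by contradiction, assuming such a function $\mathcal{C}(\tau)$ exists. First I would multiply~\eqref{6.6a} on the right by the ($\mathbf{k}$-independent) projection $\widehat{P}$ onto the constants: since $\mathcal{R}(\mathbf{k},\varepsilon)$ commutes with $\widehat{P}$, relation~\eqref{R_P} turns the smoothing operator into the scalar $\varepsilon^{s}(t^{2}+\varepsilon^{2})^{-s/2}$ ($t=|\mathbf{k}|$), and~\eqref{hatS_P=hatA^0_P} together with the invariance of $\widehat{\mathfrak{N}}$ under $\widehat{\mathcal{A}}^{0}(\mathbf{k})$ allows me to replace $e^{-i\tau\varepsilon^{-2}\widehat{\mathcal{A}}^{0}(\mathbf{k})}\widehat{P}$ by $e^{-i\tau\varepsilon^{-2}t^{2}\widehat{S}(\boldsymbol{\theta})\widehat{P}}\widehat{P}$, where $\boldsymbol{\theta}=\mathbf{k}/|\mathbf{k}|$. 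After these manipulations the assumed bound becomes precisely estimate~\eqref{abstr_exp_shrp_est} for the family $\widehat{A}(t;\boldsymbol{\theta})$ (with $P=\widehat{P}$, $S=\widehat{S}(\boldsymbol{\theta})$, $C(\tau)=\mathcal{C}(\tau)$), valid so far only for almost every $\mathbf{k}\in\widetilde{\Omega}$.

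The main point — and the step I expect to demand the most care — is to upgrade ``almost every $\mathbf{k}$'' to ``every $\mathbf{k}$'', so as to reach the ray $\mathbf{k}=t\boldsymbol{\theta}_{0}$, which has measure zero. I would do this by noting that the left-hand side of the reduced estimate is a continuous function of $\mathbf{k}\in\clos\widetilde{\Omega}$. Indeed, $\widehat{P}$ does not depend on $\mathbf{k}$, so, expanding $\widehat{P}=\sum_{l}(\,\cdot\,,\mathbf{e}_{l})_{L_{2}(\Omega)}\mathbf{e}_{l}$ in a fixed orthonormal basis $\{\mathbf{e}_{l}\}$ of $\widehat{\mathfrak{N}}$, the matter reduces to the continuity of $\mathbf{k}\mapsto e^{-i\tau\varepsilon^{-2}\widehat{\mathcal{A}}(\mathbf{k})}\mathbf{e}_{l}$, which follows from the continuous (indeed polynomial, hence norm-resolvent) dependence of $\widehat{\mathcal{A}}(\mathbf{k})$ on $\mathbf{k}$; the map $\mathbf{k}\mapsto e^{-i\tau\varepsilon^{-2}\widehat{\mathcal{A}}^{0}(\mathbf{k})}\widehat{P}$ is continuous for the same (or a simpler) reason, and the scalar factor $\varepsilon^{s}(t^{2}+\varepsilon^{2})^{-s/2}$ is continuous in $t=|\mathbf{k}|$. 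A continuous function that does not exceed $\mathcal{C}(\tau)\varepsilon$ almost everywhere does not exceed it everywhere; taking $\mathbf{k}=t\boldsymbol{\theta}_{0}$, which lies in $\widetilde{\Omega}$ for $0<t\le\widehat{t}^{\,0}\le r_{0}/2$ (see the remark after~\eqref{t0_fixation}), I obtain estimate~\eqref{abstr_exp_shrp_est} for the family $A(t)=\widehat{A}(t;\boldsymbol{\theta}_{0})$, with $C(\tau)=\mathcal{C}(\tau)$, for all $\tau\in\mathbb{R}$ and all sufficiently small $t>0$ and $\varepsilon>0$.

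It remains to invoke the abstract result. The family $\widehat{A}(t;\boldsymbol{\theta}_{0})$ satisfies the assumptions of Subsection~\ref{abstr_X_A_section} and Condition~\ref{abstr_nondegeneracy_cond} (this was checked when the operators $\widehat{\mathcal{A}}(\mathbf{k})$ were incorporated into the abstract scheme), and the hypothesis $\widehat{N}_{0}(\boldsymbol{\theta}_{0})\ne 0$ is exactly the condition $N_{0}\ne 0$ for this family. Since $s\ge 3$, Theorem~\ref{abstr_exp_time_shrp_thrm_1} asserts that no positive function $C(\tau)$ with $\lim_{\tau\to\infty}C(\tau)/|\tau|=0$ can make~\eqref{abstr_exp_shrp_est} hold for all $\tau$ and all sufficiently small $t$ and $\varepsilon$; applying it with $C(\tau)=\mathcal{C}(\tau)$, for which $\lim_{\tau\to\infty}\mathcal{C}(\tau)/|\tau|=0$ by hypothesis, contradicts the conclusion of the previous paragraph and completes the proof. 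Apart from the continuity argument, everything is a routine transcription of the abstract statement, entirely parallel to the already-cited proof of Theorem~\ref{hatA(k)_exp_smooth_shrp_thrm_1}.
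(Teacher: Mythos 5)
Your proof is correct and follows the same overall strategy as the paper: argue by contradiction, use \eqref{R_P} to reduce \eqref{6.6a} to the abstract estimate \eqref{abstr_exp_shrp_est} along the ray $\mathbf{k}=t\boldsymbol{\theta}_0$, upgrade \textquotedblleft almost every $\mathbf{k}$\textquotedblright\ to \textquotedblleft every $\mathbf{k}$\textquotedblright\ by a continuity argument, and then invoke Theorem~\ref{abstr_exp_time_shrp_thrm_1}. The only place where you deviate is the continuity step: the paper first trades $\widehat{P}$ for the spectral projection $\widehat{F}(\mathbf{k})$ via \eqref{hatA(k)_exp_shrp_f1} and cites \cite[Lemma~9.9]{Su2017} for the norm-continuity of $e^{-i\tau\varepsilon^{-2}\widehat{\mathcal{A}}(\mathbf{k})}\widehat{F}(\mathbf{k})$, whereas you keep $\widehat{P}$ and obtain norm-continuity of $e^{-i\tau\varepsilon^{-2}\widehat{\mathcal{A}}(\mathbf{k})}\widehat{P}$ directly, by composing the strong continuity of the unitary group (which follows from resolvent continuity of the type-(B) family $\widehat{\mathcal{A}}(\mathbf{k})$ via Trotter--Kato) with the fixed finite-rank projection $\widehat{P}$. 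Both routes are valid; yours is self-contained and avoids the extra back-and-forth between $\widehat{P}$ and $\widehat{F}(\mathbf{k})$, at the cost of having to justify the strong continuity of the exponential yourself rather than quoting the ready-made lemma. One small caution: the phrase \textquotedblleft norm-resolvent, hence continuous\textquotedblright\ would be wrong if applied to the full exponential in operator norm; it works here only because you apply the exponential to the finitely many fixed basis vectors of $\widehat{\mathfrak{N}}$, and it is worth making that distinction explicit.
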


\begin{proof} We prove by contradiction. Suppose that for some $s \ge 3$ there exists a function $\mathcal{C}(\tau) > 0$ such that $\lim_{\tau \to \infty} \mathcal{C}(\tau)/ |\tau| = 0$ and estimate~(\ref{6.6a}) holds for almost~every $\mathbf{k} \in \widetilde{\Omega}$ and sufficiently small $\varepsilon > 0$. By~(\ref{R_P}), (\ref{R(k,eps)(I-P)_est}), and the estimate 
	\begin{equation}
	\label{hatA(k)_exp_shrp_f1}
	\bigl\| \widehat{F} (\mathbf{k}) - \widehat{P} \bigr\|_{L_2 (\Omega) \to L_2 (\Omega)} \le \widehat{C}_1 |\mathbf{k}|, \qquad |\mathbf{k}| \le \widehat{t}^{\,0},
	\end{equation}
	(see~(\ref{abstr_F(t)_threshold})), it follows that there exists a function $\widetilde{\mathcal{C}}(\tau) > 0$ such that $\lim_{\tau \to \infty} \widetilde{\mathcal{C}}(\tau)/ |\tau| = 0$ and the estimate
	\begin{equation}
	\label{hatA(k)_exp_shrp_f2}
	\bigl\|  e^{-i \tau \varepsilon^{-2} \widehat{\mathcal{A}} ( \mathbf{k})} \widehat{F} (\mathbf{k})  - e^{-i \tau \varepsilon^{-2} \widehat{\mathcal{A}}^0 (\mathbf{k})} \widehat{P} \bigr\|_{L_2(\Omega) \to L_2 (\Omega) } \varepsilon^s (|\mathbf{k}|^2 + \varepsilon^2)^{-s/2}  \le \widetilde{\mathcal{C}}(\tau) \varepsilon
	\end{equation}
	holds for almost~every $\mathbf{k} \in \widetilde{\Omega}$ in the ball $|\mathbf{k}| \le \widehat{t}^{\,0}$ and sufficiently small $\varepsilon > 0$.
	For fixed $\tau$ and $\varepsilon$, the operator under the norm sign in~(\ref{hatA(k)_exp_shrp_f2}) is continuous with respect to $\mathbf{k}$ in the ball $|\mathbf{k}| \le \widehat{t}^{\,0}$  (see~\cite[Lemma~9.9]{Su2017}). Hence, estimate~(\ref{hatA(k)_exp_shrp_f2}) holds for all $\mathbf{k}$ in this ball, in particular, for $\mathbf{k} = t\boldsymbol{\theta}_0$ if $t \le \widehat{t}^{\,0}$. Applying~(\ref{hatA(k)_exp_shrp_f1}) once more, we see that
	\begin{equation}
	\label{hatA(k)_exp_shrp_f3}
	\bigl\| \bigl( e^{-i \tau \varepsilon^{-2} \widehat{\mathcal{A}} ( t \boldsymbol{\theta}_0)}  - e^{-i \tau \varepsilon^{-2} \widehat{\mathcal{A}}^0 (t \boldsymbol{\theta}_0)} \bigr) \widehat{P} \bigr\|_{L_2(\Omega) \to L_2(\Omega)} \varepsilon^s (t^2 + \varepsilon^2)^{-s/2}  \le \check{\mathcal{C}}(\tau) \varepsilon
	\end{equation}
	for all $t \le \widehat{t}^{\,0}$ and sufficiently small $\varepsilon > 0$, where $\check{\mathcal{C}}(\tau) > 0$ and $\lim_{\tau \to \infty} \check{\mathcal{C}}(\tau)/ |\tau| = 0$.

	In the abstract terms, estimate~(\ref{hatA(k)_exp_shrp_f3}) corresponds to~(\ref{abstr_exp_shrp_est}). Since it is assumed that  \hbox{$\widehat{N}_{0}(\boldsymbol{\theta}_0) \ne 0$}, applying Theorem~\ref{abstr_exp_time_shrp_thrm_1}, we arrive at a contradiction.
\end{proof}

Similarly, application of Theorem~\ref{abstr_exp_time_shrp_thrm_2} allows us to confirm the sharpness of Theorems~\ref{hatA(k)_exp_enchcd_thrm_1} and \ref{hatA(k)_exp_enchcd_thrm_2}.

\begin{thrm}
	\label{hatA(k)_exp_time_shrp_thrm_2}
	Suppose that $\widehat{N}_0 (\boldsymbol{\theta}) = 0$ for any $\boldsymbol{\theta} \in \mathbb{S}^{d-1}$ and  $\widehat{\mathcal{N}}^{(q)} (\boldsymbol{\theta}_0) \ne 0$  for some $\boldsymbol{\theta}_0 \in \mathbb{S}^{d-1}$ and some $q \in \{1,\ldots,p(\boldsymbol{\theta}_0)\}$. Let $s \ge 2$. Then there does not exist a positive function $\mathcal{C}(\tau)$ such that $\lim_{\tau \to \infty} \mathcal{C}(\tau)/|\tau|^{1/2} = 0$ and estimate
	\eqref{6.6a} holds for all $\tau \in \mathbb{R}$, almost~every $\mathbf{k}  \in \widetilde{\Omega}$, and sufficiently 
	small $\varepsilon > 0$.
\end{thrm}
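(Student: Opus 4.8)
The plan is to argue by contradiction, following verbatim the scheme of the proof of Theorem~\ref{hatA(k)_exp_time_shrp_thrm_1}, but invoking the abstract sharpness result Theorem~\ref{abstr_exp_time_shrp_thrm_2} (which applies precisely when $N_0 = 0$ and $\mathcal{N}^{(q)} \ne 0$) in place of Theorem~\ref{abstr_exp_time_shrp_thrm_1}. Assume, contrary to the claim, that for some $s \ge 2$ there is a positive function $\mathcal{C}(\tau)$ with $\lim_{\tau \to \infty} \mathcal{C}(\tau)/|\tau|^{1/2} = 0$ such that estimate~\eqref{6.6a} holds for almost every $\mathbf{k} \in \widetilde{\Omega}$ and all sufficiently small $\varepsilon > 0$.

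First I would peel off the part of the operator supported away from the spectral edge. Splitting $\widehat{J}(\mathbf{k},\varepsilon;\tau)\mathcal{R}(\mathbf{k},\varepsilon)^{s/2} = \widehat{J}(\mathbf{k},\varepsilon;\tau)\widehat{F}(\mathbf{k})\mathcal{R}(\mathbf{k},\varepsilon)^{s/2} + \widehat{J}(\mathbf{k},\varepsilon;\tau)(I-\widehat{F}(\mathbf{k}))\mathcal{R}(\mathbf{k},\varepsilon)^{s/2}$ and using~\eqref{R_P}, \eqref{R_hatP_est}, \eqref{R(k,eps)(I-P)_est}, the threshold estimate~\eqref{hatA(k)_exp_shrp_f1}, and the trivial bound for $|\mathbf{k}| > \widehat{t}^{\,0}$, I reduce~\eqref{6.6a} to an estimate of the form~\eqref{hatA(k)_exp_shrp_f2}, namely
\[
\bigl\| e^{-i\tau\varepsilon^{-2}\widehat{\mathcal{A}}(\mathbf{k})}\widehat{F}(\mathbf{k}) - e^{-i\tau\varepsilon^{-2}\widehat{\mathcal{A}}^0(\mathbf{k})}\widehat{P} \bigr\|_{L_2(\Omega)\to L_2(\Omega)} \varepsilon^s (|\mathbf{k}|^2 + \varepsilon^2)^{-s/2} \le \widetilde{\mathcal{C}}(\tau)\varepsilon ,
\]
valid for a.e.\ $\mathbf{k}$ in the ball $|\mathbf{k}| \le \widehat{t}^{\,0}$, with a new positive function $\widetilde{\mathcal{C}}(\tau)$ still obeying $\lim_{\tau\to\infty}\widetilde{\mathcal{C}}(\tau)/|\tau|^{1/2} = 0$. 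Since, for fixed $\tau$ and $\varepsilon$, the operator under the norm sign depends continuously on $\mathbf{k}$ in this ball (see~\cite[Lemma~9.9]{Su2017}), the estimate in fact holds for every such $\mathbf{k}$, in particular along the ray $\mathbf{k} = t\boldsymbol{\theta}_0$, $t \le \widehat{t}^{\,0}$. Applying~\eqref{hatA(k)_exp_shrp_f1} once more to replace $\widehat{F}(t\boldsymbol{\theta}_0)$ by $\widehat{P}$ and using~\eqref{hatS_P=hatA^0_P}, I arrive at an estimate of the form~\eqref{hatA(k)_exp_shrp_f3},
\[
\bigl\| \bigl( e^{-i\tau\varepsilon^{-2}\widehat{\mathcal{A}}(t\boldsymbol{\theta}_0)} - e^{-i\tau\varepsilon^{-2}t^2 \widehat{S}(\boldsymbol{\theta}_0)} \bigr)\widehat{P} \bigr\|_{L_2(\Omega)\to L_2(\Omega)} \varepsilon^s (t^2 + \varepsilon^2)^{-s/2} \le \check{\mathcal{C}}(\tau)\varepsilon
\]
for all $t \le \widehat{t}^{\,0}$ and small $\varepsilon > 0$, where $\check{\mathcal{C}}(\tau) > 0$ and $\lim_{\tau\to\infty}\check{\mathcal{C}}(\tau)/|\tau|^{1/2} = 0$.

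Finally I would recognize the last display as exactly the abstract inequality~\eqref{abstr_exp_shrp_est} for the operator family $A(t) = \widehat{A}(t;\boldsymbol{\theta}_0) = \widehat{\mathcal{A}}(t\boldsymbol{\theta}_0)$, whose germ is $S = \widehat{S}(\boldsymbol{\theta}_0)$ and whose projection $P$ is $\widehat{P}$. By hypothesis $\widehat{N}_0(\boldsymbol{\theta}_0) = 0$ and $\widehat{\mathcal{N}}^{(q)}(\boldsymbol{\theta}_0) \ne 0$, which in abstract terms means precisely $N_0 = 0$ and $\mathcal{N}^{(q)} \ne 0$ for this family (the operator $\widehat{\mathcal{N}}^{(q)}(\boldsymbol{\theta}_0)$ being, by construction, the realization of $\mathcal{N}^{(q)}$ for $\widehat{A}(t;\boldsymbol{\theta}_0)$). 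Hence Theorem~\ref{abstr_exp_time_shrp_thrm_2} rules out the existence of $\check{\mathcal{C}}(\tau)$ with $\lim_{\tau\to\infty}\check{\mathcal{C}}(\tau)/|\tau|^{1/2} = 0$ for which~\eqref{abstr_exp_shrp_est} holds for all sufficiently small $t$ and $\varepsilon$, a contradiction.

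The only genuinely delicate point is, exactly as in the proof of Theorem~\ref{hatA(k)_exp_time_shrp_thrm_1}, the passage from an ``almost every $\mathbf{k}$'' estimate to one valid at the prescribed direction $\boldsymbol{\theta}_0$; this rests on the continuity in $\mathbf{k}$ of the relevant operator-valued function, borrowed from~\cite[Lemma~9.9]{Su2017}. Everything else is a routine repackaging of the smoothing factor through~\eqref{R_P}, \eqref{R_hatP_est}, \eqref{R(k,eps)(I-P)_est} together with the threshold estimate~\eqref{hatA(k)_exp_shrp_f1}, and a direct appeal to the abstract statement.
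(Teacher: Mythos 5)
Your proposal is correct and coincides with the paper's intended argument: the paper proves Theorem~\ref{hatA(k)_exp_time_shrp_thrm_2} by exactly the reduction used for Theorem~\ref{hatA(k)_exp_time_shrp_thrm_1} (passing from the a.e.\ estimate to the ray $\mathbf{k}=t\boldsymbol{\theta}_0$ via \eqref{R_P}, \eqref{R(k,eps)(I-P)_est}, \eqref{hatA(k)_exp_shrp_f1}, and the continuity lemma), with Theorem~\ref{abstr_exp_time_shrp_thrm_2} invoked in place of Theorem~\ref{abstr_exp_time_shrp_thrm_1}. No gaps.
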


\section{The operator $\mathcal{A} (\mathbf{k})$. Application of the scheme of~\S\ref{abstr_sndw_section}}
\subsection{The operator $\mathcal{A} (\mathbf{k})$}

We apply the scheme of~\S\ref{abstr_sndw_section} to study the operator $\mathcal{A} (\mathbf{k}) = f^* \widehat{\mathcal{A}} (\mathbf{k}) f$. 
Now, $\mathfrak{H} = \widehat{\mathfrak{H}} = L_2 (\Omega; \mathbb{C}^n)$, $\mathfrak{H}_* = L_2 (\Omega; \mathbb{C}^m)$, the role of $A(t)$ is played by $A(t; \boldsymbol{\theta}) = \mathcal{A}(\mathbf{k})$, the role of $\widehat{A}(t)$ is played by $\widehat{A}(t; \boldsymbol{\theta}) = \widehat{\mathcal{A}}(\mathbf{k})$. The isomorphism $M$ is the operator of multiplication by the matrix-valued function $f(\mathbf{x})$. The operator $Q$ is the operator of multiplication by the matrix-valued function $Q(\mathbf{x}) = (f (\mathbf{x}) f (\mathbf{x})^*)^{-1}$. The block of $Q$ in the subspace $\widehat{\mathfrak{N}}$ (see~(\ref{Ker3})) is the operator of multiplication by the constant matrix $\overline{Q} = (\underline{f f^*})^{-1} = |\Omega|^{-1} \int_{\Omega} (f (\mathbf{x}) f (\mathbf{x})^*)^{-1} d \mathbf{x}$. Next, $M_0$ is the operator of multiplication by the constant matrix
\begin{equation}
\label{f0}
f_0 = (\overline{Q})^{-1/2} = (\underline{f f^*})^{1/2}.
\end{equation}
Note that $| f_0 | \le \| f \|_{L_{\infty}}$, $| f_0^{-1} | \le \| f^{-1} \|_{L_{\infty}}$.

In $L_2 (\mathbb{R}^d; \mathbb{C}^n)$, we define the operator
\begin{equation}
\label{A0}
\mathcal{A}^0 \coloneqq f_0 \widehat{\mathcal{A}}^0 f_0 = f_0 b(\mathbf{D})^* g^0 b(\mathbf{D}) f_0.
\end{equation}
Let $\mathcal{A}^0 (\mathbf{k})$ be the corresponding family of operators in $L_2 (\Omega; \mathbb{C}^n)$. Then $\mathcal{A}^0 (\mathbf{k}) = f_0 \widehat{\mathcal{A}}^0 (\mathbf{k}) f_0$. By~(\ref{Ker3}) and~(\ref{hatS_P=hatA^0_P}), 
\begin{equation}
\label{A^0(k)P}
f_0 \widehat{S} (\mathbf{k}) f_0 \widehat{P} = \mathcal{A}^0 (\mathbf{k}) \widehat{P}.
\end{equation}

\subsection{The analytic branches of eigenvalues and eigenvectors}
According to~(\ref{abstr_S_Shat}), the spectral germ $S(\boldsymbol{\theta})$ of the operator $A (t; \boldsymbol{\theta})$ acting in the subspace $\mathfrak{N}$ (see~(\ref{frakN})) is represented as $S(\boldsymbol{\theta}) = P f^* b(\boldsymbol{\theta})^* g^0 b(\boldsymbol{\theta}) f|_{\mathfrak{N}}$, where $P$ is the orthogonal projection of $L_2 (\Omega; \mathbb{C}^n)$ onto $\mathfrak{N}$.

The analytic (in $t$) branches of the eigenvalues $\lambda_l (t, \boldsymbol{\theta})$ and the eigenvectors $\varphi_l (t, \boldsymbol{\theta})$ of the operator $A (t; \boldsymbol{\theta})$ admit the power series expansions of the form~(\ref{abstr_A(t)_eigenvalues_series}), (\ref{abstr_A(t)_eigenvectors_series}) with the coefficients depending on $\boldsymbol{\theta}$:
\begin{align}
\label{A_eigenvalues_series}
\lambda_l (t, \boldsymbol{\theta}) &= \gamma_l (\boldsymbol{\theta}) t^2 + \mu_l (\boldsymbol{\theta}) t^3 + \nu_l (\boldsymbol{\theta}) t^4 + \ldots, & l &= 1, \ldots, n, 
\\
\label{A_eigenvectors_series}
\varphi_l (t, \boldsymbol{\theta}) &= \omega_l (\boldsymbol{\theta}) + t \psi^{(1)}_l (\boldsymbol{\theta}) + \ldots, & l &= 1, \ldots, n.
\end{align}

The vectors $\omega_1 (\boldsymbol{\theta}), \ldots, \omega_n (\boldsymbol{\theta})$ form an orthonormal basis in the subspace $\mathfrak{N}$, and the vectors $\zeta_l (\boldsymbol{\theta}) = f \omega_l (\boldsymbol{\theta})$, $l = 1, \ldots, n$, form a basis in $\widehat{\mathfrak{N}}$~(see~(\ref{Ker3})) orthonormal with the weight $\overline{Q}$. The numbers $\gamma_l (\boldsymbol{\theta})$ and the elements $\omega_l (\boldsymbol{\theta})$ are eigenvalues and eigenvectors of the spectral germ $S(\boldsymbol{\theta})$. According to~(\ref{abstr_hatS_gener_spec_problem}),
\begin{equation}
\label{hatS_gener_spec_problem}
b(\boldsymbol{\theta})^* g^0 b(\boldsymbol{\theta}) \zeta_l (\boldsymbol{\theta}) = \gamma_l (\boldsymbol{\theta}) \overline{Q} \zeta_l (\boldsymbol{\theta}), \qquad l = 1, \ldots, n.
\end{equation}

\subsection{The operator $\widehat{N}_Q (\boldsymbol{\theta})$}

We need to describe the operator $\widehat{N}_Q$ (see Subsection~\ref{abstr_hatZ_Q_and_hatN_Q_section}). Let $\Lambda_Q(\mathbf{x})$ be a $\Gamma$-periodic solution of the problem
\begin{equation*}
b(\mathbf{D})^* g(\mathbf{x}) (b(\mathbf{D}) \Lambda_Q(\mathbf{x}) + \mathbf{1}_m) = 0, \qquad \int_{\Omega} Q(\mathbf{x}) \Lambda_Q(\mathbf{x}) \, d \mathbf{x} = 0.
\end{equation*}
Clearly, $\Lambda_Q(\mathbf{x}) = \Lambda(\mathbf{x}) - (\overline{Q})^{-1} (\overline{Q \Lambda})$. As shown in~\cite[\S5]{BSu2005-2}, the operator $\widehat{N}_Q (\boldsymbol{\theta})$ takes the form
\begin{align}
\label{N_Q(theta)}
\widehat{N}_Q (\boldsymbol{\theta}) &= b(\boldsymbol{\theta})^* L_Q (\boldsymbol{\theta}) b(\boldsymbol{\theta}) \widehat{P},\\
\notag
L_Q (\boldsymbol{\theta}) &\coloneqq | \Omega |^{-1} \int_{\Omega} (\Lambda_Q(\mathbf{x})^*b(\boldsymbol{\theta})^* \widetilde{g} (\mathbf{x}) + \widetilde{g} (\mathbf{x})^* b(\boldsymbol{\theta}) \Lambda_Q(\mathbf{x}))\, d \mathbf{x}.
\end{align}
Some sufficient conditions where $\widehat{N}_Q (\boldsymbol{\theta}) = 0$  were distinguished in~\cite[\S5]{BSu2005-2}.

\begin{proposition}[\cite{BSu2005-2}]
	\label{hatN_Q=0_proposit}	
	Suppose that at least one of the following conditions is fulfilled\emph{:}	
	\begin{enumerate}[label=\emph{\arabic*$^{\circ}.$}, ref=\arabic*$^{\circ}$,parsep=-3pt]
	\item \label{hatN_Q=0_proposit_it1} $\mathcal{A} = f(\mathbf{x})^*\mathbf{D}^* g(\mathbf{x}) \mathbf{D}f(\mathbf{x})$, where $g(\mathbf{x})$ is a symmetric matrix with real entries.
	\item Relations~\emph{(\ref{g0=overline_g_relat})} are satisfied, i.e. $g^0 = \overline{g}$.
	\end{enumerate}
	Then $\widehat{N}_Q (\boldsymbol{\theta}) = 0$ for any $\boldsymbol{\theta} \in \mathbb{S}^{d-1}$.
\end{proposition}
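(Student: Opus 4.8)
The plan is to derive both assertions directly from the explicit representation~\eqref{N_Q(theta)} of $\widehat{N}_Q(\boldsymbol{\theta}) = b(\boldsymbol{\theta})^* L_Q(\boldsymbol{\theta}) b(\boldsymbol{\theta})\widehat{P}$, running parallel to the proof of Proposition~\ref{N=0_proposit} for $\widehat{N}(\boldsymbol{\theta})$; the only new ingredient is the constant shift in $\Lambda_Q = \Lambda - (\overline{Q})^{-1}(\overline{Q\Lambda})$. Thus it is enough to show that $b(\boldsymbol{\theta})^* L_Q(\boldsymbol{\theta}) b(\boldsymbol{\theta}) = 0$ for all $\boldsymbol{\theta}\in\mathbb{S}^{d-1}$.

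Case $2^\circ$ is quick. By the proposition stating that $g^0 = \overline{g}$ is equivalent to~\eqref{g0=overline_g_relat}, the columns $\mathbf{g}_k$ of $g(\mathbf{x})$ obey $b(\mathbf{D})^*\mathbf{g}_k = 0$, i.e.\ $b(\mathbf{D})^*(g(\mathbf{x})\mathbf{c}) = 0$ for every constant $\mathbf{c}\in\mathbb{C}^m$. Plugging this into~\eqref{equation_for_Lambda} leaves $b(\mathbf{D})^* g(\mathbf{x}) b(\mathbf{D})\Lambda(\mathbf{x}) = 0$; testing against $\Lambda$ in $L_2(\Omega)$ and using the uniform positive definiteness of $g$ gives $b(\mathbf{D})\Lambda = 0$, and the rank-$n$ condition on $b(\boldsymbol{\xi})$ then forces all nonzero Fourier coefficients of $\Lambda$ to vanish, so $\Lambda$ is constant and, by its zero-mean normalization, $\Lambda = 0$. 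Hence $\Lambda_Q = 0$, so $L_Q(\boldsymbol{\theta}) = 0$ and $\widehat{N}_Q(\boldsymbol{\theta}) = 0$.

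For case $1^\circ$ I would use, exactly as in the proof of Proposition~\ref{N=0_proposit}\,$(1^\circ)$, the ``real'' structure of the acoustic-type operator. Since $b(\mathbf{D}) = \mathbf{D}$, the matrix $b(\boldsymbol{\theta})$ is real, and because of the factor $-i$ in $\mathbf{D} = -i\nabla$ the solution $\Lambda$ of~\eqref{equation_for_Lambda} is purely imaginary, whereas $\widetilde{g}$ and $g^0$ are real ($g^0$ real symmetric). As $Q(\mathbf{x}) = (f(\mathbf{x})f(\mathbf{x})^*)^{-1}$ and $\overline{Q}$ are self-adjoint (and real, automatically so when $n=1$), the constant $(\overline{Q})^{-1}(\overline{Q\Lambda})$ is purely imaginary too, hence so is $\Lambda_Q$. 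Writing $\Lambda_Q = i\Lambda_Q^{\flat}$ with $\Lambda_Q^{\flat}$ real and using that $\widetilde{g}$, $b(\boldsymbol{\theta})$ are real, one sees that $b(\boldsymbol{\theta})^* L_Q(\boldsymbol{\theta}) b(\boldsymbol{\theta})$ equals $i$ times a real matrix; being self-adjoint it is $i$ times a real skew-symmetric matrix. For $n=1$ this is $2\bigl\langle (\boldsymbol{\theta}^{\top}\widetilde{g}\,\boldsymbol{\theta})\,\Re(\Lambda_Q^{\flat}\boldsymbol{\theta})\bigr\rangle = 0$ outright; for $n\ge 2$ the vanishing of the skew-symmetric part follows, as for $\widehat{N}(\boldsymbol{\theta})$ in Proposition~\ref{N=0_proposit}\,$(1^\circ)$, from the symmetry of $g(\mathbf{x})$ together with an integration by parts in the cell equation for $\Lambda_Q$. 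Thus $\widehat{N}_Q(\boldsymbol{\theta}) = 0$. (Alternatively, when $f$ is real the operator $\mathcal{A}$ has real coefficients, so $\mathcal{C}\mathcal{A}(\mathbf{k})\mathcal{C} = \mathcal{A}(-\mathbf{k})$ with $\mathcal{C}$ complex conjugation; this makes the branches $\lambda_l(t,\boldsymbol{\theta})$ in~\eqref{A_eigenvalues_series} even in $t$ and forces $N = 0$, whence $\widehat{N}_Q = 0$ by Lemma~\ref{abstr_N_hatNQ_lemma}.)

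The main obstacle is case $1^\circ$: one has to check that subtracting the constant $(\overline{Q})^{-1}(\overline{Q\Lambda})$ does not spoil the purely imaginary/real structure, and that the skew-symmetric real matrix produced by $b(\boldsymbol{\theta})^* L_Q(\boldsymbol{\theta}) b(\boldsymbol{\theta})$ still integrates to zero --- trivial for $n=1$, but for $n\ge 2$ demanding the same symmetry-of-$g$ and integration-by-parts bookkeeping as in Proposition~\ref{N=0_proposit}\,$(1^\circ)$. Case $2^\circ$, by contrast, is routine once one observes that $\Lambda = 0$.
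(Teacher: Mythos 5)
Your argument is correct, and it is essentially the standard one from \cite[\S5]{BSu2005-2}, which the paper itself only cites rather than reproves: case $2^\circ$ reduces to $\Lambda=0$ (hence $\Lambda_Q=0$), and case $1^\circ$ uses that $\Lambda_Q$ is purely imaginary while $\widetilde{g}$ and $b(\boldsymbol{\theta})=\boldsymbol{\theta}$ are real, so that $L_Q(\boldsymbol{\theta})$ is $i$ times a real antisymmetric matrix and $\boldsymbol{\theta}^{*}L_Q(\boldsymbol{\theta})\boldsymbol{\theta}=0$. Note that in case $1^\circ$ the choice $b(\mathbf{D})=\mathbf{D}$ forces $n=1$ and $m=d$, so your hedging about $n\ge 2$ (and the slightly garbled display preceding it) is vacuous; the quadratic form of an antisymmetric matrix vanishing already closes the argument, and the self-adjointness of $\widehat{N}_Q(\boldsymbol{\theta})$ gives an equally quick alternative finish.
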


Recall (see Subsection~\ref{abstr_hatZ_Q_and_hatN_Q_section}) that  $\widehat{N}_Q (\boldsymbol{\theta}) = \widehat{N}_{0, Q} (\boldsymbol{\theta}) + \widehat{N}_{*,Q} (\boldsymbol{\theta})$. By~(\ref{abstr_hatN_0Q_N_*Q}),
\begin{equation*}
\widehat{N}_{0, Q} (\boldsymbol{\theta}) = \sum_{l=1}^{n} \mu_l (\boldsymbol{\theta}) (\cdot, \overline{Q} \zeta_l(\boldsymbol{\theta}))_{L_2(\Omega)} \overline{Q} \zeta_l(\boldsymbol{\theta}).
\end{equation*}
We have
\begin{equation*}
(\widehat{N}_Q (\boldsymbol{\theta}) \zeta_l (\boldsymbol{\theta}), \zeta_l (\boldsymbol{\theta}))_{L_2 (\Omega)} = (\widehat{N}_{0,Q} (\boldsymbol{\theta}) \zeta_l (\boldsymbol{\theta}), \zeta_l (\boldsymbol{\theta}))_{L_2 (\Omega)} = \mu_l (\boldsymbol{\theta}), \qquad l=1, \ldots, n.
\end{equation*}

In~\cite[Proposition~5.2]{BSu2005-2} the following proposition was proved.
\begin{proposition}[\cite{BSu2005-2}]
	Suppose that the matrices $b(\boldsymbol{\theta})$, $g (\mathbf{x})$, and $Q(\mathbf{x})$ have real entries. Suppose that in the expansions~\emph{(\ref{A_eigenvectors_series})} the  \textquotedblleft embryos\textquotedblright \ $\omega_l (\boldsymbol{\theta})$, $l = 1, \ldots, n$, can be chosen so that the vectors  $\zeta_l (\boldsymbol{\theta}) = f \omega_l (\boldsymbol{\theta})$ are real. Then in~\emph{(\ref{A_eigenvalues_series})} we have $\mu_l (\boldsymbol{\theta}) = 0$, $l=1, \ldots, n$, i.e., $\widehat{N}_{0,Q} (\boldsymbol{\theta}) = 0$ for any $\boldsymbol{\theta} \in \mathbb{S}^{d-1}$.
\end{proposition}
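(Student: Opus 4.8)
The plan is to compute each coefficient $\mu_l(\boldsymbol{\theta})$ directly from the identity $\mu_l(\boldsymbol{\theta}) = (\widehat{N}_Q(\boldsymbol{\theta})\zeta_l(\boldsymbol{\theta}),\zeta_l(\boldsymbol{\theta}))_{L_2(\Omega)}$ recorded just above the statement, together with the representation $\widehat{N}_Q(\boldsymbol{\theta}) = b(\boldsymbol{\theta})^* L_Q(\boldsymbol{\theta}) b(\boldsymbol{\theta})\widehat{P}$ from~(\ref{N_Q(theta)}). Since $\zeta_l(\boldsymbol{\theta})\in\widehat{\mathfrak{N}}$ is a constant vector-valued function and $\widehat{P}$ is the averaging operator~(\ref{Phat_projector}), this gives $\mu_l(\boldsymbol{\theta}) = |\Omega|\,\langle L_Q(\boldsymbol{\theta})\mathbf{e}_l,\mathbf{e}_l\rangle$ with $\mathbf{e}_l := b(\boldsymbol{\theta})\zeta_l(\boldsymbol{\theta})$; by hypothesis $b(\boldsymbol{\theta})$ and $\zeta_l(\boldsymbol{\theta})$ are real, so $\mathbf{e}_l\in\mathbb{R}^m$. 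Thus everything reduces to showing that the Hermitian matrix $L_Q(\boldsymbol{\theta})$ is $i$ times a \emph{real skew-symmetric} matrix, since the quadratic form of such a matrix on a real vector vanishes. This yields $\mu_l(\boldsymbol{\theta}) = 0$ for every $l$ and hence $\widehat{N}_{0,Q}(\boldsymbol{\theta}) = \sum_l \mu_l(\boldsymbol{\theta})(\,\cdot\,,\overline{Q}\zeta_l(\boldsymbol{\theta}))\overline{Q}\zeta_l(\boldsymbol{\theta}) = 0$ by the formula for $\widehat{N}_{0,Q}(\boldsymbol{\theta})$ displayed after~(\ref{N_Q(theta)}). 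This is the exact analogue of the argument already carried out in the case $f = \mathbf{1}_n$.

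The main step is to establish that the solution $\Lambda$ of the cell problem~(\ref{equation_for_Lambda}) is purely imaginary. Complex-conjugating~(\ref{equation_for_Lambda}) and using that $b_1,\dots,b_d$ and $g(\mathbf{x})$ have real entries — so that $\overline{b(\mathbf{D})\mathbf{w}} = -b(\mathbf{D})\overline{\mathbf{w}}$ and $\overline{b(\mathbf{D})^*\mathbf{w}} = -b(\mathbf{D})^*\overline{\mathbf{w}}$ (the sign flips come from $D_l = -i\partial_l$), while $\overline{g\mathbf{w}} = g\overline{\mathbf{w}}$ — one obtains $b(\mathbf{D})^* g(\mathbf{x})\,b(\mathbf{D})(\Lambda + \overline{\Lambda}) = 0$, whereas $\int_\Omega(\Lambda + \overline{\Lambda})\,d\mathbf{x} = 0$. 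The only $\Gamma$-periodic null vectors of $b(\mathbf{D})^* g(\mathbf{x}) b(\mathbf{D})$ are the constants (here one uses $\rank b(\boldsymbol{\xi}) = n$ for $\boldsymbol{\xi}\neq 0$ and positive definiteness of $g$), so $\Lambda + \overline{\Lambda} = 0$, i.e.\ $\Lambda$ is purely imaginary. Consequently $b(\mathbf{D})\Lambda$ is real (the factor $-i$ in $D_l = -i\partial_l$ compensates the $i$ in $\Lambda$), hence so is $\widetilde{g} = g(b(\mathbf{D})\Lambda + \mathbf{1}_m)$ by~(\ref{g_tilde}); and since $Q(\mathbf{x})$ is real, so are $\overline{Q}$ and $(\overline{Q})^{-1}\,\overline{Q\Lambda}$, whence $\Lambda_Q = \Lambda - (\overline{Q})^{-1}\overline{Q\Lambda}$ is purely imaginary as well.

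It then remains to substitute these facts into $L_Q(\boldsymbol{\theta}) = |\Omega|^{-1}\int_\Omega\bigl(\Lambda_Q^* b(\boldsymbol{\theta})^*\widetilde{g} + \widetilde{g}^* b(\boldsymbol{\theta})\Lambda_Q\bigr)\,d\mathbf{x}$. Writing $\Lambda_Q = i\Lambda_Q'$ with $\Lambda_Q'$ real, and using that $b(\boldsymbol{\theta})^*$ and $\widetilde{g}^*$ coincide with the (real) transposes $b(\boldsymbol{\theta})^{\mathsf T}$ and $\widetilde{g}^{\mathsf T}$, the integrand becomes $-i\,R(\mathbf{x}) + i\,R(\mathbf{x})^{\mathsf T}$ with $R(\mathbf{x}) := (\Lambda_Q')^{\mathsf T} b(\boldsymbol{\theta})^{\mathsf T}\widetilde{g}(\mathbf{x})$ real; hence $L_Q(\boldsymbol{\theta}) = i\,B(\boldsymbol{\theta})$, where $B(\boldsymbol{\theta}) := |\Omega|^{-1}\int_\Omega(R^{\mathsf T} - R)\,d\mathbf{x}$ is real and skew-symmetric. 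For a real vector $\mathbf{e}_l$ this gives $\langle L_Q(\boldsymbol{\theta})\mathbf{e}_l,\mathbf{e}_l\rangle = i\,\mathbf{e}_l^{\mathsf T} B(\boldsymbol{\theta})\mathbf{e}_l = 0$, so $\mu_l(\boldsymbol{\theta}) = 0$ for all $l$ and all $\boldsymbol{\theta}$, and therefore $\widehat{N}_{0,Q}(\boldsymbol{\theta}) = 0$.

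The only genuinely non-routine point — and thus the main obstacle — is the purely-imaginary property of $\Lambda$ (and hence of $\Lambda_Q$): it rests on the complex-conjugation symmetry of the cell equation and on uniqueness (up to an additive constant) of its periodic solution, and it requires careful bookkeeping of the signs produced by the convention $D_l = -i\partial_l$. Once this is in hand, the remaining manipulation of $L_Q(\boldsymbol{\theta})$ into the form ``$i$ times a real skew-symmetric matrix'' is purely a transpose computation, entirely parallel to the treatment of the operator $\widehat{N}$ in the case $f = \mathbf{1}_n$ given above.
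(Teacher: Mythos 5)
Your argument is correct and is essentially the proof of the cited result (\cite{BSu2005-2}, Proposition~5.2), which the present paper quotes without reproving: the reality assumptions force $\Lambda$, and hence $\Lambda_Q$, to be purely imaginary and $\widetilde{g}$ to be real, so that $L_Q(\boldsymbol{\theta})$ equals $i$ times a real skew-symmetric matrix and its quadratic form on the real vectors $b(\boldsymbol{\theta})\zeta_l(\boldsymbol{\theta})$ vanishes, giving $\mu_l(\boldsymbol{\theta})=0$. One trivial slip: since $Q\Lambda$ is purely imaginary, the mean value $(\overline{Q})^{-1}\,\overline{Q\Lambda}$ is purely imaginary rather than real --- which is precisely what is needed for $\Lambda_Q=\Lambda-(\overline{Q})^{-1}\,\overline{Q\Lambda}$ to remain purely imaginary, so the conclusion is unaffected.
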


In the \textquotedblleft real\textquotedblright \ case under consideration,  $\widehat{S} (\boldsymbol{\theta})$ and $\overline{Q}$ are symmetric matrices with real entries. Clearly, if the eigenvalue $\gamma_j (\boldsymbol{\theta})$ of the generalized spectral problem~(\ref{hatS_gener_spec_problem}) is simple, then the vector $\zeta_j (\boldsymbol{\theta}) = f \omega_j (\boldsymbol{\theta})$ is defined uniquely up to a phase factor, so we can always choose it to be real. We arrive at the following corollary.
\begin{corollary}
	\label{sndw_real_spec_simple_coroll}
	Suppose that the matrices $b(\boldsymbol{\theta})$, $g (\mathbf{x})$, and $Q(\mathbf{x})$ have real entries and the spectrum of the generalized spectral problem~\emph{(\ref{hatS_gener_spec_problem})} is simple. Then $\widehat{N}_{0,Q} (\boldsymbol{\theta}) = 0$ for any $\boldsymbol{\theta} \in \mathbb{S}^{d-1}$.
\end{corollary}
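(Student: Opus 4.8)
The plan is to obtain the corollary as a direct specialization of the preceding Proposition, the only thing to check being that, under the simplicity hypothesis, the vectors $\zeta_l(\boldsymbol{\theta})$ can indeed be chosen real. First I would record that in the present setting the generalized spectral problem \eqref{hatS_gener_spec_problem} has \emph{real} data. Indeed, $b(\boldsymbol{\theta})$ and $g(\mathbf{x})$ are real by assumption, so the equation \eqref{equation_for_Lambda} for $\Lambda$ has real coefficients and admits a real periodic solution; then $\widetilde{g}(\mathbf{x})$ in \eqref{g_tilde} and the effective matrix $g^0$ in \eqref{g0} are real, so that the germ $\widehat{S}(\boldsymbol{\theta}) = b(\boldsymbol{\theta})^* g^0 b(\boldsymbol{\theta})$ is a real symmetric matrix; and $\overline{Q} = (\underline{ff^*})^{-1}$ is real symmetric and positive definite because $Q(\mathbf{x})$ is real. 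Thus the reality of the germ is not an extra hypothesis but a consequence of the assumptions.

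Next I would reduce \eqref{hatS_gener_spec_problem} to an ordinary eigenvalue problem by the substitution $\eta_l = \overline{Q}^{1/2}\zeta_l$, which turns it into $\overline{Q}^{-1/2}\widehat{S}(\boldsymbol{\theta})\overline{Q}^{-1/2}\eta_l = \gamma_l(\boldsymbol{\theta})\eta_l$ for the real symmetric matrix $\overline{Q}^{-1/2}\widehat{S}(\boldsymbol{\theta})\overline{Q}^{-1/2}$. If the spectrum of \eqref{hatS_gener_spec_problem} is simple, so is that of this matrix, hence each eigenvector $\eta_l$ is determined up to a scalar factor; choosing the nonzero one among $\Re\eta_l$ and $\Im\eta_l$ and normalizing, we may take $\eta_l$ real, and therefore $\zeta_l = \overline{Q}^{-1/2}\eta_l$ real as well. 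Setting $\omega_l(\boldsymbol{\theta}) = f^{-1}\zeta_l(\boldsymbol{\theta})$ we obtain embryos for which the hypothesis of the preceding Proposition is satisfied. That Proposition then yields $\mu_l(\boldsymbol{\theta}) = 0$ for all $l = 1,\ldots,n$, i.e.\ $\widehat{N}_{0,Q}(\boldsymbol{\theta}) = 0$; since $\boldsymbol{\theta}\in\mathbb{S}^{d-1}$ was arbitrary, the corollary follows.

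I do not expect a genuine obstacle here: the statement is a straightforward corollary. The only points requiring a modicum of care are the passage from the generalized to the standard symmetric eigenvalue problem (using positive definiteness of $\overline{Q}$), the elementary observation that a \emph{simple} eigenvalue of a real symmetric matrix possesses a real eigenvector, and the remark that the reality of $g^0$ and of the germ is inherited from the reality of $g$ and $b$ rather than assumed separately.
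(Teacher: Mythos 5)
Your argument is correct and coincides with the paper's own proof, which disposes of the corollary in one sentence just before its statement: in the real case $\widehat{S}(\boldsymbol{\theta})$ and $\overline{Q}$ are real symmetric matrices, and a simple eigenvalue of the generalized spectral problem determines $\zeta_j(\boldsymbol{\theta}) = f\omega_j(\boldsymbol{\theta})$ up to a phase factor, which can be adjusted to make it real, so the preceding Proposition applies. One small inaccuracy in your side remark on why the germ is real: since $b(\mathbf{D}) = -i\sum_l b_l\partial_l$, the periodic solution $\Lambda$ of \eqref{equation_for_Lambda} is purely imaginary (not real) when the $b_l$ and $g$ have real entries; nevertheless $b(\mathbf{D})\Lambda$, and hence $\widetilde{g}$ and $g^0$, are real, so your conclusion that $\widehat{S}(\boldsymbol{\theta}) = b(\boldsymbol{\theta})^* g^0 b(\boldsymbol{\theta})$ is real symmetric stands.
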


\subsection{The operators $\widehat{Z}_{2,Q}(\boldsymbol{\theta})$, $\widehat{R}_{2,Q}(\boldsymbol{\theta})$, $\widehat{N}_{1,Q}^0(\boldsymbol{\theta})$}
We need to describe the operators $\widehat{Z}_{2,Q}$, $\widehat{R}_{2,Q}$, $\widehat{N}_{1,Q}^0$ (which in the abstract terms are defined in Subsection~\ref{abstr_hatZ2_Q_hatR2_Q_N1^0_Q_section}). Let $\Lambda^{(2)}_{Q,l} (\mathbf{x})$ be a $\Gamma$-periodic solution of the problem
\begin{equation*}
b(\mathbf{D})^* g(\mathbf{x}) (b(\mathbf{D}) \Lambda^{(2)}_{Q,l} (\mathbf{x}) + b_l \Lambda_{Q}(\mathbf{x})) = -b_l^* \widetilde{g} (\mathbf{x}) + Q(\mathbf{x}) (\overline{Q})^{-1} b_l^* g^0, \qquad \int_{\Omega} Q(\mathbf{x}) \Lambda^{(2)}_{Q,l} (\mathbf{x}) \, d \mathbf{x} = 0.
\end{equation*}
We put $\Lambda^{(2)}_Q (\mathbf{x}; \boldsymbol{\theta}) \coloneqq \sum_{l=1}^{d} \Lambda^{(2)}_{Q,l} (\mathbf{x}) \theta_l$. In~\cite[Subsection~8.4]{VSu2012}, it was shown that
\begin{gather*}
\widehat{Z}_{2,Q}(\boldsymbol{\theta}) = \Lambda_Q^{(2)} (\mathbf{x}; \boldsymbol{\theta}) b(\boldsymbol{\theta}) \widehat{P}, \qquad
\widehat{R}_{2,Q}(\boldsymbol{\theta}) = h(\mathbf{x}) (b(\mathbf{D}) \Lambda^{(2)}_Q (\mathbf{x}; \boldsymbol{\theta}) + b(\boldsymbol{\theta}) \Lambda_Q(\mathbf{x})) b(\boldsymbol{\theta}).
\end{gather*}
Finally, in~\cite[Subsection~8.5]{VSu2012} the following representation was obtained:
\begin{align*}
&\widehat{N}_{1,Q}^0(\boldsymbol{\theta}) = b(\boldsymbol{\theta})^* L_{2,Q}(\boldsymbol{\theta}) b(\boldsymbol{\theta}) \widehat{P}, 
\\
&\begin{multlined}[c]
L_{2,Q}(\boldsymbol{\theta}) \coloneqq |\Omega|^{-1} \int_{\Omega} \bigl(\Lambda^{(2)}_Q (\mathbf{x}; \boldsymbol{\theta})^* b(\boldsymbol{\theta})^* \widetilde{g}(\mathbf{x}) + \widetilde{g}(\mathbf{x})^* b(\boldsymbol{\theta}) \Lambda^{(2)}_Q (\mathbf{x}; \boldsymbol{\theta}) \bigr) \, d \mathbf{x}  \\ 
+
|\Omega|^{-1} \int_{\Omega} \bigl(b(\mathbf{D}) \Lambda^{(2)}_Q (\mathbf{x}; \boldsymbol{\theta}) + b(\boldsymbol{\theta}) \Lambda_Q (\mathbf{x})\bigr)^* g(\mathbf{x}) \bigl(b(\mathbf{D}) \Lambda^{(2)}_Q (\mathbf{x}; \boldsymbol{\theta}) + b(\boldsymbol{\theta}) \Lambda_Q(\mathbf{x}) \bigr) \, d \mathbf{x}.
\end{multlined}
\end{align*}

\subsection{Multiplicities of the eigenvalues of the germ}
\label{sndw_eigenval_multipl_section}
Considerations of this subsection concern the case where $n \ge 2$. Now, we return to the notation of Subsection~\ref{abstr_cluster_section}. In general, the number $p(\boldsymbol{\theta})$ of different eigenvalues $\gamma^{\circ}_1 (\boldsymbol{\theta}), \ldots, \gamma^{\circ}_{p(\boldsymbol{\theta})} (\boldsymbol{\theta})$ of $S(\boldsymbol{\theta})$ (or of problem~(\ref{hatS_gener_spec_problem})) and their multiplicities $k_1 (\boldsymbol{\theta}), \ldots, k_{p(\boldsymbol{\theta})} (\boldsymbol{\theta})$ depend on the parameter $\boldsymbol{\theta} \in \mathbb{S}^{d-1}$. For a fixed $\boldsymbol{\theta}$ denote by $\mathfrak{N}_j (\boldsymbol{\theta})$ the eigenspace of the germ $S (\boldsymbol{\theta})$ corresponding to the eigenvalue $\gamma^{\circ}_j (\boldsymbol{\theta})$. Then $f \mathfrak{N}_j (\boldsymbol{\theta}) \eqqcolon \widehat{\mathfrak{N}}_{j,Q} (\boldsymbol{\theta})$ is the eigenspace of  problem~(\ref{hatS_gener_spec_problem}) corresponding to the same eigenvalue $\gamma^{\circ}_j (\boldsymbol{\theta})$. We introduce the notation $\mathcal{P}_j (\boldsymbol{\theta})$ for the \textquotedblleft skew\textquotedblright \ projection of $L_2(\Omega; \mathbb{C}^n)$ onto the subspace $\widehat{\mathfrak{N}}_{j,Q} (\boldsymbol{\theta})$; $\mathcal{P}_j (\boldsymbol{\theta})$ is orthogonal with respect to the inner product with the weight $\overline{Q}$. By~(\ref{abstr_hatN_0Q_N_*Q_invar_repr}),
\begin{equation*}
\widehat{N}_{0,Q} (\boldsymbol{\theta}) = \sum_{j=1}^{p(\boldsymbol{\theta})} \mathcal{P}_j (\boldsymbol{\theta})^* \widehat{N}_Q (\boldsymbol{\theta}) \mathcal{P}_j (\boldsymbol{\theta}), \qquad \widehat{N}_{*,Q} (\boldsymbol{\theta}) = \sum_{\substack{1 \le l,j \le p(\boldsymbol{\theta})\\ j \ne l}} \mathcal{P}_j (\boldsymbol{\theta})^* \widehat{N}_Q (\boldsymbol{\theta}) \mathcal{P}_l (\boldsymbol{\theta}).
\end{equation*}

\subsection{The coefficients $\nu_l(\boldsymbol{\theta})$, $l=1, \ldots,n$}
According to~(\ref{abstr_N_eigenvalues}), the numbers $\mu_l (\boldsymbol{\theta})$ and the elements $\omega_l (\boldsymbol{\theta})$, $l = i(q,\boldsymbol{\theta}),\ldots, i(q,\boldsymbol{\theta})+k_q(\boldsymbol{\theta})-1$, where $i(q,\boldsymbol{\theta}) = k_1(\boldsymbol{\theta})+\ldots+k_{q-1}(\boldsymbol{\theta})+1$,  are eigenvalues and eigenvectors of the operator $P_q (\boldsymbol{\theta}) N(\boldsymbol{\theta}) |_{\mathfrak{N}_q(\boldsymbol{\theta})}$. Then, by~(\ref{abstr_hatN_Q_gener_spec_problem}), we have
\begin{equation}
\label{hatN_Q_gener_spec_problem}
\widehat{P}_{q,Q} (\boldsymbol{\theta}) \widehat{N}_Q (\boldsymbol{\theta}) \zeta_l (\boldsymbol{\theta}) = \mu_l (\boldsymbol{\theta}) \widehat{P}_{q,Q}(\boldsymbol{\theta}) \overline{Q} \zeta_l (\boldsymbol{\theta}), \qquad l = i(q,\boldsymbol{\theta}),\ldots, i(q,\boldsymbol{\theta})+k_q(\boldsymbol{\theta})-1,
\end{equation} 
where $\widehat{P}_{q,Q}(\boldsymbol{\theta})$ is the orthogonal projection onto $\widehat{\mathfrak{N}}_{q,Q} (\boldsymbol{\theta})$.

The number $p'(q, \boldsymbol{\theta})$ of different eigenvalues $\mu^{\circ}_{1,q} (\boldsymbol{\theta}), \ldots, \mu^{\circ}_{p'(q, \boldsymbol{\theta}),q} (\boldsymbol{\theta})$ of the operator $P_q (\boldsymbol{\theta}) N(\boldsymbol{\theta}) |_{\mathfrak{N}_q(\boldsymbol{\theta})}$ 
and their multiplicities $k_{1,q} (\boldsymbol{\theta}), \ldots, k_{p'(\boldsymbol{\theta}),q} (\boldsymbol{\theta})$ depend on the parameter $\boldsymbol{\theta} \in \mathbb{S}^{d-1}$.
For a fixed $\boldsymbol{\theta}$ we denote by $\mathfrak{N}_{q',q} (\boldsymbol{\theta})$ the eigenspace corresponding to the eigenvalue $\mu^{\circ}_{q',q} (\boldsymbol{\theta})$. Then $f \mathfrak{N}_{q',q} (\boldsymbol{\theta}) \eqqcolon  \widehat{\mathfrak{N}}_{q',q,Q} (\boldsymbol{\theta})$ is the eigenspace of problem~(\ref{hatN_Q_gener_spec_problem}) corresponding to the same eigenvalue $\mu^{\circ}_{q',q} (\boldsymbol{\theta})$. 

Finally, according to~(\ref{abstr_scrNhat_M^(q)_gener_spec_problem}), the numbers $\nu_l (\boldsymbol{\theta})$ and the elements $\zeta_l (\boldsymbol{\theta})$, $l = i'(q',q,\boldsymbol{\theta}), {\ldots}, i'(q',q,\boldsymbol{\theta})+k_{q',q}(\boldsymbol{\theta})-1$, where $i'(q',q,\boldsymbol{\theta}) = i(q, \boldsymbol{\theta}) +k_{1,q}(\boldsymbol{\theta})+\ldots+k_{q'-1,q}(\boldsymbol{\theta})$, are the eigenvalues and the eigenvectors of the following generalized spectral problem:
\begin{equation*}
\widehat{\mathcal{N}}_Q^{(q',q)}(\boldsymbol{\theta}) \zeta_l(\boldsymbol{\theta}) = \nu_l(\boldsymbol{\theta}) \widehat{P}_{q',q,Q} (\boldsymbol{\theta}) \overline{Q} \zeta_l(\boldsymbol{\theta}), \qquad l = i'(q',q,\boldsymbol{\theta}),\ldots, i'(q',q,\boldsymbol{\theta})+k_q(\boldsymbol{\theta})-1,
\end{equation*}
where
\begin{multline*}
\widehat{\mathcal{N}}_Q^{(q',q)}(\boldsymbol{\theta}) \coloneqq 
\\ 
\widehat{P}_{q',q,Q} (\boldsymbol{\theta}) \left. \left(\widehat{N}_{1,Q}^0(\boldsymbol{\theta}) - \frac{1}{2} \widehat{Z}_Q^*(\boldsymbol{\theta}) Q \widehat{Z}_Q(\boldsymbol{\theta}) Q^{-1} \widehat{S}(\boldsymbol{\theta}) \widehat{P} - \frac{1}{2} \widehat{S}(\boldsymbol{\theta}) \widehat{P} Q^{-1} \widehat{Z}_Q(\boldsymbol{\theta})^* Q \widehat{Z}_Q(\boldsymbol{\theta}) \right)\right|_{\widehat{\mathfrak{N}}_{q',q,Q} (\boldsymbol{\theta})} 
\\ +
\sum_{\substack{j\in\{1,\ldots,p(\boldsymbol{\theta})\} \\ j \ne q}} \bigl(\gamma^{\circ}_q(\boldsymbol{\theta}) - \gamma^{\circ}_j(\boldsymbol{\theta})\bigr)^{-1} \widehat{P}_{q',q,Q}(\boldsymbol{\theta}) \widehat{N}_Q(\boldsymbol{\theta}) \widehat{P}_{j,Q}(\boldsymbol{\theta}) Q^{-1} \widehat{P}_{j,Q}(\boldsymbol{\theta}) \widehat{N}_Q(\boldsymbol{\theta})|_{\widehat{\mathfrak{N}}_{q',q,Q} (\boldsymbol{\theta})}
\end{multline*}
and $\widehat{P}_{q',q,Q}(\boldsymbol{\theta})$ is the orthogonal projection onto $\widehat{\mathfrak{N}}_{q',q,Q} (\boldsymbol{\theta})$.

Note that in the case where $\widehat{N}_{0,Q}(\boldsymbol{\theta}) = 0$ we have $\widehat{\mathfrak{N}}_{1,q,Q} (\boldsymbol{\theta}) = \widehat{\mathfrak{N}}_{q,Q} (\boldsymbol{\theta})$, $q=1, \ldots,p(\boldsymbol{\theta})$. Then we shall write $\widehat{\mathcal{N}}_Q^{(q)}(\boldsymbol{\theta})$ instead of $\widehat{\mathcal{N}}_Q^{(1,q)}(\boldsymbol{\theta})$.

\section{Approximations for the sandwiched operator $e^{-i \tau \varepsilon^{-2}  \mathcal{A}(\mathbf{k})}$}

\subsection{The general case}
Denote
\begin{equation}
\label{J(k,eps)}
J (\mathbf{k}, \varepsilon; \tau) \coloneqq f e^{-i \tau \varepsilon^{-2} \mathcal{A} (\mathbf{k})} f^{-1} - f_0 e^{-i \tau \varepsilon^{-2} \mathcal{A}^0 (\mathbf{k})} f_0^{-1}.
\end{equation}

We shall apply theorems of Subsection~\ref{abstr_sndw_exp_section} to the operator $A(t; \boldsymbol{\theta}) = \mathcal{A}(\mathbf{k})$. In doing so, we may trace the dependence of the constants in estimates on the problem data. Note that $c_*$, $\delta$, and $t^0$ do not depend on $\boldsymbol{\theta}$ (see~(\ref{c_*}), (\ref{delta_fixation}), (\ref{t0_fixation})). According to~(\ref{X_1_estimate}) the norm $\| X_1 (\boldsymbol{\theta}) \|$ can be replaced by $\alpha_1^{1/2} \| g \|_{L_{\infty}}^{1/2} \| f \|_{L_{\infty}}$. Hence, the constants in Theorems~\ref{abstr_sndw_exp_general_thrm} and~\ref{abstr_sndw_exp_enchcd_thrm_1} (applied to the operator $\mathcal{A}(\mathbf{k})$) will be independent of $\boldsymbol{\theta}$. They depend only on $\alpha_0$, $\alpha_1$, $\|g\|_{L_\infty}$, $\|g^{-1}\|_{L_\infty}$, $\|f\|_{L_{\infty}}$, $\|f^{-1}\|_{L_{\infty}}$, and $r_0$.

Applying Theorem~\ref{abstr_sndw_exp_general_thrm} and taking~(\ref{R_P})--(\ref{R(k,eps)(I-P)_est}), (\ref{A^0(k)P}) into account, we arrive at the following statement proved before in~\cite[Theorem~8.1]{BSu2008}.

\begin{thrm}[\cite{BSu2008}]
	\label{sndw_A(k)_exp_general_thrm}
	For $\tau \in \mathbb{R}$, $\varepsilon > 0$, and $\mathbf{k} \in \widetilde{\Omega}$ we have
	\begin{equation*}
	\| J (\mathbf{k}, \varepsilon; \tau) \mathcal{R}(\mathbf{k}, \varepsilon)^{3/2}\|_{L_2(\Omega) \to L_2 (\Omega) }  \le \mathcal{C}_1 (1 + |\tau|) \varepsilon,
	\end{equation*}
	where the constant $\mathcal{C}_1$ depends only on $\alpha_0$, $\alpha_1$, $\|g\|_{L_\infty}$, $\|g^{-1}\|_{L_\infty}$, $\|f\|_{L_{\infty}}$, $\|f^{-1}\|_{L_{\infty}}$, and $r_0$.
\end{thrm}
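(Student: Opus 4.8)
The plan is to specialize the abstract Theorem~\ref{abstr_sndw_exp_general_thrm} to the family $A(t;\boldsymbol{\theta}) = \mathcal{A}(\mathbf{k})$, $\mathbf{k} = t\boldsymbol{\theta}$, with $M$ the operator of multiplication by $f(\mathbf{x})$, $M_0$ the operator of multiplication by the constant matrix $f_0 = (\underline{f f^*})^{1/2}$ (see~(\ref{f0})), and $\widehat{P}$ the averaging operator~(\ref{Phat_projector}), and then to patch the resulting $\widehat{P}$-smoothed estimate into the full estimate with the smoothing operator $\mathcal{R}(\mathbf{k},\varepsilon)^{3/2}$, exactly as was done for the non-sandwiched operator in Theorem~\ref{hatA(k)_exp_general_thrm}. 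The first step is an algebraic reduction: since $M_0$ is multiplication by a constant matrix it commutes with $\widehat{P}$, and by~(\ref{effective_oper_symb}) we have $t^2 M_0\widehat{S}(\boldsymbol{\theta})M_0 = f_0\widehat{S}(\mathbf{k})f_0$ with $\widehat{S}(\mathbf{k})$ mapping $\widehat{\mathfrak{N}}$ into $\widehat{\mathfrak{N}}$; combining this with~(\ref{A^0(k)P}) one checks that $M_0 e^{-i\tau\varepsilon^{-2} t^2 M_0\widehat{S}M_0} M_0^{-1}\widehat{P} = f_0 e^{-i\tau\varepsilon^{-2}\mathcal{A}^0(\mathbf{k})} f_0^{-1}\widehat{P}$, and hence $J(\mathbf{k},\varepsilon;\tau)\widehat{P}$ is precisely the operator estimated in Theorem~\ref{abstr_sndw_exp_general_thrm}, with $\|M\| = \|f\|_{L_\infty}$ and $\|M^{-1}\| = \|f^{-1}\|_{L_\infty}$.

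Next I would split $J\mathcal{R}(\mathbf{k},\varepsilon)^{3/2} = J\mathcal{R}(\mathbf{k},\varepsilon)^{3/2}\widehat{P} + J\mathcal{R}(\mathbf{k},\varepsilon)^{3/2}(I-\widehat{P})$ and estimate the two terms separately. For $J\mathcal{R}^{3/2}\widehat{P}$ with $|\mathbf{k}|\le t^0$, identity~(\ref{R_P}) replaces $\mathcal{R}(\mathbf{k},\varepsilon)^{3/2}\widehat{P}$ by the scalar $\varepsilon^3(t^2+\varepsilon^2)^{-3/2}$ times $\widehat{P}$, so Theorem~\ref{abstr_sndw_exp_general_thrm} gives directly the bound $\|f\|_{L_\infty}^2\|f^{-1}\|_{L_\infty}^2(C_1+C_2|\tau|)\varepsilon$; for $|\mathbf{k}| > t^0$, since both $e^{-i\tau\varepsilon^{-2}\mathcal{A}(\mathbf{k})}$ and $e^{-i\tau\varepsilon^{-2}\mathcal{A}^0(\mathbf{k})}$ are unitary one has $\|J\widehat{P}\|\le 2\|f\|_{L_\infty}\|f^{-1}\|_{L_\infty}$, while, as in~(\ref{R_hatP_est}), $\|\mathcal{R}(\mathbf{k},\varepsilon)^{3/2}\widehat{P}\| = \varepsilon^3(t^2+\varepsilon^2)^{-3/2}$ is bounded by a constant depending only on $t^0$ times $\varepsilon$. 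For $J\mathcal{R}^{3/2}(I-\widehat{P})$ one again uses $\|J(I-\widehat{P})\|\le 2\|f\|_{L_\infty}\|f^{-1}\|_{L_\infty}$ together with~(\ref{R(k,eps)(I-P)_est}), which yields $\|\mathcal{R}(\mathbf{k},\varepsilon)^{3/2}(I-\widehat{P})\|\le r_0^{-3}\varepsilon^3$. Summing the three contributions gives the asserted estimate with $\mathcal{C}_1$ expressed through $C_1$, $C_2$, $\|f\|_{L_\infty}$, $\|f^{-1}\|_{L_\infty}$, $t^0$, $r_0$.

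Finally I would check the stated dependence of $\mathcal{C}_1$: the constants $C_1$, $C_2$ of Theorem~\ref{abstr_sndw_exp_general_thrm} are majorated by polynomials in $\delta^{-1/2}$ and $\|X_1(\boldsymbol{\theta})\|$, and by~(\ref{delta_fixation}), (\ref{X_1_estimate}) (with the present $f$, replacing $\|X_1(\boldsymbol{\theta})\|$ by $\alpha_1^{1/2}\|g\|_{L_\infty}^{1/2}\|f\|_{L_\infty}$) together with~(\ref{t0_fixation}) these are controlled uniformly in $\boldsymbol{\theta}\in\mathbb{S}^{d-1}$ by $\alpha_0$, $\alpha_1$, $\|g\|_{L_\infty}$, $\|g^{-1}\|_{L_\infty}$, $\|f\|_{L_\infty}$, $\|f^{-1}\|_{L_\infty}$, $r_0$. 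I expect the only nonroutine point to be the algebraic reduction of the first paragraph --- verifying that the bordered effective exponential $M_0 e^{-i\tau\varepsilon^{-2}t^2 M_0\widehat{S}M_0}M_0^{-1}\widehat{P}$ really collapses to $f_0 e^{-i\tau\varepsilon^{-2}\mathcal{A}^0(\mathbf{k})}f_0^{-1}\widehat{P}$ (this rests on $f_0$ being constant and on $\widehat{S}(\mathbf{k})$ leaving $\widehat{\mathfrak{N}}$ invariant); everything else is the same three-term argument already used to prove Theorem~\ref{hatA(k)_exp_general_thrm}.
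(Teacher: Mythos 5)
Your proposal is correct and follows the same route as the paper: the paper's proof is precisely ``apply Theorem~\ref{abstr_sndw_exp_general_thrm} together with~(\ref{R_P})--(\ref{R(k,eps)(I-P)_est}) and~(\ref{A^0(k)P})'', and your three-term decomposition (the $\widehat{P}$-part for $|\mathbf{k}|\le t^0$ via the abstract theorem, the $\widehat{P}$-part for $|\mathbf{k}|>t^0$ via the trivial bound and~(\ref{R_hatP_est}), and the $(I-\widehat{P})$-part via~(\ref{R(k,eps)(I-P)_est})) together with the identification of $M_0 e^{-i\tau\varepsilon^{-2}t^2 M_0\widehat{S}M_0}M_0^{-1}\widehat{P}$ with $f_0 e^{-i\tau\varepsilon^{-2}\mathcal{A}^0(\mathbf{k})}f_0^{-1}\widehat{P}$ is exactly the intended content of that one-line reduction, with the constants traced in the same way.
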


\subsection{The case where $\widehat{N}_Q(\boldsymbol{\theta}) = 0$}
Now, we apply Theorem~\ref{abstr_sndw_exp_enchcd_thrm_1} assuming that $\widehat{N}_Q(\boldsymbol{\theta}) = 0$ for any $\boldsymbol{\theta} \in \mathbb{S}^{d-1}$. Taking~(\ref{R_P})--(\ref{R(k,eps)(I-P)_est}), (\ref{A^0(k)P}) into account, we obtain the following result.

\begin{thrm}
	\label{sndw_A(k)_exp_enchcd_thrm_1}
	Let $\widehat{N}_Q(\boldsymbol{\theta})$ be the operator defined by~\emph{(\ref{N_Q(theta)})}. Suppose that $\widehat{N}_Q(\boldsymbol{\theta}) = 0$ for any $\boldsymbol{\theta} \in \mathbb{S}^{d-1}$. Then for $\tau \in \mathbb{R}$, $\varepsilon > 0$, and $\mathbf{k} \in \widetilde{\Omega}$ we have
	\begin{equation*}
	\| J (\mathbf{k}, \varepsilon; \tau) \mathcal{R}(\mathbf{k}, \varepsilon)\|_{L_2(\Omega) \to L_2 (\Omega)}  \le \mathcal{C}_2 (1+ |\tau|^{1/2}) \varepsilon,
	\end{equation*}
	where the constant $\mathcal{C}_2$ depends only on $\alpha_0$, $\alpha_1$, $\|g\|_{L_\infty}$, $\|g^{-1}\|_{L_\infty}$, $\|f\|_{L_{\infty}}$, $\|f^{-1}\|_{L_{\infty}}$, and $r_0$.
\end{thrm}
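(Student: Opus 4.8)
The plan is to apply the abstract Theorem~\ref{abstr_sndw_exp_enchcd_thrm_1} to the operator family $A(t;\boldsymbol{\theta}) = \mathcal{A}(\mathbf{k})$, $\widehat{A}(t;\boldsymbol{\theta}) = \widehat{\mathcal{A}}(\mathbf{k})$, with $M$ the multiplication by $f(\mathbf{x})$ and $M_0$ the multiplication by the constant matrix $f_0$ of~(\ref{f0}), and then to combine the resulting estimate (valid only near $\mathbf{k} = 0$ and only on $\Ran\widehat{P}$) with the elementary bounds~(\ref{R_hatP_est}), (\ref{R(k,eps)(I-P)_est}) for the smoothing factor away from the threshold and on $(I - \widehat{P})$. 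First, by Lemma~\ref{abstr_N_hatNQ_lemma} the hypothesis $\widehat{N}_Q(\boldsymbol{\theta}) = 0$ for all $\boldsymbol{\theta}$ is equivalent to $N(\boldsymbol{\theta}) = 0$, so the assumption of Theorem~\ref{abstr_sndw_exp_enchcd_thrm_1} holds. Second, I would identify the effective exponential occurring there with the one in~(\ref{J(k,eps)}): since $f_0$ is a \emph{constant} matrix it commutes with the averaging projection $\widehat{P}$, and since $\mathcal{A}^0(\mathbf{k})$ carries constant vector-valued functions to constant ones, $\widehat{P}$ reduces $\mathcal{A}^0(\mathbf{k})$; combining this with~(\ref{effective_oper_symb}) (so that $t^2 M_0\widehat{S}(\boldsymbol{\theta})M_0 = f_0\widehat{S}(\mathbf{k})f_0$) and~(\ref{A^0(k)P}) (so that $f_0\widehat{S}(\mathbf{k})f_0\widehat{P} = \mathcal{A}^0(\mathbf{k})\widehat{P}$) one gets, after conjugating by $f_0$ on $\Ran\widehat{P} \cong \mathbb{C}^n$,
\[
M_0\, e^{-i\tau\varepsilon^{-2} t^2 M_0\widehat{S}(\boldsymbol{\theta}) M_0}\, M_0^{-1}\widehat{P} \;=\; f_0\, e^{-i\tau\varepsilon^{-2}\mathcal{A}^0(\mathbf{k})}\, f_0^{-1}\widehat{P} .
\]
Hence the operator under the norm sign in Theorem~\ref{abstr_sndw_exp_enchcd_thrm_1} is exactly $J(\mathbf{k},\varepsilon;\tau)\widehat{P}$ with $t = |\mathbf{k}|$.

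Next I would invoke uniformity in $\boldsymbol{\theta}$, which is already in place: the quantities $c_*$, $\delta$, $t^0$ do not depend on $\boldsymbol{\theta}$, one has $\|X_1(\boldsymbol{\theta})\| \le \alpha_1^{1/2}\|g\|_{L_\infty}^{1/2}\|f\|_{L_\infty}$ by~(\ref{X_1_estimate}), and $\|M\| \le \|f\|_{L_\infty}$, $\|M^{-1}\| \le \|f^{-1}\|_{L_\infty}$; consequently the constants $C_1$, $C_4'$ produced by Theorem~\ref{abstr_sndw_exp_enchcd_thrm_1} (which are polynomials in $\delta^{-1/2}$ and $\|X_1\|$) and the prefactor $\|M\|^2\|M^{-1}\|^2$ are controlled by $\alpha_0,\alpha_1,\|g\|_{L_\infty},\|g^{-1}\|_{L_\infty},\|f\|_{L_\infty},\|f^{-1}\|_{L_\infty},r_0$ alone (recall $g = h^*h$, so $\|h\|_{L_\infty},\|h^{-1}\|_{L_\infty}$ are controlled by $\|g\|_{L_\infty}^{1/2},\|g^{-1}\|_{L_\infty}^{1/2}$). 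Using~(\ref{R_P}) with $s = 2$ to write $\mathcal{R}(\mathbf{k},\varepsilon)\widehat{P} = \varepsilon^2(t^2+\varepsilon^2)^{-1}\widehat{P}$ (a scalar multiple of $\widehat{P}$), Theorem~\ref{abstr_sndw_exp_enchcd_thrm_1} then gives, for $|\mathbf{k}| = t \le t^0$,
\[
\| J(\mathbf{k},\varepsilon;\tau)\,\mathcal{R}(\mathbf{k},\varepsilon)\widehat{P}\|_{L_2(\Omega)\to L_2(\Omega)} \le \|f\|_{L_\infty}^2\|f^{-1}\|_{L_\infty}^2\,(C_1 + C_4'|\tau|^{1/2})\,\varepsilon .
\]

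It remains to remove the restriction $|\mathbf{k}| \le t^0$ and to pass from $\widehat{P}$ to the full smoothing factor. I would split $\mathcal{R}(\mathbf{k},\varepsilon) = \mathcal{R}(\mathbf{k},\varepsilon)\widehat{P} + \mathcal{R}(\mathbf{k},\varepsilon)(I - \widehat{P})$. Since the two exponentials in~(\ref{J(k,eps)}) are unitary, $\|J(\mathbf{k},\varepsilon;\tau)\| \le 2\|f\|_{L_\infty}\|f^{-1}\|_{L_\infty}$; combined with~(\ref{R(k,eps)(I-P)_est}) (with $s = 2$) this bounds the $(I-\widehat{P})$-part by $2\|f\|_{L_\infty}\|f^{-1}\|_{L_\infty}\,r_0^{-2}\varepsilon^2$, and, by the analogue of~(\ref{R_hatP_est}) for the number $t^0$ (which, like $\widehat{t}^{\,0}$, is bounded below by a positive quantity depending only on the listed data), the $\widehat{P}$-part for $|\mathbf{k}| > t^0$ is bounded by $2\|f\|_{L_\infty}\|f^{-1}\|_{L_\infty}\,(t^0)^{-2}\varepsilon^2$. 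For $0 < \varepsilon \le 1$ one has $\varepsilon^2 \le \varepsilon$, while for $\varepsilon \ge 1$ the bound $\|\mathcal{R}(\mathbf{k},\varepsilon)\| \le 1$ makes the whole left-hand side $\le 2\|f\|_{L_\infty}\|f^{-1}\|_{L_\infty}\,\varepsilon$; collecting the three contributions yields the claim with a constant $\mathcal{C}_2$ of the required form.

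The only non-routine point in the whole argument is the identification of the effective exponential in the first displayed formula — the subtlety being that in~(\ref{A0}) the matrix $f_0$ sits on \emph{both} sides of $\widehat{\mathcal{A}}^0$, so $\mathcal{A}^0(\mathbf{k})$ is unitarily related to $\widehat{\mathcal{A}}^0(\mathbf{k})$ only after compression to $\Ran\widehat{P}$; everything else is the same bookkeeping with the smoothing factor already carried out for the coarser estimate in~\cite[Theorem~8.1]{BSu2008} and in Theorem~\ref{sndw_A(k)_exp_general_thrm} above, now fed by the sharper abstract input of Theorem~\ref{abstr_sndw_exp_enchcd_thrm_1} in place of Theorem~\ref{abstr_sndw_exp_general_thrm}.
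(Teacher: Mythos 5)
Your proposal is correct and follows essentially the same route as the paper: apply the abstract Theorem~\ref{abstr_sndw_exp_enchcd_thrm_1} to $A(t;\boldsymbol{\theta})=\mathcal{A}(\mathbf{k})$ with $M=[f]$, identify the effective exponential via~(\ref{A^0(k)P}), and absorb the smoothing factor using~(\ref{R_P})--(\ref{R(k,eps)(I-P)_est}) together with the trivial bound $\|J\|\le 2\|f\|_{L_\infty}\|f^{-1}\|_{L_\infty}$ outside the ball $|\mathbf{k}|\le t^0$ and on $\Ran(I-\widehat{P})$. The appeal to Lemma~\ref{abstr_N_hatNQ_lemma} is redundant (the abstract theorem is already stated under the hypothesis $\widehat{N}_Q=0$), but harmless; the identification of $M_0e^{-i\tau\varepsilon^{-2}t^2M_0\widehat{S}M_0}M_0^{-1}\widehat{P}$ with $f_0e^{-i\tau\varepsilon^{-2}\mathcal{A}^0(\mathbf{k})}f_0^{-1}\widehat{P}$ and the uniformity of the constants in $\boldsymbol{\theta}$ are handled exactly as in the paper.
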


\subsection{The case where $\widehat{N}_{0,Q}(\boldsymbol{\theta}) = 0$}
Now, we reject the assumptions of Theorem~\ref{sndw_A(k)_exp_enchcd_thrm_1}, but we assume instead that $\widehat{N}_{0,Q}(\boldsymbol{\theta}) = 0$ for any $\boldsymbol{\theta}$. As in Subsection~\ref{ench_approx2_section}, in order to apply Theorem~\ref{abstr_sndw_exp_enchcd_thrm_2}, we have to impose some additional conditions. We use the initial enumeration of the eigenvalues $\gamma_1 (\boldsymbol{\theta}) \le \ldots \le \gamma_n (\boldsymbol{\theta})$ of the germ $S (\boldsymbol{\theta})$. They are also the eigenvalues of the generalized spectral problem~(\ref{hatS_gener_spec_problem}). For each $\boldsymbol{\theta}$, let $\mathcal{P}^{(k)} (\boldsymbol{\theta})$ be the \textquotedblleft skew\textquotedblright \ projection (orthogonal with the weight $\overline{Q}$) of $L_2 (\Omega; \mathbb{C}^n)$ onto the eigenspace of  problem~(\ref{hatS_gener_spec_problem}) corresponding to the eigenvalue $\gamma_k (\boldsymbol{\theta})$. Clearly, for each $\boldsymbol{\theta}$ the operator $\mathcal{P}^{(k)} (\boldsymbol{\theta})$ coincides with one of the projections $\mathcal{P}_j (\boldsymbol{\theta})$ introduced in Subsection~\ref{sndw_eigenval_multipl_section} (but the number $j$ may depend on $\boldsymbol{\theta}$).
\begin{condition}
	\label{sndw_cond1}
	\begin{enumerate*}[label=\emph{\arabic*$^{\circ}.$}, ref=\arabic*$^{\circ}$]
	\item $\widehat{N}_{0,Q}(\boldsymbol{\theta})=0$ for any $\boldsymbol{\theta} \in \mathbb{S}^{d-1}$.
	\item \label{sndw_cond1_it2} For any pair of indices $(k,r)$, $1 \le k,r \le n$, $k \ne r$, such that $\gamma_k (\boldsymbol{\theta}_0) = \gamma_r (\boldsymbol{\theta}_0) $ for some $\boldsymbol{\theta}_0 \in \mathbb{S}^{d-1}$, we have $(\mathcal{P}^{(k)} (\boldsymbol{\theta}))^* \widehat{N}_Q (\boldsymbol{\theta}) \mathcal{P}^{(r)} (\boldsymbol{\theta}) = 0$ for any $\boldsymbol{\theta} \in \mathbb{S}^{d-1}$.   
	\end{enumerate*}
\end{condition}

Condition~\ref{sndw_cond1}(\ref{sndw_cond1_it2}) can be reformulated: we assume that for the \textquotedblleft blocks\textquotedblright \ $(\mathcal{P}^{(k)} (\boldsymbol{\theta}))^* \widehat{N}_Q (\boldsymbol{\theta}) \mathcal{P}^{(r)} (\boldsymbol{\theta})$ of the operator $\widehat{N}_Q (\boldsymbol{\theta})$ that are not identically zero, the corresponding branches of the eigenvalues $\gamma_k (\boldsymbol{\theta})$ and $\gamma_r (\boldsymbol{\theta})$ do not intersect.

Obviously, Condition~\ref{sndw_cond1} is ensured by the following more restrictive condition.
\begin{condition}
	\label{sndw_cond2}
	\begin{enumerate*}[label=\emph{\arabic*$^{\circ}.$}, ref=\arabic*$^{\circ}$]
	\item $\widehat{N}_{0,Q}(\boldsymbol{\theta})=0$ for any $\boldsymbol{\theta} \in \mathbb{S}^{d-1}$.
	\item \label{sndw_cond2_it2} The number $p$ of different eigenvalues of the generalized spectral problem~\emph{(\ref{hatS_gener_spec_problem})} does not depend on $\boldsymbol{\theta} \in \mathbb{S}^{d-1}$.  
	\end{enumerate*}     
\end{condition}

Under Condition~\ref{sndw_cond2}, denote the different eigenvalues of the germ enumerated in the increasing order by $\gamma^{\circ}_1(\boldsymbol{\theta}), \ldots, \gamma^{\circ}_p(\boldsymbol{\theta})$. Then their multiplicities $k_1, \ldots, k_p$ do not depend on $\boldsymbol{\theta} \in \mathbb{S}^{d-1}$.  

\begin{remark}
	\begin{enumerate*}[label=\emph{\arabic*$^{\circ}.$}, ref=\arabic*$^{\circ}$]
	\item Assumption~\emph{\ref{sndw_cond2_it2}} of Condition~\emph{\ref{sndw_cond2}} is a fortiori satisfied, if the spectrum of  problem~\emph{(\ref{hatS_gener_spec_problem})} is simple for any $\boldsymbol{\theta} \in \mathbb{S}^{d-1}$.
	\item From Corollary~\emph{\ref{sndw_real_spec_simple_coroll}} it follows that Condition~\emph{\ref{sndw_cond2}} is satisfied if the matrices $b (\boldsymbol{\theta})$, $g (\mathbf{x})$, and $Q (\mathbf{x})$ have real entries, and the spectrum of  problem~\emph{(\ref{hatS_gener_spec_problem})} is simple for any $\boldsymbol{\theta} \in \mathbb{S}^{d-1}$.
	\end{enumerate*}
\end{remark}

So, suppose that Condition~\ref{sndw_cond1} is satisfied and put
\begin{equation*}
\mathcal{K} \coloneqq \{ (k,r) \colon 1 \le k,r \le n, \; k \ne r, \;  (\mathcal{P}^{(k)} (\boldsymbol{\theta}))^* \widehat{N}_Q (\boldsymbol{\theta}) \mathcal{P}^{(r)} (\boldsymbol{\theta}) \not\equiv 0 \}.
\end{equation*}
Denote $c^{\circ}_{kr} (\boldsymbol{\theta}) \coloneqq \min \{c_*, n^{-1} |\gamma_k (\boldsymbol{\theta}) - \gamma_r (\boldsymbol{\theta})| \}$, $(k,r) \in \mathcal{K}$.

Since $S (\boldsymbol{\theta})$ depends on $\boldsymbol{\theta} \in \mathbb{S}^{d-1}$ continuously, then the perturbation theory implies that $\gamma_j (\boldsymbol{\theta})$ are continuous on $\mathbb{S}^{d-1}$. By Condition~\ref{sndw_cond1}(\ref{sndw_cond1_it2}), for $(k,r) \in \mathcal{K}$ we have~$|\gamma_k (\boldsymbol{\theta}) - \gamma_r (\boldsymbol{\theta})| > 0$ for any $\boldsymbol{\theta} \in \mathbb{S}^{d-1}$, whence $c^{\circ}_{kr} \coloneqq \min_{\boldsymbol{\theta} \in \mathbb{S}^{d-1}} c^{\circ}_{kr} (\boldsymbol{\theta}) > 0$ for $(k,r) \in \mathcal{K}$. We put
\begin{equation}
\label{c^circ}
c^{\circ} \coloneqq \min_{(k,r) \in \mathcal{K}} c^{\circ}_{kr}.
\end{equation}
Clearly, the number~(\ref{c^circ}) is a realization of~(\ref{abstr_c^circ}) chosen independently of $\boldsymbol{\theta}$.
Under Condition~\ref{sndw_cond1}, the number $t^{00}$ subject to~(\ref{abstr_t00}) also can be chosen independently of $\boldsymbol{\theta} \in \mathbb{S}^{d-1}$. Taking~(\ref{delta_fixation}) and~(\ref{X_1_estimate}) into account, we put
\begin{equation*}
t^{00} = (8 \beta_2)^{-1} r_0 \alpha_1^{-3/2} \alpha_0^{1/2} \| g\|_{L_{\infty}}^{-3/2} \| g^{-1}\|_{L_{\infty}}^{-1/2} \|f\|_{L_\infty}^{-3} \|f^{-1}\|_{L_\infty}^{-1} c^{\circ}.
\end{equation*}
(The condition $t^{00} \le t^{0}$ is valid automatically, since $c^{\circ} \le \| S (\boldsymbol{\theta}) \| \le \alpha_1 \|g\|_{L_{\infty}}\|f\|_{L_\infty}^2$.)

Applying Theorem~\ref{abstr_sndw_exp_enchcd_thrm_2}, we deduce the following result.
\begin{thrm}
	\label{sndw_A(k)_exp_enchcd_thrm_2}
	Suppose that Condition~\emph{\ref{sndw_cond1}} \emph{(}or more restrictive Condition~\emph{\ref{sndw_cond2}}\emph{)} is satisfied. Then for $\tau \in \mathbb{R}$, $\varepsilon > 0$, and $\mathbf{k} \in \widetilde{\Omega}$ we have
	\begin{equation*}
	\| J (\mathbf{k}, \varepsilon; \tau) \mathcal{R}(\mathbf{k}, \varepsilon)\|_{L_2(\Omega) \to L_2 (\Omega)}  \le \mathcal{C}_3 (1+ |\tau|^{1/2}) \varepsilon,
	\end{equation*}
	where the constant $\mathcal{C}_3$ depends only on $\alpha_0$, $\alpha_1$, $\|g\|_{L_\infty}$, $\|g^{-1}\|_{L_\infty}$, $\|f\|_{L_{\infty}}$, $\|f^{-1}\|_{L_{\infty}}$, $r_0$, and also on $n$ and the number $c^{\circ}$.
\end{thrm}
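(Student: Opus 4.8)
The plan is to derive the estimate from the abstract Theorem~\ref{abstr_sndw_exp_enchcd_thrm_2}, applied to the family $A(t;\boldsymbol{\theta}) = \mathcal{A}(\mathbf{k})$, $\widehat{A}(t;\boldsymbol{\theta}) = \widehat{\mathcal{A}}(\mathbf{k})$, with $M$ and $M_0$ the operators of multiplication by $f$ and $f_0$ respectively, $t = |\mathbf{k}|$ and $\boldsymbol{\theta} = \mathbf{k}/|\mathbf{k}|$, and then to translate the resulting abstract inequality into the required form by the identities of this section. Condition~\ref{abstr_nondegeneracy_cond} holds for $A(t;\boldsymbol{\theta})$ uniformly in $\boldsymbol{\theta}$ by~\eqref{c_*}, and the hypothesis $\widehat{N}_{0,Q}(\boldsymbol{\theta}) = 0$ of Theorem~\ref{abstr_sndw_exp_enchcd_thrm_2} is exactly Condition~\ref{sndw_cond1}$(1^\circ)$ (a fortiori Condition~\ref{sndw_cond2}$(1^\circ)$). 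Since the abstract estimate is valid only for $|\mathbf{k}| \le t^{00}$ with $t^{00}$ subject to~\eqref{abstr_t00}, the first task is to fix $t^{00}$ independently of $\boldsymbol{\theta}$, after which the remaining range $|\mathbf{k}| > t^{00}$ is covered by crude bounds.

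The uniformity of $t^{00}$ is the heart of the matter. For a fixed $\boldsymbol{\theta}$, the quantity $c^\circ$ entering~\eqref{abstr_t00} is the minimum of $\min\{c_*, n^{-1}|\gamma_k(\boldsymbol{\theta}) - \gamma_r(\boldsymbol{\theta})|\}$ over the pairs of distinct eigenvalues of the germ $S(\boldsymbol{\theta})$ --- equivalently, of the generalized spectral problem~\eqref{hatS_gener_spec_problem} --- whose block of $N(\boldsymbol{\theta})$ does not vanish; by Lemma~\ref{abstr_N_hatNQ_lemma} and~\eqref{abstr_N_hatN_Q}, conjugation by the isomorphism $M$ shows these correspond to the pairs $(k,r)$ with $(\mathcal{P}^{(k)}(\boldsymbol{\theta}))^*\widehat{N}_Q(\boldsymbol{\theta})\mathcal{P}^{(r)}(\boldsymbol{\theta}) \ne 0$, all of which lie in $\mathcal{K}$. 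For every $(k,r)\in\mathcal{K}$, Condition~\ref{sndw_cond1}$(2^\circ)$ forbids $\gamma_k(\boldsymbol{\theta}) = \gamma_r(\boldsymbol{\theta})$ at any $\boldsymbol{\theta}\in\mathbb{S}^{d-1}$; since the branches $\gamma_j(\boldsymbol{\theta})$ are continuous on the compact sphere, the minima $\min_{\boldsymbol{\theta}}c^\circ_{kr}(\boldsymbol{\theta})$ are strictly positive, so the number $c^\circ$ of~\eqref{c^circ} is positive and dominates every $\boldsymbol{\theta}$-dependent realization from below. Combined with the uniform bound~\eqref{X_1_estimate} for $\|X_1(\boldsymbol{\theta})\|$ and the $\boldsymbol{\theta}$-independent choices~\eqref{delta_fixation},~\eqref{t0_fixation} (the inequality $t^{00}\le t^0$ being automatic), this makes the $t^{00}$ displayed before the statement a legitimate $\boldsymbol{\theta}$-uniform choice, and the constants $C_5$, $C'_6$ of Theorem~\ref{abstr_sndw_exp_enchcd_thrm_2} turn into polynomials in $\alpha_0$, $\alpha_1$, $\|g\|_{L_\infty}$, $\|g^{-1}\|_{L_\infty}$, $\|f\|_{L_\infty}$, $\|f^{-1}\|_{L_\infty}$, $r_0$, $n$, and $(c^\circ)^{-1}$.

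Next comes the translation. The operator $\mathcal{A}^0(\mathbf{k})$ leaves the subspace $\widehat{\mathfrak{N}}$ of constant functions invariant and acts there as $t^2 M_0\widehat{S}M_0$ (cf.~\eqref{effective_oper_symb},~\eqref{A^0(k)P}), hence $M_0 e^{-i\tau\varepsilon^{-2}t^2 M_0\widehat{S}M_0}M_0^{-1}\widehat{P} = f_0 e^{-i\tau\varepsilon^{-2}\mathcal{A}^0(\mathbf{k})}f_0^{-1}\widehat{P}$, while trivially $M e^{-i\tau\varepsilon^{-2}A(t)}M^{-1}\widehat{P} = f e^{-i\tau\varepsilon^{-2}\mathcal{A}(\mathbf{k})}f^{-1}\widehat{P}$, so the operator under the norm sign in Theorem~\ref{abstr_sndw_exp_enchcd_thrm_2} is precisely $J(\mathbf{k},\varepsilon;\tau)\widehat{P}$. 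By~\eqref{R_P}, $\mathcal{R}(\mathbf{k},\varepsilon)\widehat{P} = \varepsilon^2(|\mathbf{k}|^2+\varepsilon^2)^{-1}\widehat{P}$, so for $|\mathbf{k}|\le t^{00}$ the abstract inequality reads $\|J(\mathbf{k},\varepsilon;\tau)\mathcal{R}(\mathbf{k},\varepsilon)\widehat{P}\| \le \|f\|_{L_\infty}^2\|f^{-1}\|_{L_\infty}^2(C_5 + C'_6|\tau|^{1/2})\varepsilon$. To finish, split $\mathcal{R}(\mathbf{k},\varepsilon) = \mathcal{R}(\mathbf{k},\varepsilon)\widehat{P} + \mathcal{R}(\mathbf{k},\varepsilon)(I-\widehat{P})$: using $\|J(\mathbf{k},\varepsilon;\tau)\| \le 2\|f\|_{L_\infty}\|f^{-1}\|_{L_\infty}$ together with~\eqref{R(k,eps)(I-P)_est} and the elementary inequality $\min\{1,a^2\}\le a$ for $a\ge 0$, one gets $\|J(\mathbf{k},\varepsilon;\tau)\mathcal{R}(\mathbf{k},\varepsilon)(I-\widehat{P})\| \le 2\|f\|_{L_\infty}\|f^{-1}\|_{L_\infty}r_0^{-1}\varepsilon$ for all $\mathbf{k}\in\widetilde{\Omega}$, and, in the range $|\mathbf{k}| > t^{00}$, $\|J(\mathbf{k},\varepsilon;\tau)\mathcal{R}(\mathbf{k},\varepsilon)\widehat{P}\| = \|J(\mathbf{k},\varepsilon;\tau)\widehat{P}\|\,\varepsilon^2(|\mathbf{k}|^2+\varepsilon^2)^{-1} \le 2\|f\|_{L_\infty}\|f^{-1}\|_{L_\infty}(t^{00})^{-1}\varepsilon$ in the same way. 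Summing the contributions yields the asserted estimate with $\mathcal{C}_3$ a polynomial in the listed quantities.

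The main obstacle is the step on uniformity of $t^{00}$: everything hinges on extracting, over the compact sphere, a uniform positive lower bound $c^\circ$ for the gaps between those eigenvalue branches of~\eqref{hatS_gener_spec_problem} whose $\widehat{N}_Q$-block is not identically zero, and this is precisely what Condition~\ref{sndw_cond1}$(2^\circ)$ (respectively the simpler Condition~\ref{sndw_cond2}$(2^\circ)$) is designed to furnish; without it the multiplicities of $S(\boldsymbol{\theta})$ may jump, the relevant branches could touch, and $c^\circ$ would degenerate to zero. The accompanying bookkeeping --- identifying the cluster-indexed index set behind~\eqref{abstr_t00} with the branch-indexed set $\mathcal{K}$ through the multiplicities of the germ, so that the nonvanishing of a between-cluster block is correctly detected by some between-branch block --- is routine but has to be tracked carefully.
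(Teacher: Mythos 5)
Your proposal is correct and follows exactly the paper's route: the paper also reduces the statement to the abstract Theorem~\ref{abstr_sndw_exp_enchcd_thrm_2}, using Condition~\ref{sndw_cond1}$(2^\circ)$ together with the continuity of the branches $\gamma_j(\boldsymbol{\theta})$ on the compact sphere to fix the uniform gap $c^{\circ}$ of~\eqref{c^circ} and the $\boldsymbol{\theta}$-independent $t^{00}$, and then translates the abstract inequality via~\eqref{R_P}--\eqref{R(k,eps)(I-P)_est} and~\eqref{A^0(k)P}. The splitting into the $\widehat{P}$ and $I-\widehat{P}$ parts and the crude bound for $|\mathbf{k}|>t^{00}$ that you spell out are precisely the details the paper leaves implicit.
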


\subsection{The sharpness of the results with respect to the smoothing operator}
Application of Theorems~\ref{abstr_sndw_exp_smooth_shrp_thrm_1}, \ref{abstr_sndw_exp_smooth_shrp_thrm_2} allows us to confirm that the results of Theorems~\ref{sndw_A(k)_exp_general_thrm}, \ref{sndw_A(k)_exp_enchcd_thrm_1}, and \ref{sndw_A(k)_exp_enchcd_thrm_2} are sharp with respect to the smoothing operator.

\begin{thrm}[\cite{Su2017}]
	\label{sndw_A(k)_exp_smooth_shrp_thrm_1}
	Suppose that $\widehat{N}_{0,Q} (\boldsymbol{\theta}_0) \ne 0$ for some $\boldsymbol{\theta}_0 \in \mathbb{S}^{d-1}$. Let $\tau \ne 0$ and $0 \le s < 3$. Then there does not exist a constant $\mathcal{C}(\tau) >0$ such that the estimate
	\begin{equation}
	\label{8.2a}
	\bigl\| \bigl( f e^{-i \tau \varepsilon^{-2} \mathcal{A} (\mathbf{k})} f^{-1} - f_0 e^{-i \tau \varepsilon^{-2} \mathcal{A}^0 (\mathbf{k})} f_0^{-1} \bigr) \mathcal{R} (\mathbf{k}, \varepsilon)^{s/2} \bigr\|_{L_2(\Omega) \to L_2 (\Omega)} \le \mathcal{C}(\tau) \varepsilon
	\end{equation}
	holds for almost~every $\mathbf{k} \in \widetilde{\Omega}$ and sufficiently small $\varepsilon > 0$.
\end{thrm}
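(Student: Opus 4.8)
The plan is to argue by contradiction and to deduce from~\eqref{8.2a} the forbidden abstract estimate~\eqref{abstr_sndw_exp_shrp_est} for the operator family $A(t;\boldsymbol{\theta}_0) = \mathcal{A}(t\boldsymbol{\theta}_0)$. Recall that this family is incorporated into the scheme of~\S\ref{abstr_sndw_section} with $M$ the operator of multiplication by $f(\mathbf{x})$, $\widehat{A}(t;\boldsymbol{\theta}_0) = \widehat{\mathcal{A}}(t\boldsymbol{\theta}_0)$, $Q$ the multiplication by $(f(\mathbf{x})f(\mathbf{x})^*)^{-1}$, $M_0$ the multiplication by the constant matrix $f_0$ from~\eqref{f0}, and germ $\widehat{S} = \widehat{S}(\boldsymbol{\theta}_0)$. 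So, contrary to the assertion, suppose that for some $0 \le s < 3$ and the given $\tau \ne 0$ there is a constant $\mathcal{C}(\tau) > 0$ with~\eqref{8.2a} valid for almost every $\mathbf{k} \in \widetilde{\Omega}$ and all sufficiently small $\varepsilon > 0$.

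First I would strip the smoothing operator down to $\widehat{P}$. Since $\widehat{P}$, the averaging over $\Omega$, projects onto an eigenspace of $\mathcal{H}_0(\mathbf{k}) = |\mathbf{D} + \mathbf{k}|^2$, it commutes with $\mathcal{R}(\mathbf{k},\varepsilon)$; multiplying~\eqref{8.2a} on the right by $\widehat{P}$, using $\|\widehat{P}\| = 1$ and~\eqref{R_P}, I obtain
\begin{equation*}
\bigl\| \bigl( f e^{-i\tau\varepsilon^{-2}\mathcal{A}(\mathbf{k})}f^{-1} - f_0 e^{-i\tau\varepsilon^{-2}\mathcal{A}^0(\mathbf{k})}f_0^{-1} \bigr)\widehat{P} \bigr\| \, \varepsilon^s(|\mathbf{k}|^2 + \varepsilon^2)^{-s/2} \le \mathcal{C}(\tau)\varepsilon
\end{equation*}
for almost every $\mathbf{k}$ (only $|\mathbf{k}| \le t^0$ matters). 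At $\mathbf{k} = t\boldsymbol{\theta}_0$ the bracketed operator is exactly the one in~\eqref{abstr_sndw_exp_shrp_est}: one has $f e^{-i\tau\varepsilon^{-2}\mathcal{A}(t\boldsymbol{\theta}_0)}f^{-1}\widehat{P} = M e^{-i\tau\varepsilon^{-2}A(t;\boldsymbol{\theta}_0)}M^{-1}\widehat{P}$, while by~\eqref{A^0(k)P} and the invariance of $\widehat{\mathfrak{N}}$ under $\mathcal{A}^0(\mathbf{k})$ (on which it acts as $t^2 M_0\widehat{S}(\boldsymbol{\theta}_0)M_0$), $f_0 e^{-i\tau\varepsilon^{-2}\mathcal{A}^0(t\boldsymbol{\theta}_0)}f_0^{-1}\widehat{P} = M_0 e^{-i\tau\varepsilon^{-2}t^2 M_0\widehat{S}(\boldsymbol{\theta}_0)M_0}M_0^{-1}\widehat{P}$. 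To upgrade the estimate from almost every $\mathbf{k}$ to every $\mathbf{k}$ in $\{|\mathbf{k}| \le t^0\}$ — in particular to the whole ray $\mathbf{k} = t\boldsymbol{\theta}_0$, $0 \le t \le t^0$ — I would use that, for fixed $\tau$ and $\varepsilon$, the bracketed operator and the scalar factor depend norm-continuously on $\mathbf{k}$ there: this follows from the analyticity of the family $\mathcal{A}(\mathbf{k})$ (whence $e^{-i\tau\varepsilon^{-2}\mathcal{A}(\mathbf{k})}$ is strongly continuous in $\mathbf{k}$, hence norm-continuous after composition with the finite-rank $\widehat{P}$; equivalently, insert the spectral projection $F(\mathbf{k})$ of $\mathcal{A}(\mathbf{k})$ and use its analytic branches, absorbing an $O(\varepsilon)$-correction via $\|F(\mathbf{k}) - P\| \le C_1|\mathbf{k}|$, see~\eqref{abstr_F(t)_threshold}, and $|\mathbf{k}|\,\varepsilon^s(|\mathbf{k}|^2 + \varepsilon^2)^{-s/2} \le \varepsilon$, as in the sandwiched counterpart of~\cite[Lemma~9.9]{Su2017}). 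A continuous function bounded on a full-measure set is bounded everywhere.

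This gives estimate~\eqref{abstr_sndw_exp_shrp_est} for the family $A(t;\boldsymbol{\theta}_0)$; since the hypothesis $\widehat{N}_{0,Q}(\boldsymbol{\theta}_0) \ne 0$ is precisely the hypothesis $\widehat{N}_{0,Q} \ne 0$ of Theorem~\ref{abstr_sndw_exp_smooth_shrp_thrm_1} for that family, and $0 \le s < 3$, that theorem produces a contradiction, completing the proof. The whole argument runs parallel to the derivation of Theorem~\ref{hatA(k)_exp_smooth_shrp_thrm_1} from Theorem~\ref{abstr_exp_smooth_shrp_thrm_1} (see~\cite{Su2017}); I expect the only genuinely non-routine point to be the passage from almost every $\mathbf{k}$ to every $\mathbf{k}$, which rests on the norm-continuity in $\mathbf{k}$ of the relevant operator-valued function, everything else being a direct translation between the $L_2(\Omega)$-setting and the abstract scheme.
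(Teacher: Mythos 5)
Your proof is correct and is essentially the paper's own route: the paper defers this statement to \cite[Theorem~11.7]{Su2017}, and the argument it spells out for the time-sharpness analogue (Theorem~\ref{sndw_A(k)_exp_time_shrp_thrm_1}) is exactly your chain --- restrict the estimate to $\widehat{P}$, identify the resulting operator at $\mathbf{k}=t\boldsymbol{\theta}_0$ with the abstract sandwiched exponential via~\eqref{A^0(k)P}, upgrade from almost every to every $\mathbf{k}$ by norm-continuity (inserting $F(\mathbf{k})$, using $f^{-1}\widehat{P}=Pf^*\overline{Q}$ and $\|F(\mathbf{k})-P\|\le C_1|\mathbf{k}|$), and apply Theorem~\ref{abstr_sndw_exp_smooth_shrp_thrm_1}. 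Both of your justifications for the continuity step are sound, and the rest is the routine dictionary between the $L_2(\Omega)$ setting and the abstract scheme.
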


\begin{thrm}
	\label{sndw_A(k)_exp_smooth_shrp_thrm_2}
	Suppose that $\widehat{N}_{0,Q} (\boldsymbol{\theta}) = 0$ for any $\boldsymbol{\theta} \in \mathbb{S}^{d-1}$ and $\widehat{\mathcal{N}}_Q^{(q)} (\boldsymbol{\theta}_0) \ne 0$   for some $\boldsymbol{\theta}_0 \in \mathbb{S}^{d-1}$ and some $q \in \{1,\ldots,p(\boldsymbol{\theta}_0)\}$.  Let $\tau \ne 0$ and $0 \le s < 2$. Then there does not exist a constant $\mathcal{C}(\tau) >0$ such that estimate
	\eqref{8.2a} holds for almost~every $\mathbf{k} \in \widetilde{\Omega}$ and sufficiently small $\varepsilon > 0$.
\end{thrm}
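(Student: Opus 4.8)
The plan is to reduce the statement to the abstract Theorem~\ref{abstr_sndw_exp_smooth_shrp_thrm_2} applied to the operator family $A(t;\boldsymbol{\theta}_0)=\mathcal{A}(t\boldsymbol{\theta}_0)$, with $M$ the operator of multiplication by $f(\mathbf{x})$ and $M_0$ the operator of multiplication by $f_0$, arguing by contradiction; this is the sandwiched analogue of the scheme used in the proof of Theorem~\ref{hatA(k)_exp_time_shrp_thrm_1}. As in the proof of Theorem~\ref{abstr_exp_smooth_shrp_thrm_2}, it suffices to treat the case $1\le s<2$: if \eqref{8.2a} held with some constant for an exponent $s_0\in[0,1)$, then, since $\|\mathcal{R}(\mathbf{k},\varepsilon)^{r/2}\|_{L_2(\Omega)\to L_2(\Omega)}\le 1$ for $r\ge 0$, it would also hold (with the same constant) for every $s\ge s_0$, in particular for $s=1$. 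So I fix $\tau\ne 0$, $1\le s<2$, and assume that \eqref{8.2a} holds with a constant $\mathcal{C}(\tau)>0$ for almost every $\mathbf{k}\in\widetilde{\Omega}$ and all sufficiently small $\varepsilon$.

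Next I would split $\mathcal{R}(\mathbf{k},\varepsilon)^{s/2}=\mathcal{R}(\mathbf{k},\varepsilon)^{s/2}\widehat{P}+\mathcal{R}(\mathbf{k},\varepsilon)^{s/2}(I-\widehat{P})$. By \eqref{R(k,eps)(I-P)_est} the second summand has norm at most $r_0^{-s}\varepsilon^s\le r_0^{-s}\varepsilon$, and since the bordered exponentials are bounded uniformly in $\mathbf{k},\tau,\varepsilon$ (by $\|f\|_{L_\infty}\|f^{-1}\|_{L_\infty}$, resp.\ $|f_0|\,|f_0^{-1}|$), the corresponding term is $O(\varepsilon)$ and may be absorbed into the right-hand side. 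For the $\widehat{P}$-part I use \eqref{R_P}, the identity $f e^{-i\tau\varepsilon^{-2}\mathcal{A}(\mathbf{k})}f^{-1}=M e^{-i\tau\varepsilon^{-2}A(t;\boldsymbol{\theta})}M^{-1}$, and the fact that the constant matrix $f_0$ commutes with the averaging operator $\widehat{P}$, which together with \eqref{A^0(k)P}, \eqref{effective_oper_symb} gives $f_0 e^{-i\tau\varepsilon^{-2}\mathcal{A}^0(\mathbf{k})}f_0^{-1}\widehat{P}=M_0 e^{-i\tau\varepsilon^{-2}t^2 M_0\widehat{S}(\boldsymbol{\theta})M_0}M_0^{-1}\widehat{P}$ (here one uses that $\mathcal{A}^0(\mathbf{k})$ leaves $\widehat{\mathfrak{N}}$ invariant and acts on it as the constant matrix $t^2 f_0\widehat{S}(\boldsymbol{\theta})f_0$). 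This yields a constant $\widetilde{\mathcal{C}}(\tau)>0$ with
\begin{equation*}
\bigl\|M e^{-i\tau\varepsilon^{-2}A(t;\boldsymbol{\theta})}M^{-1}\widehat{P}-M_0 e^{-i\tau\varepsilon^{-2}t^2 M_0\widehat{S}(\boldsymbol{\theta})M_0}M_0^{-1}\widehat{P}\bigr\|\,\varepsilon^{s}(t^2+\varepsilon^2)^{-s/2}\le\widetilde{\mathcal{C}}(\tau)\,\varepsilon
\end{equation*}
for almost every $\mathbf{k}=t\boldsymbol{\theta}\in\widetilde{\Omega}$ in the ball $|\mathbf{k}|\le t^0$ (recall that $t^0$, fixed in \eqref{t0_fixation}, does not depend on $\boldsymbol{\theta}$) and all sufficiently small $\varepsilon$.

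Then I would invoke norm-continuity in $\mathbf{k}$: for fixed $\tau$ and $\varepsilon$ the operator under the norm sign above is continuous in $\mathbf{k}$ on the ball $|\mathbf{k}|\le t^0$ --- this is the sandwiched analogue of \cite[Lemma~9.9]{Su2017}, which holds because $\mathcal{A}(\mathbf{k})=\mathcal{X}(\mathbf{k})^*\mathcal{X}(\mathbf{k})$ is an analytic family with compact resolvent (so that $e^{-i\tau\varepsilon^{-2}\mathcal{A}(\mathbf{k})}\widehat{P}$ and $e^{-i\tau\varepsilon^{-2}\mathcal{A}^0(\mathbf{k})}\widehat{P}$ are norm-continuous in $\mathbf{k}$) and $f$, $f_0$ are fixed bounded operators. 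Consequently the displayed estimate holds for \emph{every} $\mathbf{k}$ in the ball, in particular along the ray $\mathbf{k}=t\boldsymbol{\theta}_0$, $|t|\le t^0$. This is exactly estimate~\eqref{abstr_sndw_exp_shrp_est} for the family $A(t;\boldsymbol{\theta}_0)$. Since $\widehat{N}_{0,Q}(\boldsymbol{\theta}_0)=0$ and $\widehat{\mathcal{N}}_Q^{(q)}(\boldsymbol{\theta}_0)\ne 0$, Theorem~\ref{abstr_sndw_exp_smooth_shrp_thrm_2} (whose hypotheses $\widehat{N}_{0,Q}=0$, $\widehat{\mathcal{N}}_Q^{(q)}\ne 0$ are precisely these) applies to this family and yields a contradiction.

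I expect the main obstacle to be the passage from ``almost every $\mathbf{k}$'' to the single ray $\mathbf{k}=t\boldsymbol{\theta}_0$, i.e.\ establishing the norm-continuity in $\mathbf{k}$ carefully: although $\mathbf{k}\mapsto e^{-i\tau\varepsilon^{-2}\mathcal{A}(\mathbf{k})}$ is in general only strongly continuous, composition with the smoothing factor $\mathcal{R}(\mathbf{k},\varepsilon)^{s/2}$ --- a compact, norm-continuous family on $L_2(\Omega;\mathbb{C}^n)$, or equivalently with the finite-rank $\widehat{P}$ once the harmless $(I-\widehat{P})$-part has been discarded --- restores norm-continuity, and the analogous point must be checked for $\mathcal{A}^0(\mathbf{k})$; one also has to keep track of the bordering operators $f$, $f_0$. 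Everything else is routine bookkeeping of constants, as in the proofs of Theorems~\ref{abstr_exp_smooth_shrp_thrm_2} and \ref{hatA(k)_exp_time_shrp_thrm_1}.
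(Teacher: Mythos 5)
Your proposal is correct and is essentially the paper's argument: the paper proves this theorem by reducing to Theorem~\ref{abstr_sndw_exp_smooth_shrp_thrm_2} exactly along the lines of the proof it gives for Theorem~\ref{sndw_A(k)_exp_time_shrp_thrm_1} (discard the $(I-\widehat{P})$-part using \eqref{R_P}--\eqref{R(k,eps)(I-P)_est}, pass from almost every $\mathbf{k}$ to the ray $\mathbf{k}=t\boldsymbol{\theta}_0$ by norm-continuity, and invoke the abstract sharpness theorem). The only cosmetic difference is in the continuity step: the paper's template inserts the spectral projection $F(\mathbf{k})$ via \eqref{sndw_A(k)_exp_shrp_f2} and cites \cite[Lemma~11.8]{Su2017}, whereas you argue directly that a uniformly bounded, strongly continuous exponential family composed with the fixed finite-rank $\widehat{P}$ (or the compact, norm-continuous $\mathcal{R}(\mathbf{k},\varepsilon)^{s/2}$) is norm-continuous --- both justifications are valid.
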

Theorem~\ref{sndw_A(k)_exp_smooth_shrp_thrm_1} was proved in~\cite[Theorem~11.7]{Su2017}. Theorem~\ref{sndw_A(k)_exp_smooth_shrp_thrm_2} is proved with the help of  Theorem~\ref{abstr_sndw_exp_smooth_shrp_thrm_2}  in a similar way as~\cite[Theorem~11.7]{Su2017}.

\subsection{The sharpness of the results with respect to dependence of the estimates on time}
Application of Theorem~\ref{abstr_sndw_exp_time_shrp_thrm_1} allows us to confirm that the result of Theorem~\ref{sndw_A(k)_exp_general_thrm} is sharp with respect to time.

\begin{thrm}
	\label{sndw_A(k)_exp_time_shrp_thrm_1}
	Suppose that $\widehat{N}_{0,Q} (\boldsymbol{\theta}_0) \ne 0$ for some $\boldsymbol{\theta}_0 \in \mathbb{S}^{d-1}$. Let $s \ge 3$. Then there does not exist a positive function $\mathcal{C}(\tau)$ such that $\lim_{\tau \to \infty} \mathcal{C}(\tau)/ |\tau| = 0$ and estimate
	\eqref{8.2a} holds for all $\tau \in \mathbb{R}$, almost~every $\mathbf{k}\in \widetilde{\Omega}$, and sufficiently small $\varepsilon > 0$.
\end{thrm}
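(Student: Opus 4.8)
The plan is to argue by contradiction and to reduce the statement, following the scheme of the proof of Theorem~\ref{hatA(k)_exp_time_shrp_thrm_1} but now in the ``sandwiched'' setting of \S\ref{abstr_sndw_section}, to the abstract result of Theorem~\ref{abstr_sndw_exp_time_shrp_thrm_1} applied to the operator family $A(t;\boldsymbol{\theta}_0)=\mathcal{A}(t\boldsymbol{\theta}_0)$. So suppose that, for some $s\ge3$, there is a positive function $\mathcal{C}(\tau)$ with $\lim_{\tau\to\infty}\mathcal{C}(\tau)/|\tau|=0$ such that~\eqref{8.2a} holds for a.e.\ $\mathbf{k}\in\widetilde{\Omega}$ and all sufficiently small $\varepsilon>0$; a contradiction must be derived.

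First I would localize the estimate near $\mathbf{k}=0$ and push it onto the subspace $\widehat{\mathfrak{N}}$. Splitting $\mathcal{R}(\mathbf{k},\varepsilon)^{s/2}=\mathcal{R}(\mathbf{k},\varepsilon)^{s/2}\widehat{P}+\mathcal{R}(\mathbf{k},\varepsilon)^{s/2}(I-\widehat{P})$, and noting that the sandwiched operators $f e^{-i\tau\varepsilon^{-2}\mathcal{A}(\mathbf{k})}f^{-1}$ and $f_0 e^{-i\tau\varepsilon^{-2}\mathcal{A}^0(\mathbf{k})}f_0^{-1}$ have norm at most $\|f\|_{L_\infty}\|f^{-1}\|_{L_\infty}$, inequality~\eqref{R(k,eps)(I-P)_est} shows that the $(I-\widehat{P})$-contribution is $O(\varepsilon)$ (here $s\ge1$), while by~\eqref{R_hatP_est} the term $\mathcal{R}(\mathbf{k},\varepsilon)^{s/2}\widehat{P}$ is itself $O(\varepsilon)$ for $|\mathbf{k}|>t^0$, so~\eqref{8.2a} is informative only for $|\mathbf{k}|\le t^0$. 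Using~\eqref{R_P}, this yields, for a.e.\ $\mathbf{k}$ with $t\coloneqq|\mathbf{k}|\le t^0$,
\begin{equation*}
\bigl\|\bigl(f e^{-i\tau\varepsilon^{-2}\mathcal{A}(\mathbf{k})}f^{-1}-f_0 e^{-i\tau\varepsilon^{-2}\mathcal{A}^0(\mathbf{k})}f_0^{-1}\bigr)\widehat{P}\bigr\|\,\varepsilon^s(t^2+\varepsilon^2)^{-s/2}\le\mathcal{C}_\sharp(\tau)\varepsilon,
\end{equation*}
with a new positive function $\mathcal{C}_\sharp(\tau)$ still satisfying $\lim_{\tau\to\infty}\mathcal{C}_\sharp(\tau)/|\tau|=0$.

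Next I would pass from almost every $\mathbf{k}$ to the fixed ray $\mathbf{k}=t\boldsymbol{\theta}_0$. For this, insert the spectral projection $F(t;\boldsymbol{\theta})$ of $\mathcal{A}(\mathbf{k})=A(t;\boldsymbol{\theta})$ for $[0,\delta]$ in place of $\widehat{P}$ in the $e^{-i\tau\varepsilon^{-2}\mathcal{A}(\mathbf{k})}$-term: since $\Ran(f^{-1}\widehat{P})\subset\mathfrak{N}=\Ran P$ by~\eqref{frakN},~\eqref{Ker3}, one has $F(t;\boldsymbol{\theta})^{\perp}f^{-1}\widehat{P}=(I-F(t;\boldsymbol{\theta}))Pf^{-1}\widehat{P}$, so by~\eqref{abstr_F(t)_threshold} this replacement costs $O\bigl(|\mathbf{k}|\,\varepsilon^s(t^2+\varepsilon^2)^{-s/2}\bigr)=O(\varepsilon)$. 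The resulting operator, multiplied by the scalar $\varepsilon^s(t^2+\varepsilon^2)^{-s/2}$, is norm-continuous in $\mathbf{k}$ on the ball $|\mathbf{k}|\le t^0$ (the analog of~\cite[Lemma~9.9]{Su2017}, the eigenvalues and eigenvectors of $\mathcal{A}(\mathbf{k})$ in $\mathfrak{F}(\mathbf{k})$ being analytic in $\mathbf{k}$). Hence the inequality extends from a.e.\ $\mathbf{k}$ to all $\mathbf{k}$ in this ball, in particular to $\mathbf{k}=t\boldsymbol{\theta}_0$ with $t\le t^0$; removing $F(t;\boldsymbol{\theta}_0)$ again by~\eqref{abstr_F(t)_threshold}, I arrive at
\begin{equation*}
\bigl\|\bigl(f e^{-i\tau\varepsilon^{-2}\mathcal{A}(t\boldsymbol{\theta}_0)}f^{-1}-f_0 e^{-i\tau\varepsilon^{-2}\mathcal{A}^0(t\boldsymbol{\theta}_0)}f_0^{-1}\bigr)\widehat{P}\bigr\|\,\varepsilon^s(t^2+\varepsilon^2)^{-s/2}\le\check{\mathcal{C}}(\tau)\varepsilon
\end{equation*}
for all $t\le t^0$ and small $\varepsilon>0$, with $\lim_{\tau\to\infty}\check{\mathcal{C}}(\tau)/|\tau|=0$. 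Finally I would identify the left-hand side with the abstract quantity in~\eqref{abstr_sndw_exp_shrp_est}: taking $M=f$, $M_0=f_0=(\overline{Q})^{-1/2}$ and using~\eqref{A^0(k)P} together with $\widehat{S}(\mathbf{k})=t^2\widehat{S}(\boldsymbol{\theta})$ (see~\eqref{effective_oper_symb}), the constant subspace $\widehat{\mathfrak{N}}$ is invariant under $\mathcal{A}^0(\mathbf{k})$ and one checks that $f e^{-i\tau\varepsilon^{-2}\mathcal{A}(t\boldsymbol{\theta}_0)}f^{-1}\widehat{P}=M e^{-i\tau\varepsilon^{-2}A(t;\boldsymbol{\theta}_0)}M^{-1}\widehat{P}$ and $f_0 e^{-i\tau\varepsilon^{-2}\mathcal{A}^0(t\boldsymbol{\theta}_0)}f_0^{-1}\widehat{P}=M_0 e^{-i\tau\varepsilon^{-2}t^2 M_0\widehat{S}(\boldsymbol{\theta}_0)M_0}M_0^{-1}\widehat{P}$. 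Thus the displayed inequality is precisely~\eqref{abstr_sndw_exp_shrp_est} for the family $A(t;\boldsymbol{\theta}_0)$, and since $\widehat{N}_{0,Q}(\boldsymbol{\theta}_0)\ne0$ by hypothesis, Theorem~\ref{abstr_sndw_exp_time_shrp_thrm_1} yields the desired contradiction.

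The step I expect to be the main obstacle is the passage from ``a.e.\ $\mathbf{k}$'' to the single ray $\mathbf{k}=t\boldsymbol{\theta}_0$: one must verify carefully that, after inserting $F(t;\boldsymbol{\theta})$, the relevant operator-valued function of $\mathbf{k}$ is genuinely norm-continuous on the whole ball $|\mathbf{k}|\le t^0$ for fixed $\tau,\varepsilon$, which rests on the analytic dependence of the eigendata of $\mathcal{A}(\mathbf{k})$ on $\mathbf{k}$ (not merely on $t=|\mathbf{k}|$) rather than on norm-continuity of $e^{-i\tau\varepsilon^{-2}\mathcal{A}(\mathbf{k})}$ itself, which generally fails. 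Everything else amounts to bookkeeping with the identities of \S\S\ref{abstr_sndw_section},~\ref{L2_operators_part} and error terms of order $O(\varepsilon)$ that are harmless for $s\ge1$.
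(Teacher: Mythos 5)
Your proposal is correct and follows essentially the same route as the paper's proof: reduce \eqref{8.2a} to an estimate on $\widehat{P}$ via \eqref{R_P}--\eqref{R(k,eps)(I-P)_est}, insert the spectral projection $F$ at the cost of an $O(\varepsilon)$ error using \eqref{abstr_F(t)_threshold}, invoke norm-continuity in $\mathbf{k}$ (the paper cites \cite[Lemma~11.8]{Su2017}) to pass from a.e.\ $\mathbf{k}$ to the ray $t\boldsymbol{\theta}_0$, and conclude by Theorem~\ref{abstr_sndw_exp_time_shrp_thrm_1}. The only cosmetic difference is that the paper makes the range inclusion explicit through the identity $f^{-1}\widehat{P}=Pf^*\overline{Q}$ from \eqref{abstr_P_Phat} before swapping $P$ for $F(\mathbf{k})$, whereas you use $\Ran(f^{-1}\widehat{P})\subset\mathfrak{N}$ directly; the two are equivalent.
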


\begin{proof} We prove by contradiction. Suppose that for some $s \ge 3$ there exists a function \hbox{$\mathcal{C}(\tau) > 0$} such that $\lim_{\tau \to \infty} \mathcal{C}(\tau)/ |\tau| = 0$ and estimate~(\ref{8.2a}) holds for almost~every $\mathbf{k} \in \widetilde{\Omega}$ and sufficiently small $\varepsilon > 0$. By~(\ref{R_P}), (\ref{R(k,eps)(I-P)_est}), it follows that there exists a function $\widetilde{\mathcal{C}}(\tau) > 0$ such that $\lim_{\tau \to \infty} \widetilde{\mathcal{C}}(\tau)/ |\tau| = 0$ and the estimate
	\begin{equation}
	\label{sndw_A(k)_exp_shrp_f1}
	\bigl\| \bigl( f e^{-i \tau \varepsilon^{-2} \mathcal{A} (\mathbf{k})} f^{-1} - f_0 e^{-i \tau \varepsilon^{-2} \mathcal{A}^0 (\mathbf{k})} f_0^{-1}\bigr) \widehat{P} \bigr\|_{L_2(\Omega) \to L_2(\Omega)} \varepsilon^s (|\mathbf{k}|^2 + \varepsilon^2)^{-s/2}  \le \widetilde{\mathcal{C}}(\tau) \varepsilon
	\end{equation}
	holds for almost~every $\mathbf{k} \in \widetilde{\Omega}$ and sufficiently small $\varepsilon > 0$.
	
	By~(\ref{abstr_P_Phat}), we have $f^{-1} \widehat{P} = P f^* \overline{Q}$, where $P$ is the orthogonal projection of $L_2 (\Omega; \mathbb{C}^n)$ onto the subspace~$\mathfrak{N}$ (see~(\ref{frakN})). Then the operator under the norm sign in~(\ref{sndw_A(k)_exp_shrp_f1}) can be written as $f e^{-i \tau \varepsilon^{-2} \mathcal{A} (\mathbf{k})} P f^* \overline{Q} - f_0 e^{-i \tau \varepsilon^{-2} \mathcal{A}^0 (\mathbf{k})} f_0^{-1} \widehat{P}$.
	
	Then, using the inequality 
	\begin{equation}
	\label{sndw_A(k)_exp_shrp_f2}
	\bigl\| F(\mathbf{k}) - P \bigr\|_{L_2 (\Omega) \to L_2 (\Omega)} \le C_1 |\mathbf{k}|, \qquad |\mathbf{k}| \le t^0,
	\end{equation}
	(see~(\ref{abstr_F(t)_threshold})), we conclude that there exists a function $\check{\mathcal{C}}(\tau) > 0$ such that $\lim_{\tau \to \infty} \check{\mathcal{C}}(\tau)/ |\tau| = 0$ and the estimate
	\begin{equation}
	\label{sndw_A(k)_exp_shrp_f3}
	\bigl\|  f e^{-i \tau \varepsilon^{-2} \mathcal{A} (\mathbf{k})} F(\mathbf{k}) f^* \overline{Q} - f_0 e^{-i \tau \varepsilon^{-2} \mathcal{A}^0 (\mathbf{k})} f_0^{-1} \widehat{P} \bigr\|_{L_2(\Omega) \to L_2 (\Omega) } \varepsilon^s (|\mathbf{k}|^2 + \varepsilon^2)^{-s/2}  \le \check{\mathcal{C}}(\tau) \varepsilon
	\end{equation}
	holds for almost~every $\mathbf{k} \in \widetilde{\Omega}$ in the ball $|\mathbf{k}| \le t^0$ and sufficiently small $\varepsilon > 0$.
	
	For fixed $\tau$ and $\varepsilon$, the operator under the norm sign in~(\ref{sndw_A(k)_exp_shrp_f3}) is continuous with respect to $\mathbf{k}$ in the ball $|\mathbf{k}| \le t^0$ (see~\cite[Lemma~11.8]{Su2017}). Hence, estimate~(\ref{sndw_A(k)_exp_shrp_f3}) holds for all $\mathbf{k}$ in this ball, in particular, for $\mathbf{k} = t\boldsymbol{\theta}_0$ if $t \le t^0$. Applying inequality~(\ref{sndw_A(k)_exp_shrp_f2}) and the identity 
	$P f^* \overline{Q} = f^{-1} \widehat{P}$ once again, we obtain the estimate
	\begin{equation}
	\label{sndw_A(k)_exp_shrp_f4}
	\bigl\| \bigl( f e^{-i \tau \varepsilon^{-2} \mathcal{A} (t\boldsymbol{\theta}_0)} f^{-1} - f_0 e^{-i \tau \varepsilon^{-2} \mathcal{A}^0 (t\boldsymbol{\theta}_0)} f_0^{-1}\bigr) \widehat{P} \bigr\|_{L_2(\Omega) \to L_2(\Omega)} \varepsilon^s (t^2 + \varepsilon^2)^{-s/2}  \le \check{\mathcal{C}}'(\tau) \varepsilon
	\end{equation}
	for all $t \le t^{0}$ and sufficiently small $\varepsilon > 0$, where $\check{\mathcal{C}}'(\tau) > 0$ and $\lim_{\tau \to \infty} \check{\mathcal{C}}'(\tau)/ |\tau| = 0$.
	
	In the abstract terms, estimate~(\ref{sndw_A(k)_exp_shrp_f4}) corresponds to~(\ref{abstr_sndw_exp_shrp_est}). Since it is assumed that $\widehat{N}_{0,Q}(\boldsymbol{\theta}_0) \ne 0$, applying Theorem~\ref{abstr_sndw_exp_time_shrp_thrm_1}, we arrive at a contradiction.
\end{proof}

Similarly, application of Theorem~\ref{abstr_sndw_exp_time_shrp_thrm_2} allows us to confirm the sharpness of Theorems~\ref{sndw_A(k)_exp_enchcd_thrm_1}, \ref{sndw_A(k)_exp_enchcd_thrm_2}.

\begin{thrm}
	\label{sndw_A(k)_exp_time_shrp_thrm_2}
	Suppose that $\widehat{N}_{0,Q} (\boldsymbol{\theta}) = 0$ for any $\boldsymbol{\theta} \in \mathbb{S}^{d-1}$ and $\widehat{\mathcal{N}}_Q^{\,(q)} (\boldsymbol{\theta}_0) \ne 0$ for some $\boldsymbol{\theta}_0 \in \mathbb{S}^{d-1}$ and some 
	$q \in \{1,\ldots,p(\boldsymbol{\theta}_0)\}$. Let $s \ge 2$. Then there does not exist a positive function $\mathcal{C}(\tau)$ such that $\lim_{\tau \to \infty} \mathcal{C}(\tau)/ |\tau|^{1/2} = 0$ and estimate \eqref{8.2a} holds for all $\tau \in \mathbb{R}$, almost~every $\mathbf{k} \in \widetilde{\Omega}$, and sufficiently small $\varepsilon > 0$.
\end{thrm}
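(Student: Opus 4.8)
The plan is to mimic almost verbatim the argument used in the proof of Theorem~\ref{sndw_A(k)_exp_time_shrp_thrm_1}, replacing the invocation of the abstract statement Theorem~\ref{abstr_sndw_exp_time_shrp_thrm_1} by that of Theorem~\ref{abstr_sndw_exp_time_shrp_thrm_2}. First I would argue by contradiction: assume that for some $s \ge 2$ there is a positive function $\mathcal{C}(\tau)$ with $\lim_{\tau\to\infty}\mathcal{C}(\tau)/|\tau|^{1/2}=0$ such that~\eqref{8.2a} holds for a.e.\ $\mathbf{k}\in\widetilde{\Omega}$ and all sufficiently small $\varepsilon>0$. Using~\eqref{R_P} and~\eqref{R(k,eps)(I-P)_est} (estimating the component of the operator on $(I-\widehat{P})$, which is harmless since $\mathcal{R}(\mathbf{k},\varepsilon)^{s/2}(I-\widehat{P})$ is already $O(\varepsilon^s)$), I obtain a function $\widetilde{\mathcal{C}}(\tau)>0$ with $\lim_{\tau\to\infty}\widetilde{\mathcal{C}}(\tau)/|\tau|^{1/2}=0$ such that the $\widehat{P}$-sandwiched estimate~\eqref{sndw_A(k)_exp_shrp_f1} holds (with the power $s$ and with $\widetilde{\mathcal{C}}$ normalized by $|\tau|^{1/2}$ instead of $|\tau|$).

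Next I would use the identity $f^{-1}\widehat{P}=Pf^{*}\overline{Q}$ coming from~\eqref{abstr_P_Phat} (with $M$ the multiplication by $f$, $Q_{\widehat{\mathfrak{N}}}=\overline{Q}$, $M^{*}$ the multiplication by $f^{*}$), to rewrite the operator under the norm sign as $f\,e^{-i\tau\varepsilon^{-2}\mathcal{A}(\mathbf{k})}Pf^{*}\overline{Q}-f_0\,e^{-i\tau\varepsilon^{-2}\mathcal{A}^0(\mathbf{k})}f_0^{-1}\widehat{P}$. Inserting the threshold approximation $F(\mathbf{k})\approx P$ from~\eqref{sndw_A(k)_exp_shrp_f2} (the error is $O(|\mathbf{k}|)$, and $|\mathbf{k}|\,\varepsilon^s(|\mathbf{k}|^2+\varepsilon^2)^{-s/2}\le\varepsilon$ for $s\ge1$, so it is absorbed into the right-hand side), I pass to~\eqref{sndw_A(k)_exp_shrp_f3} with $F(\mathbf{k})$ in place of $P$, valid for a.e.\ $\mathbf{k}$ in the ball $|\mathbf{k}|\le t^0$. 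Then the continuity in $\mathbf{k}$ of the operator under the norm sign on that ball (reference to~\cite[Lemma~11.8]{Su2017}) upgrades ``a.e.\ $\mathbf{k}$'' to ``all $\mathbf{k}$'', in particular to $\mathbf{k}=t\boldsymbol{\theta}_0$ for $t\le t^0$; applying~\eqref{sndw_A(k)_exp_shrp_f2} once more to return $F(t\boldsymbol{\theta}_0)\to P$ and the identity $Pf^{*}\overline{Q}=f^{-1}\widehat{P}$, I arrive at the analogue of~\eqref{sndw_A(k)_exp_shrp_f4}: there is $\check{\mathcal{C}}'(\tau)>0$ with $\lim_{\tau\to\infty}\check{\mathcal{C}}'(\tau)/|\tau|^{1/2}=0$ such that
\begin{equation*}
\bigl\|\bigl(f e^{-i\tau\varepsilon^{-2}\mathcal{A}(t\boldsymbol{\theta}_0)}f^{-1}-f_0 e^{-i\tau\varepsilon^{-2}\mathcal{A}^0(t\boldsymbol{\theta}_0)}f_0^{-1}\bigr)\widehat{P}\bigr\|_{L_2(\Omega)\to L_2(\Omega)}\varepsilon^s(t^2+\varepsilon^2)^{-s/2}\le\check{\mathcal{C}}'(\tau)\varepsilon
\end{equation*}
for all $t\le t^0$ and sufficiently small $\varepsilon>0$.

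Finally, this last estimate is exactly~\eqref{abstr_sndw_exp_shrp_est} for the operator family $A(t;\boldsymbol{\theta}_0)=\mathcal{A}(t\boldsymbol{\theta}_0)$, with $M=f$, $M_0=f_0$, and $M_0\widehat{S}M_0$ corresponding via~\eqref{A^0(k)P} to $\mathcal{A}^0(\mathbf{k})$. By hypothesis $\widehat{N}_{0,Q}(\boldsymbol{\theta}_0)=0$ and $\widehat{\mathcal{N}}_Q^{(q)}(\boldsymbol{\theta}_0)\ne0$ for some $q\in\{1,\dots,p(\boldsymbol{\theta}_0)\}$, which are precisely the hypotheses of Theorem~\ref{abstr_sndw_exp_time_shrp_thrm_2} (with $s\ge2$); hence no such $\check{\mathcal{C}}'$ can exist, a contradiction. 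The only point requiring a little care is the bookkeeping of the normalizing factor $|\tau|^{1/2}$ (rather than $|\tau|$) through the chain of reductions, and checking that the $\boldsymbol{\theta}$-dependent objects $\widehat{\mathcal{N}}_Q^{(q)}(\boldsymbol{\theta}_0)$ defined in Subsection~\ref{sndw_eigenval_multipl_section} indeed match the abstract operator $\widehat{\mathcal{N}}_Q^{(q)}$ of Subsection~\ref{abstr_hatZ2_Q_hatR2_Q_N1^0_Q_section}; this matching is guaranteed by~\eqref{abstr_scrNhat_M^(q)_gener_spec_problem} together with Remark~\ref{abstr_N^(q',q)_hatNQ^(q',q)_rem}. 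I expect no genuine obstacle: the work is entirely a transcription of the proof of Theorem~\ref{sndw_A(k)_exp_time_shrp_thrm_1} with the weaker growth rate.
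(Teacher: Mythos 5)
Your proposal is correct and coincides with the paper's intended argument: the paper gives the detailed proof only for Theorem~\ref{sndw_A(k)_exp_time_shrp_thrm_1} and derives the present statement ``similarly'' by substituting Theorem~\ref{abstr_sndw_exp_time_shrp_thrm_2} for Theorem~\ref{abstr_sndw_exp_time_shrp_thrm_1}, which is exactly the transcription you carry out, including the reduction via \eqref{R_P}, \eqref{R(k,eps)(I-P)_est}, the identity $f^{-1}\widehat{P}=Pf^{*}\overline{Q}$, the insertion and removal of $F(\mathbf{k})$, and the continuity argument upgrading ``a.e.\ $\mathbf{k}$'' to $\mathbf{k}=t\boldsymbol{\theta}_0$. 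The bookkeeping of the normalization $|\tau|^{1/2}$ and the identification of $\widehat{\mathcal{N}}_Q^{(q)}(\boldsymbol{\theta}_0)$ with the abstract operator via \eqref{abstr_scrNhat_M^(q)_gener_spec_problem} are handled correctly, so there is no gap.
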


\section{Approximation of the operator exponential $e^{-i \tau \varepsilon^{-2} {\mathcal{A}}}$}

\subsection{Approximation of the operator $e^{-i\tau \varepsilon^{-2} \widehat{\mathcal{A}}}$}
In $L_2 (\mathbb{R}^d; \mathbb{C}^n)$, we consider the operator~$\widehat{\mathcal{A}}$ given by (\ref{hatA}). Let $\widehat{\mathcal{A}}^0$ be the effective operator~(\ref{hatA0}). Denote $\widehat{J}(\varepsilon; \tau) \coloneqq e^{-i\tau \varepsilon^{-2} \widehat{\mathcal{A}}} - e^{-i\tau \varepsilon^{-2} \widehat{\mathcal{A}}^0}$. Recall the notation $\mathcal{H}_0 = - \Delta$ and put
\begin{equation}
\label{R(epsilon)}
\mathcal{R} (\varepsilon) \coloneqq \varepsilon^2 (\mathcal{H}_0 + \varepsilon^2 I)^{-1}.
\end{equation}
The operator $\mathcal{R} (\varepsilon)$ expands in the direct integral of the operators~(\ref{R(k,eps)}):
\begin{equation*}
\mathcal{R} (\varepsilon) = \mathscr{U}^{-1} \left( \int_{\widetilde{\Omega}} \oplus  \mathcal{R} (\mathbf{k}, \varepsilon) \, d \mathbf{k}  \right) \mathscr{U}.
\end{equation*}

Recall also notation~(\ref{Jhat(k,eps)}). By decomposition~(\ref{Gelfand_A_decompose}) for $\widehat{\mathcal{A}}$ and $\widehat{\mathcal{A}}^0$, we have
\begin{equation}
\label{hatA_exps_Gelfand}
\| \widehat{J}(\varepsilon; \tau) \mathcal{R}(\varepsilon)^{s/2} \|_{L_2(\mathbb{R}^d) \to L_2(\mathbb{R}^d)} = \underset{\mathbf{k} \in \widetilde{\Omega}}{\esssup} \| \widehat{J}(\mathbf{k}, \varepsilon; \tau) \mathcal{R}(\mathbf{k}, \varepsilon)^{s/2} \|_{L_2(\Omega) \to L_2(\Omega)}.
\end{equation}
Therefore, Theorems~\ref{hatA(k)_exp_general_thrm}, \ref{hatA(k)_exp_enchcd_thrm_1}, \ref{hatA(k)_exp_enchcd_thrm_2} imply the following results.

\begin{thrm}[\cite{BSu2008}]
	\label{hatA_exp_general_thrm}
	For $\tau \in \mathbb{R}$ and $\varepsilon > 0$ we have
	\begin{equation*}
	\bigl\| \widehat{J}(\varepsilon; \tau) \mathcal{R}(\varepsilon)^{3/2} \bigr\|_{L_2(\mathbb{R}^d) \to L_2(\mathbb{R}^d)} \le \widehat{\mathcal{C}}_1(1 + |\tau|) \varepsilon. 
	\end{equation*}
	The constant $\widehat{\mathcal{C}}_1$ depends only on $\alpha_0$, $\alpha_1$, $\|g\|_{L_\infty}$, $\|g^{-1}\|_{L_\infty}$, and $r_0$.
\end{thrm}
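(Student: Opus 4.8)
The plan is to reduce the global estimate on $L_2(\mathbb{R}^d;\mathbb{C}^n)$ to the fiberwise estimate of Theorem~\ref{hatA(k)_exp_general_thrm} by means of the Gelfand transformation. First I would recall that under $\mathscr{U}$ the operators $\widehat{\mathcal{A}}$ and $\widehat{\mathcal{A}}^0$ expand into the direct integrals of the families $\widehat{\mathcal{A}}(\mathbf{k})$ and $\widehat{\mathcal{A}}^0(\mathbf{k})$ over $\widetilde{\Omega}$ (cf.~\eqref{Gelfand_A_decompose}). Since bounded Borel functions of a direct integral act fiberwise, the exponentials $e^{-i\tau\varepsilon^{-2}\widehat{\mathcal{A}}}$ and $e^{-i\tau\varepsilon^{-2}\widehat{\mathcal{A}}^0}$, hence also their difference $\widehat{J}(\varepsilon;\tau)$, expand into direct integrals whose fiber at $\mathbf{k}$ is precisely $\widehat{J}(\mathbf{k},\varepsilon;\tau)$ from~\eqref{Jhat(k,eps)}. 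Likewise $\mathcal{R}(\varepsilon)$ expands into the direct integral of the operators $\mathcal{R}(\mathbf{k},\varepsilon)$ of~\eqref{R(k,eps)}, and therefore so does $\mathcal{R}(\varepsilon)^{3/2}$. As $\mathscr{U}$ is unitary and the operator norm of a direct integral is the essential supremum of the fiber norms, this yields the identity~\eqref{hatA_exps_Gelfand} with $s=3$.

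Next I would invoke Theorem~\ref{hatA(k)_exp_general_thrm}, which gives $\|\widehat{J}(\mathbf{k},\varepsilon;\tau)\mathcal{R}(\mathbf{k},\varepsilon)^{3/2}\|_{L_2(\Omega)\to L_2(\Omega)}\le\widehat{\mathcal{C}}_1(1+|\tau|)\varepsilon$ for every $\mathbf{k}\in\widetilde{\Omega}$ with a constant $\widehat{\mathcal{C}}_1$ independent of $\mathbf{k}$; the uniformity in $\mathbf{k}$ is the \emph{essential} point, and it was secured earlier by choosing $\widehat{c}_*$, $\widehat{\delta}$, $\widehat{t}^{\,0}$ and the bound on $\|\widehat{X}_1(\boldsymbol{\theta})\|$ independently of $\boldsymbol{\theta}=\mathbf{k}/|\mathbf{k}|$. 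Taking the essential supremum over $\mathbf{k}\in\widetilde{\Omega}$ and combining with~\eqref{hatA_exps_Gelfand} gives the assertion, with $\widehat{\mathcal{C}}_1$ depending only on $\alpha_0$, $\alpha_1$, $\|g\|_{L_\infty}$, $\|g^{-1}\|_{L_\infty}$, and $r_0$.

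There is no real obstacle here; the only point that deserves a sentence of justification is that $\mathscr{U}$ intertwines the bounded functional calculus of $\widehat{\mathcal{A}}$ (the exponential and the resolvent power) with the corresponding direct integrals of fiber operators, which follows from~\eqref{Gelfand_A_decompose} and the measurability of $\mathbf{k}\mapsto\widehat{\mathcal{A}}(\mathbf{k})$ in the direct-integral sense. I would state this as standard. All the substantive analysis has already been carried out in Theorem~\ref{hatA(k)_exp_general_thrm} --- ultimately in the abstract Theorem~\ref{abstr_exp_general_thrm} and in~\cite{BSu2008} --- so the present statement is a routine globalization of the fiberwise result.
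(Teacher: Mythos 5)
Your argument coincides with the paper's: identity~\eqref{hatA_exps_Gelfand} with $s=3$ (obtained from the direct-integral decomposition~\eqref{Gelfand_A_decompose} and the fiberwise action of the functional calculus and of $\mathcal{R}(\varepsilon)$), followed by the uniform-in-$\mathbf{k}$ fiber estimate of Theorem~\ref{hatA(k)_exp_general_thrm}. The proposal is correct and complete.
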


\begin{thrm}
	\label{hatA_exp_enchcd_thrm_1}
	Let $\widehat{N}(\boldsymbol{\theta})$ be the operator defined by~\emph{(\ref{hatN(theta)})}. Suppose that $\widehat{N}(\boldsymbol{\theta})=0$ for any $\boldsymbol{\theta} \in \mathbb{S}^{d-1}$. Then for $\tau \in \mathbb{R}$ and $\varepsilon > 0$ we have
	\begin{equation*}
	\bigl\| \widehat{J}(\varepsilon; \tau) \mathcal{R}(\varepsilon) \bigr\|_{L_2(\mathbb{R}^d) \to L_2(\mathbb{R}^d)} \le \widehat{\mathcal{C}}_2(1 + |\tau|^{1/2}) \varepsilon.
	\end{equation*}
	The constant $\widehat{\mathcal{C}}_2$ depends only on $\alpha_0$, $\alpha_1$, $\|g\|_{L_\infty}$, $\|g^{-1}\|_{L_\infty}$, and $r_0$.
\end{thrm}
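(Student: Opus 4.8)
The plan is to deduce the estimate fiberwise from Theorem~\ref{hatA(k)_exp_enchcd_thrm_1} via the Gelfand decomposition, so that essentially no new computation is required. First I would record that under the Gelfand transformation $\mathscr{U}$ both $\widehat{\mathcal{A}}$ and $\widehat{\mathcal{A}}^0$ expand into direct integrals of the fiber operators $\widehat{\mathcal{A}}(\mathbf{k})$ and $\widehat{\mathcal{A}}^0(\mathbf{k})$ (decomposition~(\ref{Gelfand_A_decompose}) for $\widehat{\mathcal{A}}$ and its analog for $\widehat{\mathcal{A}}^0$), while $\mathcal{R}(\varepsilon)$ is the direct integral of the fiber operators $\mathcal{R}(\mathbf{k},\varepsilon)$ from~(\ref{R(k,eps)}). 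Since the operator exponential is obtained by the Borel functional calculus, which respects measurable direct integral decompositions, the operator $\widehat{J}(\varepsilon;\tau)\,\mathcal{R}(\varepsilon)$ decomposes as the direct integral of the fiber operators $\widehat{J}(\mathbf{k},\varepsilon;\tau)\,\mathcal{R}(\mathbf{k},\varepsilon)$, and consequently its $L_2(\mathbb{R}^d)\to L_2(\mathbb{R}^d)$ norm equals the essential supremum over $\mathbf{k}\in\widetilde{\Omega}$ of the fiber norms. This is precisely identity~(\ref{hatA_exps_Gelfand}) with $s=2$, which I would quote directly.

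Next I would invoke Theorem~\ref{hatA(k)_exp_enchcd_thrm_1}. Its hypothesis is exactly $\widehat{N}(\boldsymbol{\theta})=0$ for every $\boldsymbol{\theta}\in\mathbb{S}^{d-1}$, which is the assumption of the present theorem. That theorem provides, for all $\mathbf{k}\in\widetilde{\Omega}$, $\tau\in\mathbb{R}$, $\varepsilon>0$, the uniform bound
\[
\| \widehat{J}(\mathbf{k},\varepsilon;\tau)\,\mathcal{R}(\mathbf{k},\varepsilon)\|_{L_2(\Omega)\to L_2(\Omega)} \le \widehat{\mathcal{C}}_2 (1+|\tau|^{1/2})\,\varepsilon ,
\]
where $\widehat{\mathcal{C}}_2$ depends only on $\alpha_0$, $\alpha_1$, $\|g\|_{L_\infty}$, $\|g^{-1}\|_{L_\infty}$, and $r_0$; in particular it is independent of $\mathbf{k}$. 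Taking the essential supremum over $\mathbf{k}\in\widetilde{\Omega}$ and combining with~(\ref{hatA_exps_Gelfand}) yields the asserted estimate on $L_2(\mathbb{R}^d)$ with the same constant $\widehat{\mathcal{C}}_2$.

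There is no real obstacle here: the only point meriting a word of care is the interchange of the direct integral with the formation of $\widehat{J}(\varepsilon;\tau)\,\mathcal{R}(\varepsilon)$ and with the computation of the operator norm, but this is the standard fact that a bounded Borel function of a decomposable self-adjoint operator is again decomposable, with fiber the corresponding function of the fiber operators, and its norm is the essential supremum of the fiber norms. This fact is already stated in the excerpt in the form~(\ref{hatA_exps_Gelfand}). Hence the proof reduces to citing Theorem~\ref{hatA(k)_exp_enchcd_thrm_1} together with~(\ref{hatA_exps_Gelfand}), and no additional estimates are needed.
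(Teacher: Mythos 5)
Your argument is exactly the paper's: the identity~(\ref{hatA_exps_Gelfand}) with $s=2$ reduces the claim to the fiberwise bound of Theorem~\ref{hatA(k)_exp_enchcd_thrm_1}, whose hypothesis and constant are uniform in $\mathbf{k}$. The proposal is correct and coincides with the paper's (one-line) deduction.
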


\begin{thrm}
	\label{hatA_exp_enchcd_thrm_2}
	Suppose that Condition~\emph{\ref{cond1}} \emph{(}or more restrictive Condition~\emph{\ref{cond2}}\emph{)} is satisfied. Then for $\tau \in \mathbb{R}$ and $\varepsilon > 0$ we have
	\begin{equation*}
	\bigl\| \widehat{J}(\varepsilon; \tau) \mathcal{R}(\varepsilon) \bigr\|_{L_2(\mathbb{R}^d) \to L_2(\mathbb{R}^d)} \le \widehat{\mathcal{C}}_3 (1 + |\tau|^{1/2}) \varepsilon.
	\end{equation*}
	The constant $\widehat{\mathcal{C}}_3$ depends only on $\alpha_0$, $\alpha_1$, $\|g\|_{L_\infty}$, $\|g^{-1}\|_{L_\infty}$, $r_0$, and also on $n$ and the number $\widehat{c}^{\circ}$.
\end{thrm}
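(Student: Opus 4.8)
The plan is to reduce the $L_2(\mathbb{R}^d;\mathbb{C}^n)$-estimate to the fiberwise estimate already established in Theorem~\ref{hatA(k)_exp_enchcd_thrm_2}. First I would recall that, under the Gelfand transformation $\mathscr{U}$, both $\widehat{\mathcal{A}}$ and $\widehat{\mathcal{A}}^0$ expand into the direct integrals of the fiber operators $\widehat{\mathcal{A}}(\mathbf{k})$ and $\widehat{\mathcal{A}}^0(\mathbf{k})$, $\mathbf{k}\in\widetilde\Omega$ (cf.~(\ref{Gelfand_A_decompose})), while $\mathcal{R}(\varepsilon)$ expands into the direct integral of $\mathcal{R}(\mathbf{k},\varepsilon)$. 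Since in the fiber picture these operators commute, $\widehat{J}(\varepsilon;\tau)\mathcal{R}(\varepsilon)$ is the direct integral of $\widehat{J}(\mathbf{k},\varepsilon;\tau)\mathcal{R}(\mathbf{k},\varepsilon)$, and its operator norm is the essential supremum over $\mathbf{k}\in\widetilde\Omega$ of the fiber norms. This is exactly identity~(\ref{hatA_exps_Gelfand}) specialized to $s=2$.

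Next I would invoke Theorem~\ref{hatA(k)_exp_enchcd_thrm_2}: under Condition~\ref{cond1} (a fortiori under the more restrictive Condition~\ref{cond2}), for every $\mathbf{k}\in\widetilde\Omega$ one has
\[ \| \widehat{J}(\mathbf{k},\varepsilon;\tau)\,\mathcal{R}(\mathbf{k},\varepsilon) \|_{L_2(\Omega)\to L_2(\Omega)} \le \widehat{\mathcal{C}}_3\,(1+|\tau|^{1/2})\,\varepsilon , \]
and, crucially, the constant $\widehat{\mathcal{C}}_3$ there depends only on $\alpha_0$, $\alpha_1$, $\|g\|_{L_\infty}$, $\|g^{-1}\|_{L_\infty}$, $r_0$, $n$ and the number $\widehat c^{\circ}$ --- that is, it does not depend on $\mathbf{k}$ (equivalently, on $\boldsymbol\theta=\mathbf{k}/|\mathbf{k}|$). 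Taking the essential supremum over $\mathbf{k}\in\widetilde\Omega$ in~(\ref{hatA_exps_Gelfand}) and bounding the right-hand side by this uniform estimate yields the claim with the same constant $\widehat{\mathcal{C}}_3$.

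There is no substantial obstacle internal to this proof: all the analytic work has been carried out in Theorem~\ref{hatA(k)_exp_enchcd_thrm_2}, which in turn rests on the abstract Theorem~\ref{abstr_exp_enchcd_thrm_2} and on the fact that, under Condition~\ref{cond1}, the parameters $\widehat c^{\circ}$ and $\widehat t^{\,00}$ can be fixed independently of $\boldsymbol\theta$ (as arranged in \S\ref{ench_approx2_section} via~(\ref{hatc^circ}) and the accompanying formula for $\widehat t^{\,00}$). The only point meriting care is precisely this uniformity in $\boldsymbol\theta$: for $|\mathbf{k}|\le\widehat t^{\,00}$ it follows from the $\boldsymbol\theta$-independent constants of the abstract threshold estimate, while for $\widehat t^{\,00}<|\mathbf{k}|$, $\mathbf{k}\in\widetilde\Omega$, it follows from the trivial bound $\|\widehat J(\mathbf{k},\varepsilon;\tau)\|\le 2$ together with~(\ref{R_hatP_est}) and~(\ref{R(k,eps)(I-P)_est}) --- but both regimes are already absorbed into the statement of Theorem~\ref{hatA(k)_exp_enchcd_thrm_2}. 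Hence the argument is the straightforward combination of~(\ref{hatA_exps_Gelfand}) (with $s=2$) with that theorem.
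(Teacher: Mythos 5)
Your proof is correct and is exactly the paper's argument: identity~(\ref{hatA_exps_Gelfand}) with $s=2$ reduces the claim to the fiberwise estimate of Theorem~\ref{hatA(k)_exp_enchcd_thrm_2}, whose constant $\widehat{\mathcal{C}}_3$ is uniform in $\mathbf{k}$. Your remark on the uniformity in $\boldsymbol{\theta}$ (arranged in \S\ref{ench_approx2_section}) correctly identifies the only delicate point, which is indeed already absorbed into that theorem.
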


Theorem~\ref{hatA_exp_general_thrm} was proved in~\cite[Theorem~9.1]{BSu2008}. Theorems~\ref{hatA_exp_enchcd_thrm_1} and~\ref{hatA_exp_enchcd_thrm_2} improve the results of Theorems~12.2 and~12.3 from~\cite{Su2017} with respect to dependence of the estimates on $\tau$.

Application of Theorems~\ref{hatA(k)_exp_smooth_shrp_thrm_1}, \ref{hatA(k)_exp_smooth_shrp_thrm_2} allows us to confirm that the results of Theorems~\ref{hatA_exp_general_thrm}, \ref{hatA_exp_enchcd_thrm_1}, and \ref{hatA_exp_enchcd_thrm_2} are sharp with respect to the smoothing operator.

\begin{thrm}[\cite{Su2017}]
	\label{hatA_exp_smooth_shrp_thrm_1}
	Suppose that $\widehat{N}_0 (\boldsymbol{\theta}_0) \ne 0$ for some $\boldsymbol{\theta}_0 \in \mathbb{S}^{d-1}$. Let $\tau \ne 0$ and $0 \le s < 3$. Then there does not exist a constant $\mathcal{C}(\tau) >0$ such that the estimate
	\begin{equation}
	\label{9.2a}
	\bigl\| \widehat{J}(\varepsilon; \tau) \mathcal{R}(\varepsilon)^{s/2} \bigr\|_{L_2(\mathbb{R}^d) \to L_2(\mathbb{R}^d)} \le \mathcal{C}(\tau) \varepsilon
	\end{equation}
	holds for all sufficiently small $\varepsilon > 0$.
\end{thrm}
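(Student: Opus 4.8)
The plan is to deduce the statement from the fibered result of Theorem~\ref{hatA(k)_exp_smooth_shrp_thrm_1} by means of the Gelfand decomposition, arguing by contradiction.

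First I would suppose, contrary to the claim, that for some $s$ with $0 \le s < 3$ there exists a constant $\mathcal{C}(\tau) > 0$ such that \eqref{9.2a} holds for all sufficiently small $\varepsilon > 0$. Under the Gelfand transformation, the operator $\widehat{J}(\varepsilon;\tau)\mathcal{R}(\varepsilon)^{s/2}$ is the direct integral of the fiber operators $\widehat{J}(\mathbf{k},\varepsilon;\tau)\mathcal{R}(\mathbf{k},\varepsilon)^{s/2}$, so by identity~\eqref{hatA_exps_Gelfand} the left-hand side of \eqref{9.2a} equals $\esssup_{\mathbf{k}\in\widetilde{\Omega}} \| \widehat{J}(\mathbf{k},\varepsilon;\tau)\mathcal{R}(\mathbf{k},\varepsilon)^{s/2}\|_{L_2(\Omega)\to L_2(\Omega)}$. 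Hence for every sufficiently small $\varepsilon > 0$ there is a $\Gamma$-null set $Z_\varepsilon \subset \widetilde{\Omega}$ such that
\[
\| \widehat{J}(\mathbf{k},\varepsilon;\tau)\mathcal{R}(\mathbf{k},\varepsilon)^{s/2}\|_{L_2(\Omega)\to L_2(\Omega)} \le \mathcal{C}(\tau)\varepsilon, \qquad \mathbf{k}\in\widetilde{\Omega}\setminus Z_\varepsilon.
\]
Taking the union of $Z_\varepsilon$ over a sequence $\varepsilon_n\downarrow 0$ (and, if needed, invoking the continuity of the fiber operators in $\mathbf{k}$, cf.~\cite[Lemma~9.9]{Su2017}, to pass back to all small $\varepsilon$), we obtain that estimate~\eqref{6.6a} holds for almost every $\mathbf{k}\in\widetilde{\Omega}$ and all sufficiently small $\varepsilon > 0$.

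But this is precisely the situation excluded by Theorem~\ref{hatA(k)_exp_smooth_shrp_thrm_1} under the hypothesis $\widehat{N}_0(\boldsymbol{\theta}_0)\ne 0$ for some $\boldsymbol{\theta}_0\in\mathbb{S}^{d-1}$, which is exactly the hypothesis of the present theorem. The resulting contradiction proves the claim. There is no essential obstacle: the whole analytic content — the threshold analysis of the family $\widehat{A}(t;\boldsymbol{\theta})$, the role of the coefficients $\widehat{\mu}_l(\boldsymbol{\theta})$, and the sharpness construction that picks $t=t(\varepsilon)$ proportional to $\varepsilon^{1/2}$ — is already packaged in Theorem~\ref{hatA(k)_exp_smooth_shrp_thrm_1} (and, ultimately, in the abstract Theorem~\ref{abstr_exp_smooth_shrp_thrm_1}); the only thing one has to verify here is the harmless passage from the global operator norm on $L_2(\mathbb{R}^d)$ to the essential supremum of the fiber norms, which is immediate from \eqref{hatA_exps_Gelfand}.
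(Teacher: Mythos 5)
Your argument is correct and is exactly the route the paper intends: the global statement is reduced to the fiber statement of Theorem~\ref{hatA(k)_exp_smooth_shrp_thrm_1} via the direct-integral identity~\eqref{hatA_exps_Gelfand}, and the only subtlety (the $\varepsilon$-dependence of the exceptional null set of quasimomenta) is handled, as you note, by the continuity of the fiber operator in $\mathbf{k}$ for each fixed $\varepsilon$. Nothing further is needed.
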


\begin{thrm}
	\label{hatA_exp_smooth_shrp_thrm_2}
	Suppose that $\widehat{N}_0 (\boldsymbol{\theta}) = 0$ for any $\boldsymbol{\theta} \in \mathbb{S}^{d-1}$ and   $\widehat{\mathcal{N}}^{(q)} (\boldsymbol{\theta}_0) \ne 0$ for some $\boldsymbol{\theta}_0 \in \mathbb{S}^{d-1}$ and some $q \in \{1,\ldots,p(\boldsymbol{\theta}_0)\}$. 
	Let $\tau \ne 0$ and $0 \le s < 2$. Then there does not exist a constant $\mathcal{C}(\tau) >0$ such that estimate \eqref{9.2a}
	holds for all sufficiently small $\varepsilon > 0$.
\end{thrm}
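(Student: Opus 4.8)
The plan is to argue by contradiction and to reduce the statement, in the distinguished direction $\boldsymbol{\theta}_0$, to the abstract sharpness result Theorem~\ref{abstr_exp_smooth_shrp_thrm_2} applied to the family $\widehat{A}(t;\boldsymbol{\theta}_0)=\widehat{\mathcal{A}}(t\boldsymbol{\theta}_0)$; the reduction repeats, almost verbatim, the one used in the proof of Theorem~\ref{hatA(k)_exp_time_shrp_thrm_1} (and in the proof of \cite[Theorem~9.8]{Su2017}). First, it is enough to treat the range $1\le s<2$: since $\mathcal{R}(\varepsilon)=\varepsilon^2(\mathcal{H}_0+\varepsilon^2 I)^{-1}$ satisfies $\|\mathcal{R}(\varepsilon)\|\le1$, one has $\|\widehat{J}(\varepsilon;\tau)\mathcal{R}(\varepsilon)^{s'/2}\|\le\|\widehat{J}(\varepsilon;\tau)\mathcal{R}(\varepsilon)^{s/2}\|$ for $s'\ge s$, so non-existence of a constant in~\eqref{9.2a} for exponents close to $2$ implies it for all $0\le s<2$.

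Suppose then that for some $1\le s<2$ there is $\mathcal{C}(\tau)>0$ with~\eqref{9.2a} valid for all small $\varepsilon>0$. By the direct-integral identity~\eqref{hatA_exps_Gelfand}, for a.e.\ $\mathbf{k}\in\widetilde{\Omega}$ we get $\|\widehat{J}(\mathbf{k},\varepsilon;\tau)\mathcal{R}(\mathbf{k},\varepsilon)^{s/2}\|_{L_2(\Omega)\to L_2(\Omega)}\le\mathcal{C}(\tau)\varepsilon$. Restricting to the ball $|\mathbf{k}|\le\widehat{t}^{\,0}$ and using~\eqref{R_P}, \eqref{R(k,eps)(I-P)_est}, together with the threshold bound $\|\widehat{F}(\mathbf{k})-\widehat{P}\|\le\widehat{C}_1|\mathbf{k}|$ (the realization of~\eqref{abstr_F(t)_threshold}) to discard the contributions of $I-\widehat{P}$ and of $\widehat{F}(\mathbf{k})^\perp\widehat{P}$, we obtain $\widetilde{\mathcal{C}}(\tau)>0$ such that
\[
\bigl\| e^{-i\tau\varepsilon^{-2}\widehat{\mathcal{A}}(\mathbf{k})}\widehat{F}(\mathbf{k})-e^{-i\tau\varepsilon^{-2}\widehat{\mathcal{A}}^0(\mathbf{k})}\widehat{P}\bigr\|_{L_2(\Omega)\to L_2(\Omega)}\,\varepsilon^{s}(|\mathbf{k}|^2+\varepsilon^2)^{-s/2}\le\widetilde{\mathcal{C}}(\tau)\varepsilon
\]
for a.e.\ $\mathbf{k}$ with $|\mathbf{k}|\le\widehat{t}^{\,0}$ and all small $\varepsilon$. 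For fixed $\tau,\varepsilon$ the operator under the norm depends continuously on $\mathbf{k}$ on this ball (cf.\ \cite[Lemma~9.9]{Su2017}), so the bound holds for every such $\mathbf{k}$; in particular, take $\mathbf{k}=t\boldsymbol{\theta}_0$ with $t\le\widehat{t}^{\,0}$. Applying the threshold bound once more to restore $\widehat{P}$ in place of $\widehat{F}(t\boldsymbol{\theta}_0)$ and recalling $\widehat{\mathcal{A}}^0(\mathbf{k})\widehat{P}=\widehat{S}(\mathbf{k})\widehat{P}$ from~\eqref{hatS_P=hatA^0_P}, we arrive at
\[
\bigl\|\bigl(e^{-i\tau\varepsilon^{-2}\widehat{A}(t;\boldsymbol{\theta}_0)}-e^{-i\tau\varepsilon^{-2}t^2\widehat{S}(\boldsymbol{\theta}_0)\widehat{P}}\bigr)\widehat{P}\bigr\|\,\varepsilon^{s}(t^2+\varepsilon^2)^{-s/2}\le\check{\mathcal{C}}(\tau)\varepsilon
\]
for all $t\le\widehat{t}^{\,0}$ and all small $\varepsilon$, with some $\check{\mathcal{C}}(\tau)>0$.

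This last inequality is exactly the abstract estimate~\eqref{abstr_exp_shrp_est} for the family $A(t)=\widehat{A}(t;\boldsymbol{\theta}_0)$, whose germ is $\widehat{S}(\boldsymbol{\theta}_0)$. The hypothesis $\widehat{N}_0(\boldsymbol{\theta})=0$ for all $\boldsymbol{\theta}$ gives $N_0=0$ (i.e.\ $\widehat{\mu}_l(\boldsymbol{\theta}_0)=0$ for all $l$) for this family, and $\widehat{\mathcal{N}}^{(q)}(\boldsymbol{\theta}_0)$ is by construction the realization of the abstract operator $\mathcal{N}^{(q)}$ of Remark~\ref{rem1.7} for $\widehat{A}(t;\boldsymbol{\theta}_0)$, so $\widehat{\mathcal{N}}^{(q)}(\boldsymbol{\theta}_0)\ne0$ is precisely the hypothesis $\mathcal{N}^{(q)}\ne0$ needed in Theorem~\ref{abstr_exp_smooth_shrp_thrm_2}. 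That theorem then forbids the existence of such $\check{\mathcal{C}}(\tau)$ for $0\le s<2$, a contradiction. The only non-mechanical step is the passage from an almost-everywhere bound in $\mathbf{k}$ to a bound at the single direction $\boldsymbol{\theta}_0$, which rests on the norm-continuity of $\mathbf{k}\mapsto e^{-i\tau\varepsilon^{-2}\widehat{\mathcal{A}}(\mathbf{k})}\widehat{F}(\mathbf{k})$ on $\{|\mathbf{k}|\le\widehat{t}^{\,0}\}$ (\cite[Lemma~9.9]{Su2017}); everything else is a routine repetition of the reduction already carried out for Theorem~\ref{hatA(k)_exp_time_shrp_thrm_1}.
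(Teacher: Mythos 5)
Your proposal is correct and follows essentially the same route as the paper: the paper passes from \eqref{9.2a} to the fiber estimate \eqref{6.6a} via the direct-integral identity \eqref{hatA_exps_Gelfand} and then invokes Theorem~\ref{hatA(k)_exp_smooth_shrp_thrm_2}, whose proof is exactly the reduction you carry out (discarding the $I-\widehat{P}$ and $\widehat{F}(\mathbf{k})^{\perp}\widehat{P}$ contributions for $1\le s<2$, using norm-continuity in $\mathbf{k}$ to reach the single direction $\boldsymbol{\theta}_0$, and applying the abstract Theorem~\ref{abstr_exp_smooth_shrp_thrm_2}). You have merely unrolled the two steps into one argument; no gaps.
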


Theorem~\ref{hatA_exp_smooth_shrp_thrm_1} was proved in~\cite[Theorem~12.4]{Su2017}.

Next, application of Theorems~\ref{hatA(k)_exp_time_shrp_thrm_1}, \ref{hatA(k)_exp_time_shrp_thrm_2} allows us to confirm that the results of Theorems~\ref{hatA_exp_general_thrm}, \ref{hatA_exp_enchcd_thrm_1}, and \ref{hatA_exp_enchcd_thrm_2} are sharp with respect to dependence of the estimates on time.

\begin{thrm}
	\label{hatA_exp_shrp_thrm_1}
	Suppose that $\widehat{N}_0 (\boldsymbol{\theta}_0) \ne 0$ for some $\boldsymbol{\theta}_0 \in \mathbb{S}^{d-1}$. Let $s \ge 3$. Then there does not exist a positive function $\mathcal{C}(\tau)$ such that $\lim_{\tau \to \infty} \mathcal{C}(\tau)/ |\tau| = 0$ and estimate \eqref{9.2a}
	holds for all $\tau \in \mathbb{R}$ and all sufficiently small $\varepsilon > 0$.
\end{thrm}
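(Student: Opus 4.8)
The plan is to argue by contradiction and to reduce the assertion to its fiberwise counterpart, Theorem~\ref{hatA(k)_exp_time_shrp_thrm_1}, via the direct integral decomposition. Assume, contrary to the claim, that for some $s \ge 3$ there is a positive function $\mathcal{C}(\tau)$ with $\lim_{\tau \to \infty} \mathcal{C}(\tau)/|\tau| = 0$ such that \eqref{9.2a} holds for all $\tau \in \mathbb{R}$ and all $\varepsilon$ below some threshold $\varepsilon_0(\tau)$.

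The main tool is identity~\eqref{hatA_exps_Gelfand}: since $\mathcal{R}(\varepsilon)$ decomposes into the direct integral of the operators $\mathcal{R}(\mathbf{k}, \varepsilon)$ from~\eqref{R(k,eps)} and the operators $\widehat{\mathcal{A}}$, $\widehat{\mathcal{A}}^0$ decompose via the Gelfand transformation as in~\eqref{Gelfand_A_decompose}, the $(L_2(\mathbb{R}^d) \to L_2(\mathbb{R}^d))$-norm of $\widehat{J}(\varepsilon;\tau)\mathcal{R}(\varepsilon)^{s/2}$ equals $\esssup_{\mathbf{k} \in \widetilde{\Omega}} \|\widehat{J}(\mathbf{k},\varepsilon;\tau)\mathcal{R}(\mathbf{k},\varepsilon)^{s/2}\|_{L_2(\Omega)\to L_2(\Omega)}$. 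Hence the supposed bound \eqref{9.2a} immediately yields the fiberwise estimate \eqref{6.6a}, with the very same function $\mathcal{C}(\tau)$, for a.e.\ $\mathbf{k} \in \widetilde{\Omega}$ and all sufficiently small $\varepsilon$. The one point that deserves a line of care is the interchange of the quantifiers ``for a.e.\ $\mathbf{k}$'' and ``for all small $\varepsilon$'': for each $\tau$ one fixes a sequence $\varepsilon_n(\tau) \downarrow 0$ inside $(0, \varepsilon_0(\tau))$, discards the countable union of the corresponding exceptional null sets, and then passes back to all small $\varepsilon$ using continuity of the fiber operator in $\mathbf{k}$ and in $\varepsilon$, exactly as is done in the proof of Theorem~\ref{hatA(k)_exp_time_shrp_thrm_1} (via \cite[Lemma~9.9]{Su2017}); this step is routine and affects nothing.

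It then remains to observe that the hypotheses ``$\widehat{N}_0(\boldsymbol{\theta}_0) \ne 0$ for some $\boldsymbol{\theta}_0 \in \mathbb{S}^{d-1}$'' and ``$s \ge 3$'' are precisely those under which Theorem~\ref{hatA(k)_exp_time_shrp_thrm_1} forbids the existence of such a $\mathcal{C}(\tau)$; the resulting contradiction proves the theorem. In this sense the statement is a direct corollary of the fiberwise result, so there is no genuine obstacle at this stage: the real content sits one level down, in Theorem~\ref{hatA(k)_exp_time_shrp_thrm_1} and ultimately in the abstract Theorem~\ref{abstr_exp_time_shrp_thrm_1}, where, under $\widehat{N}_0(\boldsymbol{\theta}_0)\ne 0$, the branch $\widehat{\lambda}_j(t,\boldsymbol{\theta}_0)$ carries a nonvanishing cubic term and one tests the inequality along $\mathbf{k} = t_\flat(\varepsilon,\tau)\,\boldsymbol{\theta}_0$ with $t_\flat \sim |\tau|^{-1/3}\varepsilon^{2/3}$, chosen so that the phase $\tfrac12 \tau\varepsilon^{-2}(\widehat{\lambda}_j(t_\flat,\boldsymbol{\theta}_0) - \widehat{\gamma}_j(\boldsymbol{\theta}_0) t_\flat^2)$ stays of order one; then, evaluating at $\varepsilon = |\tau|^{-1}$, the left-hand side of the resulting inequality reduces to a positive constant, while its right-hand side is $\widetilde{\mathcal{C}}(\tau)/|\tau| \to 0$, which is impossible.
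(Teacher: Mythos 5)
Your proposal is correct and is exactly the argument the paper intends: the theorem is stated as an immediate consequence of the fiberwise Theorem~\ref{hatA(k)_exp_time_shrp_thrm_1} via the direct integral identity~\eqref{hatA_exps_Gelfand}, which converts the hypothesized bound \eqref{9.2a} into \eqref{6.6a} for a.e.\ $\mathbf{k}$ and small $\varepsilon$, contradicting that theorem. Your extra remarks on the quantifier interchange and on the choice $t_\flat \sim |\tau|^{-1/3}\varepsilon^{2/3}$ correctly reflect how the underlying fiberwise and abstract results are proved in the paper.
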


\begin{thrm}
	\label{hatA_exp_shrp_thrm_2}
	Suppose that $\widehat{N}_0 (\boldsymbol{\theta}) = 0$ for any $\boldsymbol{\theta} \in \mathbb{S}^{d-1}$ and  $\widehat{\mathcal{N}}^{(q)} (\boldsymbol{\theta}_0) \ne 0$ for some $\boldsymbol{\theta}_0 \in \mathbb{S}^{d-1}$ and some $q \in \{1,\ldots,p(\boldsymbol{\theta}_0)\}$. Let $s \ge 2$. Then there does not exist a positive function $\mathcal{C}(\tau)$ such that $\lim_{\tau \to \infty} \mathcal{C}(\tau)/ |\tau|^{1/2} = 0$ and estimate \eqref{9.2a}
	holds for all $\tau \in \mathbb{R}$ and all sufficiently small $\varepsilon > 0$.
\end{thrm}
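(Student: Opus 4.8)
The plan is to argue by contradiction and reduce the statement to its fibered counterpart, Theorem~\ref{hatA(k)_exp_time_shrp_thrm_2}, via the direct integral decomposition of $\widehat{\mathcal A}$ and $\widehat{\mathcal A}^0$. Suppose, contrary to the claim, that for some $s\ge 2$ there exists a positive function $\mathcal C(\tau)$ with $\lim_{\tau\to\infty}\mathcal C(\tau)/|\tau|^{1/2}=0$ such that \eqref{9.2a} holds for all $\tau\in\mathbb R$ and all sufficiently small $\varepsilon>0$.

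The first step is to pass from the global estimate to a fiberwise one. Recall that under the Gelfand transformation both $\widehat{\mathcal A}$ and $\widehat{\mathcal A}^0$ expand into direct integrals of the fibers $\widehat{\mathcal A}(\mathbf k)$ and $\widehat{\mathcal A}^0(\mathbf k)$ (see~(\ref{Gelfand_A_decompose})), and that $\mathcal R(\varepsilon)$ expands into $\int_{\widetilde\Omega}\oplus\,\mathcal R(\mathbf k,\varepsilon)\,d\mathbf k$. Hence, with $\widehat{J}(\mathbf k,\varepsilon;\tau)$ as in~(\ref{Jhat(k,eps)}), the operator-norm identity~(\ref{hatA_exps_Gelfand}) gives
\[
\|\widehat{J}(\varepsilon;\tau)\mathcal R(\varepsilon)^{s/2}\|_{L_2(\mathbb R^d)\to L_2(\mathbb R^d)}=\underset{\mathbf k\in\widetilde\Omega}{\esssup}\,\|\widehat{J}(\mathbf k,\varepsilon;\tau)\mathcal R(\mathbf k,\varepsilon)^{s/2}\|_{L_2(\Omega)\to L_2(\Omega)}.
\]
Therefore the assumed bound \eqref{9.2a} forces
\[
\|\widehat{J}(\mathbf k,\varepsilon;\tau)\mathcal R(\mathbf k,\varepsilon)^{s/2}\|_{L_2(\Omega)\to L_2(\Omega)}\le\mathcal C(\tau)\varepsilon
\]
for almost every $\mathbf k\in\widetilde\Omega$, for all $\tau\in\mathbb R$ and all sufficiently small $\varepsilon>0$; that is, estimate~(\ref{6.6a}) holds for a.e.\ $\mathbf k$ with the very same function $\mathcal C(\tau)$.

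The second step is simply to invoke Theorem~\ref{hatA(k)_exp_time_shrp_thrm_2}. Its hypotheses---$\widehat N_0(\boldsymbol\theta)=0$ for every $\boldsymbol\theta\in\mathbb S^{d-1}$ and $\widehat{\mathcal N}^{(q)}(\boldsymbol\theta_0)\ne 0$ for some $\boldsymbol\theta_0$ and $q$---are exactly those of the present theorem, and its conclusion asserts precisely that no positive function $\mathcal C(\tau)$ with $\lim_{\tau\to\infty}\mathcal C(\tau)/|\tau|^{1/2}=0$ can satisfy \eqref{6.6a} for all $\tau$, a.e.\ $\mathbf k\in\widetilde\Omega$, and sufficiently small $\varepsilon$. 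This contradicts what was derived in the previous step, completing the proof.

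As for the difficulty: there is essentially no new technical obstacle at this level. The substantive work has already been carried out in the abstract sharpness statement Theorem~\ref{abstr_exp_time_shrp_thrm_2}---where a concrete family $t=t_\dag(\varepsilon,\tau)$ is chosen to make the relevant $\sin$ bounded away from zero---and in its transfer to the fibers $\widehat{\mathcal A}(\mathbf k)$ in Theorem~\ref{hatA(k)_exp_time_shrp_thrm_2}, where the threshold bound~(\ref{abstr_F(t)_threshold}) and the continuity in $\mathbf k$ of the smoothed difference of exponentials are used to localize the estimate at $\mathbf k=t\boldsymbol\theta_0$. The only point that requires a moment's care here is that the identity~(\ref{hatA_exps_Gelfand}) converts the global bound into one valid for \emph{almost every} fiber $\mathbf k$, which is exactly the hypothesis that Theorem~\ref{hatA(k)_exp_time_shrp_thrm_2} rules out; no further passage to a specific $\mathbf k$ is needed at this stage.
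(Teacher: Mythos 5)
Your proposal is correct and is exactly the argument the paper intends: the global bound \eqref{9.2a} is converted into the fiberwise bound \eqref{6.6a} for a.e.\ $\mathbf{k}$ via the direct-integral identity \eqref{hatA_exps_Gelfand}, and this contradicts Theorem~\ref{hatA(k)_exp_time_shrp_thrm_2}, whose hypotheses coincide with those of the present statement. You are also right that the passage from a.e.\ $\mathbf{k}$ to the specific ray $\mathbf{k}=t\boldsymbol{\theta}_0$ is handled inside the fibered theorem (via continuity in $\mathbf{k}$), so no further work is needed here.
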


\subsection{Approximation of the sandwiched operator $e^{-i \tau \varepsilon^{-2} \mathcal{A}}$}

In $L_2(\mathbb{R}^d; \mathbb{C}^n)$, we consider the operator~$\mathcal{A}$ given by (\ref{A}). Let $f_0$ be the matrix~(\ref{f0}), and let $\mathcal{A}^0$ be the operator~(\ref{A0}). Denote
\begin{equation*}
J(\varepsilon; \tau) \coloneqq f e^{-i \tau \varepsilon^{-2} \mathcal{A}} f^{-1} - f_0 e^{-i \tau \varepsilon^{-2} \mathcal{A}^0 } f_0^{-1}.
\end{equation*}
Similarly to~(\ref{hatA_exps_Gelfand}), we have
\begin{equation*}
\| J(\varepsilon; \tau) \mathcal{R}(\varepsilon)^{s/2} \|_{L_2(\mathbb{R}^d) \to L_2(\mathbb{R}^d)} = \underset{\mathbf{k} \in \widetilde{\Omega}}{\esssup} \| J(\mathbf{k}, \varepsilon; \tau) \mathcal{R}(\mathbf{k}, \varepsilon)^{s/2} \|_{L_2(\Omega) \to L_2(\Omega)}.
\end{equation*}
Here $J(\mathbf{k}, \varepsilon; \tau)$ is defined by~(\ref{J(k,eps)}). Therefore, we deduce the following results from Theorems~\ref{sndw_A(k)_exp_general_thrm}, \ref{sndw_A(k)_exp_enchcd_thrm_1}, and \ref{sndw_A(k)_exp_enchcd_thrm_2}.

\begin{thrm}[\cite{BSu2008}]
	\label{sndw_A_exp_general_thrm}
	For $\tau \in \mathbb{R}$ and $\varepsilon > 0$ we have
	\begin{equation*}
	\bigl\| J(\varepsilon; \tau) \mathcal{R}(\varepsilon)^{3/2} \bigr\|_{L_2(\mathbb{R}^d) \to L_2(\mathbb{R}^d)} \le \mathcal{C}_1(1 + |\tau|) \varepsilon. 
	\end{equation*}
	The constant $\mathcal{C}_1$ depends only on $\alpha_0$, $\alpha_1$, $\|g\|_{L_\infty}$, $\|g^{-1}\|_{L_\infty}$, $\|f\|_{L_{\infty}}$, $\|f^{-1}\|_{L_{\infty}}$, and $r_0$.
\end{thrm}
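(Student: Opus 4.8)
The plan is to repeat verbatim the argument that produced Theorem~\ref{hatA_exp_general_thrm} from Theorem~\ref{hatA(k)_exp_general_thrm}, now in the sandwiched setting, reducing everything to the fiberwise estimate of Theorem~\ref{sndw_A(k)_exp_general_thrm}. The starting point is that every operator entering $J(\varepsilon;\tau)\mathcal{R}(\varepsilon)^{3/2}$ is block-diagonalized by the Gelfand transformation $\mathscr{U}$: by (\ref{Gelfand_A_decompose}) the operators $\mathcal{A}$ and $\mathcal{A}^0$ (the latter being a constant-coefficient DO) expand into the direct integrals of $\mathcal{A}(\mathbf{k})$ and $\mathcal{A}^0(\mathbf{k})$; the multiplication operators $f$, $f^{-1}$ are $\Gamma$-periodic, hence act fiberwise as multiplication by $f(\mathbf{x})$, $f(\mathbf{x})^{-1}$ on each $L_2(\Omega;\mathbb{C}^n)$; the matrix $f_0$ is constant; and $\mathcal{R}(\varepsilon)$ decomposes into the direct integral of the operators $\mathcal{R}(\mathbf{k},\varepsilon)$ from (\ref{R(k,eps)}). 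Consequently $\mathscr{U} J(\varepsilon;\tau)\mathcal{R}(\varepsilon)^{3/2}\mathscr{U}^{-1}$ is the direct integral over $\widetilde{\Omega}$ of the fiber operators $J(\mathbf{k},\varepsilon;\tau)\mathcal{R}(\mathbf{k},\varepsilon)^{3/2}$, with $J(\mathbf{k},\varepsilon;\tau)$ given by (\ref{J(k,eps)}).

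The next step is the standard identity for the norm of a decomposable operator: the $L_2(\mathbb{R}^d)\to L_2(\mathbb{R}^d)$ operator norm of such a direct integral equals the essential supremum over $\mathbf{k}\in\widetilde{\Omega}$ of the fiberwise $L_2(\Omega)\to L_2(\Omega)$ norms. This yields
\begin{equation*}
\| J(\varepsilon; \tau) \mathcal{R}(\varepsilon)^{3/2} \|_{L_2(\mathbb{R}^d) \to L_2(\mathbb{R}^d)} = \underset{\mathbf{k} \in \widetilde{\Omega}}{\esssup}\, \| J(\mathbf{k}, \varepsilon; \tau) \mathcal{R}(\mathbf{k}, \varepsilon)^{3/2} \|_{L_2(\Omega) \to L_2(\Omega)},
\end{equation*}
which is exactly the analogue of (\ref{hatA_exps_Gelfand}) in the sandwiched case.

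Finally I would invoke Theorem~\ref{sndw_A(k)_exp_general_thrm}, which bounds each fiber norm by $\mathcal{C}_1(1+|\tau|)\varepsilon$ with $\mathcal{C}_1$ depending only on $\alpha_0$, $\alpha_1$, $\|g\|_{L_\infty}$, $\|g^{-1}\|_{L_\infty}$, $\|f\|_{L_\infty}$, $\|f^{-1}\|_{L_\infty}$, and $r_0$ — crucially, uniformly in $\mathbf{k}\in\widetilde{\Omega}$ (and in $\boldsymbol{\theta}$), since, as noted just before Theorem~\ref{sndw_A(k)_exp_general_thrm}, the quantities $c_*$, $\delta$, $t^0$ and the bound (\ref{X_1_estimate}) for $\|X_1(\boldsymbol{\theta})\|$ are $\boldsymbol{\theta}$-independent. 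Taking the essential supremum preserves this bound, which gives the claim.

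There is no substantial obstacle: the only points requiring (routine) care are the verification that the periodic multiplier $f$ and the constant matrix $f_0$ are correctly intertwined by $\mathscr{U}$ into the fiber operators appearing in $J(\mathbf{k},\varepsilon;\tau)$, and the uniformity in $\mathbf{k}$ of the constant in Theorem~\ref{sndw_A(k)_exp_general_thrm} — both already set up in the excerpt. Thus the proof is essentially a one-line reduction to the fiberwise Theorem~\ref{sndw_A(k)_exp_general_thrm}.
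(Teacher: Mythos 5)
Your argument is exactly the paper's: the Gelfand transformation decomposes $J(\varepsilon;\tau)\mathcal{R}(\varepsilon)^{3/2}$ into the direct integral of the fiber operators $J(\mathbf{k},\varepsilon;\tau)\mathcal{R}(\mathbf{k},\varepsilon)^{3/2}$, the operator norm equals the essential supremum of the fiber norms (the analogue of~(\ref{hatA_exps_Gelfand})), and Theorem~\ref{sndw_A(k)_exp_general_thrm} supplies the uniform-in-$\mathbf{k}$ bound. The proof is correct and matches the paper's reduction.
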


\begin{thrm}
	\label{sndw_A_exp_enchcd_thrm_1}
	Let $\widehat{N}_{Q}(\boldsymbol{\theta})$ be the operator defined by~\emph{(\ref{N_Q(theta)})}. Suppose that $\widehat{N}_{Q}(\boldsymbol{\theta})=0$ for any $\boldsymbol{\theta} \in \mathbb{S}^{d-1}$. Then for $\tau \in \mathbb{R}$ and $\varepsilon > 0$ we have
	\begin{equation*}
	\bigl\| J(\varepsilon; \tau) \mathcal{R}(\varepsilon) \bigr\|_{L_2(\mathbb{R}^d) \to L_2(\mathbb{R}^d)} \le \mathcal{C}_2(1 + |\tau|^{1/2}) \varepsilon.
	\end{equation*}
	The constant $\mathcal{C}_2$ depends only on $\alpha_0$, $\alpha_1$, $\|g\|_{L_\infty}$, $\|g^{-1}\|_{L_\infty}$, $\|f\|_{L_{\infty}}$, $\|f^{-1}\|_{L_{\infty}}$, and $r_0$.
\end{thrm}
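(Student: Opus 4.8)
The plan is to reduce the estimate on $L_2(\mathbb{R}^d;\mathbb{C}^n)$ to the fiberwise estimate of Theorem~\ref{sndw_A(k)_exp_enchcd_thrm_1} by means of the Gelfand transformation, exactly as in the passage from Theorem~\ref{hatA(k)_exp_general_thrm} to Theorem~\ref{hatA_exp_general_thrm}. First I would recall that under $\mathscr{U}$ the operator $\mathcal{A}$ decomposes into the direct integral of the $\mathcal{A}(\mathbf{k})$ (see~(\ref{Gelfand_A_decompose})), and the same holds for $\mathcal{A}^0$ and for $\mathcal{R}(\varepsilon)$, which is fibered into the operators $\mathcal{R}(\mathbf{k},\varepsilon)$ of~(\ref{R(k,eps)}). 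Since $f$, $f^{-1}$, $f_0$, $f_0^{-1}$ are (fiberwise constant or periodic) multiplication operators commuting with $\mathscr{U}$ in the appropriate sense, the operator $J(\varepsilon;\tau)\mathcal{R}(\varepsilon)$ is carried by $\mathscr{U}$ into the direct integral of the fiber operators $J(\mathbf{k},\varepsilon;\tau)\mathcal{R}(\mathbf{k},\varepsilon)$ defined in~(\ref{J(k,eps)}). Consequently
\begin{equation*}
\| J(\varepsilon; \tau) \mathcal{R}(\varepsilon) \|_{L_2(\mathbb{R}^d) \to L_2(\mathbb{R}^d)} = \underset{\mathbf{k} \in \widetilde{\Omega}}{\esssup}\ \| J(\mathbf{k}, \varepsilon; \tau) \mathcal{R}(\mathbf{k}, \varepsilon) \|_{L_2(\Omega) \to L_2(\Omega)},
\end{equation*}
which is the analog of~(\ref{hatA_exps_Gelfand}) with $s=2$.

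Next I would invoke Theorem~\ref{sndw_A(k)_exp_enchcd_thrm_1}: under the hypothesis $\widehat{N}_Q(\boldsymbol{\theta}) = 0$ for all $\boldsymbol{\theta} \in \mathbb{S}^{d-1}$ (which is exactly the standing assumption), for every $\mathbf{k} \in \widetilde{\Omega}$, every $\tau \in \mathbb{R}$, and every $\varepsilon > 0$ one has
\begin{equation*}
\| J(\mathbf{k}, \varepsilon; \tau) \mathcal{R}(\mathbf{k}, \varepsilon) \|_{L_2(\Omega) \to L_2(\Omega)} \le \mathcal{C}_2 (1+ |\tau|^{1/2}) \varepsilon,
\end{equation*}
where $\mathcal{C}_2$ depends only on $\alpha_0$, $\alpha_1$, $\|g\|_{L_\infty}$, $\|g^{-1}\|_{L_\infty}$, $\|f\|_{L_\infty}$, $\|f^{-1}\|_{L_\infty}$, and $r_0$. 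The crucial point is that this constant is independent of $\mathbf{k}$ (equivalently, of $\boldsymbol{\theta} = \mathbf{k}/|\mathbf{k}|$): this uniformity was already established in the discussion preceding Theorem~\ref{sndw_A(k)_exp_general_thrm}, where it was noted that $c_*$, $\delta$, $t^0$ do not depend on $\boldsymbol{\theta}$ and that $\|X_1(\boldsymbol{\theta})\|$ admits the $\boldsymbol{\theta}$-independent bound $\alpha_1^{1/2}\|g\|_{L_\infty}^{1/2}\|f\|_{L_\infty}$ from~(\ref{X_1_estimate}); moreover, by Lemma~\ref{abstr_N_hatNQ_lemma}, $\widehat{N}_Q(\boldsymbol{\theta})=0$ is equivalent to $N(\boldsymbol{\theta})=0$, so the hypothesis of the abstract Theorem~\ref{abstr_exp_enchcd_thrm_1} holds for every $\boldsymbol{\theta}$. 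Taking the essential supremum over $\mathbf{k} \in \widetilde{\Omega}$ in the displayed fiberwise bound and combining with the Gelfand identity above yields the claimed estimate.

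I do not anticipate any real obstacle here: the statement is a direct corollary, and the only thing requiring attention is the bookkeeping of the uniformity of $\mathcal{C}_2$ in $\boldsymbol{\theta}$, which is already in place. If one wanted to be fully careful, the mildest technical point would be to justify that the fiber operators $J(\mathbf{k},\varepsilon;\tau)\mathcal{R}(\mathbf{k},\varepsilon)$ form a measurable (indeed continuous in $\mathbf{k}$) family, so that the passage to the direct integral and the identification of the operator norm with the essential supremum of fiber norms is legitimate; this is standard and parallels the corresponding step for $\widehat{\mathcal{A}}$.
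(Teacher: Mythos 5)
Your proposal is correct and follows essentially the same route as the paper: the Gelfand decomposition gives $\| J(\varepsilon;\tau)\mathcal{R}(\varepsilon)\| = \esssup_{\mathbf{k}\in\widetilde{\Omega}}\| J(\mathbf{k},\varepsilon;\tau)\mathcal{R}(\mathbf{k},\varepsilon)\|$, and the claim then follows from the $\mathbf{k}$-uniform fiber estimate of Theorem~\ref{sndw_A(k)_exp_enchcd_thrm_1}. Your remarks on the $\boldsymbol{\theta}$-independence of the constants and on measurability of the fiber family match the paper's (tacit) treatment.
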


\begin{thrm}
	\label{sndw_A_exp_enchcd_thrm_2}
	Suppose that Condition~\emph{\ref{sndw_cond1}} \emph{(}or more restrictive Condition~\emph{\ref{sndw_cond2}}\emph{)} is satisfied. Then for $\tau \in \mathbb{R}$ and $\varepsilon > 0$ we have
	\begin{equation*}
	\bigl\| J(\varepsilon; \tau) \mathcal{R}(\varepsilon) \bigr\|_{L_2(\mathbb{R}^d) \to L_2(\mathbb{R}^d)} \le \mathcal{C}_3(1 + |\tau|^{1/2}) \varepsilon.
	\end{equation*}
	The constant $\mathcal{C}_3$ depends only on $\alpha_0$, $\alpha_1$, $\|g\|_{L_\infty}$, $\|g^{-1}\|_{L_\infty}$, $\|f\|_{L_{\infty}}$, $\|f^{-1}\|_{L_{\infty}}$, $r_0$, and also on $n$ and the number $c^{\circ}$.
\end{thrm}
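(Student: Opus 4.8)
The plan is to pass to the direct integral decomposition furnished by the Gelfand transformation $\mathscr{U}$ and to reduce the claim to the fiberwise estimate of Theorem~\ref{sndw_A(k)_exp_enchcd_thrm_2}, exactly as Theorem~\ref{hatA_exp_enchcd_thrm_2} was deduced from its fiberwise counterpart. First I would record that the operators $\mathcal{A}$, $\mathcal{A}^0$ and $\mathcal{H}_0 = -\Delta$ all expand under $\mathscr{U}$ into direct integrals of the fiber operators $\mathcal{A}(\mathbf{k})$, $\mathcal{A}^0(\mathbf{k})$, $\mathcal{H}_0(\mathbf{k})$ (cf.~(\ref{Gelfand_A_decompose})), and that the operators of multiplication by the $\Gamma$-periodic matrix $f(\mathbf{x})$, by $f(\mathbf{x})^{-1}$, and by the constant matrix $f_0$ (see~(\ref{f0})) commute with $\mathscr{U}$, acting on each fiber $L_2(\Omega;\mathbb{C}^n)$ as the corresponding multiplication. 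Hence both $J(\varepsilon;\tau)$ and $\mathcal{R}(\varepsilon)$ decompose into direct integrals over $\widetilde{\Omega}$ with fibers $J(\mathbf{k},\varepsilon;\tau)$ (given by~(\ref{J(k,eps)})) and $\mathcal{R}(\mathbf{k},\varepsilon)$ (given by~(\ref{R(k,eps)})), and the norm of their product equals the essential supremum of the fiber norms:
\begin{equation*}
\| J(\varepsilon;\tau)\,\mathcal{R}(\varepsilon)\|_{L_2(\mathbb{R}^d)\to L_2(\mathbb{R}^d)} = \esssup_{\mathbf{k}\in\widetilde{\Omega}} \| J(\mathbf{k},\varepsilon;\tau)\,\mathcal{R}(\mathbf{k},\varepsilon)\|_{L_2(\Omega)\to L_2(\Omega)}.
\end{equation*}
It therefore suffices to bound the fiber norm uniformly in $\mathbf{k}$.

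Next I would split $\widetilde{\Omega}$ into the ball $|\mathbf{k}|\le t^{00}$ and its complement. Under Condition~\ref{sndw_cond1} (a fortiori under Condition~\ref{sndw_cond2}) the number $c^{\circ}$ of~(\ref{c^circ}) is strictly positive and the threshold $t^{00}$ was chosen independently of $\boldsymbol{\theta} = \mathbf{k}/|\mathbf{k}|$; hence, for $|\mathbf{k}|\le t^{00}$, Theorem~\ref{sndw_A(k)_exp_enchcd_thrm_2} applied to the family $A(t;\boldsymbol{\theta}) = \mathcal{A}(\mathbf{k})$ yields $\| J(\mathbf{k},\varepsilon;\tau)\,\mathcal{R}(\mathbf{k},\varepsilon)\| \le \mathcal{C}_3(1+|\tau|^{1/2})\varepsilon$ with a constant $\mathcal{C}_3$ independent of $\mathbf{k}$ and depending only on the data listed in the statement. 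For $|\mathbf{k}| > t^{00}$ the bound is elementary: the two exponentials in~(\ref{J(k,eps)}) are unitary, so $\| J(\mathbf{k},\varepsilon;\tau)\| \le \|f\|_{L_{\infty}}\|f^{-1}\|_{L_{\infty}} + |f_0|\,|f_0^{-1}| \le 2\|f\|_{L_{\infty}}\|f^{-1}\|_{L_{\infty}}$, while $\mathcal{R}(\mathbf{k},\varepsilon)$ commutes with $\widehat{P}$ and, by the analogue of~(\ref{R_hatP_est}) with $\widehat{t}^{\,0}$ replaced by $t^{00}$ on $\widehat{P}L_2(\Omega)$ and by~(\ref{R(k,eps)(I-P)_est}) on its orthogonal complement, satisfies $\|\mathcal{R}(\mathbf{k},\varepsilon)\| \le C\varepsilon$ for all $\varepsilon>0$ (using $\varepsilon^2(a^2+\varepsilon^2)^{-1}\le\max\{1,a^{-2}\}\,\varepsilon$ with $a = t^{00}$ and $a = r_0$). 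Multiplying, $\| J(\mathbf{k},\varepsilon;\tau)\,\mathcal{R}(\mathbf{k},\varepsilon)\| \le C'\varepsilon \le C'(1+|\tau|^{1/2})\varepsilon$.

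Taking the essential supremum over $\mathbf{k}\in\widetilde{\Omega}$ and enlarging $\mathcal{C}_3$ if necessary completes the proof. I do not expect a genuine obstacle here beyond the abstract and fiberwise results already in hand; the only two points demanding some care are \emph{(i)} checking that conjugation by the rapidly oscillating---but $\mathbf{k}$-independent---factors $f$, $f^{-1}$ is compatible with the Gelfand decomposition, so that the non-selfadjoint sandwiched family $J(\varepsilon;\tau)$ really is a direct integral of the fibers $J(\mathbf{k},\varepsilon;\tau)$, and \emph{(ii)} the bookkeeping ensuring that the constant delivered by Theorem~\ref{sndw_A(k)_exp_enchcd_thrm_2} is uniform in $\boldsymbol{\theta}$---which is exactly why the $\boldsymbol{\theta}$-independent quantities $c^{\circ}$ in~(\ref{c^circ}) and $t^{00}$ were introduced before that theorem.
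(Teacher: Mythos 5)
Your proposal is correct and follows essentially the same route as the paper: pass to the Gelfand direct integral, identify $\|J(\varepsilon;\tau)\mathcal{R}(\varepsilon)\|$ with the essential supremum of the fiber norms, and invoke the fiberwise Theorem~\ref{sndw_A(k)_exp_enchcd_thrm_2}. The only (harmless) redundancy is your separate treatment of $|\mathbf{k}|>t^{00}$ --- Theorem~\ref{sndw_A(k)_exp_enchcd_thrm_2} already asserts the bound for all $\mathbf{k}\in\widetilde{\Omega}$, that case having been absorbed when it was derived via~(\ref{R_hatP_est}), (\ref{R(k,eps)(I-P)_est}).
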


Theorem~\ref{sndw_A_exp_general_thrm}  was proved in~\cite[Theorem~10.1]{BSu2008}. Theorems~\ref{sndw_A_exp_enchcd_thrm_1} and~\ref{sndw_A_exp_enchcd_thrm_2} improve the results of Theorems~12.6 and~12.7 from~\cite{Su2017} with respect to dependence of the estimates on $\tau$.

Application of Theorems~\ref{sndw_A(k)_exp_smooth_shrp_thrm_1}, \ref{sndw_A(k)_exp_smooth_shrp_thrm_2} allows us to confirm that the results of Theorems~\ref{sndw_A_exp_general_thrm}, \ref{sndw_A_exp_enchcd_thrm_1}, and \ref{sndw_A_exp_enchcd_thrm_2} are sharp with respect to the smoothing operator.

\begin{thrm}[\cite{Su2017}]
	\label{sndw_A_exp_smooth_shrp_thrm_1}
	Suppose that $\widehat{N}_{0,Q} (\boldsymbol{\theta}_0) \ne 0$ for some $\boldsymbol{\theta}_0 \in \mathbb{S}^{d-1}$. Let $\tau \ne 0$ and $0 \le s < 3$. Then there does not exist a constant $\mathcal{C}(\tau) >0$ such that the estimate
	\begin{equation}
	\label{9.2b}
	\bigl\| J(\varepsilon; \tau) \mathcal{R}(\varepsilon)^{s/2} \bigr\|_{L_2(\mathbb{R}^d) \to L_2(\mathbb{R}^d)} \le \mathcal{C}(\tau) \varepsilon
	\end{equation}
	holds for all sufficiently small $\varepsilon > 0$.
\end{thrm}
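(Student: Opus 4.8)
The plan is to reduce this $L_2(\mathbb{R}^d)$ statement to its already-established fiberwise counterpart, Theorem~\ref{sndw_A(k)_exp_smooth_shrp_thrm_1}, by means of the Gelfand transformation, and then to argue by contradiction. Suppose that for some $s\in[0,3)$ there were a constant $\mathcal{C}(\tau)>0$ for which \eqref{9.2b} holds for all sufficiently small $\varepsilon>0$. Since $\mathcal{A}$, $\mathcal{A}^0$ and $\mathcal{R}(\varepsilon)$ expand under $\mathscr{U}$ into the direct integrals over $\widetilde{\Omega}$ of $\mathcal{A}(\mathbf{k})$, $\mathcal{A}^0(\mathbf{k})$ and $\mathcal{R}(\mathbf{k},\varepsilon)$, while the multiplications by $f$, $f^{-1}$, $f_0$, $f_0^{-1}$ act fiberwise, we have the norm identity recorded just before the theorem, namely
\[
\| J(\varepsilon; \tau) \mathcal{R}(\varepsilon)^{s/2} \|_{L_2(\mathbb{R}^d) \to L_2(\mathbb{R}^d)} = \underset{\mathbf{k} \in \widetilde{\Omega}}{\esssup} \, \| J(\mathbf{k}, \varepsilon; \tau) \mathcal{R}(\mathbf{k}, \varepsilon)^{s/2} \|_{L_2(\Omega) \to L_2(\Omega)} .
\]
Hence, for each admissible $\varepsilon$ the fiber estimate \eqref{8.2a} (with the same $\mathcal{C}(\tau)$) holds for $\mathbf{k}$ outside an $\varepsilon$-dependent null subset of $\widetilde{\Omega}$.

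Next I would remove the dependence of the exceptional set on $\varepsilon$: fix a sequence $\varepsilon_m\downarrow 0$ along which \eqref{9.2b} is valid, and intersect the corresponding full-measure sets. On the resulting full-measure set $\mathcal{O}\subset\widetilde{\Omega}$, inequality \eqref{8.2a} holds for every $\mathbf{k}\in\mathcal{O}$ and every $\varepsilon=\varepsilon_m$; in particular it holds for almost every $\mathbf{k}\in\widetilde{\Omega}$ and all sufficiently small $\varepsilon>0$ along this sequence. But the hypothesis that $\widehat{N}_{0,Q}(\boldsymbol{\theta}_0)\ne 0$ for some $\boldsymbol{\theta}_0\in\mathbb{S}^{d-1}$, together with $\tau\ne 0$ and $0\le s<3$, is precisely the configuration excluded by Theorem~\ref{sndw_A(k)_exp_smooth_shrp_thrm_1}. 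This contradiction proves the theorem.

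The measure-theoretic reduction above is routine; the genuine content --- and hence the only real obstacle --- sits one level down, in the fiberwise Theorem~\ref{sndw_A(k)_exp_smooth_shrp_thrm_1}, which the argument invokes as a black box and which here has already been obtained (following \cite[Theorem~11.7]{Su2017}) from Theorem~\ref{abstr_sndw_exp_smooth_shrp_thrm_1}. Its proof runs parallel to that of Theorem~\ref{sndw_A(k)_exp_time_shrp_thrm_1} above: using the threshold estimate $\|F(\mathbf{k})-P\|\le C_1|\mathbf{k}|$ for $|\mathbf{k}|\le t^0$ (see~\eqref{abstr_F(t)_threshold}), the bounds \eqref{R_P}--\eqref{R(k,eps)(I-P)_est} on the smoothing operator, the identity $f^{-1}\widehat{P}=Pf^*\overline{Q}$ coming from \eqref{abstr_P_Phat}, and the continuity in $\mathbf{k}$ of the relevant fiber operator, one transfers an almost-everywhere fiber estimate to an estimate on the single ray $\mathbf{k}=t\boldsymbol{\theta}_0$, $0<t\le t^0$, which in abstract terms is \eqref{abstr_sndw_exp_shrp_est}; since $\widehat{N}_{0,Q}(\boldsymbol{\theta}_0)\ne 0$ is equivalent to the non-vanishing of some cubic coefficient $\mu_j(\boldsymbol{\theta}_0)$ in the expansions \eqref{A_eigenvalues_series}, the abstract sharpness result Theorem~\ref{abstr_sndw_exp_smooth_shrp_thrm_1} then applies and yields the contradiction there. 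In the present reduction none of this needs to be redone.
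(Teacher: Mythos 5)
Your reduction is correct and is exactly the route the paper intends: the theorem follows from the fiberwise Theorem~\ref{sndw_A(k)_exp_smooth_shrp_thrm_1} via the direct-integral identity $\| J(\varepsilon;\tau)\mathcal{R}(\varepsilon)^{s/2}\| = \esssup_{\mathbf{k}} \| J(\mathbf{k},\varepsilon;\tau)\mathcal{R}(\mathbf{k},\varepsilon)^{s/2}\|$, with the fiberwise statement (proved in \cite{Su2017} from the abstract Theorem~\ref{abstr_sndw_exp_smooth_shrp_thrm_1}) used as a black box. Your intersection over a sequence $\varepsilon_m$ is superfluous: in the fiberwise theorems the exceptional null set of $\mathbf{k}$ is permitted to depend on $\varepsilon$ (this is how the continuity-in-$\mathbf{k}$ step in their proofs is applied, for each fixed $\varepsilon$), so the esssup identity already delivers precisely the excluded configuration for every sufficiently small $\varepsilon$, not merely along a subsequence.
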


\begin{thrm}
	\label{sndw_A_exp_smooth_shrp_thrm_2}
	Suppose that $\widehat{N}_{0,Q} (\boldsymbol{\theta}) = 0$ for any $\boldsymbol{\theta} \in \mathbb{S}^{d-1}$ and  $\widehat{\mathcal{N}}_Q^{(q)} (\boldsymbol{\theta}_0) \ne 0$  for some $\boldsymbol{\theta}_0 \in \mathbb{S}^{d-1}$ and some $q \in \{1,\ldots,p(\boldsymbol{\theta}_0)\}$. 
	Let $\tau \ne 0$ and $0 \le s < 2$. Then there does not exist a constant $\mathcal{C}(\tau) >0$ such that estimate \eqref{9.2b}
	holds for all sufficiently small $\varepsilon > 0$.
\end{thrm}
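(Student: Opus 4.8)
The plan is to deduce the statement from its fiberwise version, Theorem~\ref{sndw_A(k)_exp_smooth_shrp_thrm_2}, by means of the Gelfand decomposition, arguing by contradiction. Fix $\tau\ne 0$ and $s$ with $0\le s<2$, and suppose, contrary to the claim, that there is a constant $\mathcal C(\tau)>0$ for which \eqref{9.2b} holds for all sufficiently small $\varepsilon>0$, say for $0<\varepsilon<\varepsilon_0$. Using the identity recorded just before Theorem~\ref{sndw_A_exp_general_thrm},
\[
\| J(\varepsilon; \tau) \mathcal{R}(\varepsilon)^{s/2} \|_{L_2(\mathbb{R}^d) \to L_2(\mathbb{R}^d)} = \underset{\mathbf{k} \in \widetilde{\Omega}}{\esssup}\; \| J(\mathbf{k}, \varepsilon; \tau) \mathcal{R}(\mathbf{k}, \varepsilon)^{s/2} \|_{L_2(\Omega) \to L_2(\Omega)},
\]
one concludes that, for almost every $\mathbf k\in\widetilde\Omega$ and every $\varepsilon\in(0,\varepsilon_0)$, the fiberwise estimate \eqref{8.2a} holds with the same constant $\mathcal C(\tau)$. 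Since the hypotheses here ($\widehat{N}_{0,Q}(\boldsymbol\theta)=0$ for all $\boldsymbol\theta\in\mathbb S^{d-1}$, and $\widehat{\mathcal N}_Q^{(q)}(\boldsymbol\theta_0)\ne 0$ for some $\boldsymbol\theta_0$ and some $q$) are exactly those of Theorem~\ref{sndw_A(k)_exp_smooth_shrp_thrm_2}, this contradicts that theorem, and the proof is complete.

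The one place that calls for a little attention is the passage from ``\eqref{9.2b} for all small $\varepsilon$'' to ``\eqref{8.2a} for a.e. $\mathbf k$ and all small $\varepsilon$'', since a priori the null exceptional set appearing in the $\esssup$ depends on $\varepsilon$. This is resolved exactly as in the proofs of Theorems~\ref{sndw_A(k)_exp_time_shrp_thrm_1} and \ref{sndw_A_exp_time_shrp_thrm_1}: for fixed $\tau$ and $\varepsilon$ the map $\mathbf k\mapsto J(\mathbf k,\varepsilon;\tau)\mathcal R(\mathbf k,\varepsilon)^{s/2}$ is continuous on the ball $|\mathbf k|\le t^0$ (cf.\ \cite[Lemma~11.8]{Su2017}), so the exceptional set, being open and of measure zero, is empty there; hence the fiber bound holds at $\mathbf k=t\boldsymbol\theta_0$ for all small $t$ and all $\varepsilon\in(0,\varepsilon_0)$, which is all that Theorem~\ref{sndw_A(k)_exp_smooth_shrp_thrm_2} requires. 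Alternatively, a countable union over a sequence $\varepsilon_n\downarrow 0$ already suffices, because the contradiction underlying Theorem~\ref{sndw_A(k)_exp_smooth_shrp_thrm_2} — which comes, via inequality~\eqref{abstr_sndw_exp_est_2} and Theorem~\ref{abstr_exp_smooth_shrp_thrm_2}, from testing the estimate on $\omega_j$ with $t=t(\varepsilon)=c\varepsilon^{1/2}$ and letting $\varepsilon\to 0$ — uses only a sequence of values of $\varepsilon$.

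In short, the substantive content sits in Theorem~\ref{sndw_A(k)_exp_smooth_shrp_thrm_2} (and ultimately in the abstract Theorem~\ref{abstr_sndw_exp_smooth_shrp_thrm_2}, which in turn reduces via \eqref{abstr_sndw_exp_est_2} to Theorem~\ref{abstr_exp_smooth_shrp_thrm_2}); the present statement is a routine Gelfand-transform consequence of it, obtained in the same way that Theorem~\ref{sndw_A_exp_smooth_shrp_thrm_1} was obtained from Theorem~\ref{sndw_A(k)_exp_smooth_shrp_thrm_1}. I therefore do not expect a genuine obstacle at this last step; the only care needed is the bookkeeping of the quantifiers over $\mathbf k$ and $\varepsilon$ just described.
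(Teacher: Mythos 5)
Your proposal is correct and follows exactly the route the paper intends: the paper treats this theorem as an immediate consequence of the fiberwise Theorem~\ref{sndw_A(k)_exp_smooth_shrp_thrm_2} via the $\esssup$ identity for $\| J(\varepsilon;\tau)\mathcal{R}(\varepsilon)^{s/2}\|$, with no further argument given. Your extra care about the $\varepsilon$-dependence of the exceptional null set is consistent with how the analogous fiber statements are handled elsewhere in the paper and does not change the substance.
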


Theorem~\ref{sndw_A_exp_smooth_shrp_thrm_1} was proved in~\cite[Theorem~12.8]{Su2017}.

Next, application of Theorems~\ref{sndw_A(k)_exp_time_shrp_thrm_1}, \ref{sndw_A(k)_exp_time_shrp_thrm_2} allows us to confirm that the results of 
Theorems~\ref{sndw_A_exp_general_thrm}, \ref{sndw_A_exp_enchcd_thrm_1}, and \ref{sndw_A_exp_enchcd_thrm_2} are sharp with respect to the dependence of the estimates on time.

\begin{thrm}
	\label{sndw_A_exp_time_shrp_thrm_1}
	Suppose that $\widehat{N}_{0,Q} (\boldsymbol{\theta}_0) \ne 0$ for some $\boldsymbol{\theta}_0 \in \mathbb{S}^{d-1}$. Let $s \ge 3$. Then there does not exist a positive function $\mathcal{C}(\tau)$ such that $\lim_{\tau \to \infty} \mathcal{C}(\tau)/ |\tau| = 0$ and estimate \eqref{9.2b} 
	holds for all $\tau \in \mathbb{R}$ and all sufficiently small $\varepsilon > 0$.
\end{thrm}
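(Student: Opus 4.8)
The plan is to deduce the statement from its fibered analogue, Theorem~\ref{sndw_A(k)_exp_time_shrp_thrm_1}, which has already been established. The bridge is the direct integral identity recorded just above Theorem~\ref{sndw_A_exp_general_thrm}: under the Gelfand transformation the operators $\mathcal{A}$ and $\mathcal{A}^0$ expand as in~\eqref{Gelfand_A_decompose}, the multiplication operator $f$ expands into the direct integral of the fiber multiplications by $f(\mathbf{x})$, the constant matrix $f_0$ expands trivially, and $\mathcal{R}(\varepsilon)$ expands into the direct integral of the operators $\mathcal{R}(\mathbf{k},\varepsilon)$; consequently, for all $\tau\in\mathbb{R}$, $\varepsilon>0$, and $s>0$,
\[
\bigl\| J(\varepsilon;\tau)\,\mathcal{R}(\varepsilon)^{s/2}\bigr\|_{L_2(\mathbb{R}^d)\to L_2(\mathbb{R}^d)}
=\underset{\mathbf{k}\in\widetilde{\Omega}}{\esssup}\ \bigl\| J(\mathbf{k},\varepsilon;\tau)\,\mathcal{R}(\mathbf{k},\varepsilon)^{s/2}\bigr\|_{L_2(\Omega)\to L_2(\Omega)},
\]
where $J(\mathbf{k},\varepsilon;\tau)$ is given by~\eqref{J(k,eps)}.

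First I would argue by contradiction. Suppose that for the given $s\ge 3$ there is a positive function $\mathcal{C}(\tau)$ with $\lim_{\tau\to\infty}\mathcal{C}(\tau)/|\tau|=0$ such that~\eqref{9.2b} holds for every $\tau\in\mathbb{R}$ and all $\varepsilon<\varepsilon_0(\tau)$ (the threshold possibly depending on $\tau$). By the identity above, for each such $\tau$ and $\varepsilon$ we obtain
\[
\bigl\| J(\mathbf{k},\varepsilon;\tau)\,\mathcal{R}(\mathbf{k},\varepsilon)^{s/2}\bigr\|_{L_2(\Omega)\to L_2(\Omega)}\le\mathcal{C}(\tau)\,\varepsilon\qquad\text{for a.e. }\mathbf{k}\in\widetilde{\Omega},
\]
which is precisely estimate~\eqref{8.2a} with the same function $\mathcal{C}(\tau)$, valid for all $\tau\in\mathbb{R}$, almost every $\mathbf{k}\in\widetilde{\Omega}$, and sufficiently small $\varepsilon>0$. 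Since $\widehat{N}_{0,Q}(\boldsymbol{\theta}_0)\ne 0$ and $s\ge 3$, this contradicts Theorem~\ref{sndw_A(k)_exp_time_shrp_thrm_1}, and the proof is complete.

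The substance of the argument therefore sits entirely in the fibered Theorem~\ref{sndw_A(k)_exp_time_shrp_thrm_1}, which is already proved in the text; beyond transporting the hypothesis and conclusion through the Gelfand decomposition there is no genuine obstacle here. For completeness, the mechanism behind the fibered result is: one freezes the direction $\boldsymbol{\theta}_0$ (using continuity of the relevant fiber operators in $\mathbf{k}$ together with the threshold bound~\eqref{abstr_F(t)_threshold} and the identity~\eqref{abstr_P_Phat} in the form $f^{-1}\widehat P=Pf^*\overline Q$), thereby reducing~\eqref{8.2a} along $\mathbf{k}=t\boldsymbol{\theta}_0$ to the abstract inequality~\eqref{abstr_sndw_exp_shrp_est}, and then invokes Theorem~\ref{abstr_sndw_exp_time_shrp_thrm_1}; the latter exploits that $\widehat{N}_{0,Q}(\boldsymbol{\theta}_0)\ne0$ forces $\mu_j(\boldsymbol{\theta}_0)\ne0$ for some $j$, i.e. a nonzero $t^3$ term in the expansion~\eqref{abstr_A(t)_eigenvalues_series}, which is incompatible with a bound of the shape $\mathcal{C}(\tau)\varepsilon$ with $\mathcal{C}(\tau)=o(|\tau|)$ when $s\ge3$. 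The only point requiring a word of care in the present reduction is that the $\varepsilon$-threshold in~\eqref{9.2b} may depend on $\tau$; this is harmless, since Theorem~\ref{sndw_A(k)_exp_time_shrp_thrm_1} is itself stated with the ``sufficiently small $\varepsilon$'' quantifier.
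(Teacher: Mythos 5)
Your proposal is correct and coincides with the paper's (unwritten) argument: the paper simply remarks that Theorem~\ref{sndw_A_exp_time_shrp_thrm_1} follows by applying the fibered Theorem~\ref{sndw_A(k)_exp_time_shrp_thrm_1} through the $\esssup$ identity for $\| J(\varepsilon;\tau)\mathcal{R}(\varepsilon)^{s/2}\|$, which is exactly the contradiction argument you give. Your closing remarks on the mechanism behind the fibered result and on the $\tau$-dependence of the $\varepsilon$-threshold are accurate and consistent with the quantifier structure used in Theorems~\ref{sndw_A(k)_exp_time_shrp_thrm_1} and~\ref{abstr_sndw_exp_time_shrp_thrm_1}.
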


\begin{thrm}
	\label{sndw_A_exp_time_shrp_thrm_2}
	Suppose that $\widehat{N}_{0,Q} (\boldsymbol{\theta}) = 0$ for any $\boldsymbol{\theta} \in \mathbb{S}^{d-1}$ and $\widehat{\mathcal{N}}_Q^{(q)} (\boldsymbol{\theta}_0) \ne 0$  for some $\boldsymbol{\theta}_0 \in \mathbb{S}^{d-1}$ and some $q \in \{1,\ldots,p(\boldsymbol{\theta}_0)\}$. Let $s \ge 2$. Then there does not exist a positive function $\mathcal{C}(\tau)$ such that $\lim_{\tau \to \infty} \mathcal{C}(\tau)/ |\tau|^{1/2} = 0$ and estimate \eqref{9.2b}
	holds for all $\tau \in \mathbb{R}$ and all sufficiently small $\varepsilon > 0$.
\end{thrm}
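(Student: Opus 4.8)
The plan is to argue by contradiction and to mirror the passage from fibre estimates to $L_2(\mathbb{R}^d)$-estimates that is already used to derive Theorem~\ref{sndw_A_exp_time_shrp_thrm_1}, but feeding the argument into the abstract sharpness statement Theorem~\ref{abstr_sndw_exp_time_shrp_thrm_2} rather than Theorem~\ref{abstr_sndw_exp_time_shrp_thrm_1}. So, suppose that for some $s\ge 2$ there is a positive function $\mathcal{C}(\tau)$ with $\lim_{\tau\to\infty}\mathcal{C}(\tau)/|\tau|^{1/2}=0$ such that \eqref{9.2b} holds for all $\tau\in\mathbb{R}$ and all sufficiently small $\varepsilon>0$. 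By the direct-integral identity $\|J(\varepsilon;\tau)\mathcal{R}(\varepsilon)^{s/2}\|_{L_2(\mathbb{R}^d)\to L_2(\mathbb{R}^d)}=\esssup_{\mathbf{k}\in\widetilde{\Omega}}\|J(\mathbf{k},\varepsilon;\tau)\mathcal{R}(\mathbf{k},\varepsilon)^{s/2}\|_{L_2(\Omega)\to L_2(\Omega)}$ stated before Theorem~\ref{sndw_A_exp_general_thrm}, estimate \eqref{8.2a} then holds for a.e.\ $\mathbf{k}\in\widetilde{\Omega}$, for all $\tau$, and for all small $\varepsilon$, with the same $\mathcal{C}(\tau)$. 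Since the hypotheses $\widehat{N}_{0,Q}(\boldsymbol{\theta})=0$ for all $\boldsymbol{\theta}$ and $\widehat{\mathcal{N}}_Q^{(q)}(\boldsymbol{\theta}_0)\ne 0$ for some $q$ are exactly those of Theorem~\ref{sndw_A(k)_exp_time_shrp_thrm_2}, this is the sought contradiction.

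The substance of the matter therefore lies in Theorem~\ref{sndw_A(k)_exp_time_shrp_thrm_2}, whose proof I would carry out word for word as the proof of Theorem~\ref{sndw_A(k)_exp_time_shrp_thrm_1}, with $|\tau|$ replaced by $|\tau|^{1/2}$, the threshold $s\ge 3$ replaced by $s\ge 2$, and Theorem~\ref{abstr_sndw_exp_time_shrp_thrm_1} replaced by Theorem~\ref{abstr_sndw_exp_time_shrp_thrm_2}. Concretely: from a hypothetical \eqref{8.2a} one discards the $(I-\widehat{P})$-component via (\ref{R(k,eps)(I-P)_est}) and uses (\ref{R_P}) to replace $\mathcal{R}(\mathbf{k},\varepsilon)^{s/2}\widehat{P}$ by $\varepsilon^s(|\mathbf{k}|^2+\varepsilon^2)^{-s/2}\widehat{P}$; one rewrites $f^{-1}\widehat{P}=Pf^*\overline{Q}$ via (\ref{abstr_P_Phat}) and, using $\|F(\mathbf{k})-P\|\le C_1|\mathbf{k}|$ from (\ref{abstr_F(t)_threshold}), passes to the spectrally truncated operator $f e^{-i\tau\varepsilon^{-2}\mathcal{A}(\mathbf{k})}F(\mathbf{k})f^*\overline{Q}-f_0 e^{-i\tau\varepsilon^{-2}\mathcal{A}^0(\mathbf{k})}f_0^{-1}\widehat{P}$, which depends continuously on $\mathbf{k}$ in the ball $|\mathbf{k}|\le t^0$ (the continuity lemma, \cite[Lemma~11.8]{Su2017}); hence the a.e.\ estimate extends to every $\mathbf{k}$ in that ball, in particular to $\mathbf{k}=t\boldsymbol{\theta}_0$ with $t\le t^0$; undoing the truncation produces an estimate for $(f e^{-i\tau\varepsilon^{-2}\mathcal{A}(t\boldsymbol{\theta}_0)}f^{-1}-f_0 e^{-i\tau\varepsilon^{-2}\mathcal{A}^0(t\boldsymbol{\theta}_0)}f_0^{-1})\widehat{P}$ with weight $\varepsilon^s(t^2+\varepsilon^2)^{-s/2}$ and a constant $\check{\mathcal{C}}'(\tau)$ still satisfying $\check{\mathcal{C}}'(\tau)/|\tau|^{1/2}\to 0$. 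By (\ref{A^0(k)P}) this is precisely estimate \eqref{abstr_sndw_exp_shrp_est} for the bordered family $A(t;\boldsymbol{\theta}_0)=f^*\widehat{\mathcal{A}}(t\boldsymbol{\theta}_0)f$ (with $M=f$, $M_0=f_0$, germ $\widehat{S}(\boldsymbol{\theta}_0)$), and since $\widehat{N}_{0,Q}(\boldsymbol{\theta}_0)=0$ and $\widehat{\mathcal{N}}_Q^{(q)}(\boldsymbol{\theta}_0)\ne 0$, Theorem~\ref{abstr_sndw_exp_time_shrp_thrm_2} (applicable because $s\ge 2$) rules it out.

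The only genuinely analytic step — and hence the main obstacle — is upgrading the "a.e.\ $\mathbf{k}$" estimate handed over by the direct integral, whose exceptional null set a priori depends on $\tau$ and $\varepsilon$, to a pointwise estimate along the prescribed ray $\mathbf{k}=t\boldsymbol{\theta}_0$. This is exactly where the spectral truncation $F(\mathbf{k})$ is indispensable: the bare exponential $e^{-i\tau\varepsilon^{-2}\mathcal{A}(\mathbf{k})}$ is not norm-continuous in $\mathbf{k}$ (the higher band functions $E_j(\mathbf{k})$ oscillate on the scale $\varepsilon^{-2}$), while $e^{-i\tau\varepsilon^{-2}\mathcal{A}(\mathbf{k})}F(\mathbf{k})$ on the low-energy subspace is, since the eigenvalues $\lambda_l(\mathbf{k})\le\delta$ there remain bounded. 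Everything else is bookkeeping identical to the non-sandwiched case and to the $|\tau|$-version already in the paper.
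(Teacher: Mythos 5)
Your proposal is correct and follows exactly the route the paper intends: reduce \eqref{9.2b} to the fibre estimate \eqref{8.2a} via the $\esssup$ identity, then invoke Theorem~\ref{sndw_A(k)_exp_time_shrp_thrm_2}, whose proof is the proof of Theorem~\ref{sndw_A(k)_exp_time_shrp_thrm_1} verbatim with $|\tau|$ replaced by $|\tau|^{1/2}$, $s\ge 3$ by $s\ge 2$, and Theorem~\ref{abstr_sndw_exp_time_shrp_thrm_1} by Theorem~\ref{abstr_sndw_exp_time_shrp_thrm_2}. All the key ingredients (discarding the $(I-\widehat{P})$ part, the identity $f^{-1}\widehat{P}=Pf^*\overline{Q}$, the spectral truncation by $F(\mathbf{k})$ and the continuity lemma needed to pass from a.e.\ $\mathbf{k}$ to the ray $\mathbf{k}=t\boldsymbol{\theta}_0$) are correctly identified.
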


\part{Homogenization problems for nonstationary \\Schr\"{o}dinger-type equations}
\label{main_results_part}

\section{Approximation of the operator $e^{-i\tau\mathcal{A}_\varepsilon}$}
\label{main_results_exp_section}

\subsection{The operators $\widehat{\mathcal{A}}_\varepsilon$ and $\mathcal{A}_\varepsilon$. The scaling transformation}
If $\psi(\mathbf{x})$ is a $\Gamma$-periodic measurable function in $\mathbb{R}^d$, we agree to use the notation $\psi^{\varepsilon}(\mathbf{x}) \coloneqq \psi(\varepsilon^{-1} \mathbf{x}), \; \varepsilon > 0$. \emph{Our main objects} are the operators $\widehat{\mathcal{A}}_\varepsilon$ and $\mathcal{A}_\varepsilon$ acting in $L_2 (\mathbb{R}^d; \mathbb{C}^n)$ and formally given by
\begin{align}
\label{Ahat_eps}
\widehat{\mathcal{A}}_\varepsilon &\coloneqq b(\mathbf{D})^* g^{\varepsilon}(\mathbf{x}) b(\mathbf{D}), \\
\label{A_eps}
\mathcal{A}_\varepsilon &\coloneqq (f^{\varepsilon}(\mathbf{x}))^* b(\mathbf{D})^* g^{\varepsilon}(\mathbf{x}) b(\mathbf{D}) f^{\varepsilon}(\mathbf{x}).
\end{align}
The precise definitions are given in terms of the corresponding quadratic forms (cf.~Subsection~\ref{A_section}).

Let $T_{\varepsilon}$ be \emph{the unitary scaling transformation} in $L_2 (\mathbb{R}^d; \mathbb{C}^n)$ defined by 
$(T_{\varepsilon} \mathbf{u})(\mathbf{x}) = \varepsilon^{d/2} \mathbf{u} (\varepsilon \mathbf{x})$, $\varepsilon > 0$.
Then $\mathcal{A}_\varepsilon = \varepsilon^{-2}T_{\varepsilon}^* \mathcal{A} T_{\varepsilon}$. Hence,
\begin{equation}
\label{exp_scale_transform}
e^{-i\tau\mathcal{A}_{\varepsilon}} = T_{\varepsilon}^* e^{-i\tau\varepsilon^{-2} \mathcal{A}} T_{\varepsilon}.
\end{equation}
The operator $\widehat{\mathcal{A}}_{\varepsilon}$ satisfies similar relations. Applying the scaling transformation to the resolvent of the operator $\mathcal{H}_0 = - \Delta$ and using the notation~(\ref{R(epsilon)}), we obtain
\begin{equation}
\label{H0_resolv_scale_transform}
(\mathcal{H}_0 + I)^{-1} = \varepsilon^2 T_\varepsilon^* (\mathcal{H}_0 + \varepsilon^2 I)^{-1} T_\varepsilon = T_\varepsilon^* \mathcal{R} (\varepsilon) T_\varepsilon.
\end{equation}
Finally, if $\psi(\mathbf{x})$ is a $\Gamma$-periodic function, then $[\psi^{\varepsilon}] = T_\varepsilon^* [\psi] T_\varepsilon$.

\subsection{Approximation of the operator $e^{-i\tau\widehat{\mathcal{A}}_\varepsilon}$}
We start with the simpler operator~(\ref{Ahat_eps}). Let $\widehat{\mathcal{A}}^0$ be the effective operator~(\ref{hatA0}). Using relations of the form~(\ref{exp_scale_transform}) (for the operators $\widehat{\mathcal{A}}_\varepsilon$ and $\widehat{\mathcal{A}}^0$) and~(\ref{H0_resolv_scale_transform}), we obtain
\begin{equation}
\label{hatB_eps_exps_scale_transform}
(e^{-i\tau\widehat{\mathcal{A}}_{\varepsilon}} - e^{-i\tau\widehat{\mathcal{A}}^0}) (\mathcal{H}_0 + I)^{-s/2} =  T_{\varepsilon}^* \widehat{J}(\varepsilon; \tau) \mathcal{R} (\varepsilon)^{s/2} T_{\varepsilon}, \qquad \varepsilon > 0.
\end{equation}

Using Theorem~\ref{hatA_exp_general_thrm} and~(\ref{hatB_eps_exps_scale_transform}), we obtain the following result proved before in~\cite[Theorem~12.2]{BSu2008}

\begin{thrm}[\cite{BSu2008}]
	\label{hatA_eps_exp_general_thrm}  
	Let $\widehat{\mathcal{A}}_{\varepsilon}$ be the operator~\emph{(\ref{Ahat_eps})} and let  $\widehat{\mathcal{A}}^0$ be the effective operator~\emph{(\ref{hatA0})}. Then for $0 \le s \le 3$, and $\tau \in \mathbb{R}$, $\varepsilon > 0$ we have
	\begin{equation*}
	\| e^{-i\tau\widehat{\mathcal{A}}_{\varepsilon}} - e^{-i\tau\widehat{\mathcal{A}}^0} \|_{H^s (\mathbb{R}^d) \to L_2 (\mathbb{R}^d)} \le  \widehat{\mathfrak{C}}_1 (s) (1 +  |\tau|)^{s/3} \varepsilon^{s/3},  
	\end{equation*}
	where $\widehat{\mathfrak{C}}_1 (s) = 2^{1-s/3} \widehat{\mathcal{C}}_1^{s/3} $. The constant $\widehat{\mathcal{C}}_1$ depends only on $\alpha_0$, $\alpha_1$, $\|g\|_{L_\infty}$, $\|g^{-1}\|_{L_\infty}$, and $r_0$. 
\end{thrm}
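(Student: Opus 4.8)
The plan is to deduce the $H^s\to L_2$ estimate for all $s\in[0,3]$ by interpolation between the case $s=3$, which is precisely Theorem~\ref{hatA_exp_general_thrm}, and the trivial case $s=0$. The passage between operator norms and Sobolev norms is handled, as in~(\ref{hatB_eps_exps_scale_transform}), by the scaling transformation; the interpolation itself is a purely operator-theoretic step.

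First I would use identity~(\ref{hatB_eps_exps_scale_transform}) together with the unitarity of $T_\varepsilon$ to rewrite
\[
\| e^{-i\tau\widehat{\mathcal{A}}_{\varepsilon}} - e^{-i\tau\widehat{\mathcal{A}}^0} \|_{H^s(\mathbb{R}^d)\to L_2(\mathbb{R}^d)} = \bigl\| \widehat{J}(\varepsilon;\tau)\,\mathcal{R}(\varepsilon)^{s/2} \bigr\|_{L_2(\mathbb{R}^d)\to L_2(\mathbb{R}^d)},
\]
using that the $H^s$-norm is realized through $(\mathcal{H}_0+I)^{s/2}$. For $s=3$ the right-hand side is bounded by $\widehat{\mathcal{C}}_1(1+|\tau|)\varepsilon$ by Theorem~\ref{hatA_exp_general_thrm}. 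For $s=0$, note that $\widehat{J}(\varepsilon;\tau)=e^{-i\tau\varepsilon^{-2}\widehat{\mathcal{A}}}-e^{-i\tau\varepsilon^{-2}\widehat{\mathcal{A}}^0}$ is a difference of two unitary operators, so $\|\widehat{J}(\varepsilon;\tau)\|\le 2$; moreover $0\le\mathcal{R}(\varepsilon)\le I$ since $\mathcal{H}_0\ge 0$.

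The heart of the argument is the elementary interpolation inequality: \emph{if $A$ is bounded and $G$ is self-adjoint with $0\le G\le I$, then $\|AG^{\theta}\|\le\|A\|^{1-\theta}\|AG\|^{\theta}$ for $0\le\theta\le 1$}. I would prove this by the Hadamard three-lines theorem applied to $z\mapsto A(G+\delta I)^{z}$ on the strip $\{0\le\Re z\le 1\}$: for $\delta>0$ the operator $\log(G+\delta I)$ is bounded, hence $(G+\delta I)^{it}$ is unitary and the function is bounded and analytic on the strip, with boundary norms $\le\|A\|$ on $\Re z=0$ and $\le\|A(G+\delta I)\|$ on $\Re z=1$; three lines gives the inequality for $G+\delta I$, and letting $\delta\to0^+$ (using norm-continuity of $x\mapsto x^{\gamma}$ on $[0,2]$) gives it for $G$. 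The $\delta$-regularization is the one point that needs care, precisely because $\mathcal{R}(\varepsilon)$ has $0$ in the closure of its spectrum, so the imaginary powers of $\mathcal{R}(\varepsilon)$ itself are unbounded; this, rather than any serious difficulty, is the main obstacle.

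Finally I would apply this inequality with $A=\widehat{J}(\varepsilon;\tau)$, with $G$ taken to be $\mathcal{R}(\varepsilon)^{3/2}$ (which also satisfies $0\le G\le I$), and $\theta=s/3$, so that $G^{\theta}=\mathcal{R}(\varepsilon)^{s/2}$:
\[
\bigl\| \widehat{J}(\varepsilon;\tau)\mathcal{R}(\varepsilon)^{s/2} \bigr\| \le \|\widehat{J}(\varepsilon;\tau)\|^{1-s/3}\,\bigl\| \widehat{J}(\varepsilon;\tau)\mathcal{R}(\varepsilon)^{3/2} \bigr\|^{s/3} \le 2^{1-s/3}\bigl(\widehat{\mathcal{C}}_1(1+|\tau|)\varepsilon\bigr)^{s/3}.
\]
Combined with the first display this is the claimed estimate with $\widehat{\mathfrak{C}}_1(s)=2^{1-s/3}\widehat{\mathcal{C}}_1^{\,s/3}$, and the dependence of $\widehat{\mathcal{C}}_1$ on the data is inherited from Theorem~\ref{hatA_exp_general_thrm}.
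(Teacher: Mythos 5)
Your proposal is correct and follows essentially the same route as the paper (and as the cited proof in [BSu2008]): reduce to the operator $\widehat{J}(\varepsilon;\tau)\mathcal{R}(\varepsilon)^{s/2}$ via the scaling transformation, take the endpoint bounds from Theorem~\ref{hatA_exp_general_thrm} and the trivial bound $\|\widehat{J}\|\le 2$, and interpolate — exactly the scheme the paper writes out for the analogous Theorem~\ref{hatA_eps_exp_enchcd_thrm_1}. The only difference is that you supply a proof (three-lines theorem with $\delta$-regularization) of the interpolation inequality $\|AG^{\theta}\|\le\|A\|^{1-\theta}\|AG\|^{\theta}$, which the paper simply invokes.
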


This result can be improved under some additional assumptions. From Theorem~\ref{hatA_exp_enchcd_thrm_1} we deduce the following result.

\begin{thrm}
	\label{hatA_eps_exp_enchcd_thrm_1}
	Suppose that the assumptions of Theorem~\emph{\ref{hatA_eps_exp_general_thrm}} are satisfied. Let $\widehat{N}(\boldsymbol{\theta})$ be the operator defined by~\emph{(\ref{hatN(theta)})}. Suppose that $\widehat{N}(\boldsymbol{\theta})=0$ for any $\boldsymbol{\theta} \in \mathbb{S}^{d-1}$. Then for $0 \le s \le 2$, and $\tau \in \mathbb{R}$, $\varepsilon > 0$ we have
	\begin{equation}
	\label{hatA_eps_exp_enchcd_est_1}
	\| e^{-i\tau\widehat{\mathcal{A}}_{\varepsilon}} - e^{-i\tau\widehat{\mathcal{A}}^0} \|_{H^s (\mathbb{R}^d) \to L_2 (\mathbb{R}^d)} \le  \widehat{\mathfrak{C}}_2 (s) (1 +  |\tau|^{1/2})^{s/2} \varepsilon^{s/2},  
	\end{equation}
	where $\widehat{\mathfrak{C}}_2 (s) = 2^{1-s/2} \widehat{\mathcal{C}}_2^{s/2} $. The constant $\widehat{\mathcal{C}}_2$ depends only on $\alpha_0$, $\alpha_1$, $\|g\|_{L_\infty}$, $\|g^{-1}\|_{L_\infty}$, and $r_0$.
\end{thrm}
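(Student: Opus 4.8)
The plan is to transfer the operator-norm bound of Theorem~\ref{hatA_exp_enchcd_thrm_1} to the $H^s\to L_2$ setting by means of the scaling identity~\eqref{hatB_eps_exps_scale_transform}, and then to fill in the non-endpoint values $0<s<2$ by interpolation. First I would record that, since $(\mathcal{H}_0+I)^{s/2}$ is an isometric isomorphism of $H^s(\mathbb{R}^d;\mathbb{C}^n)$ onto $L_2(\mathbb{R}^d;\mathbb{C}^n)$, one has, using~\eqref{hatB_eps_exps_scale_transform} and the unitarity of $T_\varepsilon$,
\begin{equation*}
\bigl\| e^{-i\tau\widehat{\mathcal{A}}_\varepsilon}-e^{-i\tau\widehat{\mathcal{A}}^0}\bigr\|_{H^s\to L_2}
=\bigl\|\bigl(e^{-i\tau\widehat{\mathcal{A}}_\varepsilon}-e^{-i\tau\widehat{\mathcal{A}}^0}\bigr)(\mathcal{H}_0+I)^{-s/2}\bigr\|_{L_2\to L_2}
=\bigl\|\widehat{J}(\varepsilon;\tau)\,\mathcal{R}(\varepsilon)^{s/2}\bigr\|_{L_2\to L_2}.
\end{equation*}
Thus everything reduces to bounding $\bigl\|\widehat{J}(\varepsilon;\tau)\mathcal{R}(\varepsilon)^{s/2}\bigr\|_{L_2\to L_2}$.

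For $s=2$ this is precisely Theorem~\ref{hatA_exp_enchcd_thrm_1}, which applies because the hypothesis that $\widehat{N}(\boldsymbol{\theta})=0$ for every $\boldsymbol{\theta}\in\mathbb{S}^{d-1}$ is exactly its assumption; it gives $\bigl\|\widehat{J}(\varepsilon;\tau)\mathcal{R}(\varepsilon)\bigr\|_{L_2\to L_2}\le\widehat{\mathcal{C}}_2(1+|\tau|^{1/2})\varepsilon$, that is, the claimed estimate with $\widehat{\mathfrak{C}}_2(2)=\widehat{\mathcal{C}}_2$. For $s=0$ the two exponentials are unitary, so the left-hand side is at most $2$, the claimed estimate with $\widehat{\mathfrak{C}}_2(0)=2$. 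For $0<s<2$ I would interpolate between these endpoints. The operator $\mathcal{R}(\varepsilon)=\varepsilon^2(\mathcal{H}_0+\varepsilon^2 I)^{-1}$ is selfadjoint with spectrum in $(0,1]$, so $z\mapsto\mathcal{R}(\varepsilon)^z$ is analytic on $0<\Re z<1$, continuous and bounded on the closed strip, with $\|\mathcal{R}(\varepsilon)^{z}\|_{L_2\to L_2}\le 1$ for $\Re z\ge 0$. Applying Hadamard's three-lines theorem to the scalar matrix elements of $z\mapsto\widehat{J}(\varepsilon;\tau)\mathcal{R}(\varepsilon)^{z}$ on the strip $0\le\Re z\le 1$ --- where the norm is $\le 2$ on $\Re z=0$ and $\le\widehat{\mathcal{C}}_2(1+|\tau|^{1/2})\varepsilon$ on $\Re z=1$ --- and evaluating at $z=s/2$ yields
\begin{equation*}
\bigl\|\widehat{J}(\varepsilon;\tau)\mathcal{R}(\varepsilon)^{s/2}\bigr\|_{L_2\to L_2}
\le 2^{1-s/2}\bigl(\widehat{\mathcal{C}}_2(1+|\tau|^{1/2})\varepsilon\bigr)^{s/2}
=\widehat{\mathfrak{C}}_2(s)\,(1+|\tau|^{1/2})^{s/2}\varepsilon^{s/2},
\end{equation*}
with $\widehat{\mathfrak{C}}_2(s)=2^{1-s/2}\widehat{\mathcal{C}}_2^{\,s/2}$; by the reduction above this is~\eqref{hatA_eps_exp_enchcd_est_1}. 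Equivalently, one may simply quote complex interpolation of the Sobolev scale, $[L_2,H^2]_{s/2}=H^s$, applied to the operator $e^{-i\tau\widehat{\mathcal{A}}_\varepsilon}-e^{-i\tau\widehat{\mathcal{A}}^0}$.

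I do not expect a genuine obstacle: all the analytic content sits in Theorem~\ref{hatA_exp_enchcd_thrm_1} and, upstream of it, in the abstract Theorem~\ref{abstr_exp_enchcd_thrm_1}, so the present statement is a routine transfer via scaling and interpolation. The only points that need a little care are the bookkeeping of the constant --- it must come out exactly $\widehat{\mathfrak{C}}_2(s)=2^{1-s/2}\widehat{\mathcal{C}}_2^{\,s/2}$, matching the endpoint values --- and checking that the hypothesis on $\widehat{N}(\boldsymbol{\theta})$ is verbatim the one needed to invoke Theorem~\ref{hatA_exp_enchcd_thrm_1}; both are immediate.
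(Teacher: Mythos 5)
Your proposal is correct and follows essentially the same route as the paper: reduce via the scaling identity and the isometric isomorphism $(\mathcal{H}_0+I)^{s/2}\colon H^s\to L_2$, take the $s=2$ endpoint from Theorem~\ref{hatA_exp_enchcd_thrm_1}, the trivial bound $2$ at $s=0$, and interpolate to get $\widehat{\mathfrak{C}}_2(s)=2^{1-s/2}\widehat{\mathcal{C}}_2^{\,s/2}$. The only difference is that you spell out the interpolation mechanism (three-lines theorem for $\mathcal{R}(\varepsilon)^z$) where the paper simply invokes interpolation; the content is identical.
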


\begin{proof}
	Since $T_\varepsilon$ is unitary, it follows from Theorem~\ref{hatA_exp_enchcd_thrm_1} and~(\ref{hatB_eps_exps_scale_transform}) that
	\begin{equation}
	\label{hatA_eps_exp_enchcd_est_1_L2L2}
	\| ( e^{-i\tau\widehat{\mathcal{A}}_{\varepsilon}} - e^{-i\tau\widehat{\mathcal{A}}^0}) (\mathcal{H}_0 + I)^{-1} \|_{L_2(\mathbb{R}^d) \to L_2(\mathbb{R}^d)} \le \widehat{\mathcal{C}}_2 (1+ |\tau|^{1/2}) \varepsilon.
	\end{equation}
	Obviously,
	\begin{equation}
	\label{hatA_exp_trivial_est}
	\| e^{-i\tau\widehat{\mathcal{A}}_{\varepsilon}} - e^{-i\tau\widehat{\mathcal{A}}^0} \|_{L_2 (\mathbb{R}^d) \to L_2 (\mathbb{R}^d)} \le 2.
	\end{equation}
	Interpolating between~(\ref{hatA_eps_exp_enchcd_est_1_L2L2}) and~(\ref{hatA_exp_trivial_est}), for $ 0 \le s \le 2$ we obtain
	\begin{equation}
	\label{hatA_exp_enchcd_est_1_interp}
	\| (e^{-i\tau\widehat{\mathcal{A}}_{\varepsilon}} - e^{-i\tau\widehat{\mathcal{A}}^0}) (\mathcal{H}_0 + I)^{-s/2} \|_{L_2 (\mathbb{R}^d) \to L_2 (\mathbb{R}^d)}
	\le 2^{1-s/2} \widehat{\mathcal{C}}_2^{s/2} (1 +  |\tau|^{1/2})^{s/2} \varepsilon^{s/2}. 
	\end{equation}
	The operator $(\mathcal{H}_0 + I)^{s/2}$ is an isometric isomorphism of $H^s(\mathbb{R}^d; \mathbb{C}^n)$ onto 
	$L_2(\mathbb{R}^d; \mathbb{C}^n)$. Therefore, (\ref{hatA_exp_enchcd_est_1_interp}) is equivalent to~(\ref{hatA_eps_exp_enchcd_est_1}).
\end{proof}

Similarly, Theorem~\ref{hatA_exp_enchcd_thrm_2} implies the following result.

\begin{thrm}
	\label{hatA_eps_exp_enchcd_thrm_2}
	Suppose that the assumptions of Theorem~\emph{\ref{hatA_eps_exp_general_thrm}} are satisfied. Suppose that Condition~\emph{\ref{cond1}} \emph{(}or more restrictive Condition~\emph{\ref{cond2})} is satisfied. Then for $0 \le s \le 2$, and $\tau \in \mathbb{R}$, $\varepsilon > 0$ we have
	\begin{equation*}
	\| e^{-i\tau\widehat{\mathcal{A}}_{\varepsilon}} - e^{-i\tau\widehat{\mathcal{A}}^0} \|_{H^s (\mathbb{R}^d) \to L_2 (\mathbb{R}^d)} \le  \widehat{\mathfrak{C}}_3 (s) (1 +  |\tau|^{1/2})^{s/2} \varepsilon^{s/2},  
	\end{equation*}
	where $\widehat{\mathfrak{C}}_3 (s) = 2^{1-s/2} \widehat{\mathcal{C}}_3^{s/2} $. The constant $\widehat{\mathcal{C}}_3$ depends on $\alpha_0$, $\alpha_1$, $\|g\|_{L_\infty}$, $\|g^{-1}\|_{L_\infty}$, $r_0$, and also on $n$ and the number $\widehat{c}^{\circ}$.
\end{thrm}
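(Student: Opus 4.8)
The plan is to repeat the argument used for Theorem~\ref{hatA_eps_exp_enchcd_thrm_1}, now drawing on Theorem~\ref{hatA_exp_enchcd_thrm_2} in place of Theorem~\ref{hatA_exp_enchcd_thrm_1}. First I would use the scaling identity~\eqref{hatB_eps_exps_scale_transform} with $s=2$: since $T_\varepsilon$ is unitary, the $L_2(\mathbb{R}^d)\to L_2(\mathbb{R}^d)$ norm of $(e^{-i\tau\widehat{\mathcal{A}}_\varepsilon}-e^{-i\tau\widehat{\mathcal{A}}^0})(\mathcal{H}_0+I)^{-1}$ equals $\|\widehat{J}(\varepsilon;\tau)\mathcal{R}(\varepsilon)\|_{L_2(\mathbb{R}^d)\to L_2(\mathbb{R}^d)}$, which under Condition~\ref{cond1} (or the more restrictive Condition~\ref{cond2}) is bounded by $\widehat{\mathcal{C}}_3(1+|\tau|^{1/2})\varepsilon$ by Theorem~\ref{hatA_exp_enchcd_thrm_2}. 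The dependence of $\widehat{\mathcal{C}}_3$ on $\alpha_0$, $\alpha_1$, $\|g\|_{L_\infty}$, $\|g^{-1}\|_{L_\infty}$, $r_0$, $n$, and $\widehat{c}^{\circ}$ is simply inherited from that theorem.

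Next I would combine this $L_2\to L_2$ estimate with the trivial bound $\|e^{-i\tau\widehat{\mathcal{A}}_\varepsilon}-e^{-i\tau\widehat{\mathcal{A}}^0}\|_{L_2(\mathbb{R}^d)\to L_2(\mathbb{R}^d)}\le 2$ (the two exponentials being unitary) and interpolate. Fixing the bounded operator $B=e^{-i\tau\widehat{\mathcal{A}}_\varepsilon}-e^{-i\tau\widehat{\mathcal{A}}^0}$ and considering the analytic family $B(\mathcal{H}_0+I)^{-z}$ for $0\le\Re z\le 1$ (which is bounded by $2$ on $\Re z=0$, since $(\mathcal{H}_0+I)^{-z}$ is then unitary, and by $\widehat{\mathcal{C}}_3(1+|\tau|^{1/2})\varepsilon$ on $\Re z=1$), the standard complex interpolation argument — exactly as in the proof of Theorem~\ref{hatA_eps_exp_enchcd_thrm_1} — gives, for $0\le s\le 2$,
\[
\|(e^{-i\tau\widehat{\mathcal{A}}_\varepsilon}-e^{-i\tau\widehat{\mathcal{A}}^0})(\mathcal{H}_0+I)^{-s/2}\|_{L_2(\mathbb{R}^d)\to L_2(\mathbb{R}^d)}\le 2^{1-s/2}\widehat{\mathcal{C}}_3^{s/2}(1+|\tau|^{1/2})^{s/2}\varepsilon^{s/2}.
\]
Since $(\mathcal{H}_0+I)^{s/2}$ is an isometric isomorphism of $H^s(\mathbb{R}^d;\mathbb{C}^n)$ onto $L_2(\mathbb{R}^d;\mathbb{C}^n)$, this is equivalent to the asserted bound with $\widehat{\mathfrak{C}}_3(s)=2^{1-s/2}\widehat{\mathcal{C}}_3^{s/2}$.

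I do not expect a genuine obstacle in this particular proof: all the substance sits in the earlier Theorems~\ref{abstr_exp_enchcd_thrm_2} and~\ref{hatA(k)_exp_enchcd_thrm_2}, where Condition~\ref{cond1} was needed to choose $\widehat{t}^{\,00}$ and $\widehat{c}^{\circ}$ uniformly in $\boldsymbol{\theta}$, and in the fiberwise passage~\eqref{hatA_exps_Gelfand}. The only mild point to verify is that the interpolation is legitimate, but since only the second factor $(\mathcal{H}_0+I)^{-z}$ varies while the bounded operator $B$ is kept fixed, Stein's interpolation theorem (equivalently, the three-lines lemma applied to $(\,B(\mathcal{H}_0+I)^{-z}\,u,v\,)$) applies verbatim as in Theorem~\ref{hatA_eps_exp_enchcd_thrm_1}.
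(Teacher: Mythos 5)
Your argument is correct and coincides with the paper's proof: the paper establishes this theorem by repeating verbatim the proof of Theorem~\ref{hatA_eps_exp_enchcd_thrm_1} with Theorem~\ref{hatA_exp_enchcd_thrm_2} in place of Theorem~\ref{hatA_exp_enchcd_thrm_1}, i.e.\ the scaling identity~\eqref{hatB_eps_exps_scale_transform}, the trivial bound by $2$, interpolation, and the isometry $(\mathcal{H}_0+I)^{s/2}\colon H^s \to L_2$. The constant $\widehat{\mathfrak{C}}_3(s)=2^{1-s/2}\widehat{\mathcal{C}}_3^{s/2}$ and its dependence on the data come out exactly as you describe.
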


Theorems~\ref{hatA_eps_exp_enchcd_thrm_1} and~\ref{hatA_eps_exp_enchcd_thrm_2} improve the results of Theorems~13.2 and~13.4 from~\cite{Su2017} with respect to dependence of the estimates on $\tau$.

Application of Theorems~\ref{hatA_exp_smooth_shrp_thrm_1} and \ref{hatA_exp_smooth_shrp_thrm_2} allows us to confirm that the results of Theorems~\ref{hatA_eps_exp_general_thrm}, \ref{hatA_eps_exp_enchcd_thrm_1}, and \ref{hatA_eps_exp_enchcd_thrm_2} are sharp with respect to the type of  the operator norm.

\begin{thrm}[\cite{Su2017}]
	\label{hatA_eps_exp_smooth_shrp_thrm_1}
	Suppose that $\widehat{N}_0 (\boldsymbol{\theta}_0) \ne 0$ for some $\boldsymbol{\theta}_0 \in \mathbb{S}^{d-1}$. Let $\tau \ne 0$ and $0 \le s < 3$. Then there does not exist a constant $\mathcal{C}(\tau) >0$ such that the estimate
	\begin{equation}
	\label{10.9a}
	\| e^{-i\tau\widehat{\mathcal{A}}_{\varepsilon}} - e^{-i\tau\widehat{\mathcal{A}}^0} \|_{H^s (\mathbb{R}^d) \to L_2 (\mathbb{R}^d)} \le \mathcal{C}(\tau) \varepsilon
	\end{equation}
	holds for all sufficiently small $\varepsilon > 0$.
\end{thrm}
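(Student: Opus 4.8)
The plan is to \emph{deduce this statement directly from Theorem~\ref{hatA_exp_smooth_shrp_thrm_1}}, the sharpness result already established for the direct-integral operator on $L_2(\mathbb{R}^d;\mathbb{C}^n)$. The bridge is the scaling identity~\eqref{hatB_eps_exps_scale_transform}. Since $T_\varepsilon$ is unitary, that identity gives, for every $\tau\in\mathbb{R}$, $\varepsilon>0$, and $0\le s<3$,
\begin{equation*}
\bigl\| (e^{-i\tau\widehat{\mathcal{A}}_{\varepsilon}} - e^{-i\tau\widehat{\mathcal{A}}^0}) (\mathcal{H}_0 + I)^{-s/2} \bigr\|_{L_2(\mathbb{R}^d) \to L_2(\mathbb{R}^d)} = \bigl\| \widehat{J}(\varepsilon; \tau)\, \mathcal{R}(\varepsilon)^{s/2} \bigr\|_{L_2(\mathbb{R}^d) \to L_2(\mathbb{R}^d)}.
\end{equation*}
Because $(\mathcal{H}_0 + I)^{s/2}$ is an isometric isomorphism of $H^s(\mathbb{R}^d;\mathbb{C}^n)$ onto $L_2(\mathbb{R}^d;\mathbb{C}^n)$, the left-hand side above equals $\| e^{-i\tau\widehat{\mathcal{A}}_{\varepsilon}} - e^{-i\tau\widehat{\mathcal{A}}^0} \|_{H^s(\mathbb{R}^d) \to L_2(\mathbb{R}^d)}$. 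Hence estimate~\eqref{10.9a} holds with a constant $\mathcal{C}(\tau)$ for all sufficiently small $\varepsilon>0$ \emph{if and only if} estimate~\eqref{9.2a} holds with the same constant.

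With this equivalence in hand I would argue by contradiction: assume that, for some $0\le s<3$, $\tau\ne0$, and some $\mathcal{C}(\tau)>0$, estimate~\eqref{10.9a} is valid for all sufficiently small $\varepsilon>0$. By the identity just established, \eqref{9.2a} would then be valid too. But the hypothesis $\widehat{N}_0(\boldsymbol{\theta}_0)\ne0$ for some $\boldsymbol{\theta}_0\in\mathbb{S}^{d-1}$ is exactly the hypothesis of Theorem~\ref{hatA_exp_smooth_shrp_thrm_1}, which asserts that no such constant exists for~\eqref{9.2a}. This contradiction completes the argument. So the entire proof is a one-step scaling reduction, with no genuine obstacle to overcome at this level.

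For completeness I would remark where the substance of Theorem~\ref{hatA_exp_smooth_shrp_thrm_1} lies, since it is not proved anew here. Via the direct-integral decomposition~\eqref{hatA_exps_Gelfand} (the norm on $\mathbb{R}^d$ being the essential supremum over $\mathbf{k}\in\widetilde{\Omega}$ of the fibre norms) it reduces to the fibre statement Theorem~\ref{hatA(k)_exp_smooth_shrp_thrm_1}; the latter, in the spirit of the proof of Theorem~\ref{hatA(k)_exp_time_shrp_thrm_1}, uses the threshold estimate~\eqref{hatA(k)_exp_shrp_f1} to replace $\widehat{F}(\mathbf{k})$ by $\widehat{P}$, continuity in $\mathbf{k}$, and specialization to $\mathbf{k}=t\boldsymbol{\theta}_0$ to land on the abstract Theorem~\ref{abstr_exp_smooth_shrp_thrm_1}. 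There the obstruction is produced exactly as in the proof of Theorem~\ref{abstr_exp_smooth_shrp_thrm_2} given above, but with the cubic term: applying the operator under the norm to the eigenvector $\omega_j$ of a branch with $\mu_j\ne0$, writing $\lambda_j(t)=\gamma_j t^2+\mu_j t^3+O(t^4)$, and choosing $t=t(\varepsilon)$ of order $\varepsilon^{2/3}$ with the constant tuned so that the phase $\tfrac12\tau\varepsilon^{-2}(\lambda_j(t)-\gamma_j t^2)$ stays near $\pm\pi/2$ (so that $|\sin(\cdot)|\ge\text{const}>0$), one is reduced to the boundedness of $\varepsilon^{s/3-1}(c^2+\varepsilon^{2/3})^{-s/2}$ as $\varepsilon\to0$, which fails precisely when $s<3$. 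If one wished to make the present theorem self-contained, this is the only place where real work is needed; but since all these ingredients are already in the excerpt, the proof given here is just the scaling reduction of the first two paragraphs.
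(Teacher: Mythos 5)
Your proof is correct and coincides with the paper's own route: the paper obtains this theorem precisely by combining the scaling identity~\eqref{hatB_eps_exps_scale_transform} (with $T_\varepsilon$ unitary and $(\mathcal{H}_0+I)^{s/2}$ an isometric isomorphism of $H^s$ onto $L_2$) with Theorem~\ref{hatA_exp_smooth_shrp_thrm_1}, exactly as in your first two paragraphs. Your closing sketch of where the real work sits (the chain down to the abstract Theorem~\ref{abstr_exp_smooth_shrp_thrm_1}, with $t\sim\varepsilon^{2/3}$ and the cubic term $\mu_j t^3$) also matches the cited arguments, so nothing is missing.
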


\begin{thrm}
	Suppose that $\widehat{N}_0 (\boldsymbol{\theta}) = 0$ for any $\boldsymbol{\theta} \in \mathbb{S}^{d-1}$ and  $\widehat{\mathcal{N}}^{(q)} (\boldsymbol{\theta}_0) \ne 0$  for some $\boldsymbol{\theta}_0 \in \mathbb{S}^{d-1}$ and some $q \in \{1,\ldots,p(\boldsymbol{\theta}_0)\}$. 
	Let $\tau \ne 0$ and $0 \le s < 2$. Then there does not exist a constant $\mathcal{C}(\tau) >0$ such that estimate \eqref{10.9a}
	holds for all sufficiently small $\varepsilon > 0$.
\end{thrm}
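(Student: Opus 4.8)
The plan is to deduce the statement from Theorem~\ref{hatA_exp_smooth_shrp_thrm_2} by undoing the scaling transformation, exactly in the spirit of the proof of Theorem~\ref{hatA_eps_exp_enchcd_thrm_1}. First I would invoke identity~\eqref{hatB_eps_exps_scale_transform}, which says that for every $\varepsilon>0$,
\[
(e^{-i\tau\widehat{\mathcal{A}}_{\varepsilon}} - e^{-i\tau\widehat{\mathcal{A}}^0})\,(\mathcal{H}_0 + I)^{-s/2} =  T_{\varepsilon}^{*}\, \widehat{J}(\varepsilon; \tau)\, \mathcal{R}(\varepsilon)^{s/2}\, T_{\varepsilon}.
\]
Since $T_\varepsilon$ is unitary and $(\mathcal{H}_0+I)^{s/2}$ is an isometric isomorphism of $H^s(\mathbb{R}^d;\mathbb{C}^n)$ onto $L_2(\mathbb{R}^d;\mathbb{C}^n)$, this yields the exact identity $\| e^{-i\tau\widehat{\mathcal{A}}_{\varepsilon}} - e^{-i\tau\widehat{\mathcal{A}}^0} \|_{H^s(\mathbb{R}^d)\to L_2(\mathbb{R}^d)} = \| \widehat{J}(\varepsilon;\tau)\,\mathcal{R}(\varepsilon)^{s/2} \|_{L_2(\mathbb{R}^d)\to L_2(\mathbb{R}^d)}$. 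Consequently, estimate~\eqref{10.9a} holds for all sufficiently small $\varepsilon>0$ with some constant $\mathcal{C}(\tau)>0$ \emph{if and only if} estimate~\eqref{9.2a} holds with the same $\mathcal{C}(\tau)$.

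Next I would observe that the hypotheses of the present theorem~--- $\widehat{N}_0(\boldsymbol{\theta})=0$ for all $\boldsymbol{\theta}\in\mathbb{S}^{d-1}$ together with $\widehat{\mathcal{N}}^{(q)}(\boldsymbol{\theta}_0)\neq 0$ for some $\boldsymbol{\theta}_0$ and $q$~--- are literally those of Theorem~\ref{hatA_exp_smooth_shrp_thrm_2}, which asserts precisely that for $\tau\neq 0$ and $0\le s<2$ no positive constant $\mathcal{C}(\tau)$ can make~\eqref{9.2a} valid for all small $\varepsilon$. Combined with the equivalence established in the first step, this immediately excludes any such constant for~\eqref{10.9a} as well, completing the proof.

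I expect no real obstacle here: all the substance of the sharpness has already been extracted at the abstract level in Theorem~\ref{abstr_exp_smooth_shrp_thrm_2} and transported to the $\mathbb{R}^d$-setting through the Gelfand fibers (Theorems~\ref{hatA(k)_exp_smooth_shrp_thrm_2} and~\ref{hatA_exp_smooth_shrp_thrm_2}), so the remaining work is the purely bookkeeping reversal of the scaling $T_\varepsilon$. Should a self-contained route be preferred, one could instead replay the fiberwise contradiction argument of Theorem~\ref{abstr_exp_smooth_shrp_thrm_2} directly on $\widehat{\mathcal{A}}_\varepsilon$: by Remark~\ref{rem1.7} applied on the fiber $\boldsymbol{\theta}_0$, the condition $\widehat{\mathcal{N}}^{(q)}(\boldsymbol{\theta}_0)\neq 0$ produces a branch with $\widehat{\nu}_j(\boldsymbol{\theta}_0)\neq 0$; restricting $\mathbf{k}=t\boldsymbol{\theta}_0$, choosing $t=t(\varepsilon)$ of order $\varepsilon^{1/2}$, and testing the difference of exponentials on the corresponding embryo produces (after taking the essential supremum over $\mathbf{k}$ via the decomposition for $\widehat{J}(\varepsilon;\tau)\mathcal{R}(\varepsilon)^{s/2}$) a quantity of order $\varepsilon^{s}(t^2+\varepsilon^2)^{-s/2}\sim\varepsilon^{s/2-1}$, which is unbounded for $s<2$ (the case $s=0$ being trivial, as the two exponentials do not converge in the operator norm). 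The reduction to Theorem~\ref{hatA_exp_smooth_shrp_thrm_2} is shorter, and is the route I would take.
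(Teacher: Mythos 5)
Your proposal is correct and coincides with the paper's own (implicit) argument: the theorem is obtained from Theorem~\ref{hatA_exp_smooth_shrp_thrm_2} by the exact norm identity furnished by the scaling relation~\eqref{hatB_eps_exps_scale_transform} together with the unitarity of $T_\varepsilon$ and the isometry $(\mathcal{H}_0+I)^{s/2}\colon H^s\to L_2$, which makes~\eqref{10.9a} and~\eqref{9.2a} equivalent with the same constant. The paper gives no further detail, so your reduction is precisely the intended proof.
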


Theorem~\ref{hatA_eps_exp_smooth_shrp_thrm_1} was proved in~\cite[Theorem~13.6]{Su2017}.

Finally, application of Theorems~\ref{hatA_exp_shrp_thrm_1} and \ref{hatA_exp_shrp_thrm_2} allows us to confirm that the results of Theorems~\ref{hatA_eps_exp_general_thrm}, \ref{hatA_eps_exp_enchcd_thrm_1}, and \ref{hatA_eps_exp_enchcd_thrm_2} are sharp with respect to dependence of the estimates on time.

\begin{thrm}
	Suppose that $\widehat{N}_0 (\boldsymbol{\theta}_0) \ne 0$ for some $\boldsymbol{\theta}_0 \in \mathbb{S}^{d-1}$. Let $s \ge 3$. Then there does not exist a positive function $\mathcal{C}(\tau)$ such that $\lim_{\tau \to \infty} \mathcal{C}(\tau)/ |\tau| = 0$ and estimate \eqref{10.9a}
	holds for all $\tau \in \mathbb{R}$ and all sufficiently small $\varepsilon > 0$.
\end{thrm}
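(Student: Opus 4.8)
The plan is to deduce the statement from Theorem~\ref{hatA_exp_shrp_thrm_1} by means of the scaling transformation, arguing by contradiction. Assume that for some $s\ge 3$ there is a positive function $\mathcal{C}(\tau)$ with $\lim_{\tau\to\infty}\mathcal{C}(\tau)/|\tau|=0$ such that estimate~\eqref{10.9a} holds for all $\tau\in\mathbb{R}$ and all sufficiently small $\varepsilon>0$.

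First I would pass to the ``stationary'' form of the estimate. Since $(\mathcal{H}_0+I)^{s/2}$ is an isometric isomorphism of $H^s(\mathbb{R}^d;\mathbb{C}^n)$ onto $L_2(\mathbb{R}^d;\mathbb{C}^n)$, the norm in~\eqref{10.9a} equals the $L_2(\mathbb{R}^d)\to L_2(\mathbb{R}^d)$ norm of $(e^{-i\tau\widehat{\mathcal{A}}_\varepsilon}-e^{-i\tau\widehat{\mathcal{A}}^0})(\mathcal{H}_0+I)^{-s/2}$. By relation~\eqref{hatB_eps_exps_scale_transform} and unitarity of $T_\varepsilon$, this in turn equals $\|\widehat{J}(\varepsilon;\tau)\mathcal{R}(\varepsilon)^{s/2}\|_{L_2(\mathbb{R}^d)\to L_2(\mathbb{R}^d)}$. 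Thus the assumed inequality is precisely estimate~\eqref{9.2a} with the same function $\mathcal{C}(\tau)$, valid for all $\tau$ and all sufficiently small $\varepsilon$. Since $\widehat{N}_0(\boldsymbol{\theta}_0)\ne 0$ and $s\ge 3$, this contradicts Theorem~\ref{hatA_exp_shrp_thrm_1}, which completes the proof. Equivalently, one can go one level further down: by the Gelfand decomposition~\eqref{hatA_exps_Gelfand} the last norm equals $\esssup_{\mathbf{k}\in\widetilde{\Omega}}\|\widehat{J}(\mathbf{k},\varepsilon;\tau)\mathcal{R}(\mathbf{k},\varepsilon)^{s/2}\|_{L_2(\Omega)\to L_2(\Omega)}$, so~\eqref{6.6a} would hold for a.e.~$\mathbf{k}$, contradicting Theorem~\ref{hatA(k)_exp_time_shrp_thrm_1}.

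There is no real obstacle in this argument: all the substantive work is already contained in the abstract Theorem~\ref{abstr_exp_time_shrp_thrm_1} --- where the condition $\mu_j\ne 0$ (equivalently $N_0\ne 0$) is exploited by testing on $\omega_j$ and choosing $t_\flat=c_\flat|\tau|^{-1/3}\varepsilon^{2/3}$ --- and in its transfer to the differential-operator setting (Theorems~\ref{hatA(k)_exp_time_shrp_thrm_1} and~\ref{hatA_exp_shrp_thrm_1}), which in addition uses the threshold approximation~\eqref{hatA(k)_exp_shrp_f1} for $\widehat{F}(\mathbf{k})-\widehat{P}$ and the continuity in $\mathbf{k}$ of the operator under the norm sign. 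The only point requiring a moment of care is the bookkeeping of quantifiers: the phrase ``for all sufficiently small $\varepsilon>0$,'' with a threshold that may depend on $\tau$, is preserved at each reduction step, since $T_\varepsilon$ and the direct-integral decomposition act fiberwise and introduce no new $\varepsilon$-dependence.
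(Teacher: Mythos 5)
Your reduction is exactly the paper's intended argument: the identity \eqref{hatB_eps_exps_scale_transform} together with the unitarity of $T_\varepsilon$ and the isometry $(\mathcal{H}_0+I)^{s/2}\colon H^s\to L_2$ shows that \eqref{10.9a} and \eqref{9.2a} are the same inequality with the same function $\mathcal{C}(\tau)$, so the statement follows at once from Theorem~\ref{hatA_exp_shrp_thrm_1} (itself obtained from Theorem~\ref{hatA(k)_exp_time_shrp_thrm_1} via \eqref{hatA_exps_Gelfand}, as you note). The proposal is correct and follows essentially the same route as the paper.
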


\begin{thrm}
	Suppose that $\widehat{N}_0 (\boldsymbol{\theta}) = 0$ for any $\boldsymbol{\theta} \in \mathbb{S}^{d-1}$ and  $\widehat{\mathcal{N}}^{(q)} (\boldsymbol{\theta}_0) \ne 0$  for some $\boldsymbol{\theta}_0 \in \mathbb{S}^{d-1}$ and some $q \in \{1,\ldots,p(\boldsymbol{\theta}_0)\}$. Let $s \ge 2$. Then there does not exist a positive function $\mathcal{C}(\tau)$ such that $\lim_{\tau \to \infty} \mathcal{C}(\tau)/ |\tau|^{1/2} = 0$ and estimate \eqref{10.9a}
	holds for all $\tau \in \mathbb{R}$ and all sufficiently small $\varepsilon > 0$.
\end{thrm}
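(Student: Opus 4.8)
The plan is to reduce the statement to Theorem~\ref{hatA_exp_shrp_thrm_2}, which already establishes, under precisely the hypotheses $\widehat{N}_0(\boldsymbol{\theta}) = 0$ for all $\boldsymbol{\theta} \in \mathbb{S}^{d-1}$, $\widehat{\mathcal{N}}^{(q)}(\boldsymbol{\theta}_0) \ne 0$, and $s \ge 2$, that no positive function $\mathcal{C}(\tau)$ with $\lim_{\tau\to\infty}\mathcal{C}(\tau)/|\tau|^{1/2} = 0$ can satisfy~\eqref{9.2a} for all $\tau \in \mathbb{R}$ and all sufficiently small $\varepsilon > 0$. It therefore suffices to observe that estimate~\eqref{10.9a} in the $H^s \to L_2$ norm is literally the same inequality as~\eqref{9.2a} in the smoothed $L_2 \to L_2$ norm.

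First I would use that $(\mathcal{H}_0 + I)^{s/2}$ is an isometric isomorphism of $H^s(\mathbb{R}^d; \mathbb{C}^n)$ onto $L_2(\mathbb{R}^d; \mathbb{C}^n)$, so that
\[
\| e^{-i\tau\widehat{\mathcal{A}}_{\varepsilon}} - e^{-i\tau\widehat{\mathcal{A}}^0} \|_{H^s(\mathbb{R}^d) \to L_2(\mathbb{R}^d)} = \bigl\| \bigl( e^{-i\tau\widehat{\mathcal{A}}_{\varepsilon}} - e^{-i\tau\widehat{\mathcal{A}}^0} \bigr) (\mathcal{H}_0 + I)^{-s/2} \bigr\|_{L_2(\mathbb{R}^d) \to L_2(\mathbb{R}^d)}.
\]
Next I would invoke the scaling identity~\eqref{hatB_eps_exps_scale_transform}, which expresses the right-hand operator as $T_\varepsilon^* \widehat{J}(\varepsilon;\tau) \mathcal{R}(\varepsilon)^{s/2} T_\varepsilon$; since $T_\varepsilon$ is unitary on $L_2(\mathbb{R}^d; \mathbb{C}^n)$, taking norms gives
\[
\| e^{-i\tau\widehat{\mathcal{A}}_{\varepsilon}} - e^{-i\tau\widehat{\mathcal{A}}^0} \|_{H^s(\mathbb{R}^d) \to L_2(\mathbb{R}^d)} = \bigl\| \widehat{J}(\varepsilon;\tau) \mathcal{R}(\varepsilon)^{s/2} \bigr\|_{L_2(\mathbb{R}^d) \to L_2(\mathbb{R}^d)}
\]
for every $\tau \in \mathbb{R}$, $\varepsilon > 0$, and $s \ge 0$. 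Consequently~\eqref{10.9a} holds for a given $\mathcal{C}(\tau)$, $\tau$, $\varepsilon$ if and only if~\eqref{9.2a} does.

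The conclusion is then immediate by contradiction: were there a positive function $\mathcal{C}(\tau)$ with $\lim_{\tau\to\infty}\mathcal{C}(\tau)/|\tau|^{1/2} = 0$ for which~\eqref{10.9a} held for all $\tau \in \mathbb{R}$ and all sufficiently small $\varepsilon > 0$, the same $\mathcal{C}(\tau)$ would satisfy~\eqref{9.2a} over the same range of $\tau$ and $\varepsilon$, contradicting Theorem~\ref{hatA_exp_shrp_thrm_2} (recall $s \ge 2$, $\widehat{N}_0 \equiv 0$, $\widehat{\mathcal{N}}^{(q)}(\boldsymbol{\theta}_0) \ne 0$). There is no genuine obstacle in this argument: all the substantive work --- the Gelfand-transform reduction to the one-dimensional family $\widehat{A}(t;\boldsymbol{\theta})$, the use of the expansion $\widehat{\lambda}_j(t,\boldsymbol{\theta}) = \widehat{\gamma}_j t^2 + \widehat{\nu}_j t^4 + O(|t|^5)$ with $\widehat{\nu}_j(\boldsymbol{\theta}_0) \ne 0$, and the sharp test-point choice $t \sim |\tau|^{-1/4}\varepsilon^{1/2}$ --- is already packaged inside Theorem~\ref{hatA_exp_shrp_thrm_2} (ultimately in the abstract Theorem~\ref{abstr_exp_time_shrp_thrm_2} and its periodic counterpart Theorem~\ref{hatA(k)_exp_time_shrp_thrm_2}). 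The only point deserving a moment's care is that the norm equivalence displayed above is exact rather than up to constants, which holds because $T_\varepsilon$ is unitary and $(\mathcal{H}_0 + I)^{s/2}$ is an isometry between the two spaces in question.
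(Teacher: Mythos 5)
Your proposal is correct and coincides with the paper's (implicit) argument: the paper likewise deduces this theorem from Theorem~\ref{hatA_exp_shrp_thrm_2} via the scaling identity~\eqref{hatB_eps_exps_scale_transform}, the unitarity of $T_\varepsilon$, and the fact that $(\mathcal{H}_0+I)^{s/2}$ is an isometric isomorphism of $H^s$ onto $L_2$, so that~\eqref{10.9a} and~\eqref{9.2a} are the same inequality. Nothing is missing.
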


\subsection{Homogenization of the sandwiched operator $e^{-i\tau\mathcal{A}_\varepsilon}$}

Now, we proceed to the more general operator $\mathcal{A}_{\varepsilon}$ (see~(\ref{A_eps})).
Let $\mathcal{A}^0$ be defined by~(\ref{A0}). Using the relations of the form~(\ref{exp_scale_transform}) (for the operators $\mathcal{A}_\varepsilon$ and $\mathcal{A}^0$), and~(\ref{H0_resolv_scale_transform}), we obtain
\begin{equation}
\label{sndw_exps_scale_transform}
\bigl(f^\varepsilon e^{-i \tau \mathcal{A}_{\varepsilon}} (f^\varepsilon)^{-1} - f_0 e^{-i \tau \mathcal{A}^0} f_0^{-1} \bigr) (\mathcal{H}_0 + I)^{-s/2} =  T_{\varepsilon}^* J(\varepsilon; \tau) \mathcal{R} (\varepsilon)^{s/2} T_{\varepsilon}, \qquad \varepsilon > 0.
\end{equation}

From Theorem~\ref{sndw_A_exp_general_thrm} and  identity~(\ref{sndw_exps_scale_transform}) we deduce the following result proved before in~\cite[Theorem~12.4]{BSu2008}.

\begin{thrm}[\cite{BSu2008}]
	\label{sndw_A_eps_exp_general_thrm}
	Let $\mathcal{A}_{\varepsilon}$ and $\mathcal{A}^0$ be the operators defined by~\emph{(\ref{A_eps})} and~\emph{(\ref{A0})}. Then for $0 \le s \le 3$, and $\tau \in \mathbb{R}$, $\varepsilon > 0$ we have
	\begin{equation*}
	\| f^\varepsilon e^{-i \tau \mathcal{A}_{\varepsilon}} (f^\varepsilon)^{-1} - f_0 e^{-i \tau \mathcal{A}^0} f_0^{-1} \|_{H^s (\mathbb{R}^d) \to L_2 (\mathbb{R}^d)} \le  \mathfrak{C}_1 (s) (1 +  |\tau|)^{s/3} \varepsilon^{s/3},  
	\end{equation*}
	where $\mathfrak{C}_1 (s) = (2 \|f\|_{L_\infty} \|f^{-1}\|_{L_\infty})^{1-s/3} \mathcal{C}_1^{s/3}$. The constant $\mathcal{C}_1$ depends only on $\alpha_0$, $\alpha_1$, $\|g\|_{L_\infty}$, $\|g^{-1}\|_{L_\infty}$, $\|f\|_{L_{\infty}}$, $\|f^{-1}\|_{L_{\infty}}$, and $r_0$.
\end{thrm}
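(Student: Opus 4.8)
The plan is to reduce the statement, via the scaling transformation, to the fixed-scale estimate of Theorem~\ref{sndw_A_exp_general_thrm}, and then pass from the smoothing operator $\mathcal{R}(\varepsilon)^{3/2}$ to the norm of the operator $H^s\to L_2$ for $0\le s\le 3$ by complex interpolation, exactly in the spirit of the proofs of Theorems~\ref{hatA_eps_exp_enchcd_thrm_1} and~\ref{hatA_eps_exp_enchcd_thrm_2}. Write $D_\varepsilon(\tau)\coloneqq f^\varepsilon e^{-i \tau \mathcal{A}_{\varepsilon}} (f^\varepsilon)^{-1} - f_0 e^{-i \tau \mathcal{A}^0} f_0^{-1}$. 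First I would combine the scaling identity~(\ref{sndw_exps_scale_transform}) (with $s=3$), the unitarity of $T_\varepsilon$, and Theorem~\ref{sndw_A_exp_general_thrm} to obtain
\begin{equation*}
\| D_\varepsilon(\tau)(\mathcal{H}_0+I)^{-3/2} \|_{L_2(\mathbb{R}^d)\to L_2(\mathbb{R}^d)} = \| J(\varepsilon;\tau)\mathcal{R}(\varepsilon)^{3/2} \|_{L_2(\mathbb{R}^d)\to L_2(\mathbb{R}^d)} \le \mathcal{C}_1(1+|\tau|)\varepsilon .
\end{equation*}
Second, I would record the elementary bound $\| D_\varepsilon(\tau) \|_{L_2(\mathbb{R}^d)\to L_2(\mathbb{R}^d)}\le 2\|f\|_{L_\infty}\|f^{-1}\|_{L_\infty}$, which holds because $e^{-i\tau\mathcal{A}_\varepsilon}$ and $e^{-i\tau\mathcal{A}^0}$ are unitary, the multiplications by $f^\varepsilon$ and $(f^\varepsilon)^{-1}$ have norms bounded by $\|f\|_{L_\infty}$ and $\|f^{-1}\|_{L_\infty}$, and $|f_0|\le\|f\|_{L_\infty}$, $|f_0^{-1}|\le\|f^{-1}\|_{L_\infty}$ (see the remark after~(\ref{f0})).

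Next, I would interpolate between these two estimates. Applying the three-lines theorem to the analytic operator-valued family $z\mapsto D_\varepsilon(\tau)(\mathcal{H}_0+I)^{-3z/2}$ on the strip $0\le\Re z\le 1$ — its $L_2\to L_2$ norm is at most $2\|f\|_{L_\infty}\|f^{-1}\|_{L_\infty}$ on $\Re z=0$ and at most $\mathcal{C}_1(1+|\tau|)\varepsilon$ on $\Re z=1$, since the imaginary powers $(\mathcal{H}_0+I)^{-3iy/2}$ are unitary and $(\mathcal{H}_0+I)^{-3/2}$ commutes with them — and evaluating at $z=s/3$, I would get
\begin{equation*}
\| D_\varepsilon(\tau)(\mathcal{H}_0+I)^{-s/2} \|_{L_2(\mathbb{R}^d)\to L_2(\mathbb{R}^d)} \le \bigl(2\|f\|_{L_\infty}\|f^{-1}\|_{L_\infty}\bigr)^{1-s/3}\bigl(\mathcal{C}_1(1+|\tau|)\varepsilon\bigr)^{s/3} = \mathfrak{C}_1(s)(1+|\tau|)^{s/3}\varepsilon^{s/3}
\end{equation*}
for $0\le s\le 3$, with $\mathfrak{C}_1(s)=(2\|f\|_{L_\infty}\|f^{-1}\|_{L_\infty})^{1-s/3}\mathcal{C}_1^{s/3}$ exactly as claimed. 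Finally, since $(\mathcal{H}_0+I)^{s/2}$ is an isometric isomorphism of $H^s(\mathbb{R}^d;\mathbb{C}^n)$ onto $L_2(\mathbb{R}^d;\mathbb{C}^n)$, this last inequality is equivalent to the asserted one.

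I do not expect any genuine obstacle: all of the substantive work — the Floquet--Bloch decomposition, the threshold approximation of the sandwiched exponential, the reduction to the abstract scheme — is already packaged in Theorem~\ref{sndw_A_exp_general_thrm} and in the identities of~\S\ref{main_results_exp_section}. The only point that needs a little care is the bookkeeping of exponents: in the general case the smoothing operator carries the power $3$, so the interpolation parameter is $s/3$ (not $s/2$ as in the enhanced cases), and one must check that the resulting constant is precisely $\mathfrak{C}_1(s)$; everything else is routine and parallels the proof of Theorem~\ref{hatA_eps_exp_enchcd_thrm_1} verbatim.
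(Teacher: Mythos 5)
Your proposal is correct and follows essentially the same route the paper uses (and explicitly demonstrates in the proof of Theorem~\ref{hatA_eps_exp_enchcd_thrm_1}): the scaling identity~(\ref{sndw_exps_scale_transform}) together with Theorem~\ref{sndw_A_exp_general_thrm} gives the $s=3$ endpoint, the trivial bound $2\|f\|_{L_\infty}\|f^{-1}\|_{L_\infty}$ gives the $s=0$ endpoint, and interpolation with parameter $s/3$ followed by the isometry $(\mathcal{H}_0+I)^{s/2}\colon H^s\to L_2$ yields the stated estimate with exactly the constant $\mathfrak{C}_1(s)$. No gaps.
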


This result can be improved under some additional assumptions. Applying Theorem~\ref{sndw_A_exp_enchcd_thrm_1} and taking into account~(\ref{sndw_exps_scale_transform}) and the obvious estimate
\begin{equation*}
\| f^\varepsilon e^{-i \tau \mathcal{A}_{\varepsilon}} (f^\varepsilon)^{-1} - f_0 e^{-i \tau \mathcal{A}^0} f_0^{-1} \|_{L_2(\mathbb{R}^d) \to L_2(\mathbb{R}^d)} \le 2 \|f\|_{L_\infty} \|f^{-1}\|_{L_\infty},
\end{equation*}
we obtain the following result.

\begin{thrm}
	\label{sndw_A_eps_exp_enchcd_thrm_1}
	Suppose that the assumptions of Theorem~\emph{\ref{sndw_A_eps_exp_general_thrm}} are satisfied. Suppose that the operator $\widehat{N}_Q (\boldsymbol{\theta})$ defined by~\emph{(\ref{N_Q(theta)})} is equal to zero\emph{:} $\widehat{N}_Q (\boldsymbol{\theta}) = 0$ for any $\boldsymbol{\theta} \in \mathbb{S}^{d-1}$. Then for $0 \le s \le 2$, and $\tau \in \mathbb{R}$, $\varepsilon > 0$ we have
	\begin{equation*}
	\| f^\varepsilon e^{-i \tau \mathcal{A}_{\varepsilon}} (f^\varepsilon)^{-1} - f_0 e^{-i \tau \mathcal{A}^0} f_0^{-1} \|_{H^s (\mathbb{R}^d) \to L_2 (\mathbb{R}^d)} \le  \mathfrak{C}_2 (s) (1 +  |\tau|^{1/2})^{s/2} \varepsilon^{s/2},  
	\end{equation*}
	where $\mathfrak{C}_2 (s) = (2 \|f\|_{L_\infty} \|f^{-1}\|_{L_\infty})^{1-s/2} \mathcal{C}_2^{s/2}$. The constant $\mathcal{C}_2$ depends only on $\alpha_0$, $\alpha_1$, $\|g\|_{L_\infty}$, $\|g^{-1}\|_{L_\infty}$, $\|f\|_{L_{\infty}}$, $\|f^{-1}\|_{L_{\infty}}$, and $r_0$.
\end{thrm}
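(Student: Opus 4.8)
The plan is to transcribe, almost verbatim, the argument already used for Theorem~\ref{hatA_eps_exp_enchcd_thrm_1}, with the operator $\widehat{\mathcal A}_\varepsilon$ and its sharpening operator $\widehat N$ replaced by the sandwiched operator $\mathcal A_\varepsilon$ and the operator $\widehat N_Q$. First I would invoke Theorem~\ref{sndw_A_exp_enchcd_thrm_1}: under the hypothesis $\widehat N_Q(\boldsymbol\theta)=0$ for all $\boldsymbol\theta\in\mathbb S^{d-1}$ it gives $\| J(\varepsilon;\tau)\,\mathcal R(\varepsilon)\|_{L_2(\mathbb R^d)\to L_2(\mathbb R^d)}\le \mathcal C_2(1+|\tau|^{1/2})\varepsilon$. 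Combining this with the scaling identity~\eqref{sndw_exps_scale_transform} taken at $s=2$ and using that $T_\varepsilon$ is unitary, I obtain
\[
\bigl\| \bigl(f^\varepsilon e^{-i\tau\mathcal A_\varepsilon}(f^\varepsilon)^{-1}-f_0 e^{-i\tau\mathcal A^0}f_0^{-1}\bigr)(\mathcal H_0+I)^{-1}\bigr\|_{L_2(\mathbb R^d)\to L_2(\mathbb R^d)}\le \mathcal C_2(1+|\tau|^{1/2})\varepsilon .
\]

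Next I would record the trivial bound $\| f^\varepsilon e^{-i\tau\mathcal A_\varepsilon}(f^\varepsilon)^{-1}-f_0 e^{-i\tau\mathcal A^0}f_0^{-1}\|_{L_2\to L_2}\le 2\|f\|_{L_\infty}\|f^{-1}\|_{L_\infty}$, which is immediate since the two exponentials are unitary on $L_2$ and the multiplication operators $f^\varepsilon$, $(f^\varepsilon)^{-1}$, $f_0$, $f_0^{-1}$ are bounded with the stated norms (recall $|f_0|\le\|f\|_{L_\infty}$, $|f_0^{-1}|\le\|f^{-1}\|_{L_\infty}$). Then I would interpolate between the last two inequalities: writing $T$ for the operator difference and viewing $z\mapsto T(\mathcal H_0+I)^{-z}$ as an operator-valued analytic function on the strip $0\le\Re z\le 1$, unitarity of $(\mathcal H_0+I)^{-iy}$ gives the bound $2\|f\|_{L_\infty}\|f^{-1}\|_{L_\infty}$ on the line $\Re z=0$ and the bound $\mathcal C_2(1+|\tau|^{1/2})\varepsilon$ on the line $\Re z=1$, so the Hadamard three-lines theorem yields, for $0\le s\le 2$,
\[
\bigl\| T(\mathcal H_0+I)^{-s/2}\bigr\|_{L_2\to L_2}\le \bigl(2\|f\|_{L_\infty}\|f^{-1}\|_{L_\infty}\bigr)^{1-s/2}\mathcal C_2^{\,s/2}(1+|\tau|^{1/2})^{s/2}\varepsilon^{s/2}.
\]

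Finally, since $(\mathcal H_0+I)^{s/2}$ is an isometric isomorphism of $H^s(\mathbb R^d;\mathbb C^n)$ onto $L_2(\mathbb R^d;\mathbb C^n)$, the last estimate is equivalent to the claimed $(H^s\to L_2)$-bound with $\mathfrak C_2(s)=(2\|f\|_{L_\infty}\|f^{-1}\|_{L_\infty})^{1-s/2}\mathcal C_2^{\,s/2}$, and the dependence of $\mathcal C_2$ on $\alpha_0$, $\alpha_1$, $\|g\|_{L_\infty}$, $\|g^{-1}\|_{L_\infty}$, $\|f\|_{L_\infty}$, $\|f^{-1}\|_{L_\infty}$, $r_0$ is inherited from Theorem~\ref{sndw_A_exp_enchcd_thrm_1}. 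I do not expect any real obstacle: the only step needing a line of justification is the interpolation, and even that is routine because one of the two endpoints is produced simply by conjugation with the unitary group $(\mathcal H_0+I)^{-iy}$; everything else is a direct copy of the proof of Theorem~\ref{hatA_eps_exp_enchcd_thrm_1}.
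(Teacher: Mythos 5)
Your proposal is correct and coincides with the paper's own argument: the paper likewise combines Theorem~\ref{sndw_A_exp_enchcd_thrm_1} with the scaling identity~(\ref{sndw_exps_scale_transform}), the trivial bound $2\|f\|_{L_\infty}\|f^{-1}\|_{L_\infty}$, interpolation, and the isometry $(\mathcal H_0+I)^{s/2}\colon H^s\to L_2$, exactly mirroring the written-out proof of Theorem~\ref{hatA_eps_exp_enchcd_thrm_1}. No gaps.
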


Similarly, application of Theorem~\ref{sndw_A_exp_enchcd_thrm_2} yields the following results.

\begin{thrm}
	\label{sndw_A_eps_exp_enchcd_thrm_2}
	Suppose that the assumptions of Theorem~\emph{\ref{sndw_A_eps_exp_general_thrm}} are satisfied. Suppose that Condition~\emph{\ref{sndw_cond1}} \emph{(}or more restrictive Condition~\emph{\ref{sndw_cond2})} is satisfied. Then for $0 \le s \le 2$, and $\tau \in \mathbb{R}$, $\varepsilon > 0$ we have
	\begin{equation*}
	\| f^\varepsilon e^{-i \tau \mathcal{A}_{\varepsilon}} (f^\varepsilon)^{-1} - f_0 e^{-i \tau \mathcal{A}^0} f_0^{-1} \|_{H^s (\mathbb{R}^d) \to L_2 (\mathbb{R}^d)} \le  \mathfrak{C}_3 (s) (1 +  |\tau|^{1/2})^{s/2} \varepsilon^{s/2}, 
	\end{equation*}
	where $\mathfrak{C}_3 (s) = (2 \|f\|_{L_\infty} \|f^{-1}\|_{L_\infty})^{1-s/2}  \mathcal{C}_3^{s/2}$. The constant $\mathcal{C}_3$ depends on $\alpha_0$, $\alpha_1$, $\|g\|_{L_\infty}$, $\|g^{-1}\|_{L_\infty}$, $\|f\|_{L_{\infty}}$, $\|f^{-1}\|_{L_{\infty}}$, $r_0$, and also on $n$ and the number $c^{\circ}$.
\end{thrm}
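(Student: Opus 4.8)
The plan is to follow the pattern of the proofs of Theorems~\ref{hatA_eps_exp_enchcd_thrm_1} and~\ref{sndw_A_eps_exp_enchcd_thrm_1}: transfer the fixed-period operator-norm bound of Theorem~\ref{sndw_A_exp_enchcd_thrm_2} to the $\varepsilon$-scaled operators by means of the scaling transformation, and then interpolate it against the trivial bound.

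First I would invoke identity~\eqref{sndw_exps_scale_transform} with $s = 2$. Because $T_\varepsilon$ is unitary, the $L_2(\mathbb{R}^d)\to L_2(\mathbb{R}^d)$ norm of its left-hand side coincides with $\| J(\varepsilon;\tau)\,\mathcal{R}(\varepsilon)\|_{L_2(\mathbb{R}^d)\to L_2(\mathbb{R}^d)}$, which by Theorem~\ref{sndw_A_exp_enchcd_thrm_2} (whose hypothesis is exactly Condition~\ref{sndw_cond1}, or the stronger Condition~\ref{sndw_cond2}) is at most $\mathcal{C}_3(1 + |\tau|^{1/2})\varepsilon$. Thus, writing $B \coloneqq f^\varepsilon e^{-i\tau\mathcal{A}_\varepsilon}(f^\varepsilon)^{-1} - f_0 e^{-i\tau\mathcal{A}^0}f_0^{-1}$,
\[
\bigl\| B\,(\mathcal{H}_0 + I)^{-1} \bigr\|_{L_2(\mathbb{R}^d)\to L_2(\mathbb{R}^d)} \le \mathcal{C}_3 (1 + |\tau|^{1/2})\varepsilon ,
\]
while on the other hand $\| B \|_{L_2(\mathbb{R}^d)\to L_2(\mathbb{R}^d)} \le 2\|f\|_{L_\infty}\|f^{-1}\|_{L_\infty}$ since $e^{-i\tau\mathcal{A}_\varepsilon}$ and $e^{-i\tau\mathcal{A}^0}$ are unitary.

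Next I would interpolate. For fixed $\tau,\varepsilon$ consider the operator-valued function $z \mapsto B\,(\mathcal{H}_0 + I)^{-z}$, analytic and bounded on the strip $0 \le \Re z \le 1$ (this follows from the spectral theorem for $\mathcal{H}_0$). On $\Re z = 0$ the factor $(\mathcal{H}_0+I)^{-z}$ is unitary, so the norm is $\le 2\|f\|_{L_\infty}\|f^{-1}\|_{L_\infty}$; on $\Re z = 1$ it is $\le \mathcal{C}_3(1+|\tau|^{1/2})\varepsilon$ by the previous step. The Stein (three-lines) interpolation theorem then gives, at $z = s/2$ with $0 \le s \le 2$,
\[
\bigl\| B\,(\mathcal{H}_0 + I)^{-s/2} \bigr\|_{L_2(\mathbb{R}^d)\to L_2(\mathbb{R}^d)} \le (2\|f\|_{L_\infty}\|f^{-1}\|_{L_\infty})^{1-s/2}\,\mathcal{C}_3^{s/2}\,(1 + |\tau|^{1/2})^{s/2}\varepsilon^{s/2}.
\]
Since $(\mathcal{H}_0 + I)^{s/2}$ is an isometric isomorphism of $H^s(\mathbb{R}^d;\mathbb{C}^n)$ onto $L_2(\mathbb{R}^d;\mathbb{C}^n)$, this is equivalent to the claimed $H^s \to L_2$ estimate, with $\mathfrak{C}_3(s) = (2\|f\|_{L_\infty}\|f^{-1}\|_{L_\infty})^{1-s/2}\mathcal{C}_3^{s/2}$ and the dependence of $\mathcal{C}_3$ on the data inherited verbatim from Theorem~\ref{sndw_A_exp_enchcd_thrm_2}.

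There is no real obstacle: the scaling identity~\eqref{sndw_exps_scale_transform} is already available, and the only mildly delicate ingredient is the interpolation of $B(\mathcal{H}_0+I)^{-z}$ — which is entirely routine and can alternatively be carried out by applying the classical three-lines lemma to the scalar functions $z \mapsto (B(\mathcal{H}_0+I)^{-z}u, v)$ for fixed $u, v \in L_2(\mathbb{R}^d;\mathbb{C}^n)$, using that $(\mathcal{H}_0+I)^{-it}$ is unitary. All the genuine work sits in Theorem~\ref{sndw_A_exp_enchcd_thrm_2}, i.e. ultimately in the abstract enhancement Theorem~\ref{abstr_sndw_exp_enchcd_thrm_2} together with the $\boldsymbol{\theta}$-independent choices of $t^{00}$ and $c^\circ$ that Condition~\ref{sndw_cond1} makes possible.
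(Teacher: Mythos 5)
Your proposal is correct and follows the same route as the paper: the paper deduces this theorem from Theorem~\ref{sndw_A_exp_enchcd_thrm_2} via the scaling identity~\eqref{sndw_exps_scale_transform}, the trivial bound $2\|f\|_{L_\infty}\|f^{-1}\|_{L_\infty}$, and interpolation, exactly as in the proof of Theorem~\ref{hatA_eps_exp_enchcd_thrm_1}. Your explicit three-lines justification merely spells out the interpolation step the paper states without detail.
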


Theorems~\ref{sndw_A_eps_exp_enchcd_thrm_1} and~\ref{sndw_A_eps_exp_enchcd_thrm_2} improve the results of Theorems~13.8 and~13.10 from~\cite{Su2017} with respect to  dependence of the estimates on $\tau$.

Application of Theorems~\ref{sndw_A_exp_smooth_shrp_thrm_1} and \ref{sndw_A_exp_smooth_shrp_thrm_2} allows us to confirm that the results of Theorems~\ref{sndw_A_eps_exp_general_thrm}, \ref{sndw_A_eps_exp_enchcd_thrm_1}, and \ref{sndw_A_eps_exp_enchcd_thrm_2} are sharp with respect to the type of the operator norm.

\begin{thrm}[\cite{Su2017}]
	\label{sndw_A_eps_exp_smooth_shrp_thrm_1}
	Suppose that $\widehat{N}_{0,Q} (\boldsymbol{\theta}_0) \ne 0$ for some $\boldsymbol{\theta}_0 \in \mathbb{S}^{d-1}$. Let $\tau \ne 0$ and $0 \le s < 3$. Then there does not exist a constant $\mathcal{C}(\tau) >0$ such that the estimate
	\begin{equation}
	\label{10.10a}
	\| f^\varepsilon e^{-i \tau \mathcal{A}_{\varepsilon}} (f^\varepsilon)^{-1} - f_0 e^{-i \tau \mathcal{A}^0} f_0^{-1} \|_{H^s (\mathbb{R}^d) \to L_2 (\mathbb{R}^d)} \le \mathcal{C}(\tau) \varepsilon
	\end{equation}
	holds for all sufficiently small $\varepsilon > 0$.
\end{thrm}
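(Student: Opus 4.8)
The plan is to prove the statement by contradiction, reducing it --- by means of the scaling transformation and the direct integral decomposition --- to the fiberwise sharpness result of Theorem~\ref{sndw_A(k)_exp_smooth_shrp_thrm_1}. Fix $\tau \ne 0$ and $0 \le s < 3$, and suppose, contrary to the assertion, that there exists a constant $\mathcal{C}(\tau) > 0$ for which~\eqref{10.10a} holds for all sufficiently small $\varepsilon > 0$. Since $(\mathcal{H}_0 + I)^{s/2}$ is an isometric isomorphism of $H^s(\mathbb{R}^d; \mathbb{C}^n)$ onto $L_2(\mathbb{R}^d; \mathbb{C}^n)$, estimate~\eqref{10.10a} is equivalent to the $(L_2 \to L_2)$-bound for $\bigl(f^\varepsilon e^{-i \tau \mathcal{A}_{\varepsilon}} (f^\varepsilon)^{-1} - f_0 e^{-i \tau \mathcal{A}^0} f_0^{-1}\bigr) (\mathcal{H}_0 + I)^{-s/2}$; by identity~\eqref{sndw_exps_scale_transform} and the unitarity of $T_\varepsilon$, the latter norm coincides with $\| J(\varepsilon; \tau) \mathcal{R}(\varepsilon)^{s/2} \|_{L_2(\mathbb{R}^d) \to L_2(\mathbb{R}^d)}$. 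Hence the assumed inequality yields $\| J(\varepsilon; \tau) \mathcal{R}(\varepsilon)^{s/2} \|_{L_2(\mathbb{R}^d) \to L_2(\mathbb{R}^d)} \le \mathcal{C}(\tau)\varepsilon$ for all small $\varepsilon$.

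Next I would pass to the fibers of the direct integral. By the identity stated just before Theorem~\ref{sndw_A_exp_general_thrm},
\[
\| J(\varepsilon; \tau) \mathcal{R}(\varepsilon)^{s/2} \|_{L_2(\mathbb{R}^d) \to L_2(\mathbb{R}^d)} = \esssup_{\mathbf{k} \in \widetilde{\Omega}} \| J(\mathbf{k}, \varepsilon; \tau) \mathcal{R}(\mathbf{k}, \varepsilon)^{s/2} \|_{L_2(\Omega) \to L_2(\Omega)},
\]
so that $\| J(\mathbf{k}, \varepsilon; \tau) \mathcal{R}(\mathbf{k}, \varepsilon)^{s/2} \|_{L_2(\Omega) \to L_2(\Omega)} \le \mathcal{C}(\tau)\varepsilon$ for almost every $\mathbf{k} \in \widetilde{\Omega}$ and all sufficiently small $\varepsilon > 0$. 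Recalling the definition~\eqref{J(k,eps)} of $J(\mathbf{k}, \varepsilon; \tau)$, this is precisely estimate~\eqref{8.2a}. But by hypothesis $\widehat{N}_{0,Q}(\boldsymbol{\theta}_0) \ne 0$ for some $\boldsymbol{\theta}_0 \in \mathbb{S}^{d-1}$, so Theorem~\ref{sndw_A(k)_exp_smooth_shrp_thrm_1} asserts that no such constant $\mathcal{C}(\tau)$ can exist. This contradiction completes the proof.

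The only point requiring a little care is the passage from an $\esssup$-bound valid for every small $\varepsilon$ to estimate~\eqref{8.2a} holding on a single full-measure set of $\mathbf{k}$ for all small $\varepsilon$; this is handled exactly as in~\cite[Theorem~11.7]{Su2017}, using the continuity of $\mathbf{k} \mapsto J(\mathbf{k}, \varepsilon; \tau)$ on the ball $|\mathbf{k}| \le t^0$ (cf.~\cite[Lemma~11.8]{Su2017}) to replace ``almost every $\mathbf{k}$'' by ``every $\mathbf{k}$'', in particular $\mathbf{k} = t\boldsymbol{\theta}_0$. I do not expect a genuine obstacle here: all the analytic content has already been packaged into Theorem~\ref{sndw_A(k)_exp_smooth_shrp_thrm_1} and, through it, into the abstract Theorem~\ref{abstr_sndw_exp_smooth_shrp_thrm_1}, whose proof rests on exhibiting a bad pair $(t,\varepsilon)$ --- with $t \asymp \varepsilon^{2/3}$ --- which exploits the fact that $\widehat{N}_{0,Q}(\boldsymbol{\theta}_0) \ne 0$ forces a nonzero cubic coefficient $\mu_j(\boldsymbol{\theta}_0)$ in one of the eigenvalue expansions~\eqref{A_eigenvalues_series}, so that the smoothing factor $\varepsilon^s(t^2+\varepsilon^2)^{-s/2} \asymp \varepsilon^{s/3}$ cannot be dominated by $\mathcal{C}(\tau)\varepsilon$ when $s < 3$.
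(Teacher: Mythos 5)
Your argument is correct and follows exactly the reduction the paper intends: estimate~\eqref{10.10a} is transferred by the isometry $(\mathcal{H}_0+I)^{s/2}$ and the scaling identity~\eqref{sndw_exps_scale_transform} to the bound~\eqref{9.2b} for $J(\varepsilon;\tau)\mathcal{R}(\varepsilon)^{s/2}$, then fiberwise via the $\esssup$ identity to~\eqref{8.2a}, which contradicts Theorem~\ref{sndw_A(k)_exp_smooth_shrp_thrm_1}. The one subtlety you flag (upgrading from an $\varepsilon$-dependent null set of $\mathbf{k}$ via continuity of the fiber operator) is indeed the only point needing care, and you resolve it the same way the paper does in its analogous sharpness proofs.
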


\begin{thrm}
	Suppose that $\widehat{N}_{0,Q} (\boldsymbol{\theta}) = 0$ for any $\boldsymbol{\theta} \in \mathbb{S}^{d-1}$ and  $\widehat{\mathcal{N}}_Q^{(q)} (\boldsymbol{\theta}_0) \ne 0$ for some $\boldsymbol{\theta}_0 \in \mathbb{S}^{d-1}$ and some $q \in \{1,\ldots,p(\boldsymbol{\theta}_0)\}$. 
	Let $\tau \ne 0$ and $0 \le s < 2$. Then there does not exist a constant $\mathcal{C}(\tau) >0$ such that estimate \eqref{10.10a}
	holds for all sufficiently small $\varepsilon > 0$.
\end{thrm}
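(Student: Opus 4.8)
The plan is to deduce this statement, which concerns the $(H^s\to L_2)$-operator norm on $\mathbb{R}^d$, from the already established sharpness result for the sandwiched operator exponential supplied with the smoothing factor $\mathcal{R}(\varepsilon)^{s/2}$, that is, from Theorem~\ref{sndw_A_exp_smooth_shrp_thrm_2}. The bridge between the two formulations is the scaling identity~\eqref{sndw_exps_scale_transform}, together with the observation that $(\mathcal{H}_0+I)^{s/2}$ is an isometric isomorphism of $H^s(\mathbb{R}^d;\mathbb{C}^n)$ onto $L_2(\mathbb{R}^d;\mathbb{C}^n)$ and that the scaling transformation $T_\varepsilon$ is unitary.

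First I would argue by contradiction: suppose that for the given $\tau\ne0$ and some $s\in[0,2)$ there is a constant $\mathcal{C}(\tau)>0$ such that~\eqref{10.10a} holds for all sufficiently small $\varepsilon>0$. Rewriting the left-hand side of~\eqref{10.10a} as
\[
\bigl\| \bigl(f^\varepsilon e^{-i\tau\mathcal{A}_\varepsilon}(f^\varepsilon)^{-1} - f_0 e^{-i\tau\mathcal{A}^0}f_0^{-1}\bigr)(\mathcal{H}_0+I)^{-s/2} \bigr\|_{L_2(\mathbb{R}^d)\to L_2(\mathbb{R}^d)},
\]
and invoking~\eqref{sndw_exps_scale_transform} (which itself combines~\eqref{H0_resolv_scale_transform} with the scaling relations for $\mathcal{A}_\varepsilon$, $\mathcal{A}^0$ and $f^\varepsilon$), I would conclude that this quantity coincides with $\|J(\varepsilon;\tau)\mathcal{R}(\varepsilon)^{s/2}\|_{L_2(\mathbb{R}^d)\to L_2(\mathbb{R}^d)}$, where $J(\varepsilon;\tau)$ is the operator defined just before Theorem~\ref{sndw_A_exp_general_thrm}. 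Hence estimate~\eqref{10.10a} is equivalent to estimate~\eqref{9.2b} with the same constant $\mathcal{C}(\tau)$.

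It then remains to apply Theorem~\ref{sndw_A_exp_smooth_shrp_thrm_2}: under the present hypotheses $\widehat{N}_{0,Q}(\boldsymbol{\theta})=0$ for all $\boldsymbol{\theta}\in\mathbb{S}^{d-1}$ and $\widehat{\mathcal{N}}_Q^{(q)}(\boldsymbol{\theta}_0)\ne0$ for some $\boldsymbol{\theta}_0$ and some $q$, there is no positive constant $\mathcal{C}(\tau)$ making~\eqref{9.2b} hold for all small $\varepsilon>0$ when $\tau\ne0$ and $0\le s<2$. This contradicts the standing assumption, and the theorem follows.

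In this argument there is no genuine analytic obstacle: the hard content — namely the threshold expansion of an eigenvalue branch $\lambda_j(t,\boldsymbol{\theta})=\gamma_j t^2+\nu_j t^4+O(|t|^5)$ with $\nu_j\ne0$ (the condition $\widehat{\mathcal{N}}_Q^{(q)}(\boldsymbol{\theta}_0)\ne0$ forcing $\nu_j\ne0$ through Remark~\ref{abstr_N^(q',q)_hatNQ^(q',q)_rem}), and the choice $t=t(\varepsilon)\asymp\varepsilon^{1/2}$ producing a non-decaying sine factor — was already carried out at the abstract level in the proof of Theorem~\ref{abstr_exp_smooth_shrp_thrm_2} and transported to the $L_2(\mathbb{R}^d)$ setting in the proofs behind Theorems~\ref{sndw_A(k)_exp_smooth_shrp_thrm_2} and~\ref{sndw_A_exp_smooth_shrp_thrm_2}. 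The only thing requiring care is the bookkeeping: verifying that the hypotheses here are literally those of Theorem~\ref{sndw_A_exp_smooth_shrp_thrm_2}, and that the smoothing operator $\mathcal{R}(\varepsilon)^{s/2}$ corresponds under scaling precisely to $(\mathcal{H}_0+I)^{-s/2}$, which is exactly what~\eqref{sndw_exps_scale_transform} records.
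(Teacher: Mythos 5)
Your proposal is correct and matches the paper's own route: the paper derives this theorem precisely by combining the scaling identity~\eqref{sndw_exps_scale_transform} (plus unitarity of $T_\varepsilon$ and the isometry $(\mathcal{H}_0+I)^{s/2}\colon H^s\to L_2$) with Theorem~\ref{sndw_A_exp_smooth_shrp_thrm_2}, exactly as you do. No gaps.
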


Theorem~\ref{sndw_A_eps_exp_smooth_shrp_thrm_1} was proved in~\cite[Theorem~13.12]{Su2017}.

Application of Theorem~\ref{sndw_A_exp_time_shrp_thrm_1} allows us to confirm that the result of Theorem~\ref{sndw_A_eps_exp_general_thrm} is sharp with respect to dependence of the estimates on time.

\begin{thrm}
	Suppose that $\widehat{N}_{0,Q} (\boldsymbol{\theta}_0) \ne 0$ for some $\boldsymbol{\theta}_0 \in \mathbb{S}^{d-1}$. Let $s \ge 3$. Then there does not exist a positive function $\mathcal{C}(\tau)$ such that $\lim_{\tau \to \infty} \mathcal{C}(\tau)/ |\tau| = 0$ and estimate \eqref{10.10a}
	holds for all $\tau \in \mathbb{R}$ and all sufficiently small $\varepsilon > 0$.
\end{thrm}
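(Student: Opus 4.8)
The plan is to prove this statement by contradiction, transferring it via the scaling transformation to the already-established sharpness result for the operator $e^{-i\tau\varepsilon^{-2}\mathcal{A}}$, namely Theorem~\ref{sndw_A_exp_time_shrp_thrm_1}. So I would begin by assuming, contrary to the claim, that for some $s\ge 3$ there exists a positive function $\mathcal{C}(\tau)$ with $\lim_{\tau\to\infty}\mathcal{C}(\tau)/|\tau|=0$ such that estimate~\eqref{10.10a} holds for all $\tau\in\mathbb{R}$ and all sufficiently small $\varepsilon>0$.

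The key step is the identity~\eqref{sndw_exps_scale_transform},
\begin{equation*}
\bigl(f^\varepsilon e^{-i \tau \mathcal{A}_{\varepsilon}} (f^\varepsilon)^{-1} - f_0 e^{-i \tau \mathcal{A}^0} f_0^{-1} \bigr) (\mathcal{H}_0 + I)^{-s/2} =  T_{\varepsilon}^* J(\varepsilon; \tau) \mathcal{R} (\varepsilon)^{s/2} T_{\varepsilon}.
\end{equation*}
Since $T_\varepsilon$ is unitary and $(\mathcal{H}_0+I)^{s/2}$ is an isometric isomorphism of $H^s(\mathbb{R}^d;\mathbb{C}^n)$ onto $L_2(\mathbb{R}^d;\mathbb{C}^n)$, the $H^s\to L_2$ norm on the left-hand side of~\eqref{10.10a} equals $\|J(\varepsilon;\tau)\mathcal{R}(\varepsilon)^{s/2}\|_{L_2(\mathbb{R}^d)\to L_2(\mathbb{R}^d)}$. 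Hence the assumed bound is precisely estimate~\eqref{9.2b} with the same function $\mathcal{C}(\tau)$, holding for all $\tau\in\mathbb{R}$ and all small $\varepsilon>0$. But $\widehat{N}_{0,Q}(\boldsymbol{\theta}_0)\ne 0$ and $s\ge 3$, so Theorem~\ref{sndw_A_exp_time_shrp_thrm_1} forbids the existence of such a $\mathcal{C}(\tau)$ with $\lim_{\tau\to\infty}\mathcal{C}(\tau)/|\tau|=0$. This contradiction completes the proof.

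I expect the argument to be essentially formal: once the isometry properties above are recorded, the reduction is a one-liner, and the contradiction is supplied by citing Theorem~\ref{sndw_A_exp_time_shrp_thrm_1}. The main obstacle, such as it is, is only to check that this transfer respects the class of admissible functions $\mathcal{C}(\tau)$ — that passing through $T_\varepsilon$ and $(\mathcal{H}_0+I)^{s/2}$ changes neither the constant nor the property $\lim_{\tau\to\infty}\mathcal{C}(\tau)/|\tau|=0$ — which here is immediate because the two norms are literally equal. All the substantive analysis (the threshold asymptotics $\lambda_j(t,\boldsymbol{\theta}_0)=\gamma_j(\boldsymbol{\theta}_0)t^2+\mu_j(\boldsymbol{\theta}_0)t^3+O(t^4)$ with $\mu_j(\boldsymbol{\theta}_0)\ne 0$, and the resulting lower bound for the smoothed fiber exponential) has already been done in the chain Theorem~\ref{abstr_exp_time_shrp_thrm_1} $\Rightarrow$ Theorem~\ref{sndw_A(k)_exp_time_shrp_thrm_1} $\Rightarrow$ Theorem~\ref{sndw_A_exp_time_shrp_thrm_1}, so no new estimates arise; if one preferred to avoid citing Theorem~\ref{sndw_A_exp_time_shrp_thrm_1} directly, the same contradiction follows by first writing $\|J(\varepsilon;\tau)\mathcal{R}(\varepsilon)^{s/2}\|_{L_2(\mathbb{R}^d)\to L_2(\mathbb{R}^d)} = \esssup_{\mathbf{k}\in\widetilde\Omega}\|J(\mathbf{k},\varepsilon;\tau)\mathcal{R}(\mathbf{k},\varepsilon)^{s/2}\|_{L_2(\Omega)\to L_2(\Omega)}$ and then invoking Theorem~\ref{sndw_A(k)_exp_time_shrp_thrm_1} along the ray $\mathbf{k}=t\boldsymbol{\theta}_0$.
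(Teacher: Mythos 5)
Your proposal is correct and matches the paper's argument: the theorem is obtained precisely by combining the scaling identity~\eqref{sndw_exps_scale_transform} (together with the unitarity of $T_\varepsilon$ and the isometry $(\mathcal{H}_0+I)^{s/2}\colon H^s\to L_2$) with Theorem~\ref{sndw_A_exp_time_shrp_thrm_1}. The observation that the two operator norms are literally equal, so the admissible class of functions $\mathcal{C}(\tau)$ transfers unchanged, is exactly the point on which the paper's one-line deduction rests.
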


Similarly, application of Theorem~\ref{sndw_A_exp_time_shrp_thrm_2} allows us to confirm that the results of Theorems~\ref{sndw_A_eps_exp_enchcd_thrm_1} and \ref{sndw_A_eps_exp_enchcd_thrm_2} are sharp.

\begin{thrm}
	Suppose that $\widehat{N}_{0,Q} (\boldsymbol{\theta}) = 0$ for any $\boldsymbol{\theta} \in \mathbb{S}^{d-1}$ and  $\widehat{\mathcal{N}}_Q^{(q)} (\boldsymbol{\theta}_0) \ne 0$  for some $\boldsymbol{\theta}_0 \in \mathbb{S}^{d-1}$ and some $q \in \{1,\ldots,p(\boldsymbol{\theta}_0)\}$. Let $s \ge 2$. Then there does not exist a positive function $\mathcal{C}(\tau)$ such that $\lim_{\tau \to \infty} \mathcal{C}(\tau)/ |\tau|^{1/2} = 0$ and estimate \eqref{10.10a}
	holds for all $\tau \in \mathbb{R}$ and all sufficiently small $\varepsilon > 0$.
\end{thrm}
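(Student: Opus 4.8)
The plan is to deduce the statement from Theorem~\ref{sndw_A_exp_time_shrp_thrm_2} by undoing the scaling transformation, exactly as Theorems~\ref{sndw_A_eps_exp_enchcd_thrm_1}--\ref{sndw_A_eps_exp_enchcd_thrm_2} were obtained from their unscaled counterparts. I argue by contradiction: suppose there is a positive function $\mathcal{C}(\tau)$ with $\lim_{\tau\to\infty}\mathcal{C}(\tau)/|\tau|^{1/2}=0$ for which~\eqref{10.10a} holds for all $\tau\in\mathbb{R}$ and all sufficiently small $\varepsilon>0$. Since the operator $(\mathcal{H}_0+I)^{s/2}$ is an isometric isomorphism of $H^s(\mathbb{R}^d;\mathbb{C}^n)$ onto $L_2(\mathbb{R}^d;\mathbb{C}^n)$, the left-hand side of~\eqref{10.10a} equals $\bigl\| \bigl(f^\varepsilon e^{-i\tau\mathcal{A}_\varepsilon}(f^\varepsilon)^{-1}-f_0 e^{-i\tau\mathcal{A}^0}f_0^{-1}\bigr)(\mathcal{H}_0+I)^{-s/2}\bigr\|_{L_2(\mathbb{R}^d)\to L_2(\mathbb{R}^d)}$, which by identity~\eqref{sndw_exps_scale_transform} equals $\|T_\varepsilon^* J(\varepsilon;\tau)\mathcal{R}(\varepsilon)^{s/2}T_\varepsilon\|_{L_2(\mathbb{R}^d)\to L_2(\mathbb{R}^d)} = \|J(\varepsilon;\tau)\mathcal{R}(\varepsilon)^{s/2}\|_{L_2(\mathbb{R}^d)\to L_2(\mathbb{R}^d)}$ because $T_\varepsilon$ is unitary. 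Hence~\eqref{9.2b} holds with the same function $\mathcal{C}(\tau)$, contradicting Theorem~\ref{sndw_A_exp_time_shrp_thrm_2}. This settles all $s\ge 2$ simultaneously, since the reduction is valid for every such $s$.

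All the genuine content is thus carried by Theorem~\ref{sndw_A_exp_time_shrp_thrm_2}, which is proved (in the same manner as Theorem~\ref{sndw_A(k)_exp_time_shrp_thrm_1}, cf.~\cite[Lemma~11.8]{Su2017} for the continuity in $\mathbf{k}$ needed there) by reduction to the fiber statement Theorem~\ref{sndw_A(k)_exp_time_shrp_thrm_2} and ultimately to the abstract Theorem~\ref{abstr_exp_time_shrp_thrm_2}. For completeness I record how I would run that abstract step if it had to be reproduced in full: first, use the projection bound~\eqref{abstr_F(t)_threshold} together with~\eqref{R_P} and~\eqref{R(k,eps)(I-P)_est} to pass from the smoothed $L_2(\mathbb{R}^d)$-estimate to the corresponding estimate for $e^{-i\tau\varepsilon^{-2}A(t)}F(t)-e^{-i\tau\varepsilon^{-2}t^2 SP}P$ restricted to the germ subspace, and then, via the convergent expansions~\eqref{abstr_A(t)_eigenvectors_series} and~\eqref{abstr_smooth_shrp_f2}, to an inequality of the form~\eqref{abstr_smooth_shrp_f3}; second, apply the operator under the norm to $\omega_j$ for an index $j$ with $\nu_j\neq 0$ (whose existence follows from the hypotheses $\widehat{N}_{0,Q}\equiv 0$, $\widehat{\mathcal{N}}_Q^{(q)}(\boldsymbol{\theta}_0)\neq 0$ through Lemma~\ref{abstr_N_hatNQ_lemma} and Remark~\ref{abstr_N^(q',q)_hatNQ^(q',q)_rem}, which translate them into $\mu_l(\boldsymbol{\theta}_0)=0$ for all $l$ and $\nu_j(\boldsymbol{\theta}_0)\neq 0$), so that the left side becomes $2\bigl|\sin\bigl(\tfrac12\tau\varepsilon^{-2}(\lambda_j(t)-\gamma_j t^2)\bigr)\bigr|\,\varepsilon^s(t^2+\varepsilon^2)^{-s/2}$ with $|\lambda_j(t)-\gamma_j t^2|$ comparable to $|\nu_j|\,t^4$ by~\eqref{2.11}; third, choose $t=t_\dag(\varepsilon,\tau)$ as in~\eqref{t_dag} so that the sine argument lies in $[0,3\pi/16]$ and the bound $|\sin y|\ge\frac{2}{\pi}|y|$ applies, yielding $\frac14\,\varepsilon^s(t_\dag^2+\varepsilon^2)^{-s/2}\le\mathcal{C}(\tau)\varepsilon$; fourth, substitute~\eqref{t_dag}, put $\varepsilon=|\tau|^{-1/2}$, and let $|\tau|\to\infty$ to contradict $\mathcal{C}(\tau)/|\tau|^{1/2}\to 0$.

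The only real obstacle is bookkeeping rather than mathematics: one must be sure the reduction to a single fiber $\mathbf{k}=t\boldsymbol{\theta}_0$ is legitimate (this is where the continuity in $\mathbf{k}$ of the relevant operator-valued function enters, exactly as in the proof of Theorem~\ref{sndw_A(k)_exp_time_shrp_thrm_1}), and that the sandwiched hypotheses $\widehat{N}_{0,Q}(\boldsymbol{\theta})\equiv 0$ and $\widehat{\mathcal{N}}_Q^{(q)}(\boldsymbol{\theta}_0)\neq 0$ are correctly converted into the abstract conditions $N_0=0$, $\mathcal{N}^{(q)}\neq 0$ at the fixed direction $\boldsymbol{\theta}_0$ via Lemma~\ref{abstr_N_hatNQ_lemma} and Remark~\ref{abstr_N^(q',q)_hatNQ^(q',q)_rem} (this is the analogue, in the sandwiched setting, of the transition described in Remark~\ref{rem1.7}). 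For the theorem as stated, however, every such point is already subsumed in Theorem~\ref{sndw_A_exp_time_shrp_thrm_2}, so the proof proper is the three-line scaling reduction above.
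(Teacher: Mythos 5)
Your proposal is correct and follows exactly the route the paper intends: the statement is deduced from Theorem~\ref{sndw_A_exp_time_shrp_thrm_2} by combining the scaling identity~\eqref{sndw_exps_scale_transform}, the unitarity of $T_\varepsilon$, and the fact that $(\mathcal{H}_0+I)^{s/2}$ is an isometric isomorphism of $H^s(\mathbb{R}^d;\mathbb{C}^n)$ onto $L_2(\mathbb{R}^d;\mathbb{C}^n)$, so that~\eqref{10.10a} and~\eqref{9.2b} are equivalent with the same function $\mathcal{C}(\tau)$. Your supplementary sketch of the chain down to the abstract Theorem~\ref{abstr_exp_time_shrp_thrm_2} also matches the paper's argument.
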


\section{Homogenization of the Cauchy problem for the Schr\"{o}dinger-type equations}
\label{main_results_Cauchy_section}
\subsection{The Cauchy problem for the equation with the operator $\widehat{\mathcal{A}}_\varepsilon$}
Let $\mathbf{u}_\varepsilon (\mathbf{x}, \tau)$ be the solution of the Cauchy problem
\begin{equation}
\label{Cauchy_hatA_eps}
\left\{
\begin{aligned}
&i\frac{\partial \mathbf{u}_\varepsilon (\mathbf{x}, \tau)}{\partial \tau} = b(\mathbf{D})^*g^{\varepsilon}(\mathbf{x}) b(\mathbf{D}) \mathbf{u}_\varepsilon (\mathbf{x}, \tau) + \mathbf{F} (\mathbf{x}, \tau), \qquad \mathbf{x} \in \mathbb{R}^d, \; \tau \in \mathbb{R}, \\
& \mathbf{u}_\varepsilon (\mathbf{x}, 0) = \boldsymbol{\phi} (\mathbf{x}), \qquad \mathbf{x} \in \mathbb{R}^d,
\end{aligned}
\right.
\end{equation}
where $\boldsymbol{\phi} \in L_2 (\mathbb{R}^d; \mathbb{C}^n)$, $ \mathbf{F} \in L_{1, \mathrm{loc}} (\mathbb{R}; L_2 (\mathbb{R}^d; \mathbb{C}^n))$. The solution can be represented as
\begin{equation*}
\mathbf{u}_\varepsilon (\cdot, \tau) = e^{-i\tau\widehat{\mathcal{A}}_{\varepsilon}} \boldsymbol{\phi} - i \int_{0}^{\tau} e^{-i(\tau-\tilde{\tau})\widehat{\mathcal{A}}_{\varepsilon}} \mathbf{F} (\cdot, \tilde{\tau}) \, d \tilde{\tau}.
\end{equation*}
Let $\mathbf{u}_0 (\mathbf{x}, \tau)$ be the solution of the homogenized problem
\begin{equation}
\label{Cauchy_hatA0}
\left\{
\begin{aligned}
&i\frac{\partial \mathbf{u}_0 (\mathbf{x}, \tau)}{\partial \tau} = b(\mathbf{D})^* g^{0} b(\mathbf{D}) \mathbf{u}_0 (\mathbf{x}, \tau) + \mathbf{F} (\mathbf{x}, \tau), \qquad \mathbf{x} \in \mathbb{R}^d, \; \tau \in \mathbb{R}, \\
& \mathbf{u}_0 (\mathbf{x}, 0) = \boldsymbol{\phi} (\mathbf{x}), \qquad \mathbf{x} \in \mathbb{R}^d.
\end{aligned}
\right.
\end{equation}
Then
\begin{equation*}
\mathbf{u}_0 (\cdot, \tau) = e^{-i\tau\widehat{\mathcal{A}}^0} \boldsymbol{\phi} - i\int_{0}^{\tau} e^{-i(\tau-\tilde{\tau})\widehat{\mathcal{A}}^0} \mathbf{F} (\cdot, \tilde{\tau}) \, d \tilde{\tau}.
\end{equation*}

The following result is deduced from Theorem~\ref{hatA_eps_exp_general_thrm} (it has been proved before in~\cite[Theorem~14.2]{BSu2008}).

\begin{thrm}[\cite{BSu2008}]
	\label{hatA_eps_Cauchy_general_thrm}
	Let $\mathbf{u}_\varepsilon$ be the solution of problem~\emph{(\ref{Cauchy_hatA_eps})} and let $\mathbf{u}_0$ be the solution of problem~\emph{(\ref{Cauchy_hatA0})}.
	\begin{enumerate}[label=\emph{\arabic*$^{\circ}.$}, ref=\arabic*$^{\circ}$, leftmargin=2\parindent]
	\item If $\boldsymbol{\phi} \in H^s (\mathbb{R}^d; \mathbb{C}^n)$, $\mathbf{F} \in L_{1,\mathrm{loc}}(\mathbb{R}; H^s (\mathbb{R}^d; \mathbb{C}^n))$, where $0 \le s \le 3$, then for $\tau \in \mathbb{R}$ and $\varepsilon > 0$ we have
	\begin{equation*}
	\| \mathbf{u}_\varepsilon (\cdot, \tau) - \mathbf{u}_0 (\cdot, \tau) \|_{L_2(\mathbb{R}^d)} \le 
	\widehat{\mathfrak{C}}_1 (s) \varepsilon^{s/3} (1 +  |\tau|)^{s/3} \left(\| \boldsymbol{\phi} \|_{H^s(\mathbb{R}^d)} + \|\mathbf{F} \|_{L_1((0,\tau);H^s(\mathbb{R}^d))} \right).
	\end{equation*}
	Under the additional assumption that $\mathbf{F} \in L_p (\mathbb{R}_\pm; H^s (\mathbb{R}^d, \mathbb{C}^n))$, where $p \in [1, \infty]$, for $\tau = \pm \varepsilon^{-\alpha}$, $0 < \varepsilon \le 1$, and $0 < \alpha < s(s + 3/p')^{-1}$ we have
	\begin{multline*}
	\| \mathbf{u}_\varepsilon (\cdot, \pm \varepsilon^{-\alpha}) - \mathbf{u}_0 (\cdot, \pm \varepsilon^{-\alpha}) \|_{L_2(\mathbb{R}^d)} \\ 
	\le 2^{s/3} \widehat{\mathfrak{C}}_1 (s) \varepsilon^{s(1-\alpha)/3} \left(\| \boldsymbol{\phi} \|_{H^s(\mathbb{R}^d)} + \varepsilon^{-\alpha/p'} \|\mathbf{F} \|_{L_p(\mathbb{R}_\pm;H^s(\mathbb{R}^d))} \right).
	\end{multline*}
	The constant $\widehat{\mathfrak{C}}_1 (s)$ is defined in Theorem~\emph{\ref{hatA_eps_exp_general_thrm}}. Here $p^{-1} + (p')^{-1} = 1$.
	
	\item If $\boldsymbol{\phi} \in L_2 (\mathbb{R}^d; \mathbb{C}^n)$ and $ \mathbf{F} \in L_{1, \mathrm{loc}} (\mathbb{R}; L_2 (\mathbb{R}^d; \mathbb{C}^n) )$, then
	\begin{equation*}
	\lim\limits_{\varepsilon \to 0} \| \mathbf{u}_\varepsilon (\cdot, \tau) - \mathbf{u}_0 (\cdot, \tau) \|_{L_2(\mathbb{R}^d)} = 0, \qquad \tau \in \mathbb{R}.
	\end{equation*}
	Under the additional assumption that $\mathbf{F} \in L_1 (\mathbb{R}_\pm; L_2 (\mathbb{R}^d, \mathbb{C}^n))$, we have
	\begin{equation*}
	\lim\limits_{\varepsilon \to 0} \| \mathbf{u}_\varepsilon (\cdot, \pm\varepsilon^{-\alpha}) - \mathbf{u}_0 (\cdot, \pm\varepsilon^{-\alpha}) \|_{L_2(\mathbb{R}^d)} = 0, \qquad 0 < \alpha < 1.
	\end{equation*}
\end{enumerate}
\end{thrm}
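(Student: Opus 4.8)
The plan is to reduce the whole statement to the operator-norm estimate of Theorem~\ref{hatA_eps_exp_general_thrm} by means of the Duhamel representations of $\mathbf{u}_\varepsilon$ and $\mathbf{u}_0$ written down just above the theorem. Subtracting them gives
\begin{equation*}
\mathbf{u}_\varepsilon(\cdot,\tau) - \mathbf{u}_0(\cdot,\tau) = \bigl(e^{-i\tau\widehat{\mathcal{A}}_\varepsilon} - e^{-i\tau\widehat{\mathcal{A}}^0}\bigr)\boldsymbol\phi - i\int_0^\tau \bigl(e^{-i(\tau-\tilde\tau)\widehat{\mathcal{A}}_\varepsilon} - e^{-i(\tau-\tilde\tau)\widehat{\mathcal{A}}^0}\bigr)\mathbf{F}(\cdot,\tilde\tau)\,d\tilde\tau .
\end{equation*}
First I would apply the $H^s\to L_2$ bound of Theorem~\ref{hatA_eps_exp_general_thrm} to the free term and, under the integral sign, to $\mathbf{F}(\cdot,\tilde\tau)$ with time $\tau-\tilde\tau$; since $|\tau-\tilde\tau|\le|\tau|$ for $\tilde\tau$ between $0$ and $\tau$, the growth factor $(1+|\tau-\tilde\tau|)^{s/3}$ is dominated by $(1+|\tau|)^{s/3}$. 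Pulling this factor out and integrating yields exactly the first displayed inequality of item~$1^\circ$, with $\|\mathbf{F}\|_{L_1((0,\tau);H^s)}$ appearing naturally.

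For the large-time assertion of~$1^\circ$, I would substitute $\tau=\pm\varepsilon^{-\alpha}$. Since $0<\varepsilon\le1$ and $\alpha>0$ we have $|\tau|\ge1$, so $(1+|\tau|)^{s/3}\le 2^{s/3}|\tau|^{s/3}=2^{s/3}\varepsilon^{-\alpha s/3}$, which together with the prefactor $\varepsilon^{s/3}$ produces $\varepsilon^{s(1-\alpha)/3}$ in front of $\|\boldsymbol\phi\|_{H^s}$. For the source term I would then estimate $\|\mathbf{F}\|_{L_1((0,\tau);H^s)}$ by Hölder's inequality, $\|\mathbf{F}\|_{L_1((0,\tau);H^s)}\le|\tau|^{1/p'}\|\mathbf{F}\|_{L_p(\mathbb{R}_\pm;H^s)}=\varepsilon^{-\alpha/p'}\|\mathbf{F}\|_{L_p(\mathbb{R}_\pm;H^s)}$, which introduces the factor $\varepsilon^{s(1-\alpha)/3-\alpha/p'}$ in front of $\|\mathbf{F}\|_{L_p}$. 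An elementary rearrangement shows that the exponent $s(1-\alpha)/3-\alpha/p'$ is strictly positive precisely when $\alpha<s(s+3/p')^{-1}$, which is exactly the stated range; the $\|\boldsymbol\phi\|_{H^s}$-exponent $s(1-\alpha)/3$ is then automatically positive. Collecting terms gives the asserted bound with constant $2^{s/3}\widehat{\mathfrak{C}}_1(s)$.

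For item~$2^\circ$ I would run a standard density argument. Fix $\tau$ and $\sigma>0$, and choose $\boldsymbol\phi_\sigma\in H^3(\mathbb{R}^d;\mathbb{C}^n)$ with $\|\boldsymbol\phi-\boldsymbol\phi_\sigma\|_{L_2}<\sigma$ and $\mathbf{F}_\sigma\in L_1((0,\tau);H^3(\mathbb{R}^d;\mathbb{C}^n))$ with $\|\mathbf{F}-\mathbf{F}_\sigma\|_{L_1((0,\tau);L_2)}<\sigma$. Splitting $\mathbf{u}_\varepsilon-\mathbf{u}_0$ into the difference of the solutions with data $(\boldsymbol\phi_\sigma,\mathbf{F}_\sigma)$ plus the contribution of $(\boldsymbol\phi-\boldsymbol\phi_\sigma,\mathbf{F}-\mathbf{F}_\sigma)$, I bound the first piece by the $s=3$ case of~$1^\circ$, which tends to $0$ as $\varepsilon\to0$ for fixed $\tau$, and the second by the trivial bound $\|e^{-it\widehat{\mathcal{A}}_\varepsilon}-e^{-it\widehat{\mathcal{A}}^0}\|_{L_2\to L_2}\le2$ together with the unitarity of $e^{-it\widehat{\mathcal{A}}_\varepsilon}$ and $e^{-it\widehat{\mathcal{A}}^0}$, obtaining a quantity $\le 4\sigma$ uniformly in $\varepsilon$; letting $\varepsilon\to0$ and then $\sigma\to0$ proves the first limit. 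For the large-time limit one repeats this with $\tau=\pm\varepsilon^{-\alpha}$, $0<\alpha<1$, using $\mathbf{F}\in L_1(\mathbb{R}_\pm;L_2)$ so that $\mathbf{F}_\sigma$ may be chosen in $L_1(\mathbb{R}_\pm;H^3)$, and applying the large-time estimate of~$1^\circ$ with $p=1$, $p'=\infty$ (whose admissible range is exactly $0<\alpha<1$). The only mildly delicate point is the exponent bookkeeping in the large-time case — checking that the Hölder loss $|\tau|^{1/p'}$ and the time growth $(1+|\tau|)^{s/3}$ combine so as to leave a strictly positive power of $\varepsilon$ precisely under the hypothesis $0<\alpha<s(s+3/p')^{-1}$; everything else is a direct combination of Theorem~\ref{hatA_eps_exp_general_thrm} with the triangle and Hölder inequalities.
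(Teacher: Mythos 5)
Your proposal is correct and follows the same route the paper intends: the theorem is deduced from Theorem~\ref{hatA_eps_exp_general_thrm} by subtracting the two Duhamel representations, bounding the free term and the integrand by the $(H^s\to L_2)$-estimate, and then handling the large-time case via $(1+|\tau|)^{s/3}\le 2^{s/3}|\tau|^{s/3}$ and H\"older's inequality, with the density argument for item $2^{\circ}$. The exponent bookkeeping and the restriction $0<\alpha<s(s+3/p')^{-1}$ check out exactly as you describe.
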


The result of Theorem~\ref{hatA_eps_Cauchy_general_thrm} can be refined under some additional assumptions. Applying Theorem~\ref{hatA_eps_exp_enchcd_thrm_1}, we obtain the following result.

\begin{thrm}
	\label{hatA_eps_Cauchy_enchcd_thrm_1}
	Suppose that the assumptions of Theorem~\emph{\ref{hatA_eps_Cauchy_general_thrm}} are satisfied. Let $\widehat{N} (\boldsymbol{\theta})$ be the operator defined by~\emph{(\ref{hatN(theta)})}. Suppose that $\widehat{N} (\boldsymbol{\theta}) = 0$ for any $\boldsymbol{\theta} \in \mathbb{S}^{d-1}$.
	\begin{enumerate}[label=\emph{\arabic*$^{\circ}.$}, ref=\arabic*$^{\circ}$, leftmargin=2\parindent]
	\item If $\boldsymbol{\phi} \in H^s (\mathbb{R}^d; \mathbb{C}^n)$, $\mathbf{F} \in L_{1,\mathrm{loc}}(\mathbb{R}; H^s (\mathbb{R}^d; \mathbb{C}^n))$, where $0 \le s \le 2$, then for $\tau \in \mathbb{R}$ and $\varepsilon > 0$ we have
	\begin{equation*}
	\| \mathbf{u}_\varepsilon (\cdot, \tau) - \mathbf{u}_0 (\cdot, \tau) \|_{L_2(\mathbb{R}^d)} \le
	\widehat{\mathfrak{C}}_2 (s) \varepsilon^{s/2} (1 +  |\tau|^{1/2})^{s/2}  \left(\| \boldsymbol{\phi} \|_{H^s(\mathbb{R}^d)} + \|\mathbf{F} \|_{L_1((0,s);H^s(\mathbb{R}^d))} \right).
	\end{equation*}
	Under the additional assumption that $\mathbf{F} \in L_p (\mathbb{R}_\pm; H^s (\mathbb{R}^d, \mathbb{C}^n))$, where $p \in [1, \infty]$, for $\tau = \pm \varepsilon^{-\alpha}$, $0 < \varepsilon \le 1$, and $0 < \alpha < 2s(s + 4/p')^{-1}$ we have
	\begin{multline*}
	\| \mathbf{u}_\varepsilon (\cdot, \pm \varepsilon^{-\alpha}) - \mathbf{u}_0 (\cdot, \pm \varepsilon^{-\alpha}) \|_{L_2(\mathbb{R}^d)} \\ 
	\le 2^{s/2} \widehat{\mathfrak{C}}_2 (s) \varepsilon^{s(1-\alpha/2)/2}  \left(\| \boldsymbol{\phi} \|_{H^s(\mathbb{R}^d)} + \varepsilon^{-\alpha/p'} \|\mathbf{F} \|_{L_p(\mathbb{R}_\pm;H^s(\mathbb{R}^d))} \right).
	\end{multline*}
	The constant $\widehat{\mathfrak{C}}_2 (s)$ is defined in Theorem~\emph{\ref{hatA_eps_exp_enchcd_thrm_1}}. Here $p^{-1} + (p')^{-1} = 1$.
	
	\item If $\boldsymbol{\phi} \in L_2 (\mathbb{R}^d; \mathbb{C}^n)$, and $\mathbf{F} \in L_1 (\mathbb{R}_\pm; L_2 (\mathbb{R}^d, \mathbb{C}^n))$, then
	\begin{equation*}
	\lim\limits_{\varepsilon \to 0} \| \mathbf{u}_\varepsilon (\cdot, \pm\varepsilon^{-\alpha}) - \mathbf{u}_0 (\cdot, \pm\varepsilon^{-\alpha}) \|_{L_2(\mathbb{R}^d)} = 0, \qquad 0 < \alpha < 2.
	\end{equation*}
\end{enumerate}
\end{thrm}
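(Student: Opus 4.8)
The plan is to deduce the theorem from the operator estimate of Theorem~\ref{hatA_eps_exp_enchcd_thrm_1} in exactly the same way as Theorem~\ref{hatA_eps_Cauchy_general_thrm} follows from Theorem~\ref{hatA_eps_exp_general_thrm}, only tracking the improved factor $\varepsilon^{s/2}(1+|\tau|^{1/2})^{s/2}$ in place of $\varepsilon^{s/3}(1+|\tau|)^{s/3}$. First I would write the solutions of~\eqref{Cauchy_hatA_eps} and~\eqref{Cauchy_hatA0} via the Duhamel representations already displayed, so that
\[
\mathbf{u}_\varepsilon(\cdot,\tau)-\mathbf{u}_0(\cdot,\tau)=\bigl(e^{-i\tau\widehat{\mathcal{A}}_\varepsilon}-e^{-i\tau\widehat{\mathcal{A}}^0}\bigr)\boldsymbol{\phi}-i\int_0^\tau\bigl(e^{-i(\tau-\tilde\tau)\widehat{\mathcal{A}}_\varepsilon}-e^{-i(\tau-\tilde\tau)\widehat{\mathcal{A}}^0}\bigr)\mathbf{F}(\cdot,\tilde\tau)\,d\tilde\tau .
\]
Applying Theorem~\ref{hatA_eps_exp_enchcd_thrm_1} to each factor bounds the first term in $L_2$ by $\widehat{\mathfrak{C}}_2(s)\varepsilon^{s/2}(1+|\tau|^{1/2})^{s/2}\|\boldsymbol{\phi}\|_{H^s}$ and the integrand by $\widehat{\mathfrak{C}}_2(s)\varepsilon^{s/2}(1+|\tau-\tilde\tau|^{1/2})^{s/2}\|\mathbf{F}(\cdot,\tilde\tau)\|_{H^s}$; using $(1+|\tau-\tilde\tau|^{1/2})\le(1+|\tau|^{1/2})$ for $\tilde\tau$ between $0$ and $\tau$ and integrating in $\tilde\tau$ yields the first inequality of item~$1^\circ$ (the trivial bound $\|\cdot\|_{L_2\to L_2}\le2$ is not even needed here).

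For the large-time assertion of item~$1^\circ$ I would put $\tau=\pm\varepsilon^{-\alpha}$, $0<\varepsilon\le1$, use $\varepsilon^{s/2}(1+|\tau|^{1/2})^{s/2}\le 2^{s/2}\varepsilon^{s/2}|\tau|^{s/4}=2^{s/2}\varepsilon^{s(1-\alpha/2)/2}$, and estimate the inhomogeneous part by H\"older's inequality in time, $\int_0^{|\tau|}\|\mathbf{F}(\cdot,\tilde\tau)\|_{H^s}\,d\tilde\tau\le|\tau|^{1/p'}\|\mathbf{F}\|_{L_p(\mathbb{R}_\pm;H^s)}=\varepsilon^{-\alpha/p'}\|\mathbf{F}\|_{L_p(\mathbb{R}_\pm;H^s)}$. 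Collecting the exponents, the right-hand side is a fixed constant times $\varepsilon^{s(1-\alpha/2)/2}\bigl(\|\boldsymbol{\phi}\|_{H^s}+\varepsilon^{-\alpha/p'}\|\mathbf{F}\|_{L_p(\mathbb{R}_\pm;H^s)}\bigr)$, which is exactly the claimed bound; the hypothesis $0<\alpha<2s(s+4/p')^{-1}$ is precisely what makes the net power $\tfrac s2\bigl(1-\tfrac\alpha2\bigr)-\tfrac{\alpha}{p'}$ positive, so the estimate is $o(1)$ as $\varepsilon\to0$.

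Item~$2^\circ$ I would obtain by a standard density argument. Given $\eta>0$, choose $\boldsymbol{\phi}_\eta\in H^2(\mathbb{R}^d;\mathbb{C}^n)$ with $\|\boldsymbol{\phi}-\boldsymbol{\phi}_\eta\|_{L_2}<\eta$ and $\mathbf{F}_\eta\in L_1(\mathbb{R}_\pm;H^2(\mathbb{R}^d;\mathbb{C}^n))$ with $\|\mathbf{F}-\mathbf{F}_\eta\|_{L_1(\mathbb{R}_\pm;L_2)}<\eta$. Since $e^{-i\tau\widehat{\mathcal{A}}_\varepsilon}$ and $e^{-i\tau\widehat{\mathcal{A}}^0}$ are unitary, the contribution of $\boldsymbol{\phi}-\boldsymbol{\phi}_\eta$ and $\mathbf{F}-\mathbf{F}_\eta$ to $\|\mathbf{u}_\varepsilon(\cdot,\pm\varepsilon^{-\alpha})-\mathbf{u}_0(\cdot,\pm\varepsilon^{-\alpha})\|_{L_2}$ is at most $4\eta$, uniformly in $\varepsilon$, whereas for the smooth data the already proved part~$1^\circ$ with $s=2$ and $p=1$ (whose admissible range is exactly $0<\alpha<2$) gives convergence to $0$. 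Letting first $\varepsilon\to0$ and then $\eta\to0$ concludes.

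I expect no genuine analytic obstacle: the argument is the routine transcription of the operator bound into Duhamel form. The only points requiring care are the bookkeeping of powers of $\varepsilon$ in the large-time estimate — checking that $0<\alpha<2s(s+4/p')^{-1}$ is exactly the condition for $\tfrac s2(1-\tfrac\alpha2)-\tfrac{\alpha}{p'}>0$ — and ensuring in item~$2^\circ$ that the approximating data are chosen in a regularity class ($H^2$ for $\boldsymbol{\phi}$, $L_1(\mathbb{R}_\pm;H^2)$ for $\mathbf{F}$) for which part~$1^\circ$ is actually applicable.
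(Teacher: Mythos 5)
Your proposal is correct and is exactly the deduction the paper intends: the theorem is obtained from the operator-norm estimate of Theorem~\ref{hatA_eps_exp_enchcd_thrm_1} via the Duhamel representation, with the elementary bookkeeping $\varepsilon^{s/2}(1+|\tau|^{1/2})^{s/2}\le 2^{s/2}\varepsilon^{s(1-\alpha/2)/2}$ for $\tau=\pm\varepsilon^{-\alpha}$, H\"older in time for the source term, and a density argument (with $s=2$, $p=1$) for item $2^{\circ}$, precisely as in the proof of Theorem~\ref{hatA_eps_Cauchy_general_thrm} in the cited earlier work. No gaps.
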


Similarly, Theorem~\ref{hatA_eps_exp_enchcd_thrm_2} implies the following result.

\begin{thrm}
	Suppose that the assumptions of Theorem~\emph{\ref{hatA_eps_Cauchy_general_thrm}} are satisfied. Suppose that Condition~\emph{\ref{cond1}} \emph{(}or more restrictive Condition~\emph{\ref{cond2})} is satisfied.
	\begin{enumerate}[label=\emph{\arabic*$^{\circ}.$}, ref=\arabic*$^{\circ}$, leftmargin=2\parindent]
	\item If $\boldsymbol{\phi} \in H^s (\mathbb{R}^d; \mathbb{C}^n)$, $\mathbf{F} \in L_{1,\mathrm{loc}}(\mathbb{R}; H^s (\mathbb{R}^d; \mathbb{C}^n))$, where $0 \le s \le 2$, then for $\tau \in \mathbb{R}$ and $\varepsilon > 0$ we have
	\begin{equation*}
	\| \mathbf{u}_\varepsilon (\cdot, \tau) - \mathbf{u}_0 (\cdot, \tau) \|_{L_2(\mathbb{R}^d)} \le \widehat{\mathfrak{C}}_3 (s) \varepsilon^{s/2} (1 +  |\tau|^{1/2})^{s/2}  \left(\| \boldsymbol{\phi} \|_{H^s(\mathbb{R}^d)} + \|\mathbf{F} \|_{L_1((0,\tau);H^s(\mathbb{R}^d))} \right).
	\end{equation*}
	Under the additional assumption that $\mathbf{F} \in L_p (\mathbb{R}_\pm; H^s (\mathbb{R}^d, \mathbb{C}^n))$, where $p \in [1, \infty]$, for $\tau = \pm \varepsilon^{-\alpha}$, $0 < \varepsilon \le 1$, and $0 < \alpha < 2s(s + 4/p')^{-1}$ we have
	\begin{multline*}
	\| \mathbf{u}_\varepsilon (\cdot, \pm \varepsilon^{-\alpha}) - \mathbf{u}_0 (\cdot, \pm \varepsilon^{-\alpha}) \|_{L_2(\mathbb{R}^d)} \\ 
	\le 2^{s/2} \widehat{\mathfrak{C}}_3 (s) \varepsilon^{s(1-\alpha/2)/2} \left(\| \boldsymbol{\phi} \|_{H^s(\mathbb{R}^d)} + \varepsilon^{-\alpha/p'} \|\mathbf{F} \|_{L_p(\mathbb{R}_\pm;H^s(\mathbb{R}^d))} \right).
	\end{multline*}
	The constant $\widehat{\mathfrak{C}}_3 (s)$ is defined in Theorem~\emph{\ref{hatA_eps_exp_enchcd_thrm_2}}. Here $p^{-1} + (p')^{-1} = 1$.
	\item If $\boldsymbol{\phi} \in L_2 (\mathbb{R}^d; \mathbb{C}^n)$, and $\mathbf{F} \in L_1 (\mathbb{R}_\pm; L_2 (\mathbb{R}^d, \mathbb{C}^n))$, then
	\begin{equation*}
	\lim\limits_{\varepsilon \to 0} \| \mathbf{u}_\varepsilon (\cdot, \pm\varepsilon^{-\alpha}) - \mathbf{u}_0 (\cdot, \pm\varepsilon^{-\alpha}) \|_{L_2(\mathbb{R}^d)} = 0, \qquad 0 < \alpha < 2.
	\end{equation*}
\end{enumerate}
\end{thrm}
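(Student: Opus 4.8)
The plan is to derive everything from the operator-norm estimate of Theorem~\ref{hatA_eps_exp_enchcd_thrm_2} via the Duhamel representation, repeating the scheme of the proof of Theorem~\ref{hatA_eps_Cauchy_general_thrm} and only changing the exponents. First I would write the solutions of~(\ref{Cauchy_hatA_eps}) and~(\ref{Cauchy_hatA0}) by variation of parameters and subtract, obtaining
\begin{multline*}
\mathbf{u}_\varepsilon(\cdot,\tau) - \mathbf{u}_0(\cdot,\tau) = \bigl(e^{-i\tau\widehat{\mathcal{A}}_{\varepsilon}} - e^{-i\tau\widehat{\mathcal{A}}^0}\bigr)\boldsymbol{\phi} \\ - i\int_0^\tau \bigl(e^{-i(\tau-\tilde{\tau})\widehat{\mathcal{A}}_{\varepsilon}} - e^{-i(\tau-\tilde{\tau})\widehat{\mathcal{A}}^0}\bigr)\mathbf{F}(\cdot,\tilde{\tau})\,d\tilde{\tau}.
\end{multline*}
For $\boldsymbol{\phi}\in H^s(\mathbb{R}^d;\mathbb{C}^n)$ with $0\le s\le2$, the first term is bounded by Theorem~\ref{hatA_eps_exp_enchcd_thrm_2} by $\widehat{\mathfrak{C}}_3(s)\varepsilon^{s/2}(1+|\tau|^{1/2})^{s/2}\|\boldsymbol{\phi}\|_{H^s(\mathbb{R}^d)}$. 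In the integral term I would use that for $\tilde{\tau}$ between $0$ and $\tau$ one has $|\tau-\tilde{\tau}|\le|\tau|$, hence $(1+|\tau-\tilde{\tau}|^{1/2})^{s/2}\le(1+|\tau|^{1/2})^{s/2}$; applying Theorem~\ref{hatA_eps_exp_enchcd_thrm_2} under the integral sign and pulling this factor out produces the bound $\widehat{\mathfrak{C}}_3(s)\varepsilon^{s/2}(1+|\tau|^{1/2})^{s/2}\|\mathbf{F}\|_{L_1((0,\tau);H^s(\mathbb{R}^d))}$. Summing the two contributions gives the first inequality of item~$1^\circ$.

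Next I would specialize to $\tau=\pm\varepsilon^{-\alpha}$, $0<\varepsilon\le1$. Then $(1+|\tau|^{1/2})^{s/2}=(1+\varepsilon^{-\alpha/2})^{s/2}\le2^{s/2}\varepsilon^{-\alpha s/4}$, so $\varepsilon^{s/2}(1+|\tau|^{1/2})^{s/2}\le2^{s/2}\varepsilon^{s(1-\alpha/2)/2}$. If in addition $\mathbf{F}\in L_p(\mathbb{R}_\pm;H^s(\mathbb{R}^d;\mathbb{C}^n))$, Hölder's inequality in the time variable gives $\|\mathbf{F}\|_{L_1((0,\tau);H^s(\mathbb{R}^d))}\le|\tau|^{1/p'}\|\mathbf{F}\|_{L_p(\mathbb{R}_\pm;H^s(\mathbb{R}^d))}=\varepsilon^{-\alpha/p'}\|\mathbf{F}\|_{L_p(\mathbb{R}_\pm;H^s(\mathbb{R}^d))}$, which yields the second inequality of item~$1^\circ$. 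The restriction $0<\alpha<2s(s+4/p')^{-1}$ is exactly what makes the overall exponent $s(1-\alpha/2)/2-\alpha/p'$ of $\varepsilon$ positive, so the right-hand side tends to $0$ with $\varepsilon$.

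Finally, for item~$2^\circ$ I would argue by density. Given $\eta>0$, choose $\boldsymbol{\phi}_\eta\in H^2(\mathbb{R}^d;\mathbb{C}^n)$ with $\|\boldsymbol{\phi}-\boldsymbol{\phi}_\eta\|_{L_2(\mathbb{R}^d)}<\eta$ and $\mathbf{F}_\eta\in L_1(\mathbb{R}_\pm;H^2(\mathbb{R}^d;\mathbb{C}^n))$ with $\|\mathbf{F}-\mathbf{F}_\eta\|_{L_1(\mathbb{R}_\pm;L_2(\mathbb{R}^d))}<\eta$. The contribution of the smooth parts is handled by item~$1^\circ$ with $s=2$ (so that $0<\alpha<2$ is admissible), hence it tends to $0$ as $\varepsilon\to0$. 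The remainders are controlled uniformly in $\varepsilon$ and $\tau$ by the unitarity of $e^{-i\tau\widehat{\mathcal{A}}_{\varepsilon}}$, $e^{-i\tau\widehat{\mathcal{A}}^0}$ and the trivial bound $\|e^{-i\tau\widehat{\mathcal{A}}_{\varepsilon}}-e^{-i\tau\widehat{\mathcal{A}}^0}\|_{L_2(\mathbb{R}^d)\to L_2(\mathbb{R}^d)}\le2$, giving a contribution at most $4\eta$; letting $\eta\to0$ finishes the argument. I do not expect a serious obstacle here: the whole proof is a transcription of the argument for Theorem~\ref{hatA_eps_Cauchy_general_thrm} with Theorem~\ref{hatA_eps_exp_enchcd_thrm_2} in place of Theorem~\ref{hatA_eps_exp_general_thrm}, and the only point that needs care is the exponent bookkeeping in the large-time estimate of item~$1^\circ$, i.e.\ checking that $2s(s+4/p')^{-1}$ is precisely the range of $\alpha$ for which the power of $\varepsilon$ remains positive.
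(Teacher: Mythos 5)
Your proposal is correct and is exactly the argument the paper intends (it derives this theorem from Theorem~\ref{hatA_eps_exp_enchcd_thrm_2} by the same Duhamel-plus-operator-norm scheme used for Theorem~\ref{hatA_eps_Cauchy_general_thrm}): the exponent bookkeeping checks out, since $s(1-\alpha/2)/2-\alpha/p'>0$ is equivalent to $\alpha<2s(s+4/p')^{-1}$, and the density argument for item~$2^{\circ}$ with $s=2$, $p=1$ gives the full range $0<\alpha<2$.
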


\subsection{The Cauchy problem for the equation with the operator $\mathcal{A}_\varepsilon$}

Now, we consider more general Cauchy problem for the equation involving the operator $\mathcal{A}_\varepsilon$:
\begin{equation}
\label{Cauchy_A_eps}
\left\{
\begin{aligned}
&i\frac{\partial \mathbf{u}_\varepsilon (\mathbf{x}, \tau)}{\partial \tau} = (f^{\varepsilon}(\mathbf{x}))^* b(\mathbf{D})^* g^{\varepsilon}(\mathbf{x}) b(\mathbf{D}) f^{\varepsilon}(\mathbf{x}) \mathbf{u}_\varepsilon (\mathbf{x}, \tau) +  (f^\varepsilon (\mathbf{x}))^{-1} \mathbf{F} (\mathbf{x}, \tau), \qquad \mathbf{x} \in \mathbb{R}^d, \; \tau \in \mathbb{R},\\
& f^\varepsilon (\mathbf{x}) \mathbf{u}_\varepsilon (\mathbf{x}, 0) = \boldsymbol{\phi}(\mathbf{x}), \qquad \mathbf{x} \in \mathbb{R}^d,
\end{aligned}
\right.
\end{equation}
where $\boldsymbol{\phi} \in L_2 (\mathbb{R}^d; \mathbb{C}^n)$, $ \mathbf{F} \in L_{1, \mathrm{loc}} (\mathbb{R}; L_2 (\mathbb{R}^d; \mathbb{C}^n) )$. The solution can be represented as
\begin{equation*}
\mathbf{u}_\varepsilon (\cdot, \tau) = e^{-i\tau\mathcal{A}_{\varepsilon}} (f^\varepsilon)^{-1} \boldsymbol{\phi} - i \int_{0}^{\tau} e^{-i(\tau-\tilde{\tau})\mathcal{A}_{\varepsilon}} (f^\varepsilon)^{-1} \mathbf{F} (\cdot, \tilde{\tau}) \, d \tilde{\tau}.
\end{equation*}
Let $\mathbf{u}_0 (\mathbf{x}, \tau)$ be the solution of the homogenized problem
\begin{equation}
\label{Cauchy_A0}
\left\{
\begin{aligned}
&i\frac{\partial \mathbf{u}_0 (\mathbf{x}, \tau)}{\partial \tau} = f_0 b(\mathbf{D})^* g^0 b(\mathbf{D}) f_0 \mathbf{u}_0 (\mathbf{x}, \tau) +  f_0^{-1} \mathbf{F} (\mathbf{x}, \tau), \qquad \mathbf{x} \in \mathbb{R}^d, \; \tau \in \mathbb{R},\\
& f_0 \mathbf{u}_0 (\mathbf{x}, 0) = \boldsymbol{\phi}(\mathbf{x}), \qquad \mathbf{x} \in \mathbb{R}^d.
\end{aligned}
\right.
\end{equation}
Then
\begin{equation*}
\mathbf{u}_0 (\cdot, \tau) = e^{-i\tau\mathcal{A}^0} f_0^{-1} \boldsymbol{\phi} - i \int_{0}^{\tau} e^{-i(\tau-\tilde{\tau})\mathcal{A}^0} f_0^{-1} \mathbf{F} (\cdot, \tilde{\tau}) \, d \tilde{\tau}.
\end{equation*}

The following result is deduced from Theorem~\ref{sndw_A_eps_exp_general_thrm} (it has been proved before in~\cite[Theorem~14.5]{BSu2008}).

\begin{thrm}[\cite{BSu2008}]
	\label{A_eps_Cauchy_general_thrm}
	Let $\mathbf{u}_\varepsilon$ be the solution of problem~\emph{(\ref{Cauchy_A_eps})}, and let $\mathbf{u}_0$ be the solution of problem~\emph{(\ref{Cauchy_A0})}.
	\begin{enumerate}[label=\emph{\arabic*$^{\circ}.$}, ref=\arabic*$^{\circ}$, leftmargin=2\parindent]
	\item If $\boldsymbol{\phi} \in H^s (\mathbb{R}^d; \mathbb{C}^n)$, $\mathbf{F} \in L_{1,\mathrm{loc}}(\mathbb{R}; H^s (\mathbb{R}^d; \mathbb{C}^n))$, where $0 \le s \le 3$, then for $\tau \in \mathbb{R}$ and $\varepsilon > 0$ we have
	\begin{equation*}
	\|f^\varepsilon \mathbf{u}_\varepsilon (\cdot, \tau) - f_0 \mathbf{u}_0 (\cdot, \tau) \|_{L_2(\mathbb{R}^d)} \le \mathfrak{C}_1 (s) \varepsilon^{s/3} (1 +  |\tau|)^{s/3}   \left(\| \boldsymbol{\phi} \|_{H^s(\mathbb{R}^d)} + \|\mathbf{F} \|_{L_1((0,\tau);H^s(\mathbb{R}^d))} \right).
	\end{equation*}
	Under the additional assumption that $\mathbf{F} \in L_p (\mathbb{R}_\pm; H^s (\mathbb{R}^d, \mathbb{C}^n))$, where $p \in [1, \infty]$, for $\tau = \pm \varepsilon^{-\alpha}$, $0 < \varepsilon \le 1$, and $0 < \alpha < s(s + 3/p')^{-1}$ we have
	\begin{multline*}
	\| f^\varepsilon \mathbf{u}_\varepsilon (\cdot, \pm \varepsilon^{-\alpha}) - f_0 \mathbf{u}_0 (\cdot, \pm \varepsilon^{-\alpha}) \|_{L_2(\mathbb{R}^d)} \\ 
	\le 2^{s/3}\mathfrak{C}_1 (s) \varepsilon^{s(1-\alpha)/3} \left(\| \boldsymbol{\phi} \|_{H^s(\mathbb{R}^d)} + \varepsilon^{-\alpha/p'} \|\mathbf{F} \|_{L_p(\mathbb{R}_\pm;H^s(\mathbb{R}^d))} \right).
	\end{multline*}
	The constant $\mathfrak{C}_1 (s)$ is defined in Theorem~\emph{\ref{sndw_A_eps_exp_general_thrm}}. Here $p^{-1} + (p')^{-1} = 1$.
	
	\item If $\boldsymbol{\phi} \in L_2 (\mathbb{R}^d; \mathbb{C}^n)$ and $ \mathbf{F} \in L_{1, \mathrm{loc}} (\mathbb{R}; L_2 (\mathbb{R}^d; \mathbb{C}^n) )$, then
	\begin{equation*}
	\lim\limits_{\varepsilon \to 0} \|  f^\varepsilon \mathbf{u}_\varepsilon (\cdot, \tau) - f_0 \mathbf{u}_0 (\cdot, \tau) \|_{L_2(\mathbb{R}^d)} = 0, \qquad \tau \in \mathbb{R}.
	\end{equation*}
	Under the additional assumption that $\mathbf{F} \in L_1 (\mathbb{R}_\pm; L_2 (\mathbb{R}^d, \mathbb{C}^n))$  we have
	\begin{equation*}
	\lim\limits_{\varepsilon \to 0} \| f^\varepsilon \mathbf{u}_\varepsilon (\cdot, \pm\varepsilon^{-\alpha}) - f_0 \mathbf{u}_0 (\cdot, \pm\varepsilon^{-\alpha}) \|_{L_2(\mathbb{R}^d)} = 0, \qquad 0 < \alpha < 1.
	\end{equation*}
\end{enumerate}
\end{thrm}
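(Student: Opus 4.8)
The plan is to reduce everything to the operator-norm estimate of Theorem~\ref{sndw_A_eps_exp_general_thrm}, following verbatim the scheme used to derive Theorem~\ref{hatA_eps_Cauchy_general_thrm} from Theorem~\ref{hatA_eps_exp_general_thrm}. First I would multiply the Duhamel representations of $\mathbf{u}_\varepsilon$ and $\mathbf{u}_0$ by the isomorphisms $f^\varepsilon$ and $f_0$ respectively, obtaining $f^\varepsilon \mathbf{u}_\varepsilon(\cdot,\tau) = f^\varepsilon e^{-i\tau\mathcal{A}_\varepsilon}(f^\varepsilon)^{-1}\boldsymbol{\phi} - i\int_0^\tau f^\varepsilon e^{-i(\tau-\tilde{\tau})\mathcal{A}_\varepsilon}(f^\varepsilon)^{-1}\mathbf{F}(\cdot,\tilde{\tau})\,d\tilde{\tau}$ and the analogous formula for $f_0\mathbf{u}_0$ with $\mathcal{A}^0$ and $f_0$. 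Subtracting, the difference $f^\varepsilon\mathbf{u}_\varepsilon - f_0\mathbf{u}_0$ equals $\mathcal{J}_\varepsilon(\tau)\boldsymbol{\phi} - i\int_0^\tau \mathcal{J}_\varepsilon(\tau-\tilde{\tau})\mathbf{F}(\cdot,\tilde{\tau})\,d\tilde{\tau}$, where $\mathcal{J}_\varepsilon(\tau) \coloneqq f^\varepsilon e^{-i\tau\mathcal{A}_\varepsilon}(f^\varepsilon)^{-1} - f_0 e^{-i\tau\mathcal{A}^0}f_0^{-1}$ is precisely the operator controlled by Theorem~\ref{sndw_A_eps_exp_general_thrm}. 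Since $(\mathcal{H}_0 + I)^{s/2}$ is an isometric isomorphism of $H^s(\mathbb{R}^d;\mathbb{C}^n)$ onto $L_2(\mathbb{R}^d;\mathbb{C}^n)$, that theorem gives $\|\mathcal{J}_\varepsilon(\tau)\mathbf{v}\|_{L_2(\mathbb{R}^d)} \le \mathfrak{C}_1(s)\varepsilon^{s/3}(1+|\tau|)^{s/3}\|\mathbf{v}\|_{H^s(\mathbb{R}^d)}$.

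For item $1^\circ$ I would apply this bound to $\mathbf{v} = \boldsymbol{\phi}$ and, inside the integral, to $\mathbf{v} = \mathbf{F}(\cdot,\tilde{\tau})$ with time $\tau-\tilde{\tau}$; using $(1+|\tau-\tilde{\tau}|)^{s/3} \le (1+|\tau|)^{s/3}$ for $\tilde{\tau}$ between $0$ and $\tau$ and pulling this weight out of the $\tilde{\tau}$-integral yields the first displayed inequality. For $\tau = \pm\varepsilon^{-\alpha}$ with $0 < \varepsilon \le 1$ the $\boldsymbol{\phi}$-term picks up $(1+\varepsilon^{-\alpha})^{s/3} \le 2^{s/3}\varepsilon^{-\alpha s/3}$, which together with the prefactor $\varepsilon^{s/3}$ gives $2^{s/3}\varepsilon^{s(1-\alpha)/3}$; for the Duhamel term I would apply H\"older's inequality in $\tilde{\tau}$ with exponents $p$ and $p'$, using $\mathbf{F} \in L_p(\mathbb{R}_\pm; H^s)$ and the elementary bound $\bigl\| (1+|\tau-\tilde{\tau}|)^{s/3} \bigr\|_{L_{p'}(0,\pm\varepsilon^{-\alpha})} \le C_{s,p}\,\varepsilon^{-\alpha(s/3 + 1/p')}$. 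Multiplying by $\varepsilon^{s/3}$ produces exactly $\varepsilon^{s(1-\alpha)/3}\varepsilon^{-\alpha/p'}\|\mathbf{F}\|_{L_p(\mathbb{R}_\pm;H^s)}$, and the restriction $0 < \alpha < s(s+3/p')^{-1}$ is precisely what keeps the total power of $\varepsilon$ positive.

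Item $2^\circ$ is a routine density and uniform-boundedness argument: the operators $e^{-i\tau\mathcal{A}_\varepsilon}(f^\varepsilon)^{-1}$ and $e^{-i\tau\mathcal{A}^0}f_0^{-1}$ have $L_2 \to L_2$ norms bounded by $\|f^{-1}\|_{L_\infty}$ and $|f_0^{-1}|$ respectively, uniformly in $\tau$ and $\varepsilon$. Hence, given $\boldsymbol{\phi} \in L_2$ and $\mathbf{F} \in L_{1,\mathrm{loc}}(\mathbb{R};L_2)$ (or $\mathbf{F}\in L_1(\mathbb{R}_\pm;L_2)$), I would approximate them on the relevant time interval by $\boldsymbol{\phi}_\delta \in H^3$ and $\mathbf{F}_\delta \in L_1(\cdot;H^3)$ with small $L_2$- and $L_1(L_2)$-errors, split $f^\varepsilon\mathbf{u}_\varepsilon - f_0\mathbf{u}_0$ accordingly into a part controlled by item $1^\circ$ with $s = 3$ (which tends to $0$ as $\varepsilon\to0$ for fixed $\tau$, or carries the power $\varepsilon^{1-\alpha}$ when $\tau = \pm\varepsilon^{-\alpha}$, $0 < \alpha < 1$, $p = 1$) and a part small uniformly in $\varepsilon$, and conclude by a standard $\delta/\varepsilon$ argument.

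The step I expect to be the main obstacle is the bookkeeping in the large-time Duhamel estimate of item $1^\circ$: one must verify carefully that the weight $(1+|\tau-\tilde{\tau}|)^{s/3}$ coming from Theorem~\ref{sndw_A_eps_exp_general_thrm}, combined with the H\"older split in $\tilde{\tau}$, produces exactly the exponent $s(1-\alpha)/3$ of $\varepsilon$ together with the factor $\varepsilon^{-\alpha/p'}$ in front of $\|\mathbf{F}\|_{L_p}$, and that $0 < \alpha < s(s+3/p')^{-1}$ is the sharp admissibility threshold; everything else reduces mechanically to Theorem~\ref{sndw_A_eps_exp_general_thrm} once the representation $f^\varepsilon\mathbf{u}_\varepsilon - f_0\mathbf{u}_0 = \mathcal{J}_\varepsilon(\tau)\boldsymbol{\phi} - i\int_0^\tau \mathcal{J}_\varepsilon(\tau-\tilde{\tau})\mathbf{F}(\cdot,\tilde{\tau})\,d\tilde{\tau}$ is established.
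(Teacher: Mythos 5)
Your proposal is correct and follows exactly the route the paper takes (the paper simply cites the deduction from Theorem~\ref{sndw_A_eps_exp_general_thrm}, which is the Duhamel representation plus the $(H^s\to L_2)$-operator estimate, as in the proof of Theorem~\ref{hatA_eps_Cauchy_general_thrm}). All the bookkeeping checks out: the weight $(1+|\tau-\tilde{\tau}|)^{s/3}\le 2^{s/3}\varepsilon^{-\alpha s/3}$ combined with the H\"older factor $\varepsilon^{-\alpha/p'}$ from the interval length gives precisely the exponent $s(1-\alpha)/3$ and the stated admissibility threshold for $\alpha$, and the density argument for item $2^{\circ}$ is the standard one using the uniform $L_2\to L_2$ bound $2\|f\|_{L_\infty}\|f^{-1}\|_{L_\infty}$.
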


The result of Theorem~\ref{A_eps_Cauchy_general_thrm} can be refined under some additional assumptions. Application of Theorem~\ref{sndw_A_eps_exp_enchcd_thrm_1} yields the following result.

\begin{thrm}
	\label{A_eps_Cauchy_enchcd_thrm_1}
	Suppose that the assumptions of Theorem~\emph{\ref{A_eps_Cauchy_general_thrm}} are satisfied. Let $\widehat{N}_Q (\boldsymbol{\theta})$ be the operator defined by~\emph{(\ref{N_Q(theta)})}. Suppose that $\widehat{N}_Q (\boldsymbol{\theta}) = 0$ for any $\boldsymbol{\theta} \in \mathbb{S}^{d-1}$.
	\begin{enumerate}[label=\emph{\arabic*$^{\circ}.$}, ref=\arabic*$^{\circ}$, leftmargin=2\parindent]
	\item	If $\boldsymbol{\phi} \in H^s (\mathbb{R}^d; \mathbb{C}^n)$, $\mathbf{F} \in L_{1,\mathrm{loc}}(\mathbb{R}; H^s (\mathbb{R}^d; \mathbb{C}^n))$, where $0 \le s \le 2$, then for $\tau \in \mathbb{R}$ and $\varepsilon > 0$ we have
	\begin{equation*}
	\| f^\varepsilon \mathbf{u}_\varepsilon (\cdot, \tau) - f_0 \mathbf{u}_0 (\cdot, \tau) \|_{L_2(\mathbb{R}^d)} \le  \mathfrak{C}_2 (s) \varepsilon^{s/2} (1 +  |\tau|^{1/2})^{s/2}
	\left(\| \boldsymbol{\phi} \|_{H^s(\mathbb{R}^d)} + \|\mathbf{F} \|_{L_1((0,s);H^s(\mathbb{R}^d))} \right).
	\end{equation*}
	Under the additional assumption that $\mathbf{F} \in L_p (\mathbb{R}_\pm; H^s (\mathbb{R}^d, \mathbb{C}^n))$, where $p \in [1, \infty]$, for $\tau = \pm \varepsilon^{-\alpha}$, $0 < \varepsilon \le 1$, and $0 < \alpha < 2s(s + 4/p')^{-1}$ we have
	\begin{multline*}
	\| f^\varepsilon \mathbf{u}_\varepsilon (\cdot, \pm \varepsilon^{-\alpha}) - f_0 \mathbf{u}_0 (\cdot, \pm \varepsilon^{-\alpha}) \|_{L_2(\mathbb{R}^d)} \\ 
	\le  2^{s/2}\mathfrak{C}_2 (s) \varepsilon^{s(1-\alpha/2)/2}  \left(\| \boldsymbol{\phi} \|_{H^s(\mathbb{R}^d)} + \varepsilon^{-\alpha/p'} \|\mathbf{F} \|_{L_p(\mathbb{R}_\pm;H^s(\mathbb{R}^d))} \right).
	\end{multline*}
	The constant $\mathfrak{C}_2 (s)$ is defined in Theorem~\emph{\ref{sndw_A_eps_exp_enchcd_thrm_1}}. Here $p^{-1} + (p')^{-1} = 1$.
	
	\item If $\boldsymbol{\phi} \in L_2 (\mathbb{R}^d; \mathbb{C}^n)$ and $\mathbf{F} \in L_1 (\mathbb{R}_\pm; L_2 (\mathbb{R}^d, \mathbb{C}^n))$, then
	\begin{equation*}
	\lim\limits_{\varepsilon \to 0} \| f^\varepsilon \mathbf{u}_\varepsilon (\cdot, \pm \varepsilon^{-\alpha}) - f_0 \mathbf{u}_0 (\cdot, \pm \varepsilon^{-\alpha}) \|_{L_2(\mathbb{R}^d)} = 0, \qquad 0 < \alpha < 2.
	\end{equation*}
\end{enumerate}
\end{thrm}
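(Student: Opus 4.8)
\emph{Proof strategy.} The plan is to reduce the assertion to the operator-norm bound of Theorem~\ref{sndw_A_eps_exp_enchcd_thrm_1} via the Duhamel representation, and then to reach the large-time regime $\tau=\pm\varepsilon^{-\alpha}$ by elementary estimates for the factor $(1+|\tau|^{1/2})^{s/2}$ together with Hölder's inequality in the time variable; the endpoint case $\boldsymbol{\phi}\in L_2$, $\mathbf{F}\in L_1$ is then obtained by a density argument, exactly as in the proof of the general Theorem~\ref{A_eps_Cauchy_general_thrm}, but with the improved dependence on $\tau$.

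First I would subtract the two Duhamel formulas. Multiplying the representation of $\mathbf{u}_\varepsilon$ by $f^\varepsilon$ and that of $\mathbf{u}_0$ by $f_0$, and denoting $J(\varepsilon;\tau)\coloneqq f^\varepsilon e^{-i\tau\mathcal{A}_\varepsilon}(f^\varepsilon)^{-1}-f_0 e^{-i\tau\mathcal{A}^0}f_0^{-1}$, one obtains
\[
f^\varepsilon\mathbf{u}_\varepsilon(\cdot,\tau)-f_0\mathbf{u}_0(\cdot,\tau)=J(\varepsilon;\tau)\boldsymbol{\phi}-i\int_0^\tau J(\varepsilon;\tau-\tilde\tau)\mathbf{F}(\cdot,\tilde\tau)\,d\tilde\tau .
\]
Hence the left-hand $L_2(\mathbb{R}^d)$-norm is majorized by $\|J(\varepsilon;\tau)\|_{H^s\to L_2}\|\boldsymbol{\phi}\|_{H^s}$ plus $\int_0^{|\tau|}\|J(\varepsilon;\tau-\tilde\tau)\|_{H^s\to L_2}\|\mathbf{F}(\cdot,\tilde\tau)\|_{H^s}\,d\tilde\tau$. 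Since $\widehat{N}_Q(\boldsymbol{\theta})=0$ by assumption, Theorem~\ref{sndw_A_eps_exp_enchcd_thrm_1} gives $\|J(\varepsilon;\sigma)\|_{H^s\to L_2}\le\mathfrak{C}_2(s)(1+|\sigma|^{1/2})^{s/2}\varepsilon^{s/2}$, and for $\sigma=\tau-\tilde\tau$ with $\tilde\tau$ between $0$ and $\tau$ one has $(1+|\tau-\tilde\tau|^{1/2})^{s/2}\le(1+|\tau|^{1/2})^{s/2}$, so this factor may be pulled out of the integral. This yields the first inequality of statement~$1^{\circ}$.

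For $\tau=\pm\varepsilon^{-\alpha}$ with $0<\varepsilon\le1$ one has $|\tau|\ge1$, hence $(1+|\tau|^{1/2})^{s/2}\le 2^{s/2}|\tau|^{s/4}$ and $\varepsilon^{s/2}|\tau|^{s/4}=\varepsilon^{s(1-\alpha/2)/2}$, which settles the $\boldsymbol{\phi}$-term. For the $\mathbf{F}$-term I would apply Hölder's inequality in $\tilde\tau$ over $(0,|\tau|)$ with exponents $p$ and $p'$, using the uniform bound $\|J(\varepsilon;\tau-\tilde\tau)\|_{H^s\to L_2}\le 2^{s/2}\mathfrak{C}_2(s)|\tau|^{s/4}\varepsilon^{s/2}$ valid for $|\tilde\tau|\le|\tau|$; the extra factor $|\tau|^{1/p'}=\varepsilon^{-\alpha/p'}$ then reproduces exactly the asserted bound $2^{s/2}\mathfrak{C}_2(s)\varepsilon^{s(1-\alpha/2)/2}(\|\boldsymbol{\phi}\|_{H^s}+\varepsilon^{-\alpha/p'}\|\mathbf{F}\|_{L_p(\mathbb{R}_\pm;H^s)})$. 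The constraint $0<\alpha<2s(s+4/p')^{-1}$ is precisely the condition under which the $\varepsilon$-exponent $s(1-\alpha/2)/2-\alpha/p'$ in the $\mathbf{F}$-term is positive, so that the right-hand side actually decays.

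Statement~$2^{\circ}$ corresponds to the degenerate case where the operator bound carries no decay, so a density argument is needed: approximate $\boldsymbol{\phi}$ in $L_2$ by $\boldsymbol{\phi}_\eta\in H^2(\mathbb{R}^d;\mathbb{C}^n)$ and $\mathbf{F}$ in $L_1(\mathbb{R}_\pm;L_2)$ by $\mathbf{F}_\eta\in L_1(\mathbb{R}_\pm;H^2)$. The remainder terms involving $\boldsymbol{\phi}-\boldsymbol{\phi}_\eta$ and $\mathbf{F}-\mathbf{F}_\eta$ are controlled by the trivial bound $\|J(\varepsilon;\tau)\|_{L_2\to L_2}\le 2\|f\|_{L_\infty}\|f^{-1}\|_{L_\infty}$, uniform in $\tau$ and $\varepsilon$, while the smooth parts are handled by the case $s=2$ of $1^{\circ}$, since $(1+|\tau|^{1/2})\varepsilon\le 2\varepsilon^{1-\alpha/2}\to0$ for $0<\alpha<2$; letting $\varepsilon\to0$ and then $\eta\to0$ finishes the proof. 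The only genuine care needed is in the Hölder step, where the admissible range of $\alpha$ must be tracked, and in the density argument, where one must use a bound on $\|J(\varepsilon;\cdot)\|_{H^2\to L_2}$ that is uniform over the whole growing interval $[0,|\tau|]$ rather than merely pointwise in $\tau$; there is no substantive new obstacle beyond Theorem~\ref{sndw_A_eps_exp_enchcd_thrm_1}.
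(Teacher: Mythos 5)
Your argument is correct and is essentially the proof the paper intends: the Duhamel subtraction, the monotonicity of $(1+|\sigma|^{1/2})^{s/2}$ in $|\sigma|\le|\tau|$, the bound $(1+|\tau|^{1/2})^{s/2}\le 2^{s/2}|\tau|^{s/4}$ for $|\tau|\ge 1$ combined with H\"older in time, and a density argument using the trivial $L_2\to L_2$ bound together with the case $s=2$, $p=1$ of part $1^{\circ}$. Nothing further is needed.
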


Similarly, applying Theorem~\ref{sndw_A_eps_exp_enchcd_thrm_2}, we arrive at the following statement.

\begin{thrm}
	Suppose that the assumptions of Theorem~\emph{\ref{A_eps_Cauchy_general_thrm}} are satisfied. Suppose that Condition~\emph{\ref{sndw_cond1}} \emph{(}or more restrictive Condition~\emph{\ref{sndw_cond2})} is satisfied.
	\begin{enumerate}[label=\emph{\arabic*$^{\circ}.$}, ref=\arabic*$^{\circ}$, leftmargin=2\parindent]
	\item	If $\boldsymbol{\phi} \in H^s (\mathbb{R}^d; \mathbb{C}^n)$, $\mathbf{F} \in L_{1,\mathrm{loc}}(\mathbb{R}; H^s (\mathbb{R}^d; \mathbb{C}^n))$, where $0 \le s \le 2$, then for $\tau \in \mathbb{R}$ and $\varepsilon > 0$ we have
	\begin{equation*}
	\| f^\varepsilon \mathbf{u}_\varepsilon (\cdot, \tau) - f_0 \mathbf{u}_0 (\cdot, \tau) \|_{L_2(\mathbb{R}^d)} \le 
	\mathfrak{C}_3 (s) \varepsilon^{s/2} (1 +  |\tau|^{1/2})^{s/2} \left(\| \boldsymbol{\phi} \|_{H^s(\mathbb{R}^d)} + \|\mathbf{F} \|_{L_1((0,\tau);H^s(\mathbb{R}^d))} \right).
	\end{equation*}
	Under the additional assumption that $\mathbf{F} \in L_p (\mathbb{R}_\pm; H^s (\mathbb{R}^d, \mathbb{C}^n))$, where $p \in [1, \infty]$, for $\tau = \pm \varepsilon^{-\alpha}$, $0 < \varepsilon \le 1$, and $0 < \alpha < 2s(s + 4/p')^{-1}$ we have
	\begin{multline*}
	\| f^\varepsilon \mathbf{u}_\varepsilon (\cdot, \pm \varepsilon^{-\alpha}) - f_0 \mathbf{u}_0 (\cdot, \pm \varepsilon^{-\alpha}) \|_{L_2(\mathbb{R}^d)} \\ 
	\le  2^{s/2} \mathfrak{C}_3 (s) \varepsilon^{s(1-\alpha/2)/2}  \left(\| \boldsymbol{\phi} \|_{H^s(\mathbb{R}^d)} + \varepsilon^{-\alpha/p'} \|\mathbf{F} \|_{L_p(\mathbb{R}_\pm;H^s(\mathbb{R}^d))} \right).
	\end{multline*}
	The constant $\mathfrak{C}_3 (s)$ is defined in Theorem~\emph{\ref{sndw_A_eps_exp_enchcd_thrm_2}}. Here $p^{-1} + (p')^{-1} = 1$.
	
	\item If $\boldsymbol{\phi} \in L_2 (\mathbb{R}^d; \mathbb{C}^n)$ and $\mathbf{F} \in L_1 (\mathbb{R}_\pm; L_2 (\mathbb{R}^d, \mathbb{C}^n))$, then
	\begin{equation*}
	\lim\limits_{\varepsilon \to 0} \| f^\varepsilon \mathbf{u}_\varepsilon (\cdot, \pm \varepsilon^{-\alpha}) - f_0 \mathbf{u}_0 (\cdot, \pm \varepsilon^{-\alpha}) \|_{L_2(\mathbb{R}^d)} = 0, \qquad 0 < \alpha < 2.
	\end{equation*}
\end{enumerate}
\end{thrm}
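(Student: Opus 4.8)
The plan is to deduce this statement from the operator-norm bound of Theorem~\ref{sndw_A_eps_exp_enchcd_thrm_2}, following verbatim the scheme by which Theorem~\ref{A_eps_Cauchy_general_thrm} was obtained from Theorem~\ref{sndw_A_eps_exp_general_thrm} (and Theorem~\ref{A_eps_Cauchy_enchcd_thrm_1} from Theorem~\ref{sndw_A_eps_exp_enchcd_thrm_1}); only the exponent $\varepsilon^{s/2}$ and the time-factor $(1+|\tau|^{1/2})^{s/2}$ replace $\varepsilon^{s/3}$ and $(1+|\tau|)^{s/3}$. First I would use the Duhamel representations of $\mathbf{u}_\varepsilon(\cdot,\tau)$ and $\mathbf{u}_0(\cdot,\tau)$ recalled above and subtract them, so that $f^\varepsilon\mathbf{u}_\varepsilon(\cdot,\tau)-f_0\mathbf{u}_0(\cdot,\tau)$ equals $\bigl(f^\varepsilon e^{-i\tau\mathcal{A}_\varepsilon}(f^\varepsilon)^{-1}-f_0 e^{-i\tau\mathcal{A}^0}f_0^{-1}\bigr)\boldsymbol\phi$ minus $i\int_0^\tau\bigl(f^\varepsilon e^{-i(\tau-\tilde\tau)\mathcal{A}_\varepsilon}(f^\varepsilon)^{-1}-f_0 e^{-i(\tau-\tilde\tau)\mathcal{A}^0}f_0^{-1}\bigr)\mathbf{F}(\cdot,\tilde\tau)\,d\tilde\tau$. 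Each occurrence of the difference of exponentials is precisely the operator estimated in Theorem~\ref{sndw_A_eps_exp_enchcd_thrm_2}, which applies because Condition~\ref{sndw_cond1} (or \ref{sndw_cond2}) holds.

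For part~$1^\circ$ with arbitrary $\tau\in\mathbb{R}$ and $0\le s\le 2$, I would apply that $H^s\to L_2$ bound to $\boldsymbol\phi$ and, inside the integral, to $\mathbf{F}(\cdot,\tilde\tau)$. Since $\tilde\tau$ ranges between $0$ and $\tau$ we have $|\tau-\tilde\tau|\le|\tau|$, hence $(1+|\tau-\tilde\tau|^{1/2})^{s/2}\le(1+|\tau|^{1/2})^{s/2}$, so the time-factor can be pulled out of the integral and the remaining integral is $\|\mathbf{F}\|_{L_1((0,\tau);H^s)}$; this gives the stated bound with constant $\mathfrak{C}_3(s)$. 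For the long-time regime $\tau=\pm\varepsilon^{-\alpha}$, $0<\varepsilon\le1$, one has $|\tau|\ge1$, so $1+|\tau|^{1/2}\le2|\tau|^{1/2}$ and therefore $(1+|\tau|^{1/2})^{s/2}\varepsilon^{s/2}\le2^{s/2}|\tau|^{s/4}\varepsilon^{s/2}=2^{s/2}\varepsilon^{s(1-\alpha/2)/2}$. For the inhomogeneous term I would apply Hölder's inequality in $\tilde\tau$: $\|\mathbf{F}\|_{L_1((0,\tau);H^s)}\le|\tau|^{1/p'}\|\mathbf{F}\|_{L_p(\mathbb{R}_\pm;H^s)}=\varepsilon^{-\alpha/p'}\|\mathbf{F}\|_{L_p(\mathbb{R}_\pm;H^s)}$. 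Combining these gives the second displayed inequality of part~$1^\circ$; the restriction $0<\alpha<2s(s+4/p')^{-1}$ is exactly what makes the overall $\varepsilon$-exponent $s(1-\alpha/2)/2-\alpha/p'$ strictly positive, so the right-hand side still tends to $0$.

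For part~$2^\circ$ I would run a density argument. Alongside the bound just obtained, I would use the trivial estimate $\|f^\varepsilon e^{-i\tau\mathcal{A}_\varepsilon}(f^\varepsilon)^{-1}-f_0 e^{-i\tau\mathcal{A}^0}f_0^{-1}\|_{L_2\to L_2}\le2\|f\|_{L_\infty}\|f^{-1}\|_{L_\infty}$, which holds for all $\tau$. Given $\eta>0$, choose $\boldsymbol\phi_\eta\in H^2(\mathbb{R}^d;\mathbb{C}^n)$ with $\|\boldsymbol\phi-\boldsymbol\phi_\eta\|_{L_2}<\eta$ and $\mathbf{F}_\eta\in L_1(\mathbb{R}_\pm;H^2(\mathbb{R}^d;\mathbb{C}^n))$ with $\|\mathbf{F}-\mathbf{F}_\eta\|_{L_1(\mathbb{R}_\pm;L_2)}<\eta$; split the Duhamel difference into the part driven by $(\boldsymbol\phi_\eta,\mathbf{F}_\eta)$ and the part driven by $(\boldsymbol\phi-\boldsymbol\phi_\eta,\mathbf{F}-\mathbf{F}_\eta)$. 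The former tends to $0$ as $\varepsilon\to0$ by part~$1^\circ$ applied with $s=2$, $p=1$ (so $p'=\infty$ and the admissible range becomes $0<\alpha<2$), even with $\tau=\pm\varepsilon^{-\alpha}$; the latter is bounded by $2\|f\|_{L_\infty}\|f^{-1}\|_{L_\infty}(1+1)\eta$ uniformly in $\varepsilon$ and $\tau$. Letting $\varepsilon\to0$ and then $\eta\to0$ yields the claim. The only points that need genuine care, rather than bookkeeping, are the uniformity in $\tau$ of the trivial $L_2\to L_2$ bound (needed because $\tau=\pm\varepsilon^{-\alpha}$ blows up as $\varepsilon\to0$) and the verification that the $\varepsilon$-exponent remains positive over the whole admissible $\alpha$-range; both are immediate once the Hölder estimate is in place, so no serious obstacle is expected.
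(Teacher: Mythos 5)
Your proposal is correct and follows exactly the scheme the paper intends: Duhamel representation, application of the operator-norm bound of Theorem~\ref{sndw_A_eps_exp_enchcd_thrm_2} to $\boldsymbol{\phi}$ and to $\mathbf{F}(\cdot,\tilde\tau)$ with the monotonicity $(1+|\tau-\tilde\tau|^{1/2})^{s/2}\le(1+|\tau|^{1/2})^{s/2}$, H\"older in $\tilde\tau$ for the long-time regime, and a density argument with the trivial $L_2\to L_2$ bound for part~$2^{\circ}$. All exponent computations (including the admissible range of $\alpha$ and the case $s=2$, $p=1$, $p'=\infty$) check out, so nothing further is needed.
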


\section{Applications of the general results}
\label{appl_section}
\subsection{The Schr\"{o}dinger-type equation with the operator $\widehat{\mathcal{A}}_\varepsilon = - \operatorname{div} g^\varepsilon \nabla$}
Consider the scalar elliptic operator
\begin{equation}
\label{appl_Schr}
\widehat{\mathcal{A}} = - \operatorname{div} g (\mathbf{x}) \nabla = \mathbf{D}^* g (\mathbf{x}) \mathbf{D}
\end{equation}
acting in $L_2 (\mathbb{R}^d)$, $d \ge 1$, which is a particular case of the operator~(\ref{hatA}). In this case $n=1$, $m=d$, $b(\mathbf{D}) = \mathbf{D}$.

The effective matrix $g^0$ is defined in the standard way. Let $\psi_j \in \widetilde{H}^1(\Omega)$ be a (weak) $\Gamma$-periodic solution of the problem
\begin{equation}
\label{psi_equation}
\operatorname{div} g(\mathbf{x}) (\nabla \psi_j (\mathbf{x}) + \mathbf{e}_j) = 0, \qquad \int_{\Omega} \psi_j (\mathbf{x}) \, d\mathbf{x} = 0.
\end{equation}
Here $\mathbf{e}_1, \ldots, \mathbf{e}_d$ is the standard orthonormal basis in $\mathbb{R}^d$. The matrix $\widetilde{g}(\mathbf{x})$ is the ($d \times d$)-matrix with the columns $\widetilde{\mathbf{g}}_j (\mathbf{x}) \coloneqq g(\mathbf{x}) (\nabla \psi_j (\mathbf{x}) + \mathbf{e}_j)$, $j=1,\dots,d$.  Then $g^0 = | \Omega |^{-1} \int_{\Omega} \widetilde{g}(\mathbf{x}) \, d \mathbf{x}$.

If $g(\mathbf{x})$ is a symmetric matrix with real entries, then, by Proposition~\ref{N=0_proposit}(\ref{N=0_proposit_it1}),  $\widehat{N} (\boldsymbol{\theta}) = 0$ for any $\boldsymbol{\theta} \in \mathbb{S}^{d-1}$. If $g(\mathbf{x})$ is a Hermitian matrix with complex entries, then, in general, $\widehat{N} (\boldsymbol{\theta})$ is not zero. Since $n=1$, then $\widehat{N} (\boldsymbol{\theta}) = \widehat{N}_0 (\boldsymbol{\theta})$ is the operator of multiplication by $\widehat{\mu}(\boldsymbol{\theta})$, where $\widehat{\mu}(\boldsymbol{\theta})$ is the coefficient in the expansion for the first eigenvalue $\widehat{\lambda}(t, \boldsymbol{\theta}) = \widehat{\gamma} (\boldsymbol{\theta}) t^2 + \widehat{\mu} (\boldsymbol{\theta}) t^3 + \widehat{\nu} (\boldsymbol{\theta}) t^4 + \ldots$ of the operator $\widehat{\mathcal{A}} (\mathbf{k})$. A calculation~(see~\cite[Subsection~10.3]{BSu2005-2}) shows that
\begin{align*}
&\widehat{N} (\boldsymbol{\theta}) = \widehat{\mu} (\boldsymbol{\theta}) = -i \sum_{j,l,r=1}^{d} (a_{jlr} - a_{jlr}^*) \theta_j \theta_l \theta_r, \\
&a_{jlr} = |\Omega|^{-1} \int_{\Omega} \psi_j (\mathbf{x})^* \left\langle g(\mathbf{x}) (\nabla \psi_l (\mathbf{x}) + \mathbf{e}_l), \mathbf{e}_r \right\rangle \, d \mathbf{x}, \qquad j, l, r = 1, \ldots, d. 
\end{align*}
The following example is borrowed from~\cite[Subsection~10.4]{BSu2005-2}.

\begin{example}[\cite{BSu2005-2}]
	Let $d=2$, $\Gamma = (2 \pi \mathbb{Z})^2$, and let $g(\mathbf{x})$ be given by
	\begin{equation*}
	g(\mathbf{x}) = \begin{pmatrix}
	1 & i \beta'(x_1) \\
	- i \beta' (x_1) & 1
	\end{pmatrix},
	\end{equation*}
	where $\beta(x_1)$ is a smooth $(2 \pi)$-periodic real-valued function such that $1 - (\beta'(x_1))^2 > 0$ and $\int_{0}^{2 \pi} \beta(x_1)\, d x_1 = 0$. Then $\widehat{N} (\boldsymbol{\theta}) = - \alpha \pi^{-1} \theta_2^3$, where $\alpha = \int_{0}^{2 \pi} \beta (x_1) (\beta' (x_1))^2 dx_1$. It is easy to give a concrete example where $\alpha \ne 0${\rm :} if $\beta (x_1) = c (\sin x_1 + \cos 2x_1)$ with $0 < c < 1/3$, then $\alpha = - (3 \pi/2) c^3 \ne 0$. In this example $\widehat{N} (\boldsymbol{\theta}) = \widehat{\mu} (\boldsymbol{\theta}) \ne 0$ for all $\boldsymbol{\theta} \in \mathbb{S}^1$ except for the points $(\pm 1 ,0)$.
\end{example}

Next, let $\phi_{jl} (\mathbf{x})$ be a $\Gamma$-periodic solution of the problem
\begin{equation}
\label{phi_equation}
-\operatorname{div} g(\mathbf{x}) (\nabla \phi_{jl} (\mathbf{x}) - \psi_j(\mathbf{x}) \mathbf{e}_l) = g^0_{lj} - \widetilde{g}_{lj} (\mathbf{x}), \qquad \int_{\Omega} \phi_{jl} (\mathbf{x}) \, d\mathbf{x} = 0.
\end{equation}
The operator $\widehat{\mathcal{N}}^{(1,1)} (\boldsymbol{\theta})$ is the operator of multiplication by $\widehat{\nu}(\boldsymbol{\theta})$. A calculation~(see~\cite[Subsection~14.5]{VSu2012}) shows that
\begin{align}
\label{hat_nu(k)}
\widehat{\mathcal{N}}^{(1,1)} (\boldsymbol{\theta}) &= \widehat{\nu}(\boldsymbol{\theta}) = \sum_{p,q,l,r=1}^{d} (\alpha_{pqlr} - (\overline{\psi_p^* \psi_q}) g^0_{lr}) \theta_p \theta_q \theta_l \theta_r,
\\
\notag
\alpha_{pqlr} &= |\Omega|^{-1} \int_{\Omega} (\widetilde{g}_{lp} (\mathbf{x}) \phi_{qr}(\mathbf{x}) + \widetilde{g}_{rq} (\mathbf{x}) \phi_{pl}(\mathbf{x})) \, d \mathbf{x}
\\ 
\notag
&+
|\Omega|^{-1} \int_{\Omega} \left\langle g(\mathbf{x}) (\nabla \phi_{qr} (\mathbf{x}) - \psi_q(\mathbf{x}) \mathbf{e}_r), \nabla \phi_{pl} (\mathbf{x}) - \psi_p(\mathbf{x}) \mathbf{e}_l \right\rangle \, d \mathbf{x},
\\
\notag
&p,q,l,r = 1, \ldots, d.
\end{align}
\begin{lemma}
	Let $d=1$ and $\widehat{\mathcal A} = - \frac{d}{dx} g(x)\frac{d}{dx}$. If $g \ne \const$, then $\widehat{\nu}(-1) = \widehat{\nu}(1) \ne 0$.
\end{lemma}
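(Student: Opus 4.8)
The plan is to evaluate every object entering formula~\eqref{hat_nu(k)} in closed form in the one-dimensional scalar setting, where $n=m=1$, $b(\mathbf{D})=D$, $\mathbf{e}_1=1$, and $\boldsymbol{\theta}=\pm1$; in particular every index $p,q,l,r$ in~\eqref{hat_nu(k)} equals $1$ and $\theta_p\theta_q\theta_l\theta_r=1$, so the right-hand side of~\eqref{hat_nu(k)} does not see the sign of $\boldsymbol{\theta}$ at all, which already yields $\widehat\nu(1)=\widehat\nu(-1)$. Since $g$ is a real, positive, periodic scalar function, Proposition~\ref{N=0_proposit}(\ref{N=0_proposit_it1}) gives $\widehat N(\boldsymbol\theta)=\widehat N_0(\boldsymbol\theta)=0$, so $\widehat{\mathcal N}^{(1)}(\boldsymbol\theta)=\widehat{\mathcal N}^{(1,1)}(\boldsymbol\theta)=\widehat\nu(\boldsymbol\theta)$ is exactly the scalar described by~\eqref{hat_nu(k)}. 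It remains to show that this scalar is nonzero precisely when $g$ is non-constant.

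First I would solve~\eqref{psi_equation}. For $d=1$ it reads $\frac{d}{dx}\bigl(g(x)(\psi'(x)+1)\bigr)=0$, hence $g(x)(\psi'(x)+1)\equiv c$ for a constant $c$; the periodicity normalization $\int_\Omega\psi'\,dx=0$ forces $\int_\Omega\bigl(c\,g(x)^{-1}-1\bigr)\,dx=0$, i.e. $c=\underline g=g^0$ (recall $g^0=\underline g$ when $m=n$). The key observation is then that $\widetilde g(x)=g(x)(\psi'(x)+1)\equiv g^0$ is \emph{constant}, while $\psi'(x)=g^0 g(x)^{-1}-1$. Next I would solve~\eqref{phi_equation} for $\phi\coloneqq\phi_{11}$: its right-hand side is $g^0-\widetilde g(x)=0$, so $g(x)\bigl(\phi'(x)-\psi(x)\bigr)\equiv c'$, and the periodicity of $\phi$ together with $\int_\Omega\psi\,dx=0$ forces $c'=0$, whence $\phi'(x)=\psi(x)$.

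With these two facts the coefficient $\alpha_{1111}$ in~\eqref{hat_nu(k)} collapses: the first integral equals $2|\Omega|^{-1}\int_\Omega\widetilde g(x)\phi(x)\,dx=2g^0|\Omega|^{-1}\int_\Omega\phi(x)\,dx=0$ by the normalization $\int_\Omega\phi\,dx=0$, and the second integral equals $|\Omega|^{-1}\int_\Omega g(x)\,\lvert\phi'(x)-\psi(x)\rvert^2\,dx=0$ because $\phi'=\psi$. Hence~\eqref{hat_nu(k)} reduces to
\[
\widehat\nu(\pm1)=-\bigl(\overline{\lvert\psi\rvert^2}\bigr)\,g^0=-\,g^0\,\lvert\Omega\rvert^{-1}\!\int_\Omega\lvert\psi(x)\rvert^2\,dx .
\]
Finally, $g^0>0$, so $\widehat\nu(\pm1)=0$ would force $\psi\equiv0$, i.e. $\psi'\equiv0$, i.e. $g^0 g(x)^{-1}\equiv1$, i.e. $g\equiv g^0=\const$; therefore $g\ne\const$ gives $\widehat\nu(1)=\widehat\nu(-1)<0$, in particular nonzero.

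The computation is routine; the one place deserving emphasis is the remark that $\widetilde g$ is constant in dimension one, since this is exactly what kills both the source term of~\eqref{phi_equation} (forcing $\phi'=\psi$) and the first integral in $\alpha_{1111}$. I do not anticipate any genuine obstacle beyond keeping the normalization conditions straight.
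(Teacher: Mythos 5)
Your proposal is correct and follows essentially the same route as the paper: solve the one-dimensional cell problems to get $\widetilde g\equiv\underline g=g^0$ and $\phi_{11}'=\psi_1$, conclude $\alpha_{1111}=0$, and read off $\widehat\nu(\pm1)=-\overline{|\psi_1|^2}\,g^0\ne0$ when $g\ne\const$. The only difference is that you spell out the verification of $\alpha_{1111}=0$ and of the constant of integration in the $\phi_{11}$-equation, which the paper leaves as ``easy to check.''
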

\begin{proof}
	The problem~(\ref{psi_equation}) now takes the form $\frac{d}{dx} g(x) (\frac{d}{dx} \psi_1 (x) + 1) = 0$, $\overline{\psi_1}= 0$. Then $\frac{d}{dx} \psi_1 (x) = \underline{g} (g(x))^{-1} - 1$. Since $g(x) \ne \const$, then $\underline{g} (g(x))^{-1} - 1 \not\equiv 0$ and therefore $\psi_1 \not\equiv 0$. Next, $\widetilde{g}(x) = \underline{g} = g^0$ and equation~(\ref{phi_equation}) takes the form $\frac{d}{dx} g(x) (\frac{d}{dx} \phi_{11} (x) - \psi_1(x)) = 0$, $\overline{\phi_{11}} = 0$. Then $\frac{d}{dx} \phi_{11} (x) - \psi_1(x) = 0$. It is easy to check that $\alpha_{1111}$ in~(\ref{hat_nu(k)}) is equal to zero: $\alpha_{1111} = 0$. Since $\overline{\psi_1^2} g^0 \ne 0$, then $\widehat{\nu}(-1) = \widehat{\nu}(1) \ne 0$.
\end{proof}

Consider the Cauchy problem~\eqref{Cauchy_hatA_eps} with the operator $\widehat{\mathcal{A}}_\varepsilon = - \operatorname{div} g^\varepsilon (\mathbf{x}) \nabla$. We can apply Theorem~\ref{hatA_eps_Cauchy_general_thrm} in the general case and Theorem~\ref{hatA_eps_Cauchy_enchcd_thrm_1} in the \textquotedblleft real\textquotedblright \ case. These results are sharp with respect to smoothness of the initial data and with respect to dependence of the estimates on time.

\subsection{The nonstationary Schr\"{o}dinger equation with a singular potential}
(See~\cite[Chapter~6, Subsection~1.1]{BSu2003}.) In the space $L_2(\mathbb{R}^d)$, $d \ge 1$, we consider the operator $\mathcal{H} = \mathbf{D}^* \check{g} (\mathbf{x}) \mathbf{D} +  V (\mathbf{x})$, where a symmetric $(d \times d)$-matrix-valued function $\check{g} (\mathbf{x})$ with \emph{real entries} and a \emph{real-valued} potential $V (\mathbf{x})$ are $\Gamma$-periodic and satisfy
\begin{gather*}
\check{g} (\mathbf{x}) > 0, \quad \check{g}, \check{g}^{-1} \in L_\infty;\\
V \in L_q(\Omega), \qquad q > d/2 \; \text{ for } \; d \ge 2, \quad q =1 \; \text{ for } \; d = 1.
\end{gather*}
Adding an appropriate constant to $V (\mathbf{x})$, we may assume that \emph{the point $\lambda = 0$ is the bottom of the spectrum of $\mathcal{H}$}. Then the operator $\mathcal{H}$ can be written in the factorized form:
\begin{equation}
\label{appl_Schr_factorized}
\mathcal{H} = \omega^{-1} \mathbf{D}^* \omega^2 \check{g} \mathbf{D} \omega^{-1},
\end{equation}
where  $\omega(\mathbf{x})$ is a positive $\Gamma$-periodic solution of the equation $\mathbf{D}^* \check{g}(\mathbf{x}) \mathbf{D} \omega(\mathbf{x}) + V(\mathbf{x})\omega(\mathbf{x}) = 0$, $\int_{\Omega}\omega^2(\mathbf{x}) d \mathbf{x}= |\Omega|$. Therefore, the operator~\eqref{appl_Schr_factorized} is a particular case of the operator~(\ref{A}). In this case $n=1$, $m=d$, $b(\mathbf{D}) = \mathbf{D}$,
$g = \omega^2 \check{g}$, $f = \omega^{-1}$.

Let $g^0$ be the effective matrix for the operator~\eqref{appl_Schr} (with $g = \omega^2 \check{g}$). Now $Q(\mathbf{x}) = \omega^2(\mathbf{x})$, and, by the normalization condition for $\omega$, we have  $\overline{Q} = 1$ and $f_0 = (\overline{Q})^{-1/2} = 1$. Therefore, the operator~\eqref{A0} takes the form $\mathcal{H}^0 = \mathbf{D}^* g^0 \mathbf{D}$.

Now we consider the operator
\begin{equation}
\label{appl_Schr_factorized_eps}
\mathcal{H}_\varepsilon = (\omega^\varepsilon)^{-1} \mathbf{D}^* (\omega^\varepsilon)^2 \check{g}^\varepsilon \mathbf{D} (\omega^\varepsilon)^{-1}.
\end{equation}
In the initial form, the operator~\eqref{appl_Schr_factorized_eps} can be written as $
\mathcal{H}_\varepsilon = \mathbf{D}^* \check{g}^\varepsilon \mathbf{D} + \varepsilon^{-2} V^\varepsilon$. Note that this expression contains a large factor $\varepsilon^{-2}$ at the rapidly oscillating potential $V^\varepsilon$.

Let us consider the Cauchy problem~\eqref{Cauchy_A_eps} with the operator~\eqref{appl_Schr_factorized_eps}. By Proposition~\ref{hatN_Q=0_proposit}(\ref{hatN_Q=0_proposit_it1}), $\widehat{N}_Q(\boldsymbol{\theta}) = 0$ for any $\boldsymbol{\theta} \in \mathbb{S}^{d-1}$. We can apply Theorem~\ref{A_eps_Cauchy_enchcd_thrm_1}. This result is sharp with respect to smoothness of the initial data and with respect to dependence of the estimates on time. 

\begin{remark}
	It is also possible to consider the Cauchy problem for the magnetic Schr\"{o}dinger equation with a small magnetic potential, using an appropriate factorization for the magnetic Schr\"{o}dinger operator\emph{;} see~\emph{\cite[Subsection~15.4]{Su2017}}. In this case, we do not have improvement of the general results.
\end{remark}

\subsection{The nonstationary two-dimensional Pauli equation}
(See~\cite[Chapter~4, \S12, Subsection~12.3]{BSu2005-2}.) Let \emph{the magnetic potential} $\mathbf{A} = \{A_1, A_2\}$ be a $\Gamma$-periodic real vector-valued function in $\mathbb{R}^2$ such that $\mathbf{A} \in L_p(\Omega; \mathbb{C}^2)$, $p>2$. By the gauge transformation, we may assume that $\operatorname{div} \mathbf{A} = 0$, $\int_{\Omega} \mathbf{A}(\mathbf{x}) \, d\mathbf{x} =0$. Under these conditions there exists a (unique) $\Gamma$-periodic real-valued function $\varphi$ such that $\nabla \varphi = \{A_2, -A_1\}$, $\int_{\Omega} \varphi(\mathbf{x}) \, d \mathbf{x} = 0$.

In $L_2(\mathbb{R}^2; \mathbb{C}^2)$, we consider the Pauli operator
\begin{equation}
\label{appl_Pauli}
\mathcal{P} = \begin{pmatrix}
P_{-} & 0 \\
0 & P_{+} 
\end{pmatrix}, \qquad
\begin{aligned}
P_{+} &= \omega_{-} \partial_{+} \omega_{+}^2 \partial_{-} \omega_{-},\\
P_{-} &= \omega_{+} \partial_{-} \omega_{-}^2 \partial_{+} \omega_{+},
\end{aligned}
\end{equation}
where $\omega_{\pm} (\mathbf{x}) = e^{\pm \varphi(\mathbf{x})}$ and $\partial_{\pm} = D_1 \pm iD_2$. If the potential $\mathbf A$ is sufficiently smooth, then the blocks $P_{\pm}$ of the operator~\eqref{appl_Pauli} are of the form
\begin{equation*}
P_{\pm} = (\mathbf{D}-\mathbf{A})^2 \pm B, \qquad B = \partial_1 A_2 - \partial_2 A_1,
\end{equation*}
where the expression $B$ corresponds to the strength of the magnetic field.

The operator~\eqref{appl_Pauli} can be written as $\mathcal{P} = f_\times b_\times(\mathbf{D}) g_\times b_\times(\mathbf{D}) f_\times$, where
\begin{equation*}
b_\times(\mathbf{D}) = \begin{pmatrix}
0 & \partial_{-}\\
\partial_{+} & 0
\end{pmatrix}, \qquad f_\times (\mathbf{x}) = \begin{pmatrix}
\omega_{+}(\mathbf{x})  & 0\\
0 & \omega_{-}(\mathbf{x})
\end{pmatrix}, \qquad g_\times (\mathbf{x}) = \begin{pmatrix}
\omega^2_{+}(\mathbf{x}) & 0\\
0 & \omega^2_{-}(\mathbf{x})
\end{pmatrix}.
\end{equation*}
The operator $\mathcal{P}$  is of the form~\eqref{A} with
$m=n=d=2$, $b(\mathbf{D}) = b_\times(\mathbf{D})$, $f(\mathbf{x})=f_\times(\mathbf{x})$, $g(\mathbf{x})=g_\times(\mathbf{x})$.

Since $m = n$, the effective matrix is equal to $
g_{\times}^0 = \underline{g_{\times}} = \operatorname{diag}\{g_{+}^0, g_{-}^0\}$, $g_{\pm}^0 = \underline{\omega_{\pm}^2}$.
The matrix $Q_{\times} = f_{\times}^{-2} = g_{\times}^{-1}$ plays the role of  $Q$. Then $\overline{Q_\times} = \operatorname{diag}\{(g_{+}^0)^{-1}, (g_{-}^0)^{-1}\}$. The role of $f_0$ is played by $f_{\times, 0} = \operatorname{diag} \{(g_{+}^0)^{1/2}, (g_{-}^0)^{1/2}\}$. The operator~(\ref{A0}) now takes the form 
\begin{equation*}
\mathcal{P}_{\times}^0 = f_{\times,0} b_\times(\mathbf{D}) g^0_\times b_\times(\mathbf{D}) f_{\times,0} = \begin{pmatrix}
-\gamma \Delta & 0 \\
0 & -\gamma \Delta
\end{pmatrix}.
\end{equation*}
Here $\gamma = g_{+}^0 g_{-}^0 = |\Omega|^2 \| \omega_{+} \|_{L_2(\Omega)}^{-2} \| \omega_{-} \|_{L_2(\Omega)}^{-2}$.

Now, we describe the operator $\widehat{N}_{Q, \times} (\boldsymbol{\theta})$ that plays the role of $\widehat{N}_Q (\boldsymbol{\theta})$ for $\mathcal{P}$. Let $w_{\pm}(\mathbf{x})$ be the $\Gamma$-periodic solutions of the problems $
\partial_{\mp} w_{\pm}(\mathbf{x}) = g_{\pm}^0 \omega_{\mp}^2 (\mathbf{x}) - 1$, $\int_{\Omega} w_{\pm}(\mathbf{x}) \, d \mathbf{x} = 0$.
Then $\widehat{N}_{Q, \times} (\boldsymbol{\theta}) = \operatorname{diag} \{\widehat{N}_{Q,-}(\boldsymbol{\theta}), \widehat{N}_{Q,+}(\boldsymbol{\theta})\}$, $
\widehat{N}_{Q,\pm}(\boldsymbol{\theta}) = - 2 \gamma \left( \theta_1 \Re \overline{\omega_{\pm}^2 w_{\pm}} \pm \theta_2 \Im \overline{\omega_{\pm}^2 w_{\pm}} \right)$, $\boldsymbol{\theta} \in \mathbb{S}^1$. The following example is borrowed form~\cite[Example~16.2]{Su2017}. 

\begin{example}[\cite{Su2017}]
	Let $\Gamma = (2 \pi {\mathbb Z})^2$ and let $\omega_{-}^2 (\mathbf{x}) = 1 + \alpha  (\sin x_2 + 4 \sin 2x_2)$, where $\alpha>0$
	is sufficiently small. Then $\widehat{N}_{Q, \times} (\boldsymbol{\theta}) \ne 0$ for $\theta_1 \ne 0$.
\end{example}

Now, we consider the operator
\begin{equation}
\label{appl_Pauli_eps}
\mathcal{P}_\varepsilon = f^\varepsilon_\times b_\times(\mathbf{D}) g^\varepsilon_\times b_\times(\mathbf{D}) f^\varepsilon_\times = \begin{pmatrix}
P_{-,\varepsilon} & 0 \\
0 & P_{+,\varepsilon}
\end{pmatrix},
\end{equation}
where $P_{+,\varepsilon} = \omega_{-}^\varepsilon \partial_{+} (\omega_{+}^\varepsilon)^2 \partial_{-} \omega_{-}^\varepsilon$, $P_{-,\varepsilon} = \omega_{+}^\varepsilon \partial_{-} (\omega_{-}^\varepsilon)^2 \partial_{+} \omega_{+}^\varepsilon$. If the potential $\mathbf A$ is sufficiently smooth, then the blocks of the operator~\eqref{appl_Pauli_eps} are of the form
$P_{\pm,\varepsilon} = (\mathbf{D}-\varepsilon^{-1}\mathbf{A}^\varepsilon)^2 \pm \varepsilon^{-2}B^\varepsilon$.

Let us consider the Cauchy problem~\eqref{Cauchy_A_eps} with the operator~\eqref{appl_Pauli_eps}. We can apply Theorem~\ref{A_eps_Cauchy_general_thrm}. In general, this result is sharp with respect to smoothness of the initial data and with respect to dependence of the estimates on time.

\end{document}